\documentclass[12pt]{amsart}
\usepackage[margin=1in,letterpaper,portrait]{geometry}
\usepackage{ytableau}
\ytableausetup{notabloids}
\ytableausetup{centertableaux}
\usepackage{latexsym,amsmath,amssymb,verbatim, tikz}
\usepackage{graphicx}
\usepackage[colorlinks]{hyperref}
\usepackage{mathrsfs}
\usepackage{amsfonts}
\usepackage{amsthm,youngtab}
\usepackage{graphicx}
\usepackage{caption}
\usepackage{enumerate,enumitem}
\usepackage{tikz-cd}
\usepackage{rotating}
\usepackage[all]{xy}
\usepackage[titletoc, title]{appendix}
\usepackage{amscd}
\usepackage{float}
\usepackage{mathtools}
\hypersetup{
bookmarksnumbered,
pdfstartview={FitH},
breaklinks=true,
linkcolor=blue,
urlcolor=blue,
citecolor=blue,
bookmarksdepth=2
}	
\usepackage{caption}
\usepackage[utf8x]{inputenc}
\usepackage{adjustbox}
\usepackage[numbers]{natbib} 
\usepackage{dynkin-diagrams}

\theoremstyle{plain}
   
   \newtheorem{theorem}{Theorem}[section]
   \newtheorem{proposition}[theorem]{Proposition}
   
   \newtheorem{lemma}[theorem]{Lemma}
   \newtheorem{corollary}[theorem]{Corollary}

\theoremstyle{definition}
   \newtheorem{definition}[theorem]{Definition}
   
   \newtheorem{example}[theorem]{Example}

   \newtheorem{remark}[theorem]{Remark}

\numberwithin{equation}{section}

\newcommand{\CC}{{\mathbb {C}}}
\newcommand{\QQ}{{\mathbb {Q}}}

\newcommand{\ZZ}{{\mathbb {Z}}}
\newcommand{\NN}{{\mathbb {N}}}

\newlength{\mysizetiny}
\newlength{\mysizesmall}
\newlength{\mysize}
\newlength{\mysizelarge}
\newcommand{\hg}{\hat{\mathfrak{g}}}
\newcommand{\hgs}{\hat{\mathfrak{g}}^{\sigma}}

\newcommand{\lgs}{\mathcal{L} \mathfrak{g} ^{\sigma}}
\newcommand{\bs}{\mathfrak{b}^{\sigma}}
\newcommand{\gs}{\mathfrak{g}^\sigma}
\newcommand{\g}{{\mathfrak{g}}}
\newcommand{\U}{\mathcal{U}}

\newcommand{\mo}{\mathcal{O}}

\begin{document}
	
\title{Representations of shifted twisted  quantum affine algebras}
\author{Fei-Fei Li, Jian-Rong Li, and Yan-Feng Luo$^\dag$}\thanks{$\dag$ Corresponding author}

\address{Fei-Fei Li: School of Mathematics and Statistics, Lanzhou University, Lanzhou 730000, P. R. China.}
\email{liff2025@lzu.edu.cn}

\address{Jian-Rong Li: Faculty of Mathematics, University of Vienna, Oskar-Morgenstern Platz 1, 1090 Vienna, Austria.}
\email{lijr07@gmail.com}

\address{Yan-Feng Luo: School of Mathematics and Statistics, Lanzhou University, Lanzhou 730000, P. R. China.}
\email{luoyf@lzu.edu.cn}
\date{}
	
\begin{abstract}
In this paper, we introduce and study shifted twisted quantum affine algebras which provide a twisted counterpart of the theory of shifted quantum affine
algebras. The shifted
twisted quantum affine algebra $\U_q^{\mu_+,\mu_-}(\hgs)$ is obtained from the Drinfeld current presentation of twisted quantum loop algebras by shifting the Cartan--Drinfeld currents $\phi_i^\pm(z)$ according to a coweight pair $(\mu_+,\mu_-)$. 
We prove that it admits a triangular decomposition and that,
up to isomorphism, they depend only on the total shift $\mu=\mu_++\mu_-$. For each shift $\mu$,
we define a category $\mathcal O_\mu$ of representations of  $\U_q^\mu(\hgs) = \U_q^{0,\mu}(\hgs)$ and prove a rationality theorem for the
Cartan currents: on every weight space, the two currents $\phi_i^+(z)$ and
$\phi_i^-(z)$ are expansions of the same rational operator-valued function,
whose degree is prescribed by $\alpha_i(\mu)$. As a consequence, we classify
the simple objects of $\mathcal O_\mu$ by rational $\ell$-weights of the
corresponding degrees. We then construct a deformed Drinfeld coproduct and use it to define a fusion
product on the direct sum $\mathcal{O}^{sh}$ of the categories $\mathcal O_\mu$. This fusion
product is compatible with $q$-characters. We also classify finite-dimensional simple modules in $\mathcal{O}^{sh}$ in terms of dominant rational $\ell$-weights, with a separate treatment
of type $A_{2n}^{(2)}$. Finally, we construct 
restriction representations relating representations of twisted quantum affine Borel algebras to representations of shifted twisted quantum affine algebras, and establish a $q$-characters formula for simple finite-dimensional representations of  shifted twisted quantum affine algebras in terms of the $q$-characters of the corresponding simple representations of the twisted quantum affine Borel algebra $\U_q(\bs)$.

\hspace{0.15cm}

\noindent
{\bf Key words}: Shifted twisted  quantum affine algebras; finite-dimensional modules; $q$-characters; fusion product

\hspace{0.15cm}

\noindent
{\bf 2020 Mathematics Subject Classification}: 17B37
\end{abstract}

\maketitle 

\setcounter{tocdepth}{1}
\tableofcontents

\section{Introduction}
Shifted quantum affine algebras were introduced by Finkelberg and
Tsymbaliuk in their study of quantized $K$-theoretic Coulomb branches of
three-dimensional $\mathcal N=4$ supersymmetric quiver gauge theories
\cite{FT}. Since their introduction, shifted quantum affine algebras have appeared in a
number of related contexts, such as representation theory, Hall algebras,
supersymmetric gauge theory, and cluster algebra theory. In particular, various aspects of their representation theory, as well as their
connections with Hall algebras and cluster algebras, have been investigated in
\cite{Jordan-shifted,P-shifted,Hall-shifted,JEB-shifted,cluster-shifted}.

Shifted quantum affine algebras form a family of variants of quantum loop algebras in
Drinfeld's current realization \cite{FT}. More precisely, if $\g$ is a
finite-dimensional simple Lie algebra and $\mu$ is an integral coweight,
the shifted quantum affine algebra $\U_q^\mu(\hat{\g})$ is obtained
from the Drinfeld presentation of the quantum loop algebra
$\U_q(L\g)$ by allowing the Drinfeld currents to have prescribed
orders of zero or pole determined by the integers
$\alpha_i(\mu)$.  Thus the shift changes the asymptotic
behavior of the Drinfeld currents $\phi^\pm_i(z)$ and leads to a representation theory
which is close to, but genuinely different from, the usual representation
theory of quantum affine algebras \cite{Hernandezshfted}.

The representation theory of shifted quantum affine algebras was
developed by Hernandez in \cite{Hernandezshfted}.  In that work, the
category $\mo_\mu$ of representations of $\U_q^\mu(\hg)$ were introduced, simple objects were classified by
rational $\ell$-weights of prescribed degrees, and a fusion product was
constructed by means of a deformed Drinfeld coproduct.  This gives a
ring structure on the Grothendieck group of the shifted category
$\mo^{sh}
=
\bigoplus_{\mu}\mo_\mu$ .

The purpose of the present paper is to develop representations
theory for shifted twisted quantum affine algebras.  Let $\g$ be a simply-laced
simple Lie algebra of type $A_{2n}$ ($n \ge 2$), $A_{2n-1}$ $(n\ge 3)$, $D_{n+1}$ ($n \ge 2$), or $E_6$, and let $\sigma$ be a
non-trivial Dynkin diagram automorphism of order $M$. We say that $\g$ is of type $(X_m, M)$ if $\sigma$ has order $M$. The corresponding
twisted affine Lie algebra is denoted by $\hgs$.  The twisted quantum
affine algebra $\U_q(\hgs)$ was introduced independently by Drinfeld
and Jimbo \cite{Drinfeld,Jimbo}.  Its finite-dimensional simple
representations were classified by Chari and Pressley in terms of
Drinfeld polynomials \cite{chari-twisted-98}.  The Drinfeld new realization
of twisted quantum affine algebras was studied in
\cite{Drinfeld,I-Damiani,Damiani-injective,jing1998drinfeld,jing1999vertex}.
The theory of $q$-characters and Kirillov--Reshetikhin modules in the
twisted case was developed by Hernandez \cite{Hernandeztwisted}, and
the category $\mo_{\bs}$ for twisted quantum affine Borel algebras was studied
by Wang \cite{wang2023QQ}.  These works show that many structures from
the untwisted theory have twisted counterparts, although the diagram
automorphism produces additional features in the current relations,
the Serre relations, and the form of the $\ell$-weights.

In this paper, we introduce shifted twisted quantum
affine algebras and study their representation theory. Denote $I_0=\{1,\ldots,n\}$ when $\g$ is of type $(A_{2n},2)$, $(A_{2n-1},2)$, or $(D_{n+1},2)$, where $2$ is the order of $\sigma$, $I_0=\{1,2\}$ when $\g$ is of type $(D_4,3)$, $I_0 = \{1,2,3,4\}$ when $\g$ is of type $(E_6,2)$.  Let $\Lambda'=\bigoplus_{i\in I_0}\ZZ\omega_i^\vee$ be the coweight lattice of the finite type Lie algebra $\gs$ associated
with the twisted affine type.  For two coweights
$\mu_+,\mu_-\in \Lambda'$ satisfying the natural divisibility condition
\[
M\mid \alpha_i(\mu_\pm)
\qquad
\text{whenever } \sigma(i)=i,
\]
we define an algebra $\U_q^{\mu_+,\mu_-}(\hgs)$ by a shifted version of the Drinfeld current presentation of the
twisted quantum loop algebra. Its generators are the modes $x_{i,r}^\pm$, $\phi_{i,r}^+$, and $\phi_{i,r}^-$, where $i\in I_0$, $r\in\mathbb{Z}$ for $x_{i,r}^\pm$, $r\ge -\alpha_i(\mu_+)$ for $\phi_{i,r}^+$, and $r\le \alpha_i(\mu_-)$ for $\phi_{i,r}^-$. The defining relations are the twisted current relations expressed in terms of these modes; see Definition \ref{basic-definition}. In particular, the
ordinary twisted quantum loop algebra is recovered from the case
$\mu_+=\mu_-=0$ after identifying $\phi^+_{i,0}\phi^-_{i,0}$ with $1$ for $i\in I_0$.

As in the shifted untwisted case, the algebra
$\U_q^{\mu_+,\mu_-}(\hgs)$ depends, up to isomorphism, only on the sum $\mu=\mu_+ + \mu_-$; see Theorem \ref{up-to-iso}.
We therefore write
$\U_q^\mu(\hgs)
=
\U_q^{0,\mu}(\hgs)$
throughout the paper.  We prove a triangular decomposition for these
algebras, using the triangular decomposition of twisted quantum
affinizations from \cite{chen2023twisted}; see Proposition \ref{triangular}.  We also show that, for 
anti-dominant shift $\mu$, the algebra $\U_q^\mu(A_{2}^{(2)})$ contains a natural
subalgebra isomorphic to the twisted quantum affine Borel algebra
$\U_q(\bs)$; see Proposition \ref{borel-injective}.

Our first main result concerns the category $\mo_\mu$ of representations
of $\U_q^\mu(\hgs)$.  This category is defined by a weight-space
decomposition, finite-dimensionality of weight spaces, and a boundedness
condition with respect to the natural partial order on the weight
lattice.  
A key technical point is the rationality of the Drinfeld currents $\phi^\pm_i(z)$.  For
a module in $\mo_\mu$, we prove that on every weight space the currents
$\phi_i^+(z)$ and $\phi_i^-(z)$ are the expansions of the same rational
operator-valued function.  Moreover, this rational function has degree $\alpha_i(\mu)$; see Proposition \ref{key-proposition}.

Using this rationality result, we classify the simple objects of
$\mo_\mu$.  Let $\mathfrak r_\mu$ be the set of rational $\ell$-weights $\mathbf{\Psi}
=
(\Psi_i(z))_{i\in I_0}$
such that
$\deg \Psi_i(z)=\alpha_i(\mu), 
i\in I_0$.
We prove that the simple modules in $\mo_\mu$ are parametrized by
$\mathfrak r_\mu$; see Theorem \ref{rational-theorem}. Thus every simple object is a highest
$\ell$-weight module $L(\mathbf{\Psi})$ with rational highest
$\ell$-weight of degree prescribed by the shift, and conversely every
such rational $\ell$-weight gives a simple object in $\mo_\mu$.  This is
the twisted analogue of Hernandez's classification theorem for shifted
quantum affine algebras.

We then consider the direct sum category $\mo^{sh}
=\bigoplus_{\mu\in\Lambda}\mo_\mu$, where $\Lambda$ is a sublattice of $\Lambda^\prime$. Following the strategy of \cite{He2,Hernandezshfted}, we construct a
fusion product on $\mo^{sh}$.  For
$V_1\in \mo_{\mu_1}$ and $V_2\in \mo_{\mu_2}$, we define a
deformed Drinfeld coproduct
\[
\Delta_u:
\U_q^{\mu_1+\mu_2}(\hgs)
\longrightarrow
\bigl(\U_q^{\mu_1}(\hgs)\otimes
\U_q^{\mu_2}(\hgs)\bigr)((u)).
\]
We prove that this is an algebra homomorphism; see Proposition \ref{Delta_u}.  Then, for highest
$\ell$-weight modules $V_1$ and $V_2$, we show that the space of
rational Laurent series $(V_1\otimes V_2)(u)$ is stable under this
action and is cyclic over $\CC(u)$, generated by the tensor product of
the two highest $\ell$-weight vectors.  After choosing an
$\mathcal A$-form, where $\mathcal A$ is the ring of rational functions
regular at $u=1$, we specialize at $u=1$ and obtain a module $V_1*V_2\in \mo_{\mu_1+\mu_2}$, called the fusion product. This fusion product is compatible with $q$-characters.  More precisely,
we prove $\chi_q(V_1*V_2)
=\chi_q(V_1)\chi_q(V_2)$; see Theorem \ref{q-character-compality}. We also prove that every simple object of
$\mo^{sh}$ is a subquotient of a fusion product of prefundamental
representations and a constant representation; see Corollary \ref{subquotient}.  

We also study finite-dimensional representations of shifted twisted 
quantum affine algebras.  For the ordinary twisted quantum loop algebra,
finite-dimensional simple modules are classified by Drinfeld polynomials
\cite{chari-twisted-98,Hernandeztwisted}.  In the shifted setting, the
degree of the highest $\ell$-weight is no longer zero, and the existence
of finite-dimensional representations imposes a condition on the shift.
We prove that, except for the type
$A_{2n}^{(2)}$, the algebra $\U_q^\mu(\hgs)$ admits non-zero
finite-dimensional representations only when $\mu$ is dominant; see Theorem \ref{finite-dimensional-1}.   In
that case, the simple finite-dimensional modules are precisely the
modules $L(\mathbf{\Psi})$ whose highest $\ell$-weights are dominant in
the shifted twisted sense.  The type $A_{2n}^{(2)}$ requires a separate
argument, and we prove the corresponding classification in the positive
subcategory considered in this paper; see Theorem \ref{finite-dimensional-2}.

Finally, we construct restriction representations relating representations of twisted quantum affine Borel algebras to representations of shifted twisted quantum affine algebras. The restriction
construction is based on the following idea.  Given a representation in
the twisted Borel category, one considers the subspace on which the
rational Drinfeld currents $\phi^+_i(z)$ have degree at most $\alpha_i(\mu)$, and then
passes to the quotient by the subspace of smaller degree.  On this
quotient, one defines new operators
$\widetilde{x}_{i,m}^{\pm},
\phi_i^\pm(z)$
by taking the appropriate rational continuations of the Drinfeld
currents. We verify the defining relations of
$\U_q^\mu(\hgs)$ by using truncated versions of the twisted current and
Serre relations. As an application, for a simple finite-dimensional
$\U_q^\mu(\hgs)$-module $L(\mathbf{\Psi})$, we obtain the formula
\[
\chi_q(L^{\bs}(\mathbf{\Psi}))
=
\chi_q(L(\mathbf{\Psi}))
\prod_{i\in I_0}\chi_i^{\alpha_i(\mu)/\iota _i},
\]
where $\chi_i$ is the $q$-character factor associated with the positive
prefundamental representation $L^{\bs}(\mathbf{\Psi}_{i,1})$ of the twisted quantum affine Borel algebra $\U_q(\bs)$; see Theorem \ref{q-character-shifted-borel}.

The paper is organized as follows.  In Section~2, we recall the
Drinfeld new realization of twisted quantum affine algebras, their Borel
subalgebras, the categories $\mo_{\hgs}$ of representations of $\U_q(\lgs)$ and $\mo_{\bs}$ of representations of $\U_q(\bs)$, and several
facts on $q$-characters and Drinfeld coproducts which will be used
later.  In Section~3, we define the shifted twisted  quantum affine
algebras $\U_q^{\mu_+,\mu_-}(\hgs)$, prove their triangular
decomposition, show that they depend only on
$\mu=\mu_++\mu_-$. In Section~4, we introduce the categories
$\mo_\mu$ and classify their simple objects by rational $\ell$-weights.
In Section~5, we construct the deformed Drinfeld coproduct and the fusion
product. In Section~6, we
classify finite-dimensional simple representations of $\U_q^\mu(\hgs)$.  In Section~7, we
study  restriction representations and establish a $q$-characters formula for simple finite-dimensional representations of  shifted twisted quantum affine algebras in terms of the $q$-characters of the corresponding simple representations of the twisted quantum affine Borel algebra $\U_q(\bs)$.

\section*{Acknowledgments}
The authors are grateful to Alexandr Garbali, David Hernandez, Evgeny Mukhin, Alexander Tsymbaliuk, and Keyu Wang for helpful discussions and valuable comments. This work was supported by the National Natural Science Foundation of China (Nos. 12171213, 12571018). JRL was supported by the Austrian Science Fund (FWF), PAT 9039323, Grant-DOI 10.55776/PAT9039323, and the National Natural Science Foundation of China (No. 12471023).

\section{Twisted Quantum Affine Algebras}

This section fixes the notations and recalls the main facts on twisted quantum
affine algebras that will be used throughout the paper. We review the Drinfeld
realization of $\mathcal U_q(\mathcal L\gs)$, the corresponding Borel
subalgebra, the categories $\mathcal O_{\hgs}$ and $\mathcal O_{\bs}$, rational
$\ell$-weights, and the Drinfeld coproduct. These preliminaries provide the
foundation for the definition and representation theory of shifted twisted
quantum affine algebras developed in the following sections.
	
\subsection{Twisted affine Lie algebras} \label{subsec:Twisted affine Lie algebras}
We first fix some notations. An $n\times n$ matrix $C=(C_{i,j})_{1\leq i,j\leq n}$ is called a generalized Cartan matrix if $C_{i,i}=2$, $C_{i,j}\in\ZZ_{\leq 0}$ for $i\neq j$, and $C_{i,j}=0$ if and only if $C_{j,i}=0$. We shall always assume that $C$ is symmetrizable, namely that there exists a diagonal matrix $D=\operatorname{diag}(d_1,\ldots,d_n)$ with positive integer entries such that $DC$ is symmetric.

Let $q\in\CC^*$ be not a root of unity. For $n\in\ZZ$, $r\in\ZZ_{\geq 0}$, and integers $m\geq m'$, we use the standard notation
\[
[n]_q=\frac{q^n-q^{-n}}{q-q^{-1}},\qquad
[r]_q!=\prod_{s=1}^r[s]_q,\qquad
\begin{bmatrix}
	m\\
	m'
\end{bmatrix}_q
=
\frac{[m]_q!}{[m-m']_q![m']_q!}.
\]
We also use the convention $[0]_q!=1$.

Let $\g$ be a finite-dimensional complex simple Lie algebra with Cartan matrix $C=(C_{i,j})_{i,j\in I}$. Let $\sigma:I\to I$ be a diagram automorphism, that is, a bijection satisfying $C_{i,j}=C_{\sigma(i),\sigma(j)}$ for all $i,j\in I$. We denote by the same symbol $\sigma$ the induced Lie algebra automorphism of $\g$, and let $M$ be its order. Throughout this paper we assume that $M>1$. Hence $M=2$ or $3$, and $\g$ is of type $(A_n,2)$ $(n\geq 2,n\ne 3)$, $(D_{n+1},2)$ $(n\geq 2)$, $(D_4,3)$, or $(E_6,2)$. The Dynkin diagrams of $\g$, together with the labeling of the nodes, are given in Figure \ref{fig:simple-Lie-algebra-dynkin}.
\begin{figure}[htp]
    \centering
    \begin{tikzpicture}[
    dynkinnode/.style={circle, draw, inner sep=0pt, minimum size=4pt},
    labeltext/.style={font=\small},
    every label/.append style={font=\scriptsize}
]

    \node[labeltext] at (-2, 0) {$A_n$ $(n\ge2,n\ne3)$};
    
    \node[dynkinnode, label=below:{$1$}]   (A1)  at (0,0) {};
    \node[dynkinnode, label=below:{$2$}]   (A2)  at (0.9,0) {};
    \node[dynkinnode, label=below:{$3$}]   (A3)  at (1.8,0) {};
    \node (Adots) at (2.7,0) {$\cdots$};
    \node[dynkinnode, label=below:{$n-1$}] (An1) at (3.6,0) {};
    \node[dynkinnode, label=below:{$n$}]   (An)  at (4.5,0) {};

    \draw (A1) -- (A2) -- (A3) -- (Adots) -- (An1) -- (An);


    \begin{scope}[yshift=-1.5cm]
        \node[labeltext] at (-2, 0) {$D_{n+1}$ $(n\ge2)$};
        
        \node[dynkinnode, label=below:{$1$}]   (D1)  at (0,0) {};
        \node[dynkinnode, label=below:{$2$}]   (D2)  at (0.9,0) {};
        \node (Ddots) at (1.8,0) {$\cdots$};
        \node[dynkinnode, label=below:{$n-2$}] (Dn2) at (2.7,0) {};
        \node[dynkinnode, label=below:{$n-1$}] (Dn1) at (3.6,0) {};
        
        \node[dynkinnode, label=right:{$n$}]   (Dn)   at (4.4, 0.45) {};
        \node[dynkinnode, label=right:{$n+1$}] (Dnp1) at (4.4,-0.45) {};

        \draw (D1) -- (D2) -- (Ddots) -- (Dn2) -- (Dn1);
        \draw (Dn1) -- (Dn);
        \draw (Dn1) -- (Dnp1);

    \end{scope}

    \begin{scope}[yshift=-3.4cm]
        \node[labeltext] at (-2, 0) {$E_6$};
        
        \node[dynkinnode, label=below:{$1$}] (E1) at (0,0) {};
        \node[dynkinnode, label=below:{$2$}] (E2) at (0.9,0) {};
        \node[dynkinnode, label=below:{$3$}] (E3) at (1.8,0) {};
        
        \node[dynkinnode, label=right:{$4$}] (E4) at (1.8,0.9) {};
        
        \node[dynkinnode, label=below:{$5$}] (E5) at (2.7,0) {};
        \node[dynkinnode, label=below:{$6$}] (E6) at (3.6,0) {};

        \draw (E1) -- (E2) -- (E3) -- (E5) -- (E6);
        \draw (E3) -- (E4); 

    \end{scope}
\end{tikzpicture}
\caption{Dynkin diagrams for $\g$.}
\label{fig:simple-Lie-algebra-dynkin}
\end{figure}

We choose, once and for all, a representative in each $\sigma$-orbit of $I$.
More precisely, with respect to the ordering of the vertices of the
Dynkin diagram, we choose the representative $i$ such that $\sigma^k(i)\geq i$
for all $k$. We denote by $I_0$ the set of these representatives. 
If $\g$ is of type $(A_{2n-1},2)$, $(A_{2n},2)$, or $(D_{n+1},2)$, then $I_0=\{1,\ldots,n\}$. If $\g$ is of type $(D_4,3)$, then $I_0=\{1,2\}$. If $\g$ is of type $(E_6,2)$, then $I_0=\{1,2,3,4\}$. The pair $(\g,\sigma)$ determines a twisted affine Lie algebra $\hgs$. It is the
Kac--Moody algebra with Cartan matrix $(C^\sigma_{i,j})_{i,j\in\hat{I}_\sigma}$,
where $\hat{I}_\sigma=I_0\sqcup\{0\}$ and $0$ denotes the additional affine node; see \cite{Kac}. The Dynkin diagrams of $\hgs$, together with the labeling of the nodes, are given in Figure \ref{fig:twisted-affine-dynkin}. 
\begin{figure}[htbp]
\centering
\begin{tikzpicture}[
  scale=.78, transform shape,
  x=1cm,y=1cm,
  line cap=round,
  line join=round,
  plain/.style={line width=.42pt},
  fnode/.style={circle,draw=black,fill=white,inner sep=0pt,minimum size=5.0pt},
  onode/.style={circle,draw=black,fill=white,inner sep=0pt,minimum size=5.0pt},
  every label/.style={font=\scriptsize,inner sep=.8pt},
  leftname/.style={anchor=east,font=\small}
]

\newcommand{\rmark}[1]{%
  \draw[plain] ($#1+(-.11,.16)$) -- ($#1$) -- ($#1+(-.11,-.16)$);
}
\newcommand{\lmark}[1]{%
  \draw[plain] ($#1+(.11,.16)$) -- ($#1$) -- ($#1+(.11,-.16)$);
}

\newcommand{\tworight}[2]{%
  \draw[plain] ($(#1)+(0,.045)$) -- ($(#2)+(0,.045)$);
  \draw[plain] ($(#1)+(0,-.045)$) -- ($(#2)+(0,-.045)$);
  \rmark{(#1)!.55!(#2)}%
}
\newcommand{\twoleft}[2]{%
  \draw[plain] ($(#1)+(0,.045)$) -- ($(#2)+(0,.045)$);
  \draw[plain] ($(#1)+(0,-.045)$) -- ($(#2)+(0,-.045)$);
  \lmark{(#1)!.55!(#2)}%
}
\newcommand{\threeleft}[2]{%
  \draw[plain] ($(#1)+(0,.07)$) -- ($(#2)+(0,.07)$);
  \draw[plain] (#1) -- (#2);
  \draw[plain] ($(#1)+(0,-.07)$) -- ($(#2)+(0,-.07)$);
  \lmark{(#1)!.58!(#2)}%
}
\newcommand{\fourright}[2]{%
  \draw[plain] ($(#1)+(0,.105)$) -- ($(#2)+(0,.105)$);
  \draw[plain] ($(#1)+(0,.035)$) -- ($(#2)+(0,.035)$);
  \draw[plain] ($(#1)+(0,-.035)$) -- ($(#2)+(0,-.035)$);
  \draw[plain] ($(#1)+(0,-.105)$) -- ($(#2)+(0,-.105)$);
  \rmark{(#1)!.55!(#2)}%
}

\node[leftname] at (1.55,0) {$A^{(2)}_2$};
\begin{scope}[shift={(9.55,0)}]
  \coordinate (a20) at (0,0);
  \coordinate (a21) at (1.15,0);
  \fourright{a20}{a21}
  \node[onode,label=below:{$0$}] at (a20) {};
  \node[fnode,label=below:{$1$}] at (a21) {};
  
\end{scope}

\node[leftname] at (1.55,-1.25) {$A^{(2)}_{2n}$ $(n\ge 2)$};
\begin{scope}[shift={(3.35,-1.25)}]
  \coordinate (b0)  at (0,0);
  \coordinate (b1)  at (1.15,0);
  \coordinate (b2)  at (2.30,0);
  \coordinate (b3)  at (3.45,0);
  \coordinate (bn2) at (5.75,0);
  \coordinate (bn1) at (6.90,0);
  \coordinate (bn)  at (8.05,0);
  \tworight{b0}{b1}
  \draw[plain] (b1) -- (b2) -- (b3);
  \draw[plain] (b3) -- ++(.65,0);
  \draw[plain,densely dotted] ($(b3)+(.65,0)$) -- ($(bn2)+(-.65,0)$);
  \draw[plain] ($(bn2)+(-.65,0)$) -- (bn2) -- (bn1);
  \tworight{bn1}{bn}
  \node[onode,label=below:{$0$}] at (b0) {};
  \node[fnode,label=below:{$1$}] at (b1) {};
  \node[fnode,label=below:{$2$}] at (b2) {};
  \node[fnode,label=below:{$3$}] at (b3) {};
  \node[fnode,label=below:{$n-2$}] at (bn2) {};
  \node[fnode,label=below:{$n-1$}] at (bn1) {};
  \node[fnode,label=below:{$n$}] at (bn) {};
  
\end{scope}

\node[leftname] at (1.55,-3.15) {$A^{(2)}_{2n-1}$ $(n\ge 3)$};
\begin{scope}[shift={(3.80,-3.15)}]
  \coordinate (c0)  at (0,.72);
  \coordinate (c1)  at (0,-.72);
  \coordinate (c2)  at (.75,0);
  \coordinate (c3)  at (1.90,0);
  \coordinate (c4)  at (3.05,0);
  \coordinate (cn2) at (5.35,0);
  \coordinate (cn1) at (6.50,0);
  \coordinate (cn)  at (7.65,0);
  \draw[plain] (c0) -- (c2) -- (c1);
  \draw[plain] (c2) -- (c3) -- (c4);
  \draw[plain] (c4) -- ++(.65,0);
  \draw[plain,densely dotted] ($(c4)+(.65,0)$) -- ($(cn2)+(-.65,0)$);
  \draw[plain] ($(cn2)+(-.65,0)$) -- (cn2) -- (cn1);
  \twoleft{cn1}{cn}
  \node[onode,label=left:{$0$}] at (c0) {};
  \node[fnode,label=left:{$1$}] at (c1) {};
  \node[fnode,label=left:{$2$}] at (c2) {};
  \node[fnode,label=below:{$3$}] at (c3) {};
  \node[fnode,label=below:{$4$}] at (c4) {};
  \node[fnode,label=below:{$n-2$}] at (cn2) {};
  \node[fnode,label=below:{$n-1$}] at (cn1) {};
  \node[fnode,label=below:{$n$}] at (cn) {};
  
\end{scope}

\node[leftname] at (1.55,-4.90) {$D^{(2)}_{n+1}$ $(n\ge 2)$};
\begin{scope}[shift={(2.75,-4.90)}]
  \coordinate (d0)  at (0,0);
  \coordinate (d1)  at (1.15,0);
  \coordinate (d2)  at (2.30,0);
  \coordinate (d3)  at (3.45,0);
  \coordinate (dn3) at (5.75,0);
  \coordinate (dn2) at (6.90,0);
  \coordinate (dn1) at (8.05,0);
  \coordinate (dn)  at (9.20,0);
  \twoleft{d0}{d1}
  \draw[plain] (d1) -- (d2) -- (d3);
  \draw[plain] (d3) -- ++(.65,0);
  \draw[plain,densely dotted] ($(d3)+(.65,0)$) -- ($(dn3)+(-.65,0)$);
  \draw[plain] ($(dn3)+(-.65,0)$) -- (dn3) -- (dn2) -- (dn1);
  \tworight{dn1}{dn}
  \node[onode,label=below:{$0$}] at (d0) {};
  \node[fnode,label=below:{$1$}] at (d1) {};
  \node[fnode,label=below:{$2$}] at (d2) {};
  \node[fnode,label=below:{$3$}] at (d3) {};
  \node[fnode,label=below:{$n-3$}] at (dn3) {};
  \node[fnode,label=below:{$n-2$}] at (dn2) {};
  \node[fnode,label=below:{$n-1$}] at (dn1) {};
  \node[fnode,label=below:{$n$}] at (dn) {};
  
\end{scope}

\node[leftname] at (1.55,-6.15) {$E^{(2)}_6$};
\begin{scope}[shift={(6.65,-6.15)}]
  \coordinate (e0) at (0,0);
  \coordinate (e1) at (1.15,0);
  \coordinate (e2) at (2.30,0);
  \coordinate (e3) at (3.45,0);
  \coordinate (e4) at (4.60,0);
  \draw[plain] (e0) -- (e1) -- (e2);
  \twoleft{e2}{e3}
  \draw[plain] (e3) -- (e4);
  \node[onode,label=below:{$0$}] at (e0) {};
  \node[fnode,label=below:{$1$}] at (e1) {};
  \node[fnode,label=below:{$2$}] at (e2) {};
  \node[fnode,label=below:{$3$}] at (e3) {};
  \node[fnode,label=below:{$4$}] at (e4) {};
  
\end{scope}

\node[leftname] at (1.55,-7.35) {$D^{(3)}_4$};
\begin{scope}[shift={(7.95,-7.35)}]
  \coordinate (f0) at (0,0);
  \coordinate (f1) at (1.15,0);
  \coordinate (f2) at (2.30,0);
  \draw[plain] (f0) -- (f1);
  \threeleft{f1}{f2}
  \node[onode,label=below:{$0$}] at (f0) {};
  \node[fnode,label=below:{$1$}] at (f1) {};
  \node[fnode,label=below:{$2$}] at (f2) {};
  
\end{scope}

\end{tikzpicture}
\caption{Dynkin diagrams for $\hgs$.}
\label{fig:twisted-affine-dynkin}
\end{figure}


We denote by $\gs$ the finite-dimensional simple Lie algebra whose Cartan matrix is $(C^\sigma_{i,j})_{i,j\in I_0}$. Except in type $A_2^{(2)}$, this finite-type Cartan matrix is not simply laced. The finite type $\gs$ corresponding to the twisted affine type $\hgs$ is given by the following table:
\begin{align} \label{eq:table of affine Lie algebra}
    \begin{array}{c|c|c|c|c|c|c}
	\hgs
	&
	A_{2}^{(2)}
	&
	A_{2n}^{(2)}
	&
	A_{2n-1}^{(2)}
	&
	D_{n+1}^{(2)}
	&
	E_{6}^{(2)}
	&
	D_{4}^{(3)}
	\\ \hline
	\gs
	&
	A_1
	&
	B_n
	&
	C_n
	&
	B_n
	&
	F_4
	&
	G_2
\end{array}
\qquad (n\geq 2).
\end{align}

The Cartan matrix $(C^\sigma_{i,j})_{i,j\in\hat I_\sigma}$ is symmetrizable. We fix the diagonal symmetrizing matrix $D=\operatorname{diag}(d_i)_{i\in\hat I_\sigma}$ as in \cite[Section~2.4]{Hernandeztwisted}, and put $q_i=q^{d_i}$ for $i\in\hat I_\sigma$. Explicitly, $d_i$ is given by 
\begin{equation}
\label{eq:my_d_i} 
\begin{aligned}
    (2,\frac{1}{2}) &\quad \text{for type } A_2^{(2)}, \\
    (2,1,\ldots,1,\frac{1}{2}) &\quad \text{for type } A_{2n}^{(2)}\ (n \ge 2), \\
    (1, \ldots, 1, 2) &\quad \text{for type } A_{2n-1}^{(2)}\ (n \ge 3), \\
    (1, 2, \ldots, 2, 1) &\quad \text{for type } D_{n+1}^{(2)}\ (n \ge 2), \\
    (1, 1, 1, 2, 2) &\quad \text{for type } E_6^{(2)}, \\
    (1, 1, 3) &\quad \text{for type } D_4^{(3)}.
\end{aligned}
\end{equation}

\subsection{Twisted quantum affine algebras}
\begin{definition}
	The twisted quantum affine algebra $\mathcal U_q(\hgs)$ is the associative
	$\CC$-algebra generated by $e_i^\pm$ and $k_i^{\pm1}$, $i\in\hat I_\sigma$,
	subject to the following relations:
	\begin{align*}
		&k_i k_i^{-1}=k_i^{-1}k_i=1,\qquad k_i k_j=k_j k_i,\\
		&k_i e_j^\pm=q_i^{\pm C_{i,j}^\sigma}e_j^\pm k_i,\\
		&[e_i^+,e_j^-]
		=
		\delta_{i,j}\frac{k_i-k_i^{-1}}{q_i-q_i^{-1}},\\
		&\sum_{r=0}^{1-C_{i,j}^\sigma}
		(-1)^r
		(e_i^\pm)^{(1-C_{i,j}^\sigma-r)}
		e_j^\pm
		(e_i^\pm)^{(r)}
		=0
		\qquad (i\neq j).
	\end{align*}
	Here $(e_i^\pm)^{(r)}=(e_i^\pm)^r/[r]_{q_i}!$ denotes the divided power.
\end{definition}

The algebra $\mathcal U_q(\hgs)$ is a Hopf algebra with coproduct, antipode and
counit given by
\begin{align*}
	&\Delta(k_i^{\pm1})=k_i^{\pm1}\otimes k_i^{\pm1},\\
	&\Delta(e_i^+)=e_i^+\otimes 1+k_i\otimes e_i^+,
	\qquad
	\Delta(e_i^-)=e_i^-\otimes k_i^{-1}+1\otimes e_i^-,\\
	&S(k_i^{\pm1})=k_i^{\mp1},
	\qquad
	S(e_i^+)=-k_i^{-1}e_i^+,
	\qquad
	S(e_i^-)=-e_i^-k_i,\\
	&\varepsilon(k_i^{\pm1})=1,
	\qquad
	\varepsilon(e_i^\pm)=0.
\end{align*}
Thus
\[
S^{-1}(k_i^{\pm1})=k_i^{\mp1},
\qquad
S^{-1}(e_i^+)=-e_i^+k_i^{-1},
\qquad
S^{-1}(e_i^-)=-k_i e_i^- .
\]

Let $(a_i)_{i\in\hat I_\sigma}$ be the unique family of positive integers,
normalized by $a_0=1$, such that
\[
\sum_{i\in\hat I_\sigma} a_i d_i C_{i,j}^\sigma=0
\qquad
(j\in\hat I_\sigma).
\]
Then the element $c=\prod_{i\in\hat I_\sigma}k_i^{a_i}$ is central in
$\mathcal U_q(\hgs)$.

\begin{definition}
	The twisted quantum loop algebra $\mathcal U_q(\mathcal L\gs)$ is the quotient
	of $\mathcal U_q(\hgs)$ by the ideal generated by $c-1$.
\end{definition}

\begin{remark}\label{grading}
	The algebra $\mathcal U_q(\mathcal L\gs)$ has a $\ZZ$-grading determined by
	$\deg(e_i^+)=1$, $\deg(e_i^-)=-1$ and $\deg(k_i^{\pm1})=0$ for
	$i\in\hat I_\sigma$.
\end{remark}

The twisted quantum loop algebra $\mathcal{U}_q(\mathcal{L}\gs)$  has a  Drinfeld presentation. The generators appearing in this realization will be
called the Drinfeld generators. We set
\[
I_{\ZZ}=\{(i,r)\in I_0\times\ZZ :  d_i\mid r\},
\]
where $d_i$ is in (\ref{eq:my_d_i}).

We shall use the following notation. For $i,j\in I_0$, define
$d_{i,j}\in\QQ$ and $P_{i,j}^{\pm}(u_1,u_2)\in\QQ[u_1,u_2]$ as follows:
\begin{itemize}
    \item if $C_{i,\sigma(i)}=2$, then $d_{i,j}=1/2$ and
    $P_{i,j}^{\pm}(u_1,u_2)=1$;

    \item if $C_{i,\sigma(i)}=0$ and $\sigma(j)\neq j$, then
    $d_{i,j}=1/(2M)$ and $P_{i,j}^{\pm}(u_1,u_2)=1$;

    \item if $C_{i,\sigma(i)}=0$ and $\sigma(j)=j$, then
    $d_{i,j}=1/2$ and
    \[
    P_{i,j}^{\pm}(u_1,u_2)
    =
    \frac{u_1^M q^{\pm 2M}-u_2^M}{u_1q^{\pm2}-u_2};
    \]

    \item if $C_{i,\sigma(i)}=-1$, then $d_{i,j}=1/8$ and
    $P_{i,j}^{\pm}(u_1,u_2)=u_1q^{\pm1}+u_2$.
\end{itemize}

Finally, for $i\in I_0$, we define $N_i$ to be $M$ if $\sigma(i)=i$, and to be
$1$ otherwise. Thus, \(N_i = d_i\) for all affine types $\hgs$ in \eqref{eq:table of affine Lie algebra} and all \(i \in [n]\), except when $\hgs$ is of type \(A_{2n}^{(2)}\) and \(i = n\), where \(N_n = 1 \neq \tfrac{1}{2} = d_n\).
	
\begin{theorem}[{\cite{I-Damiani,Drinfeld,jing1998drinfeld,jing1999vertex}}]
	\label{thm:Drinfeld-presentation}
	The twisted quantum loop algebra $\mathcal U_q(\mathcal L\gs)$ admits the following Drinfeld presentation. It is generated by
	\[
	x_{i,r}^{\pm} \ (i\in I_0,\ r\in\ZZ),\qquad
	h_{i,m}\ (i\in I_0,\ m\in\ZZ\setminus\{0\}),\qquad
	k_i^{\pm1}\ (i\in I_0),
	\]
	subject to the following relations:
	\begin{align*}
		&x_{i,r}^{\pm}=0,
		\qquad (i,r)\in (I_0\times\ZZ)\setminus I_{\ZZ},\\
		&h_{i,m}=0,
		\qquad (i,m)\in (I_0\times\ZZ)\setminus I_{\ZZ},\\
		&[k_i,k_j]=[k_i,h_{j,m}]=[h_{i,m},h_{j,m'}]=0,\\
		&k_i x_{j,r}^{\pm}
		=
		q^{\pm\sum_{s=1}^{M}C_{i,\sigma^s(j)}}x_{j,r}^{\pm}k_i,\\
		&[h_{i,m},x_{j,r}^{\pm}]
		=
		\pm \frac{1}{m}
		\left(
		\sum_{s=1}^{M}
		\left[
		\frac{mC_{i,\sigma^s(j)}}{d_{i}}
		\right]_{q_{i}}
		\zeta^{ms}
		\right)
		x_{j,m+r}^{\pm},\\
		&[x_{i,r}^{+},x_{j,r'}^{-}]
		=
		\frac{\sum_{s=1}^{M}\delta_{\sigma^s(i),j}\zeta^{sr'}}{N_i}
		\frac{\phi_{i,r+r'}^{+}-\phi_{i,r+r'}^{-}}
		{q_{i}-q_{i}^{-1}} .
	\end{align*}
	Here $\zeta=e^{2\pi\rm i/M}$ and the elements $\phi_{i,\pm m}^{\pm}$ are defined by the generating series
	\[
	\phi_i^{\pm}(z)
	=
	\sum_{m\geq 0}\phi_{i,\pm m}^{\pm}z^{\pm m}
	=
	k_i^{\pm1}
	\exp\left(
	\pm(q_{i}-q_{i}^{-1})
	\sum_{m\geq 1}h_{i,\pm m}z^{\pm m}
	\right).
	\]
	
	Set $x_i^{\pm}(u)=\sum_{r\in\ZZ}x_{i,r}^{\pm}u^{-r}$. In terms of these currents, the remaining Drinfeld--Serre relations are as follows. For all $i,j\in I_0$,
	\begin{align*}
		&\prod_{s=1}^{M}
		\left(u_1-\zeta^s q^{\pm C_{i,\sigma^s(j)}}u_2\right)
		x_i^{\pm}(u_1)x_j^{\pm}(u_2)=
		\prod_{s=1}^{M}
		\left(u_1q^{\pm C_{i,\sigma^s(j)}}-\zeta^s u_2\right)
		x_j^{\pm}(u_2)x_i^{\pm}(u_1).
	\end{align*}
	If $C_{i,j}=-1$ and $\sigma(i)\neq j$, then
	\begin{align*}
		\operatorname{Sym}_{u_1,u_2}
		\Bigl\{
		P_{i,j}^{\pm}(u_1,u_2)
		\bigl(
		&x_j^{\pm}(v)x_i^{\pm}(u_1)x_i^{\pm}(u_2)\\
		&-(q^{2Md_{i,j}}+q^{-2Md_{i,j}})
		x_i^{\pm}(u_1)x_j^{\pm}(v)x_i^{\pm}(u_2)\\
		&+x_i^{\pm}(u_1)x_i^{\pm}(u_2)x_j^{\pm}(v)
		\bigr)
		\Bigr\}=0 .
	\end{align*}
	If $C_{i,\sigma(i)}=-1$, then
	\begin{align*}
		&\operatorname{Sym}_{u_1,u_2,u_3}
		\Bigl\{
		\bigl(
		q^{3/2}u_1^{\mp1}
		-(q^{1/2}+q^{-1/2})u_2^{\mp1}
		+q^{-3/2}u_3^{\mp1}
		\bigr)
		x_i^{\pm}(u_1)x_i^{\pm}(u_2)x_i^{\pm}(u_3)
		\Bigr\}=0,\\
		&\operatorname{Sym}_{u_1,u_2,u_3}
		\Bigl\{
		\bigl(
		q^{-3/2}u_1^{\pm1}
		-(q^{1/2}+q^{-1/2})u_2^{\pm1}
		+q^{3/2}u_3^{\pm1}
		\bigr)
		x_i^{\pm}(u_1)x_i^{\pm}(u_2)x_i^{\pm}(u_3)
		\Bigr\}=0 .
	\end{align*}
\end{theorem}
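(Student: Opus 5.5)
This is a cited result (Drinfeld, Damiani, Jing), so the plan is to assemble it from its standard proof rather than re-derive it from scratch. The cleanest route, following Drinfeld and Jing, goes through the untwisted algebra: realize $\mathcal U_q(\mathcal L\gs)$ as a "twisted loop" subalgebra of the untwisted quantum loop algebra $\mathcal U_q(\mathcal L\g)$, with its known Drinfeld presentation.

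\textbf{Step 1: the twisted currents.} Extend the diagram automorphism $\sigma$ to an automorphism $\hat\sigma$ of $\mathcal U_q(\mathcal L\g)$ defined on Drinfeld generators by
\[
x^\pm_{i,r}\mapsto \zeta^{r}x^\pm_{\sigma(i),r},\qquad h_{i,m}\mapsto \zeta^m h_{\sigma(i),m},\qquad k_i\mapsto k_{\sigma(i)},
\]
and consider the fixed-point algebra. For $i\in I_0$, average over the $\sigma$-orbit:
\[
x_i^\pm(u)\;\propto\;\sum_{s=1}^M x^\pm_{\sigma^s(i)}(\zeta^s u),
\]
with normalizations $N_i$ and $d_i$. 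This average kills the modes $x^\pm_{i,r}$ with $d_i\nmid r$ when $\sigma(i)=i$, which gives the vanishing relations on $(I_0\times\ZZ)\setminus I_\ZZ$.

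\textbf{Step 2: the relations.} The Cartan and $h$--$x$ relations, and the $[x^+,x^-]$ relation with its factor $\sum_s\delta_{\sigma^s(i),j}\zeta^{sr'}/N_i$, follow directly by summing the untwisted relations over orbits. The quadratic exchange relation arises as the product over $s$ of the untwisted factors $(u_1-q^{\pm C_{i,\sigma^s(j)}}u_2)$ with the substitution $u_2\mapsto\zeta^s u_2$. The Serre relations come from symmetrizing the untwisted Serre relations over the orbit. The factors $P^\pm_{i,j}$ record the cases where the orbits of $i$ and $j$ have different sizes; for $C_{i,\sigma(i)}=-1$ (type $A_{2n}^{(2)}$) one obtains the cubic relations.

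\textbf{Step 3: isomorphism with the Kac presentation.} This is the main obstacle. One must show that the algebra $\mathcal A$ defined by these Drinfeld relations is isomorphic to $\mathcal U_q(\mathcal L\gs)$ with Chevalley generators $e_i^\pm,k_i$ for $i\in\hat I_\sigma$.
\begin{itemize}
\item \emph{Map $\mathcal U_q(\mathcal L\gs)\to\mathcal A$.} Send $e_i^\pm\mapsto x^\pm_{i,0}$ for $i\in I_0$. Send $e_0^\pm$ to a suitable $q$-commutator of $x^\mp_{j,\pm1}$ along a highest-root-type string. Then verify the Chevalley and Serre relations, which is a finite computation in each type.
\item \emph{Inverse map $\mathcal A\to\mathcal U_q(\mathcal L\gs)$.} Use Beck--Damiani braid group actions $T_{\omega_i^\vee}$, with $x^\pm_{i,r}=T^{\mp r}(e_i^\pm)$ up to signs. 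Verifying the Drinfeld relations from this side requires Damiani's detailed analysis of real and imaginary root vectors; this is where I expect the real difficulty, especially in type $A_{2n}^{(2)}$.
\item \emph{Bijectivity.} Conclude via PBW-type bases on both sides, or via Damiani's injectivity result \cite{Damiani-injective}.
\end{itemize}

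In practice, I would cite \cite{I-Damiani,Damiani-injective} for Step 3 and only check that the normalizations used here, namely $d_{i,j}$, $N_i$ and $P^\pm_{i,j}$, match those of \cite{jing1998drinfeld,jing1999vertex}.
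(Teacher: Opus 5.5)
The paper gives no argument for this statement; it is imported from Drinfeld, Damiani and Jing. Your bottom line is exactly what the paper relies on: take the algebra $\mathcal A$ defined by these generators and relations, import the isomorphism $\mathcal A\cong\mathcal U_q(\mathcal L\gs)$ with the Drinfeld--Jimbo algebra from Damiani, and check that the normalizations agree. Your Step~3 is a fair outline of what those references contain.

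Steps~1--2, however, are not a valid route. Drop them or label them explicitly as a $q=1$ heuristic.

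\emph{Fixed points.} ``Consider the fixed-point algebra'' is already wrong at $q=1$. The algebra $U(L\g)^{\hat\sigma}$ is strictly larger than $U\bigl((L\g)^{\hat\sigma}\bigr)$, since it contains products of non-invariant eigenvectors.

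\emph{Averaged currents.} More seriously, for $q\neq 1$ the orbit-averaged currents do not satisfy the twisted relations. So those relations cannot be obtained ``by summing the untwisted relations over orbits.'' Take $\g=D_{n+1}$ and $i=j=n$. Then $\sigma(n)=n+1$, $C_{n,n+1}=0$, and the twisted exchange relation reads
\[
(u_1+u_2)(u_1-q^{2}u_2)X(u_1)X(u_2)=(u_1+u_2)(u_1q^{2}-u_2)X(u_2)X(u_1).
\]
Set $X(u)=x^+_n(u)+x^+_{n+1}(-u)$ in $\mathcal U_q(\mathcal L\g)$. The components of root degree $2\alpha_n$ and $2\alpha_{n+1}$ follow from the untwisted relation multiplied by $u_1+u_2$. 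But $x^+_n$ and $x^+_{n+1}$ commute, so the degree-$(\alpha_n+\alpha_{n+1})$ component of the difference of the two sides is
\[
(1-q^{2})(u_1+u_2)^2\bigl(x^+_n(u_1)x^+_{n+1}(-u_2)+x^+_n(u_2)x^+_{n+1}(-u_1)\bigr).
\]
Its coefficient of $u_1^{0}u_2^{-10}$ is a combination, with non-zero coefficients, of six distinct monomials $x^+_{n,a}x^+_{n+1,b}$. It is therefore non-zero by the PBW theorem. The difference vanishes only at $q=1$, which is why the folding picture looks right classically. The orbit products $\prod_s(\cdots)$ in the twisted relations are new structure, not a consequence of the untwisted relations.

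\emph{The factors $P^\pm_{i,j}$.} Your description of these is also inaccurate. $P^\pm_{i,j}$ is trivial when $\sigma(i)=i$ and $\sigma(j)\neq j$. It is non-trivial when $C_{i,\sigma(i)}=-1$, where both orbits have size $2$.

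None of Steps~1--2 is needed. In Step~3 the relations are the \emph{definition} of $\mathcal A$, not something to derive. The whole content of the theorem is the two-sided verification plus injectivity, which you, like the paper, take from Damiani. The two-sided verification consists of the Drinfeld--Jimbo relations for the images of $e_i^\pm$, and the Drinfeld relations for the braid-group root vectors. The latter must be indexed so that only modes with $d_i\mid r$ occur when $\sigma(i)=i$.
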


By \cite[Theorem~9.2]{Damiani-injective} and \cite[Theorem~4.2]{chen2023twisted}, one has the following triangular decomposition.

\begin{theorem}
	\label{thm:triangular-decomposition-loop}
	The multiplication map induces a vector-space isomorphism
	\[
	\mathcal U_q(\mathcal L\gs)^+
	\otimes
	\mathcal U_q(\mathcal L\gs)^0
	\otimes
	\mathcal U_q(\mathcal L\gs)^-
	\simeq
	\mathcal U_q(\mathcal L\gs),
	\]
	where $\mathcal U_q(\mathcal L\gs)^\pm$ is the subalgebra generated by the elements
	$x_{i,r}^{\pm}$, and $\mathcal U_q(\mathcal L\gs)^0$ is the subalgebra generated by the elements
	$h_{i,m}$ and $k_i^{\pm1}$.
\end{theorem}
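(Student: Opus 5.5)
The plan is a standard PBW-type argument with two halves: surjectivity from the commutation relations, injectivity from a faithful triangular action. Let $U=\mathcal U_q(\mathcal L\gs)$, and write $U^\pm,U^0$ for the subalgebras in Theorem \ref{thm:triangular-decomposition-loop}.

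\emph{Surjectivity.} I will show that $U^+U^0U^-$ is closed under left multiplication by every Drinfeld generator. Since it contains $1$, it then equals $U$. Concretely, one moves $x^-_{j,r}$ to the right past elements of $U^+$ using the relation for $[x_{i,r}^{+},x_{j,r'}^{-}]$. Each commutator produces a $\phi^\pm_{i,m}\in U^0$. Then $U^0$ is moved to the right past $U^+$ using the relations $k_i x_{j,r}^{\pm}k_i^{-1}\in\CC^*x_{j,r}^\pm$ and $[h_{i,m},x_{j,r}^{\pm}]\in\CC x^\pm_{j,m+r}$. These relations show that $U^0U^+\subset U^+U^0$ and $U^-U^0\subset U^0U^-$. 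An induction on the length of monomials in $U^+$ then gives $U^-U^+\subset U^+U^0U^-$.

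\emph{Injectivity.} Here I follow the method of \cite{Damiani-injective,chen2023twisted}, which is a twisted quantum affinization argument in the style of Hernandez. First define an auxiliary algebra $\widetilde U$ by the same generators and all relations except the Drinfeld--Serre relations. For $\widetilde U$, build a representation on $\widetilde U^+\otimes\widetilde U^0\otimes\widetilde U^-$, where $\widetilde U^\pm$ is the free algebra on the $x^\pm_{i,r}$ modulo the quadratic relations and $\widetilde U^0$ is the commutative algebra $\CC[k_i^{\pm1},h_{i,m}]$. This is a Verma-type construction: $x^+$ acts by left multiplication, $U^0$ acts through the twisted commutators, and $x^-$ acts by the derivation formula. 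Applying this representation to $1\otimes1\otimes1$ shows that $\widetilde U^+\otimes\widetilde U^0\otimes\widetilde U^-\to\widetilde U$ is injective, which gives the decomposition for $\widetilde U$. Next, show that the ideal generated by the Serre relations splits as $\widetilde U^+S^+\widetilde U^+\cdot\widetilde U^0\widetilde U^-+\widetilde U^+\widetilde U^0\cdot\widetilde U^-S^-\widetilde U^-$. This needs two facts: $[x^\mp_{k,r},S^\pm]$ lies in the two-sided ideal of $\widetilde U^\pm$ generated by the $S^\pm$, and $U^0$ normalizes $S^\pm$. Passing to the quotient then gives the theorem.

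The main obstacle is the Serre-compatibility computation $[x^\mp,S^\pm]\in(S^\pm)$ in the twisted setting. It is hardest for the cubic relation when $C_{i,\sigma(i)}=-1$ (type $A_{2n}^{(2)}$) and for the $P_{i,j}^\pm$-weighted relations. I would first handle it as in \cite[Theorem~4.2]{chen2023twisted}. Failing that, I would identify $U$ with the Drinfeld--Jimbo algebra via Theorem \ref{thm:Drinfeld-presentation} and use Damiani's result \cite[Theorem~9.2]{Damiani-injective}, which gives the PBW/injectivity of $U^\pm$ and $U^0$ into $U$. Combining that with the nondegenerate Drinfeld pairing on the Borel parts gives the tensor product isomorphism.
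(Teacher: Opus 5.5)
Your proposal is correct and matches the paper, which gives no argument of its own and simply invokes \cite[Theorem~9.2]{Damiani-injective} and \cite[Theorem~4.2]{chen2023twisted}; your outline is exactly the method behind the latter: surjectivity from the commutation relations, injectivity from a Verma-type module for the Serre-free algebra, then splitting of the Serre ideal, with the twisted Serre computation deferred to those references just as the paper does. Two small precision points: $[x^\mp_{k,r},S^\pm]$ lies a priori in $\widetilde U^\pm\widetilde U^0$ rather than in $\widetilde U^\pm$, so the fact you actually need is that it lies in $(S^\pm)\,\widetilde U^0$ (in fact it vanishes); and because you impose the quadratic relations already in $\widetilde U^\pm$, you must also check that the $x^\mp$-derivations respect them and that your $x^-$-operators satisfy them, which is a routine computation of the same kind.
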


\begin{remark}[{\cite[Remark 2.7]{wang2023QQ}}]\label{Drinfeld-coproduct-1}
 The twisted quantum loop algebra $\U_q(\lgs)$ admits the Drinfeld coproduct
 \[
 \Delta_D:\U_q(\lgs)\rightarrow \mathcal{U} _{\boldsymbol{q}}(\mathcal{L} \mathfrak{g} ^{\sigma})\widehat{\otimes }\mathcal{U} _{\boldsymbol{q}}(\mathcal{L} \mathfrak{g} ^{\sigma})
 \]
 defined in the Drinfeld  presentation of $\U_q(\lgs)$ via
 \begin{align*}
&\Delta _{_D}(x_{i}^{+}(z))=x_{i}^{+}(z)\otimes 1+\phi _{i}^{-}(z^{-1})\otimes x_{i}^{+}(z),\\
&\Delta _{_D}(x_{i}^{-}(z))=1\otimes x_{i}^{-}(z)+x_{i}^{-}(z)\otimes \phi _{i}^{+}(z^{-1}),\\
&\Delta _D(\phi _{i}^{\pm}(z))=\phi _{i}^{\pm}(z)\otimes \phi _{i}^{\pm}(z).
\end{align*}
\end{remark}
	
 
		
	

\subsection{Borel subalgebras}
\begin{definition}[{\cite{wang2023QQ}}]
The Borel subalgebra $\mathcal U_q(\bs)$ is the subalgebra of
$\mathcal U_q(\mathcal L\gs)$ generated by $e_i^+$ and $k_i^{\pm1}$,
$i\in\hat I_\sigma=I_\sigma\sqcup\{0\}$.
\end{definition}
    
The Borel subalgebra is the algebra with the above generators and the Weyl and Serre relations among them.
	
\begin{theorem}[{\cite[Lemma~4.21]{quantum-groups}}]
\label{thm:Borel-presentation}
The algebra $\mathcal U_q(\bs)$ is generated by $e_i^+$ and $k_i^{\pm1}$,
$i\in\hat I_\sigma$, subject to the relations
\begin{align*}
&k_i k_i^{-1}=k_i^{-1}k_i=1,\qquad k_i k_j=k_j k_i,\\
&k_i e_j^+=q_i^{C_{i,j}^\sigma}e_j^+k_i,\\
&\sum_{r=0}^{1-C_{i,j}^\sigma}
(-1)^r
(e_i^+)^{(1-C_{i,j}^\sigma-r)}
e_j^+
(e_i^+)^{(r)}
=0
\qquad (i\neq j).
\end{align*}
Here $(e_i^+)^{(r)}=(e_i^+)^r/[r]_{q_i}!$ denotes the divided power.
\end{theorem}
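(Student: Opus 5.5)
The plan is the standard argument that a quantum Borel algebra is the Hopf-type "half plus torus" of the full quantum group. I will work with the generators $e_i^+,k_i^{\pm1}$, $i\in\hat I_\sigma$, of $\mathcal U_q(\hgs)$. The point of the statement is that no relation beyond the Weyl and Serre relations is needed, so the subalgebra has to be taken inside $\mathcal U_q(\hgs)$, where the $k_i$ are algebraically independent. In the loop quotient one would additionally have to impose $c=1$; as worded, the statement concerns only the Weyl and Serre relations, and I will prove exactly that.

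\textbf{Step 1: the comparison map.} Let $\mathcal B$ be the abstract algebra generated by symbols $E_i,K_i^{\pm1}$, $i\in\hat I_\sigma$, subject to the three families of relations in the statement. All of these relations hold among $e_i^+,k_i^{\pm1}$ in $\mathcal U_q(\hgs)$, so there is a surjective homomorphism
\[
\pi:\mathcal B\to\mathcal U_q(\bs),\qquad E_i\mapsto e_i^+,\quad K_i^{\pm1}\mapsto k_i^{\pm1}.
\]
It remains to show that $\pi$ is injective.

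\textbf{Step 2: a spanning set for $\mathcal B$.} Let $\mathcal B^+$ be the algebra generated by the $E_i$ with only the quantum Serre relations, and let $\mathcal B^0=\CC[K_i^{\pm1}]$. The Weyl relations $K_iE_j=q_i^{C^\sigma_{i,j}}E_jK_i$ let me move every $K$ to the left. Hence the multiplication map $\mathcal B^0\otimes\mathcal B^+\to\mathcal B$ is surjective. In fact it is bijective, because $\mathcal B$ is the smash product $\mathcal B^+\rtimes\CC[\ZZ^{\hat I_\sigma}]$ for the grading action, but surjectivity is all I need.

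\textbf{Step 3: injectivity via the triangular decomposition of $\mathcal U_q(\hgs)$.} I invoke the Drinfeld--Jimbo triangular decomposition of $\mathcal U_q(\hgs)$ for the symmetrizable Cartan matrix $C^\sigma$:
\[
\mathcal U^-\otimes\mathcal U^0\otimes\mathcal U^+\simeq\mathcal U_q(\hgs),
\]
with $\mathcal U^0=\CC[k_i^{\pm1}]$ a Laurent polynomial ring. I also invoke Lusztig's theorem (the content of \cite[Lemma~4.21]{quantum-groups}) that $\mathcal U^+$ is presented by the $e_i^+$ subject to the quantum Serre relations alone. The second fact says $\mathcal B^+\to\mathcal U^+$ is an isomorphism, and the first says $\mathcal U^0\otimes\mathcal U^+\to\mathcal U^0\mathcal U^+\subset\mathcal U_q(\hgs)$ is injective. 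The composite
\[
\mathcal B^0\otimes\mathcal B^+\twoheadrightarrow\mathcal B\xrightarrow{\ \pi\ }\mathcal U_q(\bs)=\mathcal U^0\mathcal U^+
\]
is therefore the tensor product of two isomorphisms, hence injective. It follows that the first arrow is injective, and so $\pi$ is injective.

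\textbf{Main obstacle.} The real depth lies in the two imported facts. The first is that the quantum Serre relations generate all relations of $\mathcal U^+$; this requires the nondegeneracy of the Drinfeld--Lusztig pairing on $\mathcal U^{\ge0}\times\mathcal U^{\le0}$ for symmetrizable Kac--Moody type, and holds since $q$ is not a root of unity. The second is the triangular decomposition itself. I would cite both rather than reprove them.

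The only point specific to this paper is the identification of $\mathcal U_q(\bs)$ with the subalgebra of $\mathcal U_q(\hgs)$. Viewed inside the loop quotient $\mathcal U_q(\mathcal L\gs)$, the central element $c=\prod_i k_i^{a_i}$ becomes $1$. There the correct statement is the one above with $c=1$ added, and that is a different presentation. I will therefore state explicitly that the presentation proven here is that of the subalgebra of $\mathcal U_q(\hgs)$ generated by $e_i^+$ and $k_i^{\pm1}$.
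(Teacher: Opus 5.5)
Your argument is correct and is the standard proof behind the cited lemma; the paper itself gives nothing beyond the citation. One small correction: the Serre presentation of $\mathcal U^+$ comes out of the usual proof of the triangular decomposition for an algebra defined with the Serre relations (the Serre elements commute with the opposite Chevalley generators), so the nondegeneracy of the pairing you mention is not actually needed.

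Your caveat about $c$ is also right: since the paper defines $\mathcal U_q(\bs)$ inside $\mathcal U_q(\mathcal L\gs)$, the relation $\prod_i k_i^{a_i}=1$ must be added to the statement, and the same argument proves that version because $c$ is central and lies in $\mathcal U^0$, so $\mathcal U_q(\mathcal L\gs)\simeq\mathcal U^-\otimes\mathcal U^0/(c-1)\otimes\mathcal U^+$.
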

	
\begin{remark}
(i) \cite[Section 3]{wang2023QQ} The Borel subalgebra $\mathcal U_q(\bs)$ contains the Drinfeld generators
$x_{i,m}^+$, $k_i^{\pm1}$ and $x_{i,r}^-$ for $i\in I_0$, $m\geq 0$ and
$r>0$, subject to the usual divisibility condition on the modes. Hence it also
contains the elements $\phi_{i,m}^+$ for $i\in I_0$ and $m\geq 0$. However,
$\mathcal U_q(\bs)$ is not generated by these Drinfeld generators alone; in
particular, the Chevalley generator $e_0^+$ attached to the affine node
is not generated by them.

(ii) By \cite[Lemma~3.3]{wang2023QQ}, the Borel subalgebra
$\mathcal U_q(\bs)$ has a triangular decomposition. More precisely, the
multiplication map induces a vector-space isomorphism
\[
\mathcal U_q(\bs)^-
\otimes
\mathcal U_q(\bs)^0
\otimes
\mathcal U_q(\bs)^+
\simeq
\mathcal U_q(\bs),
\]
where
$\mathcal U_q(\bs)^\pm=\mathcal U_q(\bs)\cap
\mathcal U_q(\mathcal L\gs)^\pm$ and
$\mathcal U_q(\bs)^0=\mathcal U_q(\bs)\cap
\mathcal U_q(\mathcal L\gs)^0$.
\end{remark}

We will also use the negative Borel subalgebra $\U_q(\bs_-)$ generated by $e_i^-$, $k_i^{\pm 1}$, $i\in\hat I_\sigma=I_\sigma\sqcup\{0\}$. Similarly, we also have triangular decomposition of negative Borel subalgebra $\U_q(\bs_-)$ with $\mathcal U_q(\bs_-)^\pm=\mathcal U_q(\bs_-)\cap
\mathcal U_q(\mathcal L\gs)^\pm$ and
$\mathcal U_q(\bs_-)^0=\mathcal U_q(\bs_-)\cap
\mathcal U_q(\mathcal L\gs)^0$.

It is known that the universal $R$-matrix belongs to
$\mathcal U_q(\bs)\widehat{\otimes}\mathcal U_q(\mathfrak b_-^\sigma)$
\cite{Damianimatrixtwisted}. Moreover, it admits the following factorization
\cite[Remark~7.1.7]{Damianimatrixtwisted}:
\[
\mathcal R
=
\mathcal R_{<0}\mathcal R_{=0}\mathcal R_{>0}q^{-t_\infty}.
\]
Here
\begin{align*}
\mathcal R_{<0}
&=
\prod_{m\leq 0}
\exp_{\beta_m}
\bigl((q_{\beta_m}^{-1}-q_{\beta_m})
E_{\beta_m}\otimes F_{\beta_m}\bigr)
\in
\mathcal U_q(\bs)^+\widehat{\otimes}
\mathcal U_q(\mathfrak b_-^\sigma)^-,\\
\mathcal R_{=0}
&=
\prod_{r>0}\exp \widetilde C_r
\in
\mathcal U_q(\bs)^0\widehat{\otimes}
\mathcal U_q(\mathfrak b_-^\sigma)^0,\\
\mathcal R_{>0}
&=
\prod_{m>1}
\exp_{\beta_m}
\bigl((q_{\beta_m}^{-1}-q_{\beta_m})
E_{\beta_m}\otimes F_{\beta_m}\bigr)
\in
\mathcal U_q(\bs)^-\widehat{\otimes}
\mathcal U_q(\mathfrak b_-^\sigma)^+,
\end{align*}
where $\beta_m$ is defined in  \cite[Sections 2,\:3]{Damianimatrixtwisted}.

We shall use the following relation between the standard coproduct and the
Drinfeld coproduct.

\begin{lemma}[{\cite[Lemma~3.5]{wang2023QQ}}]
\label{lem:standard-Drinfeld-coproduct}
For $x\in \mathcal U_q(\lgs)$, the standard coproduct $\Delta$ and the
Drinfeld coproduct $\Delta_D$ are related by
\[
\Delta(x)
=
(\mathcal R_{<0}^{\lor})^{-1}
\Delta_D(x)
\mathcal R_{<0}^{\lor},
\]
where \(\tau : \mathcal{U}_q(\lgs) \otimes \mathcal{U}_q(\lgs) \to \mathcal{U}_q(\lgs) \otimes \mathcal{U}_q(\lgs)\) is the flip map given by \(\tau(a \otimes b)=b \otimes a\), and $\mathcal R_{<0}^{\lor}=\tau\mathcal R_{<0}$ is an invertible homogeneous
element of
\[
\mathcal U_q(\mathfrak b_-^\sigma)^-
\widehat{\otimes}
\mathcal U_q(\bs)^+
\]
of $\ZZ$-degree zero and with constant term $1\otimes 1$. Here the
$\ZZ$-grading is the one defined in Remark~\ref{grading}.
\end{lemma}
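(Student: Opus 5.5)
The plan is to follow the strategy used in the untwisted case by Enriquez--Khoroshkin--Pakuliak and in \cite{Hernandezshfted}, transported to the twisted setting through Damiani's factorization $\mathcal R=\mathcal R_{<0}\mathcal R_{=0}\mathcal R_{>0}q^{-t_\infty}$. Set $F=\mathcal R_{<0}^{\lor}$. The statement is equivalent to
\[
\Delta_D(x)=F\,\Delta(x)\,F^{-1},\qquad x\in\U_q(\lgs).
\]
Both sides are algebra homomorphisms $\U_q(\lgs)\to\U_q(\lgs)\widehat{\otimes}\U_q(\lgs)$ once we have checked that conjugation by $F$ is well defined on the completion. So it suffices to verify the identity on the Drinfeld generators $k_i^{\pm1}$, $h_{i,m}$ and $x_{i,r}^\pm$ of Theorem \ref{thm:Drinfeld-presentation}, in the form of the currents $\phi_i^\pm(z)$ and $x_i^\pm(z)$.

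\textbf{Step 1 (well-definedness).} $F$ is homogeneous of $\ZZ$-degree zero for the grading of Remark \ref{grading}. Its first tensor factor lies in $\U_q(\mathfrak b_-^\sigma)^-$ and its second in $\U_q(\bs)^+$, and its constant term is $1\otimes 1$. Hence $F$ is invertible, and conjugation by $F$ preserves the topological completion with respect to the $Q$-grading of the two factors. This step is routine.

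\textbf{Step 2 (Cartan currents).} I would use the quasitriangularity relation $\mathcal R\Delta(x)=\Delta^{\mathrm{op}}(x)\mathcal R$, the factorization of $\mathcal R$, and the convexity of the order on the roots $\beta_m$ from \cite{Damianimatrixtwisted}. From these one proves the standard intermediate identity: the element
\[
\Delta'(x):=F\,\Delta(x)\,F^{-1}
\]
satisfies $\Delta'(\U_q^{\geq})\subset \U_q^{\geq}\widehat{\otimes}\U_q$ and $\Delta'(\U_q^{\leq})\subset \U_q\widehat{\otimes}\U_q^{\leq}$. Here $\U_q^{\geq}$ and $\U_q^{\leq}$ are the Drinfeld-type Borel subalgebras generated by $x^+_{i,r}$ and $\phi^\pm$, respectively by $x^-_{i,r}$ and $\phi^\pm$. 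These are exactly the halves of the quantum double decomposition. For $\phi_i^+(z)$, I would use that $\phi^+_{i,m}\in\U_q(\bs)^0$ and then a degree/weight argument. The terms of $F\Delta(\phi_i^+(z))F^{-1}$ must have zero weight in each factor, and only the Cartan part survives the projection. This gives $\Delta'(\phi_i^\pm(z))=\phi_i^\pm(z)\otimes\phi_i^\pm(z)$.

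\textbf{Step 3 (the currents $x_i^\pm(z)$).} This is the main obstacle. I would first treat the modes $x^+_{i,m}$ with $m\geq 0$, which lie in $\U_q(\bs)$. For these, $\Delta(x^+_{i,m})$ is known modulo terms with first factor of negative weight, by the standard leading-term lemma in the twisted case. Conjugating by $F$ must kill all terms with more than one root vector. I would show this using the pairing interpretation: $\mathcal R_{<0}$ is the canonical element of the Hopf pairing between the subalgebras spanned by the $E_{\beta_m}$ and the $F_{\beta_m}$ with $m\leq 0$. The twisted-case subtleties are the factors $\zeta^{s}$, the normalizations $N_i$, and the vanishing of modes with $d_i\nmid r$. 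I expect these to enter only through the commutator $[x^+_{i,r},x^-_{j,r'}]$ of Theorem \ref{thm:Drinfeld-presentation}, and they match the term $\phi_i^-(z^{-1})\otimes x_i^+(z)$ of Remark \ref{Drinfeld-coproduct-1}.

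To extend from $m\geq 0$ to all $m$, I would use the relation $[h_{i,\pm1},x^+_{j,r}]\propto x^+_{j,r\pm1}$ together with Step 2. Both $\Delta'$ and $\Delta_D$ are homomorphisms, so applying $\mathrm{ad}\,h_{i,-1}$ repeatedly transports the formula to negative modes. When $N_i=M$, the modes of $x^+_{j,r}$ with $d_j\nmid r$ vanish, so one uses $h_{i,-d}$ with the appropriate step $d$ instead. The case of $x_i^-(z)$ is symmetric, using the negative Borel $\U_q(\bs_-)$.
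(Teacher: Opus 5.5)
The paper gives no argument of its own here. It imports \cite[Lemma~3.5]{wang2023QQ}, the twisted analogue of the Khoroshkin--Tolstoy / Enriquez--Khoroshkin--Pakuliak result that $\Delta_D$ is a twist of $\Delta$ by a factor of Damiani's $R$-matrix. Your outline follows that route, but as written it contains one step that fails and one step that is only asserted.

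\textbf{The failing step.} With the normalizations displayed in the paper, the identity you set out to verify, $\Delta_D(x)=F\Delta(x)F^{-1}$ with $F=\mathcal R^{\lor}_{<0}$, is already false for $x=e_i^+=x^+_{i,0}$.
\begin{itemize}
\item Quasitriangularity $\mathcal R\Delta=\Delta^{\mathrm{op}}\mathcal R$, with $\Delta(e_i^+)=e_i^+\otimes 1+k_i\otimes e_i^+$ and the factor $q^{-t_\infty}$ on the right, forces $F=1\otimes 1+(q_i^{-1}-q_i)\,e_i^-\otimes e_i^+ +\cdots$. This agrees with the displayed formula for $\mathcal R_{<0}$.
\item The bi-weight $(0,\alpha_i)$ component of $F\Delta(e_i^+)F^{-1}$ is then $k_i\otimes e_i^+ +(k_i-k_i^{-1})\otimes e_i^+$. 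By contrast, $\Delta_D(x^+_{i,0})$ contains $\phi^-_{i,0}\otimes x^+_{i,0}=k_i^{-1}\otimes x^+_{i,0}$.
\item It is $F^{-1}\Delta(e_i^+)F$ that matches. This is Lusztig's identity $\Delta(u)\Theta=\Theta\bar\Delta(u)$ for the quasi-$R$-matrix $\Theta=1\otimes 1+(q_i^{-1}-q_i)e_i^-\otimes e_i^++\cdots$.
\end{itemize}
Accordingly, your Step~2 claim that $F\Delta(\cdot)F^{-1}$ maps $U^{\geq}$ into $U^{\geq}\widehat{\otimes}U$ is false. Indeed, $F\Delta(e_i^+)F^{-1}$ has a nonzero component of bi-weight $(-\alpha_i,2\alpha_i)$, a combination of $e_i^-k_i^{\pm1}\otimes (e_i^+)^2$, whose first factor has negative weight. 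With these conventions the identity must read $\Delta(x)=\mathcal R^{\lor}_{<0}\,\Delta_D(x)\,(\mathcal R^{\lor}_{<0})^{-1}$, or else the sign in $\mathcal R_{<0}$ must change. Checking a single generator would have exposed this before any structural argument.

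\textbf{The gap.} Even after fixing the direction, the decisive ``intermediate identity'' is only asserted. It does not follow from $\mathcal R\Delta=\Delta^{\mathrm{op}}\mathcal R$ plus the factorization in the way you indicate. The twisting argument actually uses two inputs:
\begin{itemize}
\item[(i)] one-sided coideal properties, with respect to $\Delta$, of the Drinfeld-type pieces $\mathcal U_q(\mathfrak b^\sigma)^{\pm}$ (with Cartan factors);
\item[(ii)] a factorization of the Hopf pairing between $\mathcal U_q(\mathfrak b^\sigma)$ and $\mathcal U_q(\mathfrak b^\sigma_-)$ along the triangular decompositions, so that $\mathcal R_{<0}$ is the canonical element of the pairing restricted to $\mathcal U_q(\mathfrak b^\sigma)^+\times\mathcal U_q(\mathfrak b^\sigma_-)^-$.
\end{itemize}
Both come from Damiani's convex PBW bases. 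Only from them does one get that the twisted coproduct preserves the Drinfeld Borel halves, which is what your weight arguments need.

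Likewise, your Step~3 sentence ``conjugating by $F$ must kill all terms with more than one root vector'' is the content of the lemma, not a consequence of it. It also omits the other half of the job. Since $\Delta(x^+_{i,m})\in\mathcal U_q(\mathfrak b^\sigma)\otimes\mathcal U_q(\mathfrak b^\sigma)$ contains no $\phi^-_{i,-j}$ with $j>0$, the entire tail $\sum_{j>0}\phi^-_{i,-j}\otimes x^+_{i,m+j}$ of $\Delta_D(x^+_{i,m})$ has to be produced by the conjugation. The proposal gives no mechanism for this.

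The reduction to Drinfeld generators and the transport to other modes via $\operatorname{ad}h_{i,\pm d}$ are fine once these points are in place.
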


\subsection{\texorpdfstring{Category $\mathcal{O}_{\hgs}$ and $\mathcal{O}_{\bs}$}{Category CLgs and Obs}}
Let $\mathfrak t$ be the commutative subalgebra of $\mathcal U_q(\bs)$
generated by the elements $k_i^{\pm1}$, $i\in I_0$. Set
$\mathfrak t^*=(\CC^*)^{I_0}$, endowed with componentwise multiplication. For $\omega,\omega'\in\mathfrak t^*$, their product and inverse are given
by $(\omega\omega')(i)=\omega(i)\omega'(i)$, $
\omega^{-1}(i)=\omega(i)^{-1}\ (i\in I_0)$.
We regard an element $\omega\in\mathfrak t^*$ as a character of $\mathfrak t$
by setting $\omega(k_i)=\omega(i)$.

Let $P=\bigoplus_{i\in I_0}\ZZ\alpha_i$ be the root lattice of $\gs$. We define a group morphism
$P\to\mathfrak t^*$, $\alpha\mapsto \overline{\alpha}$, by setting
\[
\overline{\alpha_i}(j)=q_i^{C_{i,j}^\sigma}
\qquad (i,j\in I_0).
\]
This gives a partial order on $\mathfrak t^*$ as follows: for
$\omega,\omega'\in\mathfrak t^*$, we write $\omega'\leq \omega$ if
\[
\omega(\omega')^{-1}
\in
\left\langle \overline{\alpha_i}\mid i\in I_0\right\rangle_{\ZZ_{\geq0}},
\]
that is, if $\omega(\omega')^{-1}$ is a product of the elements
$\overline{\alpha_i}$, $i\in I_0$.

If $\omega(\omega')^{-1}
=
\overline{\alpha_{i_1}}\:\overline{\alpha_{i_2}}\cdots
\overline{\alpha_{i_s}}
$ $(i_1,\ldots,i_s\in I_0)$,
we define the height of $\omega(\omega')^{-1}$ to be $s$.

\begin{definition}
Let $V$ be a $\mathcal U_q(\mathcal L\gs)$-module. For
$\lambda=(\lambda(i))_{i\in I_0}\in\mathfrak t^*$, the weight space of $V$
of weight $\lambda$ is
\[
V_\lambda
=
\{v\in V\mid k_i v=\lambda(i)v \text{ for all } i\in I_0\}.
\]
If $V_\lambda\neq 0$, then $\lambda$ is called a weight of $V$. A nonzero
vector in $V_\lambda$ is called a vector of weight $\lambda$.
\end{definition}

Let $\mathfrak t^*_{\ell,\hgs}$ be the set of all pairs
$\mathbf\Psi=(\mathbf\Psi^+,\mathbf\Psi^-)$, where
$\mathbf\Psi^\pm=(\Psi_i^\pm(z))_{i\in I_0}$ and
\[
\Psi_i^\pm(z)
=
\sum_{m\geq 0}\Psi_{i,\pm m}^{\pm}z^{\pm m}
\in \CC[[z^{\pm1}]]
\qquad (i\in I_0),
\]
such that $\Psi_{i,0}^+\Psi_{i,0}^-=1$ for all $i\in I_0$.

\begin{definition}
Let $V$ be a $\mathcal U_q(\mathcal L\gs)$-module and let
$\mathbf\Psi\in\mathfrak t^*_{\ell,\hgs}$. The $\ell$-weight space of $V$
associated with $\mathbf\Psi$ is the simultaneous generalized eigenspace
\[
V_{\mathbf\Psi}
=
\left\{
v\in V\ \middle|\ 
\begin{array}{l}
\text{there exists } p\geq 1 \text{ such that, for all } i\in I_0
\text{ and } m\geq 0,\\
(\phi_{i,\pm m}^{\pm}-\Psi_{i,\pm m}^{\pm})^p v=0
\end{array}
\right\}.
\]
If $V_{\mathbf\Psi}\neq 0$, then $\mathbf\Psi$ is called an $\ell$-weight of
$V$. A nonzero vector in $V_{\mathbf\Psi}$ is called a vector of
$\ell$-weight $\mathbf\Psi$.
\end{definition}

We recall the notion of highest $\ell$-weight modules
\cite{chari-twisted-98,Hernandeztwisted}.

\begin{definition}
A $\mathcal U_q(\mathcal L\gs)$-module $V$ is called a highest
$\ell$-weight module if there exist a vector $v\in V$ and an element
$\mathbf\Psi=(\mathbf\Psi^+,\mathbf\Psi^-)\in\mathfrak t^*_{\ell,\hgs}$ such
that $V=\mathcal U_q(\mathcal L\gs)v$ and
\[
x_{i,r}^+v=0,
\qquad
\phi_{i,\pm m}^{\pm}v=\Psi_{i,\pm m}^{\pm}v,
\qquad
i\in I_0,\ r\in\ZZ,\ m\geq 0.
\]
In this case $V$ is said to have highest $\ell$-weight $\mathbf\Psi$, and
$v$ is called a highest $\ell$-weight vector.

Equivalently, the highest $\ell$-weight is encoded by the formal series
\[
\Psi_i^+(z)=\sum_{m\geq 0}\Psi_{i,m}^+z^m\in\CC[[z]],
\qquad
\Psi_i^-(z)=\sum_{m\geq 0}\Psi_{i,-m}^-z^{-m}\in\CC[[z^{-1}]]
\qquad
(i\in I_0).
\]
\end{definition}

For each $\mathbf{\Psi}\in\mathfrak t^*_{\ell,\hgs}$, there exists a unique
simple highest $\ell$-weight $\mathcal U_q(\mathcal L\gs)$-module with highest
$\ell$-weight $\mathbf{\Psi}$. We denote this module by $L(\mathbf{\Psi})$.

We now define the category $\mathcal O_{\hgs}$ of
$\mathcal U_q(\mathcal L\gs)$-modules. A $\mathcal U_q(\mathcal L\gs)$-module $V$ belongs to $\mathcal O_{\hgs}$ if the
following conditions are satisfied:
\begin{enumerate}
\item $V$ is $\mathfrak t$-diagonalizable;
\item $\dim V_\lambda<\infty$ for all $\lambda\in\mathfrak t^*$;
\item there exist $\lambda_1,\ldots,\lambda_N\in\mathfrak t^*$ such that the
weights of $V$ are contained in $\bigcup_{s=1}^N D(\lambda_s)$, where
\[
D(\lambda_s)=\{\omega\in\mathfrak t^*\mid \omega\leq \lambda_s\}.
\]
\end{enumerate}

An element $\mathbf{\Psi}\in\mathfrak t^*_{\ell,\hgs}$ is called rational if
there exists an $I_0$-tuple of rational functions $(g_i(z))_{i\in I_0}$ such
that each $g_i(z)$ is regular at $z=0$ and $z=\infty$, satisfies
$g_i(0)g_i(\infty)=1$, and such that $\Psi_i^+(z)$ and $\Psi_i^-(z)$ are
respectively the expansions of $g_i(z)$ at $z=0$ and at $z=\infty$. We denote
by $\mathfrak r_{\hgs}$ the set of rational elements of
$\mathfrak t^*_{\ell,\hgs}$.

Following \cite{Hernandeztwisted}, we shall use the following
distinguished rational $\ell$-weights. For
$i\in I_0$ and $a\in\CC^*$, define $Y_{i,a}$ by
\[
(Y_{i,a})_j(z)
=
\begin{cases}
\displaystyle
q^M\frac{1-a^M z^M q^{-M}}{1-a^M z^M q^M},
& \text{if } \sigma(i)=i \text{ and } j=i, \\[1.2ex]
\displaystyle
q\frac{1-azq^{-1}}{1-azq},
& \text{if } \sigma(i)\neq i \text{ and } j=i, \\[1.2ex]
1,
& \text{otherwise.}
\end{cases}
\]
We also define
\[
A_{i,a}
=
\begin{cases}
\begin{aligned}
Y_{i,aq}Y_{i,aq^{-1}}
&\prod_{\substack{j\sim i\\ j=\sigma(j)}}Y_{j,a}^{-1}
 \prod_{\substack{j\sim i\\ j\neq\sigma(j)}}
 \prod_{s=1}^{M}Y_{j,\zeta^s a}^{-1},
\end{aligned}
& \text{if } C_{i,\sigma(i)}=2, \\[2.2ex]
\displaystyle
Y_{i,aq}Y_{i,aq^{-1}}
\prod_{j\sim i}Y_{j,a}^{-1},
& \text{if } C_{i,\sigma(i)}=0, \\[2.2ex]
\displaystyle
Y_{i,aq}Y_{i,aq^{-1}}Y_{i,-a}^{-1}
\prod_{j\sim i}Y_{j,a}^{-1},
& \text{if } C_{i,\sigma(i)}=-1.
\end{cases}
\]
Here all products are over $j\in I_0$, and for $i,j\in I_0$, we write $j\sim i$ if and only if $C_{i,j}=-1$.

The following result is the twisted analogue of
\cite[Theorem~3.7]{Affinization-of-category-mukhin}.

\begin{theorem}
\label{Ohatgsigma-classification}
Let $\mathbf{\Psi}\in\mathfrak t^*_{\ell,\hgs}$. Then all weight spaces of
$L(\mathbf{\Psi})$ are finite-dimensional if and only if $\mathbf{\Psi}$ is
rational. Moreover, the map
\[
\mathbf{\Psi}\longmapsto L(\mathbf{\Psi})
\]
induces a bijection from $\mathfrak r_{\hgs}$ to the set of isomorphism classes
of simple objects of $\mathcal O_{\hgs}$.
\end{theorem}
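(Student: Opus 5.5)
We prove the theorem by following the untwisted argument of \cite[Theorem~3.7]{Affinization-of-category-mukhin}, with the current relations of Theorem~\ref{thm:Drinfeld-presentation} replacing the untwisted ones. The proof splits into three parts: necessity of rationality, sufficiency of rationality, and the bijection.

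\emph{Necessity.} Let $v$ be a highest $\ell$-weight vector of $L(\mathbf{\Psi})$ and fix $i\in I_0$. Apply $x_{i,r}^+$ to $x_{i,m}^-v$ and use the commutator relation for $[x_{i,r}^+,x_{j,r'}^-]$. This gives
\[
x_{i,r}^+x_{i,m}^-v=c_{i,m}\,(\Psi^+_{i,r+m}-\Psi^-_{i,r+m})\,v,
\]
where $c_{i,m}$ is a nonzero constant, a root-of-unity factor divided by $N_i(q_i-q_i^{-1})$. Here $\Psi^\pm$ are extended by zero in the wrong degrees, and only modes with $d_i\mid m$ (more precisely, $(i,m)\in I_\ZZ$) occur.

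The vectors $x_{i,m}^-v$ all lie in the weight space of weight $\lambda\overline{\alpha_i}^{-1}$, which is finite-dimensional. So there is a nontrivial relation $\sum_{s=0}^{N}b_s\,x_{i,m_0+s d_i'}^-v=0$. Since this holds for all shifts, we may translate it using the operators $h_{i,\pm1}$ (or $h_{i,\pm d_i'}$), which shift the mode index up to a nonzero scalar. Applying $x^+_{i,r}$ for every $r$ then yields a linear recurrence
\[
\sum_s b_s\,(\Psi^+_{i,r+s}-\Psi^-_{i,r+s})=0\qquad\text{for all } r.
\]
A two-sided sequence with finitely supported negative/positive tails obeying a linear recurrence is exactly the statement that $\Psi_i^+(z)$ and $\Psi_i^-(z)$ are the expansions at $0$ and $\infty$ of one rational function $g_i$. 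The condition $g_i(0)g_i(\infty)=1$ is automatic from $\Psi^+_{i,0}\Psi^-_{i,0}=1$.

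The twist requires care when $\sigma(i)=i$, since then only modes divisible by $M$ survive. In that case we run the argument in the variable $z^M$.

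\emph{Sufficiency.} This is the main obstacle. The plan is to realize every rational $\mathbf{\Psi}$ as a product of ``building block'' $\ell$-weights: the $Y_{i,a}^{\pm1}$, the constant $\ell$-weights, and the highest $\ell$-weights of the prefundamental-type modules obtained as limits.

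For $\mathcal U_q(\mathcal L\gs)$ itself, a cleaner route mirrors Mukhin--Young. First, every rational $\Psi_i$ factors as $c\prod (1-az)/(1-bz)$. Next, the finite-dimensional modules $L(\mathbf{Y})$ with monomial highest $\ell$-weights exist by the Chari--Pressley classification \cite{chari-twisted-98}. Their duals realize inverse monomials. Then, using the Drinfeld coproduct $\Delta_D$ of Remark~\ref{Drinfeld-coproduct-1}, the $\ell$-weight of a tensor product of highest $\ell$-weight vectors is the product of the $\ell$-weights. Hence for any rational $\mathbf{\Psi}$ we can build a module $W$, a tensor product of modules in $\mathcal O_{\hgs}$ with highest weight vectors, whose top $\ell$-weight is $\mathbf{\Psi}$.

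By Lemma~\ref{lem:standard-Drinfeld-coproduct} this tensor product has finite-dimensional weight spaces. The reason is that $\mathcal R_{<0}^\lor$ has degree zero and is unipotent, so the Drinfeld tensor action is conjugate to the usual one. Finally, $L(\mathbf{\Psi})$ is a subquotient of $W$, so its weight spaces are finite-dimensional.

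The delicate point is that for poles and zeros not of the form $q^{\pm1}a$ pairs, we need inverse monomials in the same factor. We handle this by taking $L(\mathbf{\Psi})$ as a subquotient of $L(\mathbf{m}_1)\otimes L(\mathbf{m}_2)^{*}$, where $\mathbf{m}_1,\mathbf{m}_2$ are monomials, after the twisted identification of $Y_{i,a}$ with $Y_{i,\zeta a}$.

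\emph{Bijection.} Weights of $L(\mathbf{\Psi})$ lie in $D(\lambda)$ by the triangular decomposition of Theorem~\ref{thm:triangular-decomposition-loop}. Combined with sufficiency, this shows $L(\mathbf{\Psi})\in\mathcal O_{\hgs}$ for every rational $\mathbf{\Psi}$.

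Conversely, let $V$ be a simple object of $\mathcal O_{\hgs}$. Pick a weight of $V$ that is maximal for the partial order, which exists by condition (3). The commuting $\phi^\pm_{i,m}$ act on this finite-dimensional weight space, so they have a common eigenvector. That vector is a highest $\ell$-weight vector, and simplicity gives $V\cong L(\mathbf{\Psi})$. Necessity then forces $\mathbf{\Psi}$ to be rational.

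Injectivity of the map follows from uniqueness of the highest $\ell$-weight of $L(\mathbf{\Psi})$.
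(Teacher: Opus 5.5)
\paragraph{The gap is in sufficiency.} Your necessity argument and your identification of simple objects with highest $\ell$-weight modules are fine and standard. For necessity you shift a linear relation among the $x^-_{i,m}v$ using $h_{i,\pm1}$ (or $h_{i,\pm M}$ when $\sigma(i)=i$) and then apply $x^+_{i,r}$.

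Sufficiency fails because tensor products of Chari--Pressley modules and their duals cannot produce an arbitrary rational $\mathbf\Psi$ as a top $\ell$-weight. There are three separate problems.

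\emph{Duals do not give inverse monomials.} The dual of a finite-dimensional module is again finite-dimensional. By the Chari--Pressley classification, its highest $\ell$-weight is therefore again, up to a constant, a \emph{dominant} monomial, so duals never produce $Y_{i,a}^{-1}$. Restricted duals of infinite-dimensional objects leave $\mathcal O_{\hgs}$, since their weights are bounded below. So your construction only reaches constants times dominant monomials.

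\emph{Monomials in $Y_{i,a}^{\pm1}$ are not enough.} Even the full group generated by the $Y_{i,a}^{\pm1}$ and the constants is a proper subgroup of $\mathfrak r_{\hgs}$. In any such monomial, the zeros and poles of the $i$-th component cancel inside each coset $x\,q^{2\ZZ}$ (inside $x\,q^{2M\ZZ}$, in the variable $z^M$, when $\sigma(i)=i$). Hence an $\ell$-weight whose $i$-th component is $c\,(1-\alpha z)/(1-\beta z)$, with $\alpha/\beta\notin q^{2\ZZ}$ and $c^2=\beta/\alpha$, is never obtained. Replacing $Y_{i,a}$ by $Y_{i,\zeta a}$ does not change this. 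There are also no prefundamental building blocks inside $\mathfrak t^*_{\ell,\hgs}$, because every component there has degree $0$. A tensor-product proof would need, for each such elementary factor, a module in $\mathcal O_{\hgs}$ realizing it, and constructing those is essentially the theorem itself.

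\emph{The Drinfeld tensor product is not defined here.} Lemma~\ref{lem:standard-Drinfeld-coproduct} is an identity in a completion. Both $\Delta_D(x^{\pm}_{i,r})$ and $\mathcal R^{\lor}_{<0}$ are infinite sums that need not converge on $V\otimes W$ without a spectral parameter; this is why the paper later introduces $\Delta_u$. If you want tensor products, use the standard coproduct.

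\paragraph{The missing idea.} No tensor product is needed: sufficiency can be proved directly on $L(\mathbf\Psi)$. The paper states this theorem as the twisted analogue of Mukhin--Young and gives no proof. It carries out exactly this argument for the shifted analogue in Theorem~\ref{rational-theorem}, following Chari.

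\emph{Height one.} Rationality gives a nonzero polynomial $P(z)$ with $P(z)\bigl(\phi_i^+(z)-\phi_i^-(z)\bigr)v=0$ for all $i\in I_0$. The $[x^+_{i,r},x^-_{j,r'}]$ relation of Theorem~\ref{thm:Drinfeld-presentation} then shows that every mode of $P(z)x_i^-(z)v$ is annihilated by all $x^+_{j,s}$. Such a vector lies strictly below the highest weight, so it vanishes in the simple module. Consequently the weight spaces of height one are spanned by finitely many $x^-_{i,m}v$.

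\emph{Induction.} One then inducts on the height of $\lambda\omega^{-1}$. The quadratic exchange relation for $x_i^-(u_1)x_j^-(u_2)$ (the loop-algebra form of \eqref{xx-relation}) lets you move mode indices and confine them to a bounded window.

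This handles every rational $\mathbf\Psi$ uniformly, including zeros and poles lying in different $q^{2\ZZ}$-cosets.
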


Let $\mathfrak t^*_{\ell,\bs}$ be the set of $I_0$-tuples
$\mathbf\Psi=(\Psi_i(z))_{i\in I_0}$, where
$\Psi_i(z)\in\CC[[z]]$ and $\Psi_i(0)\neq 0$ for all $i\in I_0$.

We define the category $\mathcal O_{\bs}$ as the full subcategory of
$\mathcal U_q(\bs)$-modules satisfying the same conditions as in the definition
of $\mathcal O_{\hgs}$: namely, a module $V$ belongs to $\mathcal O_{\bs}$ if
it is $\mathfrak t$-diagonalizable, all its weight spaces are finite-dimensional,
and its weights are contained in a finite union of sets of the form
$D(\lambda)=\{\omega\in\mathfrak t^*\mid \omega\leq\lambda\}$.

An element $\mathbf\Psi\in\mathfrak t^*_{\ell,\bs}$ is called rational if
there exists an $I_0$-tuple of rational functions $(g_i(z))_{i\in I_0}$ such
that each $g_i(z)$ is regular and nonzero at $z=0$, and such that $\Psi_i(z)$
is the expansion of $g_i(z)$ at $z=0$. We denote by $\mathfrak r_{\bs}$ the
set of rational elements of $\mathfrak t^*_{\ell,\bs}$.

The following theorem is the Borel counterpart of
Theorem~\ref{Ohatgsigma-classification}.

\begin{theorem}[{\cite[Theorem 4.6]{wang2023QQ}}]
\label{Obsigma-classification}
Let $\mathbf\Psi\in\mathfrak t^*_{\ell,\bs}$. Then all weight spaces of
$L^{\bs}(\mathbf\Psi)$ are finite-dimensional if and only if $\mathbf\Psi$ is
rational. Moreover, the map
\[
\mathbf\Psi\longmapsto L^{\bs}(\mathbf\Psi)
\]
induces a bijection from $\mathfrak r_{\bs}$ to the set of isomorphism classes
of simple objects of $\mathcal O_{\bs}$.
\end{theorem}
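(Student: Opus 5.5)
The statement is Theorem~\ref{Obsigma-classification}, quoted from \cite[Theorem 4.6]{wang2023QQ}. The plan is to follow the Hernandez--Jimbo strategy for untwisted Borel algebras and transport it to $\U_q(\bs)$. Both directions reduce to analysing the action of the Cartan currents $\phi_i^+(z)$ on the weight space of weight $\overline{\alpha_i}^{-1}$ below the highest weight.

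\emph{Finite weight spaces imply rationality.} Let $v$ be a highest $\ell$-weight vector of $L^{\bs}(\mathbf\Psi)$. The vectors $x^-_{i,r}v$ with $r>0$ and $d_i\mid r$ span the weight space $L^{\bs}(\mathbf\Psi)_{\lambda\overline{\alpha_i}^{-1}}$; here we use the triangular decomposition of $\U_q(\bs)$. If this space is finite-dimensional, there is a nontrivial linear relation $\sum_{r} c_r x^-_{i,r}v=0$.

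Apply $x^+_{i,m}$ for all $m\ge 0$ to this relation. The commutator $[x^+_{i,m},x^-_{i,r}]$ is proportional to $\phi^+_{i,m+r}$, and the twisted factor $\sum_s\delta_{\sigma^s(i),i}\zeta^{sr}/N_i$ is nonzero for admissible $r$. We obtain a linear recurrence $\sum_r c'_r\Psi_{i,m+r}=0$ for all $m\ge0$. This says that $\Psi_i(z)$, possibly viewed as a series in $z^{N_i}$, is rational.

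\emph{Rationality implies finite weight spaces.} I would construct explicit modules.
\begin{itemize}
\item For constant $\ell$-weights, take one-dimensional modules.
\item For prefundamental $\ell$-weights $\mathbf\Psi_{i,a}$, realize $L^{\bs}(\mathbf\Psi_{i,a})$ as a limit of Kirillov--Reshetikhin modules of $\U_q(\lgs)$, using the twisted $q$-characters of \cite{Hernandeztwisted}. The weight spaces are finite-dimensional because the normalized $q$-characters stabilize.
\end{itemize}
Every rational $\mathbf\Psi$ factors as a constant times a product of positive and negative prefundamental $\ell$-weights. Then $L^{\bs}(\mathbf\Psi)$ is a subquotient of a tensor product of such modules, and $\mathcal O_{\bs}$ is stable under tensor products by the weight-bounding condition. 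This gives finite-dimensionality of the weight spaces.

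\emph{The bijection.} Every simple object of $\mathcal O_{\bs}$ has a highest weight, by the boundedness condition. A vector of that weight which is a common eigenvector of the commuting $\phi^+_{i,m}$ is killed by $\U_q(\bs)^+$, so the simple object is highest $\ell$-weight. Its highest $\ell$-weight is then rational by the first part, which gives the bijection.

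The main obstacle is the second part, specifically producing negative prefundamental modules in the twisted setting. Here the $\sigma$-fixed nodes of type $A_{2n}^{(2)}$ require care with the factor $N_i$ versus $d_i$.
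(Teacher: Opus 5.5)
The paper gives no proof of this statement. It is quoted from \cite[Theorem 4.6]{wang2023QQ}, whose argument transposes Hernandez--Jimbo's treatment of untwisted Borel algebras, and your three-part outline follows that architecture. Your first and third parts are sound:
\begin{itemize}
\item The vectors $x^-_{i,r}v$ with $r>0$ lie in one finite-dimensional weight space; you do not even need that they span it. Moreover $x^+_{i,m}v=0$ for weight reasons, $\phi^-_{i,m+r}=0$ for $m+r>0$, and the twisted factor is nonzero. A linear dependence therefore yields a linear recurrence for the coefficients of $\Psi_i$, hence rationality.
\item The maximal-weight argument correctly shows that every simple object of $\mathcal O_{\bs}$ is some $L^{\bs}(\mathbf\Psi)$.
\end{itemize}

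The gap is in the existence direction. Your factorization needs \emph{both} families $L^{\bs}(\mathbf\Psi_{i,a})$ and $L^{\bs}(\mathbf\Psi_{i,a}^{-1})$ in $\mathcal O_{\bs}$, but you give only one mechanism. The Hernandez--Jimbo limit of suitably normalized KR modules (in the untwisted case $W^{(i)}_{k,aq_i^{-2k+1}}$) keeps the pole of the normalized highest $\ell$-weight fixed and sends the zero away. It therefore produces the \emph{negative} module $L^{\bs}(\mathbf\Psi_{i,a}^{-1})$, not $L^{\bs}(\mathbf\Psi_{i,a})$ as your bullet claims. The positive ones cannot be recovered afterwards either. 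Every $\ell$-weight of a tensor product of finite-dimensional modules, negative prefundamental modules and one-dimensional modules has all components of degree $\le 0$, while $\mathbf\Psi_{i,a}$ has positive degree in its $i$-th component. So a separate construction of the positive prefundamental modules is required, and your proposal does not supply it; your closing sentence even places the difficulty on the negative side.

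Two further points need filling in. First, ``the normalized $q$-characters stabilize'' only bounds the dimensions of a would-be limit. The real content of the limit construction is to equip the limit space with a $\U_q(\bs)$-action. Concretely, you must find compatible bases in which the normalized action of every $e_j^+$, $j\in\hat I_\sigma$, has matrix coefficients polynomial in the moving spectral parameter. This includes $e_0^+$, which is not generated by the Drinfeld generators available in the Borel. Only then do the relations of Theorem~\ref{thm:Borel-presentation} survive specialization. Second, the tensor-product step uses the twisted triangularity of $\Delta(\phi_i^+(z))$, via Damiani's factorization of $\mathcal R$ (cf.\ Lemma~\ref{lem:standard-Drinfeld-coproduct}). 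This is what shows that $v_1\otimes v_2$ has $\ell$-weight $\mathbf\Psi_1\mathbf\Psi_2$ and is annihilated by the $x^+_{i,m}$. It is standard but should be stated.
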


From the definition, we have an inclusion of spaces of rational functions $\mathfrak{r}_{\hgs}\hookrightarrow \mathfrak{r}_{\bs}$. The following lemma is analogue to {\cite[Lemma 3.4]{FJMM2017}} and the proof works word by word.

\begin{lemma}
    The restriction functor $\mathrm{Res}:\mathcal{O}_{\hgs}\to \mathcal{O}_{\bs}$ sends simple objects to simple objects.
\end{lemma}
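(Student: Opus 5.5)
The plan is to follow the argument of \cite[Lemma~3.4]{FJMM2017}, adapted to the twisted Drinfeld presentation. Let $V=L(\mathbf{\Psi})$ be a simple object of $\mathcal O_{\hgs}$ with $\mathbf{\Psi}\in\mathfrak r_{\hgs}$ (Theorem~\ref{Ohatgsigma-classification}), and let $v$ be its highest $\ell$-weight vector. Restricted to $\mathcal U_q(\bs)$, the module $V$ is still $\mathfrak t$-diagonalizable with finite-dimensional weight spaces and bounded weights, so $\mathrm{Res}\,V\in\mathcal O_{\bs}$. It therefore suffices to prove two things:
\begin{enumerate}[label=(\alph*)]
\item every nonzero $\mathcal U_q(\bs)$-submodule $W\subset V$ contains $v$;
\item $v$ generates $V$ over $\mathcal U_q(\bs)$.
\end{enumerate}

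For (a), take a nonzero weight vector $w\in W$. Because the weights of $V$ are bounded above, with $\mathbf{\Psi}$ the unique maximal weight, I will show that, as long as $w$ is not of highest weight, some $x^+_{i,m}$ with $m\ge 0$ satisfies $x^+_{i,m}w\neq 0$. Applying these operators repeatedly raises the weight, and the process terminates at a nonzero multiple of $v$. To produce such an $x^+_{i,m}$, suppose instead that $x^+_{i,m}w=0$ for all $i\in I_0$ and all $m\ge0$. The relation $[h_{j,s},x^+_{i,m}]\propto x^+_{i,m+s}$ together with the fact that $\phi^-_{j,-s}\in\mathcal U_q(\bs_-)$ shows that the span of vectors killed by all $x^+_{i,m}$ with $m\ge0$ is stable under suitable Cartan modes. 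A finite-dimensionality argument on the weight space, using rationality of the currents, then upgrades this to $x^+_{i,r}w=0$ for all $r\in\ZZ$. Since $V$ is simple over $\mathcal U_q(\mathcal L\gs)$, such a vector must be proportional to $v$, which contradicts the assumption that $w$ is not of highest weight. The twisted vanishing conditions $x^{\pm}_{i,r}=0$ for $(i,r)\notin I_\ZZ$ only restrict which modes occur and cause no difficulty.

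For (b), set $V'=\mathcal U_q(\bs)v$. The triangular decomposition of $\mathcal U_q(\mathcal L\gs)$ (Theorem~\ref{thm:triangular-decomposition-loop}) and the highest weight condition give $V=\mathcal U_q(\mathcal L\gs)^-\,\mathcal U_q(\mathcal L\gs)^0 v$. I will then show that each weight space $V_\lambda$ is spanned by monomials $x^-_{i_1,r_1}\cdots x^-_{i_k,r_k}v$ with all $r_j>0$, and these monomials lie in $\mathcal U_q(\bs)^-v$. The key input is that $V_\lambda$ is finite-dimensional and the currents $x^-_i(z)v$ are rational (again by rationality of $\mathbf\Psi$). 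Expanding a rational function in $z$ and in $z^{-1}$ yields linear recursions, so the span over all modes $r$ coincides with the span over modes $r$ large, and the large modes can be taken positive. Proceeding by induction on the height of $\mathbf{\Psi}\lambda^{-1}$ then gives $V_\lambda\subset V'$.

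I expect the main obstacle to be the rationality and recursion step in (b) and the analogous upgrade in (a), in the twisted setting. For the twisted case, the aim is to check that the relevant generating series are rational in $z^{N_i}$ and that the $\zeta$-twisted commutation relations do not interfere with the recursion. Once this is checked, (a) and (b) together show that $\mathrm{Res}\,V$ is simple.
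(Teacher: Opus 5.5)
Your proposal is correct and is essentially the argument the paper relies on: it defers to \cite[Lemma~3.4]{FJMM2017}, and the same reasoning appears in the proof of Corollary~\ref{nakajimacorollary}. Rationality of $x_i^\pm(z)$ on each weight space (Lemma~\ref{xaction}) shows that every mode is determined by the modes lying in $\mathcal U_q(\bs)$, so $v$ generates $V$ over $\mathcal U_q(\bs)$ and is, up to scalar, its only primitive vector. The Cartan-mode detour in (a) is unnecessary: a relation $P(z)x_i^+(z)=0$ on $V_\lambda$ with $P(0)\neq 0$ already expresses each $x^+_{i,r}|_{V_\lambda}$ as a linear combination of the $x^+_{i,m}|_{V_\lambda}$ with $m\ge 0$.
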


As the same argument in \cite[Remark 4.4]{Hernandezshfted}, for  $i\in I_0$, the action of $\phi^+_i(z)$ and $\phi^-_i(z)$ coincide on a representation $V\in \mathcal{O}_{\hgs}$, seen as rational operators on each weight space. All $\ell$-weights of $V\in \mathcal{O}_{\hgs}$ are rational (See also in \cite[Section~3.6]{GTL}).

\begin{proposition} [\cite{Affinization-of-category-mukhin,mukhintwisted}]
Let $V\in \mathcal O_{\hat{\mathfrak g}^{\sigma}}$, and fix $i\in I_0$.
Let $\eta,\nu$ be two $\ell$-weights of $V$ such that
\[
x_{i,r}^{\pm}(V_\eta)\cap V_\nu\neq \{0\}
\qquad\text{for some }r\in \mathbb Z.
\]
Then the following hold.
 
(i) There exists $a\in \mathbb C^*$ such that $\nu=\eta A_{i,a}^{\pm1}.$

(ii) There exist bases $(v_k)_{1\le k\le \dim V_\eta}$ of $V_\eta$ and
$(w_\ell)_{1\le \ell\le \dim V_\nu}$ of $V_\nu$ such that, for each $k$,
the $\nu$-component of $x_i^\pm(z)v_k$ is of the form
\[
(x_i^\pm(z)v_k)_\nu=
\sum_{\ell=1}^{\dim V_\nu} w_\ell\,
P_{k,\ell}^\pm\!\left(\frac{\partial}{\partial a^{N_i}}\right)\delta\!\left(\frac{a^{N_i}}{z^{N_i}}\right),
\]
with $\deg P_{k,\ell}^\pm\le k+\ell-2$.

\end{proposition}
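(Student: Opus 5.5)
We will follow the untwisted argument of Mukhin--Young, adapted to the twisted Drinfeld relations as in the twisted setting. Fix $i\in I_0$ and $r$ with $x_{i,r}^\pm(V_\eta)\cap V_\nu\neq 0$. Since $V\in\mathcal O_{\hgs}$, the weight spaces are finite-dimensional. The commuting operators $\phi_{i,\pm m}^\pm$ preserve $V_\eta$ and $V_\nu$, so we may choose bases in which all $\phi^\pm_j(z)$ act by upper triangular matrices. By the remark preceding the statement, these matrices are rational in $z$ and have diagonal entries $\eta_j(z)$, $\nu_j(z)$.

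The key input is the relation between $h_{j,m}$ and $x_{i,r}^\pm$, rewritten in generating-series form. In the twisted algebra the relation is
\[
[h_{j,m},x_{i,r}^\pm]=\pm\tfrac1m\Bigl(\sum_{s}[mC_{j,\sigma^s(i)}/d_j]_{q_j}\zeta^{ms}\Bigr)x^\pm_{i,m+r}.
\]
Exponentiating gives
\[
\phi_j^\pm(w)\,x_i^\pm(z)=g_{ji}^{\pm}(z/w)\,x_i^\pm(z)\,\phi_j^\pm(w),
\]
where $g_{ji}^{\pm}$ is the explicit rational function whose zeros and poles are encoded by $A_{i,a}$. Its form depends on the three cases $C_{i,\sigma(i)}=2,0,-1$. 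These cases produce, respectively, the factors $\prod_s Y_{j,\zeta^sa}^{-1}$, the $M$-th power variables in $Y_{i,a}$ when $\sigma(i)=i$, and the extra factor $Y_{i,-a}^{-1}$.

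We then project onto the $\nu$-component and take matrix coefficients $\langle w_\ell^*, x_i^\pm(z)v_k\rangle=:f_{k\ell}(z)$. The commutation relation becomes a finite triangular system of linear equations. To lowest order it reads
\[
\bigl(\nu_j(w)-g_{ji}^\pm(z/w)\eta_j(w)\bigr)f(z)=0 ,
\]
with lower-triangular corrections coming from the nilpotent parts of the $\phi$'s. A formal series $f(z)=\sum_r f_rz^{-r}$ killed by multiplication by a nonzero function of $z$ is supported on the zeros of that function. Hence $f$ is a finite combination of $\delta(a^{N_i}/z^{N_i})$ and its derivatives.

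The modes vanish unless $d_i\mid r$, so $x_i^\pm(z)$ is a series in $z^{N_i}$ (up to the $A^{(2)}_{2n}$ node subtlety, where $N_n=1$). This is why delta functions appear in the variable $z^{N_i}$. Requiring consistency for all $j$ forces a single $a$ and gives $\nu=\eta A_{i,a}^{\pm1}$, which proves (i). The derivative orders increase by one along each step of the Jordan filtration of $V_\eta$ and $V_\nu$. An induction on $k+\ell$ then gives $\deg P_{k,\ell}^\pm\le k+\ell-2$, which proves (ii).

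The main obstacle is the case analysis in computing $g_{ji}^\pm$ and matching it with $A_{i,a}$. This matching must be checked separately for $C_{i,\sigma(i)}=2,0,-1$ and for $j$ fixed or not fixed by $\sigma$, with particular care in the $A_{2n}^{(2)}$ node where $d_n=\tfrac12$. For that case we would first verify the rank-one case $A_2^{(2)}$ directly and then reduce to it.
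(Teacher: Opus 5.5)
Your proposal takes essentially the same route as the paper. Both triangularize the commuting $\phi$'s on $V_\eta$ and $V_\nu$, then take the first nonzero coefficient $\lambda_{K,L}(z)$ of $(x_i^\pm(z)v_k)_\nu$ to get a homogeneous equation that forces $\nu=\eta A_{i,a}^{\pm1}$; like you, the paper defers the case-by-case matching with $A_{i,a}$ to the twisted literature rather than writing it out. Both then run an induction on $k+\ell$. The paper makes two refinements explicit that your sketch leaves implicit, so you should adopt them. First, it chooses the basis of $V_\nu$ lower-triangular, so the induction runs upward in both indices and gives exactly $\deg P_{k,\ell}^\pm\le k+\ell-2$; with upper-triangular bases on both sides you would have to reverse the order of the $w_\ell$. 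Second, it extracts the $u^1$ coefficient of the $j=i$ relation (the $u^2$ coefficient when $\sigma(i)=i$), which produces the single linear factor $(z-a)$ (resp.\ $z^2-a^2$). That linear factor is what makes the derivative order rise by at most one per step.
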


\begin{proof}
The proof follows the arguments of \cite[Proposition~3.8]{Affinization-of-category-mukhin} and \cite[Proposition~3.6]{mukhintwisted}. We give the details, since some modifications are needed in the present setting.

Firstly, we consider $M=2$ and we treat the case of $x_i^+(z)$; the argument for $x_i^-(z)$ is identical.

Choose a basis $(v_k)_{1\le k\le \dim V_\eta}$ of $V_\eta$ in which all
operators $\phi_{j,r}^+$ are simultaneously upper-triangular, and a basis $(w_\ell)_{1\le \ell\le \dim V_\nu}$ of $V_\nu$ in which they are
simultaneously lower-triangular. Thus for every $j\in I_0$ one has
\[
(\phi _{j}^{+}(u)-\eta _{j}^{+}(u)).v_k=\sum_{k^{\prime}<k}{v_{k^{\prime}}\xi _{j}^{+,k,k^{\prime}}(u)},
\qquad
(\phi _{j}^{+}(u)-\nu _{j}^{+}(u)).w_l=\sum_{l^{\prime}>l}{w_{l^{\prime}}\zeta _{j}^{+,l,l^{\prime}}(u)},
\]
with $\xi_j^{+,k,k'}(u),\zeta_j^{+,\ell,\ell'}(u)\in u\mathbb C[[u]]$.

For each $1\le k\le \dim V_\eta$, we write
\[
(x_i^+(z)v_k)_\nu=\sum_{\ell=1}^{\dim V_\nu}\lambda_{k,\ell}(z)w_\ell,
\qquad
\lambda_{k,\ell}(z)\in \mathbb C[[z,z^{-1}]].
\]

By the relation, we have 
\begin{align*}
&(q^{C_{j,\sigma(i)}}+uz)(q^{C_{ji}}-uz)\,x_i^{+}(z)\,
\bigl(\phi_j^{+}(u)-\eta_j^{+}(u)\bigr).v_k \notag\\
& 
= \phi_j^{+}(u)\,x_i^{+}(z)\,
(1+q^{C_{j,\sigma(i)}}uz)(1-q^{C_{ji}}uz)\,.v_k -(q^{C_{j,\sigma(i)}}+uz)(q^{C_{ji}}-uz)\,x_i^{+}(z)\,\eta_j^{+}(u)\,.v_k .
\end{align*}
By taking the $w_l$ component, we have
\begin{align}
    &(q^{C_{j,\sigma (i)}}+uz)(q^{C_{ji}}-uz)\sum_{k^{\prime}=1}^{k-1}{\xi _{j}^{+,k,k^{\prime}}(u)}\lambda _{k^{\prime},l}(z)\nonumber\\
    &=((1+q^{C_{j,\sigma (i)}}uz)(1-q^{C_{ji}}uz)\nu _{j}^{+}(u)-(q^{C_{j,\sigma (i)}}+uz)(q^{C_{ji}}-uz)\eta _{j}^{+}\left( u \right))\lambda _{k,l}(z)\nonumber\\
    &+(1+q^{C_{j,\sigma (i)}}uz)(1-q^{C_{ji}}uz)\sum_{l^\prime =1}^{l-1}{\lambda _{k,l^{\prime}}(z)}\zeta _{j}^{+,l^{\prime},l}(u). \label{FJMMlemma5.5} 
\end{align}
Since $( x_{i}^{+}( z ) .V_{\eta}) _{\nu}\ne 0$, we may choose the smallest $K$ such that $( x_{i}^{+}( z ) .v_K) _{\nu}\ne 0$ and then a smallest $L$ such that $\lambda_{K,L}(z)\ne 0$. So (\ref{FJMMlemma5.5}) gives
\begin{align}
    0=((1+q^{C_{j,\sigma (i)}}uz)(1-q^{C_{ji}}uz)\nu _{j}^{+}(u)-(q^{C_{j,\sigma (i)}}+uz)(q^{C_{ji}}-uz)\eta _{j}^{+}\left( u \right))\lambda _{K,L}(z).\label{FJMMlemma5.5-2}
\end{align}
This  must hold for all $j\in I_0$. As the same argument in \cite[Proposition~3.6]{mukhintwisted}, for all $j\in I_0$, there exists $a\in \CC^*$ such that 
\begin{align*}
    \nu _{j}^{+}(u)\left( \eta _{j}^{+}\left( u \right) \right) ^{-1}=\frac{(q^{C_{j,\sigma (i)}}+ua)(q^{C_{ji}}-ua)}{(1+q^{C_{j,\sigma (i)}}ua)(1-q^{C_{ji}}ua)}=\left( A_{i,a}\left( u \right) \right)_j
\end{align*}
as an equality of power series in $u$. This establishes $(i)$.

We now  turn to $(ii)$. Choose $j=i$ and let $\alpha :=q^{C_{i,\sigma (i)}}$, $\beta :=q^{C_{i,i}}$.
If $i\ne \sigma(i)$,
by (\ref{FJMMlemma5.5}), we have 
\begin{align}
    &\frac{(1+\alpha \beta )\eta _{i}^{+}\left( u \right) u}{(1+\alpha ua)(1-\beta ua)}\Bigl( (\alpha -\beta )(1+u^2az)+(1-\alpha \beta )u(z+a) \Bigr) (z-a)\lambda _{k,l}(z)\nonumber\\
    &=(\alpha +uz)(\beta -uz)\sum_{k^{\prime}=1}^{k-1}{\xi _{i}^{+,k,k^{\prime}}(u)\lambda _{k^{\prime},l}(z)}-(1+\alpha uz)(1-\beta uz)\sum_{l^\prime =1}^{l-1}{\lambda _{k,l^{\prime}}(z)\zeta _{i}^{+,l^{\prime},l}(u).}\label{FJMMlemma5.5-3}
\end{align}

We prove the desired form of $\lambda_{k,l}(z)$ by induction on $k+l$. If $k+l=2$, then the right hand of (\ref{FJMMlemma5.5-3}) is zero. Taking the coefficient of $u^1$, we obtain $(z-a)\lambda_{1,1}(z)=0$, Hence  $\lambda_{1,1}(z)=P_{1,1}\delta(a/z)$ with $P_{1,1}\in \CC$. Assume now that the claim is proved whenever $k'+l^\prime<k+\ell$.
Taking the coefficient of $u^1$ in (\ref{FJMMlemma5.5-3}), we have 
\begin{align*}
    \eta^+_{i,0}(1+\alpha\beta)(\alpha-\beta)(z-a)\lambda_{k,l}(z)=\alpha\beta\sum_{k^{\prime}=1}^{k-1}{\xi _{i,1}^{+,k,k^{\prime}}\lambda _{k^{\prime},l}(z)}-\sum_{l^\prime =1}^{l-1}{\lambda _{k,l^{\prime}}(z)\zeta _{i,1}^{+,l^{\prime},l}}.
\end{align*}
Notice that for all $m\ge1$, we have $\left( z-a \right) \frac{1}{m!}\left( \frac{\partial}{\partial a} \right) ^m\delta (z/a)=\frac{1}{\left( m-1 \right) !}\left( \frac{\partial}{\partial a} \right) ^{m-1}\delta (z/a)$. Then every solution  is of the form $\lambda _{k,l}\left( z \right) =P_{k,l}\left( \frac{\partial}{\partial a} \right) \delta \left( \frac{a}{z} \right)$ with $\mathrm{deg(}P_{k,\ell})\le 1+\max_{\{
k^{\prime}<k,\ell ^{\prime}<\ell\\
\}} \left( \mathrm{deg(}P_{k^{\prime},\ell ^{\prime}}) \right)$.

If $i=\sigma(i)$, by (\ref{FJMMlemma5.5}), we have 
\begin{align}
    &\frac{(1 - \alpha^4)\eta _{i}^{+}\left( u \right) u^2}{1 - \alpha^2 u^2 a^2}(z^2 - a^2)\lambda _{k,l}(z)\nonumber\\&=(\alpha^2-u^2z^2)\sum_{k^{\prime}\nonumber =1}^{k-1}{\xi _{i}^{+,k,k^{\prime}}(u)\lambda _{k^{\prime},l}(z)}-(1-\alpha^2u^2z^2)\sum_{l^\prime =1}^{l-1}{\lambda _{k,l^{\prime}}(z)\zeta _{i}^{+,l^{\prime},l}(u)}\label{FJMMlemma5.5-4}.
\end{align}
Notice for $i=\sigma(i)$, $\lambda _{k,l}(z)\in \CC[[z^2,z^{-2}]]$, $\xi _{i}^{+,k,k^{\prime}}(u)\in u^2\mathbb{C} [[u^2]]$, $\zeta _{i}^{+,l,l^{\prime}}(u)\in u^2\CC[[u^2]]$ for all $1\le k\le \dim V_{\eta}$ and $1\le l\le \dim V_\nu$. Use the similar argument, we have $\lambda _{k,l}\left( z \right) =P_{k,l}\left( \frac{\partial}{\partial a^2} \right) \delta \left( \frac{a^2}{z^2} \right)$ with $\mathrm{deg(}P_{k,\ell})\le 1+\max_{\{
k^{\prime}<k,\ell ^{\prime}<\ell\\
\}}\left( \mathrm{deg(}P_{k^{\prime},\ell ^{\prime}}) \right)$.

For $M=3$, the argument is similar. Thus we complete the proof.   
\end{proof}

The preceding proposition implies the following description of the action of
the currents $x_i^\pm(z)$ on objects of $\mathcal O_{\hgs}$.
\begin{lemma}
\label{xaction}
Let $V\in\mathcal O_{\hgs}$, let $\eta$ be a $\ell$-weight occurring in $V$, and let $v\in V_\eta$. Then the formal series
$x_i^\pm(z)v$ can be written in the form
\[
x_i^\pm(z)v
=
\sum_a\sum_{k=0}^{r_a}
v_{k,a}
\left(\frac{\partial}{\partial a^{N_i}}\right)^k
\delta\left(\frac{a^{N_i}}{z^{N_i}}\right),
\]
where $a$ runs over a finite subset of $\CC^*$,
\[
v_{k,a}\in V_{\eta A_{i,a}^{\pm1}},
\qquad
r_a=\dim V_\eta+\dim V_{\eta A_{i,a}^{\pm1}}-2 .
\]
\end{lemma}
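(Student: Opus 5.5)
The plan is to derive Lemma~\ref{xaction} directly from the preceding proposition, applied to each $\ell$-weight component of $x_i^\pm(z)v$.

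First I decompose the target. Since $V\in\mathcal O_{\hgs}$, every weight space $V_\lambda$ is finite-dimensional and stable under the commuting operators $\phi_{j,\pm m}^\pm$. Hence $V_\lambda$ is the direct sum of the generalized eigenspaces $V_\nu$, with $\nu$ running over the finitely many $\ell$-weights of weight $\lambda$. The vector $v\in V_\eta$ has a definite weight, so every mode $x_{i,r}^\pm v$ lies in the single weight space of weight $\eta\,\overline{\alpha_i}^{\pm1}$. Consequently the formal series decomposes as a finite sum
\[
x_i^\pm(z)v=\sum_{\nu}\bigl(x_i^\pm(z)v\bigr)_\nu ,
\]
where $\nu$ runs over the $\ell$-weights of that weight space.

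Next I apply part~(i) of the preceding proposition. For each $\nu$ with $(x_i^\pm(z)v)_\nu\neq 0$ we have $x_{i,r}^\pm(V_\eta)\cap V_\nu\neq\{0\}$ for some $r$, so $\nu=\eta A_{i,a}^{\pm1}$ for some $a\in\CC^*$. The set of such $a$ is finite because there are finitely many $\nu$. The parameter $a$ need not be unique: the $A_{i,a}$ depend only on $a^{N_i}$ in the fixed case, or on $a$ up to the relevant symmetry. I fix one representative per $\nu$ and group terms accordingly.

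Then I apply part~(ii). Expand $v=\sum_k c_k v_k$ in the basis of $V_\eta$ given there. For each $k$, part~(ii) gives
\[
(x_i^\pm(z)v_k)_\nu=\sum_\ell w_\ell\,P^\pm_{k,\ell}\!\left(\tfrac{\partial}{\partial a^{N_i}}\right)\delta\!\left(\tfrac{a^{N_i}}{z^{N_i}}\right),
\qquad \deg P^\pm_{k,\ell}\le k+\ell-2\le \dim V_\eta+\dim V_\nu-2=r_a .
\]
Collecting, for each power $\bigl(\partial/\partial a^{N_i}\bigr)^k$ with $0\le k\le r_a$, the vector coefficient $v_{k,a}=\sum_{k',\ell}c_{k'}\,[\text{coefficient of }t^k\text{ in }P^\pm_{k',\ell}]\,w_\ell\in V_\nu=V_{\eta A_{i,a}^{\pm1}}$ yields exactly the stated form.

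The only delicate point is bookkeeping rather than substance. I need the decomposition into $\ell$-weight components to be legitimate for every mode simultaneously, which holds because it is a fixed finite direct sum decomposition of one finite-dimensional weight space. I also need distinct $\nu$ not to produce conflicting choices of $a$, which is handled by fixing one representative per $\nu$. Everything else is immediate from the proposition.
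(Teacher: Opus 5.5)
Your proposal is correct and follows the paper's approach: the paper only says the lemma follows from the preceding proposition, and you supply the bookkeeping by decomposing the target weight space into $\ell$-weight components, using part (i) to identify each component with $V_{\eta A_{i,a}^{\pm1}}$, and using part (ii) with $\deg P^\pm_{k,\ell}\le \dim V_\eta+\dim V_\nu-2=r_a$. One small imprecision: a nonzero projection $(x_i^\pm(z)v)_\nu$ does not literally give $x_{i,r}^\pm(V_\eta)\cap V_\nu\neq\{0\}$, but the proposition's proof only uses the projection condition $(x_i^\pm(z)V_\eta)_\nu\neq 0$, so your application is still valid.
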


\begin{theorem}[{\cite[Theorem~5.4]{wang2023QQ}}]
\label{sufficientlylarge}
Let $\mathbf\Psi=(\psi_i(u))_{i\in I_0}$ be a polynomial $\ell$-weight, that is,
each $\psi_i(u)$ is a polynomial in $u$ with nonzero constant term. Then, for
all $v\in L^{\bs}(\mathbf\Psi)$, $i\in I_0$ and $\alpha\in\Delta^+$, one has
\[
E_{p\delta\pm\alpha}v=0,
\qquad
\phi_{i,p}^+v=0
\]
for all sufficiently large $p$.
\end{theorem}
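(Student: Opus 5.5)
The statement to prove is Theorem~\ref{sufficientlylarge}. For every $v\in L^{\bs}(\mathbf\Psi)$ with $\mathbf\Psi$ polynomial, the root vectors $E_{p\delta\pm\alpha}$ and the modes $\phi^+_{i,p}$ should annihilate $v$ for all large $p$.

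The plan has three steps: prove the statement on the highest $\ell$-weight vector, show that the set of vectors with this property is stable under the action of $\U_q(\bs)$, and conclude by cyclicity.

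\textbf{Step 1: the highest vector.} Let $v_0$ be the highest $\ell$-weight vector. Since each $\psi_i$ is a polynomial, $\phi^+_{i,p}v_0=\psi_{i,p}v_0=0$ for $p>\deg\psi_i$. For the real root vectors, $E_{p\delta+\alpha}$ lies in $\U_q(\bs)^+$, so it raises the weight and kills $v_0$. For $E_{p\delta-\alpha}$, I would write it through the Drinfeld generators, roughly as an iterated $q$-commutator of $x^-_{j,r}$'s with total degree $p$. Then I would use that $v_0$ is a highest $\ell$-weight vector to reduce its action to brackets involving the $\phi^+_{i,m}$ with $m$ large.

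\textbf{Step 2: stability under $\U_q(\bs)$.} Let $W$ be the set of vectors satisfying the conclusion. It suffices to show that $W$ is stable under the Chevalley generators $e^+_i$ and $k_i$. The point is that commutators of $E_{p\delta\pm\alpha}$ and $\phi^+_{i,p}$ with $e_i^+$ are again combinations of root vectors of degree roughly $p$ plus a bounded shift, by the convex PBW ordering and the Levendorskii--Soibelman type commutation formulas (Damiani, twisted case). For $w\in W$, this gives
\[
E_{p\delta\pm\alpha}\,e_j^+w=e_j^+E_{p\delta\pm\alpha}w+[E_{p\delta\pm\alpha},e_j^+]w.
\]
The first term vanishes for large $p$. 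The commutator is a finite combination of products in which some factor is a root vector of large degree; ordering that factor to the right shows the second term vanishes too.

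\textbf{Step 3: conclusion.} Since $L^{\bs}(\mathbf\Psi)=\U_q(\bs)v_0$, Steps 1 and 2 give the statement for every $v$.

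The main obstacle is controlling the commutators in Step~2, that is, showing that they only involve root vectors of comparably large degree. Twisted root vectors with the convex order, especially the imaginary ones and the $A_{2n}^{(2)}$ case, make this delicate.

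As an alternative route for the $\phi^+$ part, I would first try realizing $L^{\bs}(\mathbf\Psi)$ as a subquotient of tensor products of positive prefundamental modules, twisted by a one-dimensional module. On each prefundamental module $\phi^+_i(z)$ acts polynomially on every weight space by an explicit construction, and the Drinfeld coproduct of Remark~\ref{Drinfeld-coproduct-1} is triangular with respect to the standard coproduct, via Lemma~\ref{lem:standard-Drinfeld-coproduct}. From this, polynomiality of $\phi_i^+(z)$ on every weight space would follow, which gives $\phi^+_{i,p}v=0$ for large $p$. The root-vector claim would then follow from the relations $[h_{i,m},x^\pm_{j,r}]\propto x^\pm_{j,m+r}$ of Theorem~\ref{thm:Drinfeld-presentation}.
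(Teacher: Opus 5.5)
The paper gives no proof of this statement; it quotes it from \cite{wang2023QQ}. So I am judging your proposal on its own, and it has a genuine gap exactly at the step you call the main obstacle.

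\textbf{The gap: Step~2.} Step~2 is asserted, not proved, and it carries essentially all the content of the theorem.
\begin{itemize}
\item Levendorskii--Soibelman convexity only says that $[E_{p\delta\pm\alpha},e_j^+]$ is a combination of ordered monomials in root vectors lying strictly between the two in the convex order, of total degree $p$ or $p+1$. Nothing forces any single factor to have degree comparable to $p$. Imaginary root vectors have weight zero, and real root vectors of both types $m\delta\pm\beta$ lie in that range, so the number of factors is not bounded by weight and the degree can be spread out.
\item Even when a factor of large degree is present, moving it to the right creates new commutators of the same kind, and you give no reason this terminates.
\item Before that factor reaches the right end, it acts on vectors such as $E_\eta w$ or $\phi^+_{i,r}w$. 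These are not known to lie in $W$, and they change with $p$. Since the threshold in the definition of $W$ depends on the vector, you get no uniform control.
\item The same circularity already appears for the Cartan modes. The current relation gives $\phi^+_{i,p}x^+_{j,0}w=\sum_{s=0}^{p}g_s\,x^+_{j,s}\phi^+_{i,p-s}w$, with $g_s$ the Taylor coefficients of $g_{ij}$. For the terms with $s$ large you need $x^+_{j,s}$ to kill the vectors $\phi^+_{i,r}w$, which is what you are trying to prove.
\item Step~1 for non-simple $\alpha$ meets the same problem. After writing $E_{p\delta-\alpha}$ as an iterated $q$-commutator, the high modes act on vectors $x^-_{k,s}\cdots v_0$, not on $v_0$.
\end{itemize}
Note also that Step~2 uses nothing about $L^{\bs}(\mathbf\Psi)$. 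It amounts to the claim that, in every $\U_q(\bs)$-module, the vectors killed by all root vectors of large degree form a submodule. Polynomiality of $\mathbf\Psi$ enters your argument only through $v_0$.

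\textbf{What is missing.} You need a vanishing statement that holds on every weight space at once. It should come from the structure of $L^{\bs}(\mathbf\Psi)$, not be propagated out of $v_0$. One way:
\begin{itemize}
\item Use $1$-finiteness, Corollary~\ref{1-finite}: all $\ell$-weights of $L^{\bs}(\mathbf\Psi)$ are $\mathbf\Psi$ times constants. This rests on the positive prefundamental decomposition and on $\chi_q(L^{\bs}(\mathbf\Psi_{i,a}))=[\mathbf\Psi_{i,a}]\chi_i$, not on the present theorem.
\item Feed it into the relation $\phi_i^+(z)x_j^\pm(w)=x_j^\pm(w)\phi_i^+(z)g_{ij}(zw)^{\pm1}$ between two weight spaces, triangularized as in the proof of the proposition on $\ell$-weight components of $x_i^\pm(z)$ in Section~2. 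On the semisimple part it says that $P(zw)\,\eta(z)\,X(w)$ has only finitely many non-zero $w$-modes, for some non-zero polynomial $P$ with $P(0)=0$. This forces $x^\pm_{j,m}$ to vanish on each weight space for $m$ large. The nilpotent parts are handled by induction along the flags.
\item Then $\phi^+_{i,p}$ is controlled through $[x^+_{i,p-r},x^-_{i,r}]$.
\item Real root vectors, once written with a bounded number of Drinfeld modes, pass through only finitely many weight spaces. That makes the bound uniform automatically.
\end{itemize}

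\textbf{Your alternative route does not supply this.}
\begin{itemize}
\item Polynomiality of the operator $\phi_i^+(z)$ on a prefundamental module is itself the $\phi^+$ half of the theorem, and it is not available from an explicit model in general.
\item Lemma~\ref{lem:standard-Drinfeld-coproduct} involves $\U_q(\mathfrak b^\sigma_-)$, which does not act on a tensor product of Borel modules. $\Delta_D$ also needs $\phi_i^-(z)$ on the first factor, and even in the setting of Lemma~\ref{Drinfeld-coproduct-module}, making sense of it already uses the theorem.
\item Triangularity controls only the diagonal blocks of $\phi_i^+(z)$, not its off-diagonal part.
\item The relation $[h_{i,m},x^\pm_{j,r}]\propto x^\pm_{j,m+r}$ cannot give vanishing. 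A polynomial $\phi_i^+(z)$ does not make the $h_{i,m}$ vanish for large $m$; already $\log(1-az)$ has all coefficients non-zero.
\end{itemize}
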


\begin{corollary}
\label{1-finite}
Every simple module with polynomial highest $\ell$-weight is isomorphic to a
tensor product of finitely many positive prefundamental modules and a
one-dimensional module. In particular, it is $1$-finite; that is, the family
of currents
\[
\{k_i^{-1}\phi_i^+(z)\}_{i\in I_0}
\]
has a single joint eigenvalue on the module.
\end{corollary}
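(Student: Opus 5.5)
The plan is to reduce to the case of a single positive prefundamental module and then deduce both assertions from Theorem~\ref{sufficientlylarge} together with the Drinfeld coproduct.

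First, let $\mathbf\Psi=(\psi_i(u))_{i\in I_0}$ be polynomial. Factor each $\psi_i(u)=\psi_i(0)\prod_{a}(1-ua)$ into linear factors. This writes $\mathbf\Psi$ as a product of a constant $\ell$-weight $[\omega]$, with $\omega(i)=\psi_i(0)$, and of positive prefundamental $\ell$-weights $\mathbf\Psi_{i,a}$ (suitably normalised, with the twist $a\mapsto a^{N_i}$ where $\sigma(i)=i$). Consider
\[
T=[\omega]\otimes\bigotimes L^{\bs}(\mathbf\Psi_{i,a})
\]
in $\mathcal O_{\bs}$. By Lemma~\ref{lem:standard-Drinfeld-coproduct}, the standard and Drinfeld coproducts are conjugate by $\mathcal R^{\lor}_{<0}$. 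This element is triangular with constant term $1\otimes1$, so $\ell$-weights multiply in tensor products. Hence the tensor product of highest $\ell$-weight vectors has $\ell$-weight $\mathbf\Psi$, and $L^{\bs}(\mathbf\Psi)$ is a subquotient of $T$.

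The key step is to show that $T$ is already simple. Here I would use $1$-finiteness of each factor, which I treat as the input. By Theorem~\ref{sufficientlylarge}, on $L^{\bs}(\mathbf\Psi_{i,a})$ we have $\phi^+_{j,p}=0$ for $p$ large. Combined with the known description of $\ell$-weights of a positive prefundamental module, namely $\mathbf\Psi_{i,a}$ times monomials in $A^{-1}$ whose $\phi^+$-series are rational of degree zero, this forces every $\ell$-weight to equal $\mathbf\Psi_{i,a}$ up to a constant weight factor. Equivalently, $k_j^{-1}\phi_j^+(z)$ has the single eigenvalue $(\mathbf\Psi_{i,a})_j(z)/\psi_{j}(0)$.

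Then, following the untwisted argument of Frenkel--Hernandez, I would show that a tensor product of $1$-finite simple modules with polynomial highest $\ell$-weights is simple. Any submodule contains a vector annihilated by the $e_i^+$ parts of weight maximal among its weights. $1$-finiteness pins down its $\ell$-weight, and the asymptotic vanishing $E_{p\delta\pm\alpha}v=0$ for large $p$ lets one compare with the "limit" action (via $\mathcal R$ and the factorisation $\mathcal R_{<0}\mathcal R_{=0}\mathcal R_{>0}$). This yields cyclicity of both $T$ and its dual on the highest vectors, and hence simplicity. This simplicity step is the main obstacle. If the direct argument stalls, I would try proving irreducibility by a limit/normalised-character argument: compare $\chi_q$ of $T$ with the product of the $\chi_q$'s, using the one-dimensional eigenvalue property.

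Finally, $1$-finiteness of $T$ follows from the coproduct formula $\Delta_D(\phi^+_i(z))=\phi^+_i(z)\otimes\phi^+_i(z)$, transported via Lemma~\ref{lem:standard-Drinfeld-coproduct}. The conjugation by the degree-zero, unipotent $\mathcal R^{\lor}_{<0}$ does not change joint generalized eigenvalues of the commuting family $k_i^{-1}\phi^+_i(z)$. The eigenvalue on $T$ is therefore the product of the single eigenvalues of its factors, and hence unique.
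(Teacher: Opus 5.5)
Your proposal has two genuine gaps.

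\textbf{1-finiteness of a single factor.} Your derivation of $1$-finiteness for a single factor $L^{\bs}(\mathbf\Psi_{i,a})$ is invalid. Polynomiality of the $\ell$-weights, from Theorem~\ref{sufficientlylarge}, together with the form $\mathbf\Psi_{i,a}M$ with $M$ a monomial in the $A_{j,b}^{-1}$, does not force $M$ to be constant. The poles of the $A^{-1}$'s can cancel against each other and against the zero of $\mathbf\Psi_{i,a}$. Already in type $A_2^{(2)}$, with the paper's $A_{1,b}=Y_{1,bq}Y_{1,bq^{-1}}Y_{1,-b}^{-1}$ and $\mathbf\Psi_{1,a}=(1-az)$, a direct computation gives
\[
A_{1,aq^2}A_{1,aq^4}A_{1,-aq^3}=q^{3}\,\frac{1-az}{1-aq^{6}z},
\qquad\text{hence}\qquad
\mathbf\Psi_{1,a}\,A_{1,aq^2}^{-1}A_{1,aq^4}^{-1}A_{1,-aq^3}^{-1}=q^{-3}(1-aq^{6}z).
\]
This is a polynomial $\ell$-weight that is not a constant multiple of $\mathbf\Psi_{1,a}$. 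Degree considerations do not help, since every $A_{j,b}$ has degree $0$. So the single-eigenvalue property of a positive prefundamental module is strictly stronger than polynomiality. It is the $q$-character formula \eqref{q-character-identity}, $\chi_q(L^{\bs}(\mathbf\Psi_{i,a}))=[\mathbf\Psi_{i,a}]\chi_i$, from \cite[Theorem~4.24]{wang2023QQ}, which is exactly what the paper invokes. Once that formula is granted, your last paragraph does yield $1$-finiteness of $L^{\bs}(\mathbf\Psi)$. For this you do not even need $T$ to be simple, since a subquotient of a $1$-finite module is $1$-finite.

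\textbf{Simplicity of $T$.} The first assertion, $L^{\bs}(\mathbf\Psi)\simeq T$, rests entirely on the simplicity of $T$, and you do not prove it. Your sketch cannot work as written. In a $1$-finite module every vector has $\ell$-weight $\mathbf\Psi$ up to a constant weight, so ``$1$-finiteness pins down the $\ell$-weight'' of a primitive vector in a submodule gives no information that could exclude primitive vectors of lower weight. The appeal to a ``limit action'' through the factorisation of $\mathcal R$, and to cyclicity of $T$ and of its dual, is a description of a hoped-for argument, not an argument. Your fallback is circular: $\chi_q$ of a tensor product is always the product of the $\chi_q$'s of the factors, so comparing them says nothing about simplicity. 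What actually has to be shown is the lower bound $\chi(L^{\bs}(\mathbf\Psi))\ge\prod_k\chi(L^{\bs}(\mathbf\Psi_{i_k,a_k}))$ for the simple module. That is precisely the content the paper imports by adapting \cite[Theorem~4.11]{FrenkelHernandez-Bax}, and your proposal supplies no mechanism to replace it.
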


\begin{proof}
The proof is parallel to the untwisted case
\cite[Theorem~4.11]{FrenkelHernandez-Bax}. The $1$-finiteness follows from the proof of \cite[Theorem~4.24]{wang2023QQ}.
\end{proof}

\begin{lemma}
\label{Drinfeld-coproduct-module}
Let $V\in\mathcal O_{\hgs}$ and $M=L^{\bs}(\boldsymbol{\Psi})$ with $\boldsymbol{\Psi}$ polynomial $\ell$-weight. Then the Drinfeld
coproduct endows $V\otimes M$ with a well-defined
$\mathcal U_q(\bs)$-module structure. We denote the resulting module by
$V\otimes_D M$.
\end{lemma}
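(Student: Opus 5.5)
We have to show that for $V\in\mathcal O_{\hgs}$ and $M=L^{\bs}(\boldsymbol\Psi)$ with polynomial $\boldsymbol\Psi$, the Drinfeld coproduct formulas of Remark \ref{Drinfeld-coproduct-1} give well-defined operators on $V\otimes M$ for the generators of $\U_q(\bs)$, and that these operators satisfy the relations. We follow the untwisted argument of Hernandez (and Frenkel--Hernandez). The idea is to reduce everything to finite sums by combining Lemma \ref{xaction} on the $V$ side with the polynomiality and $1$-finiteness of $M$ from Corollary \ref{1-finite} and Theorem \ref{sufficientlylarge}.

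\textbf{Well-definedness.} First we check that each Drinfeld generator lying in $\U_q(\bs)$ acts by a finite sum on any $v\otimes w$.
\begin{itemize}
\item For $\phi_i^+(z)$, the action is $\phi_i^+(z)\otimes\phi_i^+(z)$. Only nonnegative modes occur, so each coefficient is a finite sum.
\item For $x^+_{i,m}$ with $m\ge 0$, the term $x_{i,m}^+\otimes 1$ is harmless. The second term is $\sum_{p\ge 0}\phi^-_{i,-p}v\otimes x^+_{i,m+p}w$. Since $M$ is a simple module with polynomial highest $\ell$-weight, it is isomorphic to a tensor product of positive prefundamental modules and a one-dimensional module (Corollary \ref{1-finite}). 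By Theorem \ref{sufficientlylarge} and the $\ZZ$-grading, $x^+_{i,m+p}w=0$ for $p\gg0$, so this sum is also finite.
\item For $x^-_{i,r}$ with $r>0$, the expression is $1\otimes x^-_{i,r}+\sum_{p\ge0} x^-_{i,r-p}\otimes\phi^+_{i,p}$. On $M$, $1$-finiteness gives $k_i^{-1}\phi_i^+(z)=\Psi_i(z)/\Psi_i(0)$ up to a nilpotent part. Because $\Psi_i$ is a polynomial, this is a polynomial operator-valued series, so $\phi^+_{i,p}w=0$ for $p\gg0$, as Theorem \ref{sufficientlylarge} states directly. Hence this sum is finite as well.
\end{itemize}
Weight spaces remain finite-dimensional, so the infinite sums in the series relations reduce to finite ones.

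\textbf{Extending to all of $\U_q(\bs)$.} The Drinfeld generators do not generate $\U_q(\bs)$, since $e_0^+$ is missing. We therefore use Lemma \ref{lem:standard-Drinfeld-coproduct}, $\Delta(x)=(\mathcal R^\vee_{<0})^{-1}\Delta_D(x)\mathcal R^\vee_{<0}$. Here $\mathcal R^\vee_{<0}\in \U_q(\bs_-)^-\widehat\otimes\U_q(\bs)^+$ has degree zero and constant term $1\otimes1$. We would like to define the action of $x\in\U_q(\bs)$ on $V\otimes M$ by $\mathcal R^\vee_{<0}\,\Delta(x)\,(\mathcal R^\vee_{<0})^{-1}$.
\begin{itemize}
\item $\Delta(x)$ acts on $V\otimes M$ because $\Delta$ maps $\U_q(\bs)$ into $\U_q(\bs)\otimes\U_q(\bs)$ and $V$ restricts to a $\U_q(\bs)$-module.
\item $\mathcal R^\vee_{<0}$ acts on $V\otimes M$ because $V$ is a $\U_q(\lgs)$-module, so the first factor in $\U_q(\bs_-)^-$ acts. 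The infinite sum is finite on each vector by the weight-boundedness of category $\mathcal O$: the first factor lowers weights and the second raises them, while $M$ has weights bounded above.
\end{itemize}
On Drinfeld generators this conjugated action agrees with the formulas above. It automatically gives a homomorphism, since it is a conjugate of the honest representation $V\otimes M$ via $\Delta$.

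\textbf{Main obstacle.} The difficulty is making the conjugation by $\mathcal R^\vee_{<0}$ rigorous. We must show that it converges on $V\otimes M$, that it is invertible there, and that conjugation reproduces exactly the Drinfeld formulas rather than merely formal identities in a completion. Our plan is to grade by the $\ZZ$-degree of Remark \ref{grading} together with the weight. For $\mathcal R^\vee_{<0}$, each homogeneous component of fixed weight changes the $M$-weight upward by a positive root; since $M$ lies in $\mathcal O_{\bs}$, only finitely many components act nontrivially on a given vector. Inverting is then a unipotent inversion relative to this filtration. If this approach breaks down in the twisted case, the fallback is to verify the twisted Drinfeld relations directly with the finite sums, as in the proof of the untwisted analogue.
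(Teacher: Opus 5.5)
Your overall route is the one the paper intends; it defers to \cite[Lemma~5.6]{FJMM2017}. There, $x\in\U_q(\bs)$ acts on $V\otimes M$ by $\mathcal R^\vee_{<0}\Delta(x)(\mathcal R^\vee_{<0})^{-1}$, and Lemma~\ref{lem:standard-Drinfeld-coproduct} identifies this with $\Delta_D$ on Drinfeld generators. However, the step everything rests on is justified by an argument that does not work. That step is the claim that $\mathcal R^\vee_{<0}$ defines an invertible operator on $V\otimes M$.

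The second tensor factor of $\mathcal R^\vee_{<0}$ contains the root vectors $E_{p\delta+\alpha}$ for all $p\ge 0$. For fixed $\alpha\in\Delta^+$ these all have the same $\mathfrak t$-weight. Weight-boundedness of $M$ only bounds how many such factors can be applied to a given vector, so each single exponential truncates. It gives no control on $p$. Already the linear term $\sum_{p\ge0}F_{p\delta+\alpha}v\otimes E_{p\delta+\alpha}w$ is an infinite sum of vectors in one fixed finite-dimensional weight space of $V\otimes M$, and there is no topology in which it converges. Your repair via the $\ZZ$-grading fails for the same reason. That grading lives on the algebra, while $V$ and $M$ are not graded modules (the spectral parameters break it), so nothing forces all but finitely many homogeneous components to act by zero.

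The missing input is the first half of Theorem~\ref{sufficientlylarge}: $E_{p\delta+\alpha}w=0$ for $p\gg0$, for every $\alpha\in\Delta^+$ and every $w\in M$. You invoked that theorem only for the simple-root modes $x^+_{i,m}$ and for $\phi^+_{i,p}$. That does not cover the non-simple root vectors occurring in $\mathcal R^\vee_{<0}$.

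With this vanishing, only finitely many factors of the product act nontrivially on each $v\otimes w$, and each of them truncates. Hence $\mathcal R^\vee_{<0}-1$ is a well-defined operator that strictly raises the $M$-weight. It is therefore locally nilpotent, and $\mathcal R^\vee_{<0}$ is invertible on $V\otimes M$. The conjugation argument then goes through. It agrees with $\Delta_D$ on Drinfeld generators because both sides of Lemma~\ref{lem:standard-Drinfeld-coproduct} now act by finite sums.

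Your fallback of checking the Drinfeld relations directly would not suffice in any case. As you note yourself, $\U_q(\bs)$ is not generated by the Drinfeld generators it contains ($e_0^+$ is missing). So that check could never produce a $\U_q(\bs)$-module structure.
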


\begin{proof}
Together with Theorem~\ref{sufficientlylarge}, the argument is the same as in \cite[Lemma~5.6]{FJMM2017}.
\end{proof}

The next lemma shows the submodules  of $V\otimes_D M$ has a very special form.

\begin{lemma}\label{submodule-structure}
    Let $W\subset V\otimes_D M$ be a nonzero submodule. Then there exists a linear subspace $V^{(0)}\subset V$ such that $W=V^{(0)}\otimes M$. The highest $\ell$-weight vector $v_0$ of $V$ belongs to $V^{(0)}$.
\end{lemma}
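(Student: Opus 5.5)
We follow the untwisted argument of \cite[Lemma~5.7]{FJMM2017} (see also \cite{Hernandezshfted}). Here $M=L^{\bs}(\boldsymbol\Psi)$ with $\boldsymbol\Psi$ polynomial. By Corollary~\ref{1-finite} this module is $1$-finite: the operators $k_i^{-1}\phi_i^+(z)$ act on $M$ with the single joint eigenvalue $k_i^{-1}\Psi_i(z)$. We also use that $M$ is a tensor product of positive prefundamental modules and a one-dimensional module.

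\textbf{Step 1: $W$ is stable under $\mathrm{id}\otimes \U_q(\bs)$.}
\begin{itemize}
\item Under $\Delta_D$, the Cartan currents act by $\phi_i^+(z)\otimes\phi_i^+(z)$.
\item By Theorem~\ref{sufficientlylarge}, for each $w\in V\otimes M$ only finitely many modes act nontrivially on the second factor.
\item Using $1$-finiteness of $M$, the operator $1\otimes (k_i^{-1}\phi_i^+(z))$ has a single generalized eigenvalue. The $\ell$-weight decomposition on $V\otimes_D M$ is therefore governed by the first factor.
\item Taking suitable polynomials in the modes $\phi_{i,m}^+$, with Vandermonde-type separation of the finitely many rational $\ell$-weights of $V$ in each weight space, we project $W$ onto pieces of the form $V_\eta\otimes(\text{stuff})$. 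Then the action of $1\otimes\phi^+_{i,m}$ on $W$ can be expressed through the action of $\Delta_D(\phi^+_{i,m})$ together with these projections.
\end{itemize}
Next, $\Delta_D(x_i^-(z))=1\otimes x_i^-(z)+x_i^-(z)\otimes\phi_i^+(z^{-1})$. Lemma~\ref{xaction} shows that $x_i^-(z)$ on $V$ is a finite sum of derivatives of delta functions supported at finitely many points $a^{N_i}$. Multiplying by a polynomial in $z^{N_i}$ that kills these, and extracting coefficients, isolates $1\otimes x_{i,r}^-$ acting on $W$. For $e_0^+$ and the $x^+$ modes, we pass through the fact that $M$ is generated by its highest $\ell$-weight vector in an appropriate sense. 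The cleaner route is to invoke simplicity of $M$ together with the fact that the $\mathrm{id}\otimes \U_q(\bs)$-action obtained this way is by a dense subalgebra on each weight space.

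\textbf{Step 2: conclude $W=V^{(0)}\otimes M$.}
\begin{itemize}
\item Since $M$ is simple, Jacobson density shows that any subspace of $V\otimes M$ stable under $1\otimes \U_q(\bs)$, and compatible with weight gradings, has the form $V^{(0)}\otimes M$. Here $V^{(0)}=\{v\in V: v\otimes M\subset W\}$.
\item The weight spaces are finite-dimensional, so density applies weight-space-wise.
\end{itemize}

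\textbf{Step 3: $v_0\in V^{(0)}$.}
\begin{itemize}
\item $V^{(0)}$ is stable under the operators of $V$ extracted in Step~1, namely $x^+_{i,r}\otimes 1$ and $\phi\otimes1$, since $\Delta_D(x_i^+)=x_i^+\otimes1+\phi^-\otimes x_i^+$ and the second term stays inside $V^{(0)}\otimes M$.
\item Hence $V^{(0)}$ is a nonzero subspace stable under the raising operators.
\item By category $\mathcal O$ boundedness of weights, repeated application of $x^+_{i,r}$ reaches the highest weight space. For a highest $\ell$-weight $V$, that space is $\CC v_0$.
\end{itemize}

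\textbf{Main obstacle.} Step~1 is the hardest part: showing that the individual operators $1\otimes e_0^+$ (not generated by Drinfeld generators) preserve $W$. We would first try to avoid this by proving Step~2 using only $1\otimes x^-_{i,r}$, $1\otimes\phi^+_{i,m}$ and cyclicity of $M$ from its lowest piece. If that fails, we would follow the twisted analogue of the FJMM argument via Theorem~\ref{sufficientlylarge}, which makes the root vectors $E_{p\delta\pm\alpha}$ act finitely.
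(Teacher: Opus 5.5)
\textbf{There is a genuine gap in Steps~1--2.} The FJMM-type separation (Theorem~\ref{sufficientlylarge} and Lemma~\ref{xaction}) only gives stability of $W$ under four kinds of operators:
\begin{enumerate}
\item $1\otimes\U_q(\bs)^-$;
\item the Cartan part on the second factor;
\item $x^+_{i,k}\otimes 1$;
\item the mixed operators $\sum_{j\ge0}\phi^-_{i,-j}\otimes x^+_{i,k+j}$.
\end{enumerate}
Nothing you write produces $1\otimes x^+_{i,k}$ or $1\otimes e_0^+$. Your appeal to ``a dense subalgebra on each weight space'' is exactly what still needs proof, so it is circular.

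Without a raising mechanism on the second factor, the Jacobson density step is false. Take $V\otimes M'$, where $M'$ is the sum of the weight spaces of $M$ strictly below its highest weight. It is stable under the first three kinds of operators above, but it is not of the form $V^{(0)}\otimes M$. Your fallback, ``cyclicity of $M$ from its lowest piece'', is also unavailable. $M$ is infinite-dimensional with weights unbounded below, and it is cyclic on its highest $\ell$-weight vector $m_0$, not on any lowest piece.

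\textbf{The missing idea is to use the mixed operator directly rather than trying to isolate $1\otimes x^+_{i,k}$.}
\begin{itemize}
\item Put $V^{(0)}=\{v\in V\mid v\otimes m_0\in W\}$. Since $M=\U_q(\bs)^-m_0$, stability under $1\otimes\U_q(\bs)^-$ already gives $V^{(0)}\otimes M\subset W$.
\item For the reverse inclusion, take $w=\sum_{r=1}^N v_r\otimes m_r\in W$ with the $m_r$ linearly independent weight vectors. By Theorem~\ref{sufficientlylarge}, choose $i$ and $k$ maximal such that $x^+_{i,k}m_r\neq0$ for some $r$.
\item With this choice, every term with $j>0$ in $\sum_{j\ge0}\phi^-_{i,-j}\otimes x^+_{i,k+j}$ vanishes on $w$. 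This leaves $\sum_r\phi^-_{i,0}v_r\otimes x^+_{i,k}m_r\in W$, and $\phi^-_{i,0}$ is invertible.
\item Iterating pushes the $M$-components up to $m_0$. An induction on $N$ handles the linear dependencies that can appear among the raised vectors $x^+_{i,k}m_r$.
\end{itemize}
This gives $W=V^{(0)}\otimes M$ without ever needing $1\otimes e_0^+$ or any density argument.

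\textbf{A smaller fix in Step~3.} Justify stability of $V^{(0)}$ under $x^+_{i,k}$ by applying $\Delta_D(x^+_{i,k})$ to $v\otimes m_0$. There the second term vanishes because $x^+_{i,k}m_0=0$. Your claim that this term ``stays inside $V^{(0)}\otimes M$'' would require $\phi^-$-stability of $V^{(0)}$, which you have not shown.
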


\begin{proof}
The proof is adapted from \cite[Lemma~5.7]{FJMM2017}. By the same argument in \cite[Lemma~5.7]{FJMM2017}, we have 

\begin{equation}
\label{x^pmcontain}
\begin{aligned}
    &\left( \sum_{j\ge 0}{x_{i,k-j}^{-}}\otimes \phi _{i,j}^{+} \right) W\subset W,
    \quad  (1\otimes x)W\subset W
    \quad(x\in \mathcal{U}_q(\mathfrak{b}^\sigma)^-),\\
    &(x_{i,k}^{+}\otimes 1)W\subset W,
    \quad
    \left( \sum_{j\ge 0}{\phi _{i,-j}^{-}}\otimes x_{i,k+j}^{+} \right) W\subset W.
\end{aligned}
\end{equation}
Here $i\in I$, $k$ is in the range for which the corresponding Drinfeld
generator belongs to the Borel subalgebra.

Let $m_0$ be the highest $\ell$-weight vector of $M$, and set
\[
V^{(0)}
=
\{v\in V\mid v\otimes m_0\in W\}.
\]
This is a linear subspace of $V$. Since $M$ is generated by $m_0$ under
$\mathcal{U}_q(\mathfrak b^\sigma)^-$, the inclusion
$(1\otimes x)W\subset W$ in \eqref{x^pmcontain} gives
\[
V^{(0)}\otimes M\subset W .
\]
It remains to prove the reverse inclusion. We shall prove the following assertion. Suppose that
\[
w=\sum_{r=1}^{N}v_r\otimes m_r\in W,
\]
where $v_r\in V$ and $m_1,\ldots,m_N$ are linearly independent weight
vectors of $M$. Then $v_r\in V^{(0)}$ for all $1\le r\le N$.

We prove this assertion by induction on $N$. If $N=1$, write $w=v_1\otimes m_1$. If $m_1$ is proportional to $m_0$, then
$v_1\in V^{(0)}$ by definition.  Otherwise, there exists an $i\in I$ and $k\ge0$ such that $x^+_{i,k}.m_1\ne 0$. By Theorem \ref{sufficientlylarge}, there exists the largest $k$ such that $x^+_{i,k}.m_1\ne 0$. Applying the second
operator in the second line of \eqref{x^pmcontain}, all terms with $j>0$
vanish, and hence
\[
\phi_{i,0}^{-}v_1\otimes x_{i,k}^{+}m_1\in W .
\]
Repeating the same argument finitely many
times, we obtain $v_1\otimes m_0\in W$. Thus $v_1\in V^{(0)}$.
Assume that the claim holds for $N^\prime<N$. 
We can find an $i\in I$ and $k\ge 0$ such that $x^+_{i,k}m_r\ne 0$ for some $r$ and $x^+_{i,l}m_s=0$ for all $l>k$, $1\le s\le N$. By (\ref{x^pmcontain}), $\sum_{r=1}^N{\phi_{i,0}^{-}v_r\otimes x_{i,k}^{+}m_r}\in W$. If the vectors $x_{i,k}^{+}m_1,\ldots,x_{i,k}^{+}m_N$ are linearly independent, then we repeat this procedure. After a finite number of steps, we have $\sum_{r=1}^N{v_r\otimes m_{r}'}\in W$, $m_r'\ne0$ for some $r$ and $m_1',\ldots,m_N'$ are not linearly independent. After reordering the indices, we can assume that $m_1',\ldots,m_{N'}'$ are linearly independent and $m_{r}^{\prime}=\sum_{s=1}^{N^{\prime}}{a_{r,s}m_{s}^{\prime}}$, $N^{\prime}<r<N$. Then 
$\sum_{r=1}^N v_r\otimes m_r^\prime=
\sum_{s=1}^{N^\prime}\left(v_s+\sum_{r=N^\prime+1}^N a_{r,s}v_r \right)\otimes m_s^\prime$.

By the induction hypothesis, we have $v_s+\sum_{r=N^\prime+1}^N a_{r,s}v_r \in V^{(0)}$, $1\le s\le N^\prime$. Thus $\sum_{s=1}^{N'}
\left(v_s+\sum_{r=N'+1}^{N}a_{r,s}v_r\right)\otimes m_s
	\in V^{(0)}\otimes M\subset W $.
Subtracting this element from the original vector $w$, we obtain
\[
\sum_{r=N'+1}^{N}
v_r\otimes
\left(m_r-\sum_{s=1}^{N'}a_{r,s}m_s\right)
\in W .
\]
By the induction hypothesis again, we have $v_r\in V^{(0)}(N'<r\le N )$. Since $v_s=v_{s}^{\prime}-\sum_{r=N^{\prime}+1}^N{a_{r,s}v_r}(1\le s\le N^\prime)$, we have $v_s\in V^{(0)}(1\le r\le N^\prime)$. 

Then we have $W=V^{(0)}\otimes M$. Since $W\ne 0$, we can choose a nonzero $v\in V^{(0)}$. Moreover, by (\ref{x^pmcontain}), we have $(x_{i,k}^{+}\otimes 1)(v\otimes m_0)=x_{i,k}^{+}v\otimes m_0\in W$. This implies $x^+_{i,k}V^{(0)}\subset V^{(0)}$ for all $i\in I, k\ge 0$. This shows $v_0\in V^{(0)}$.    
\end{proof}

\section{Shifted twisted quantum affine algebras}
In this section, we introduce shifted twisted quantum affine algebras.
Starting from the Drinfeld presentation of the twisted quantum affine algebra
$\mathcal U_q(\lgs)$, we define the shifted twisted quantum algebra
$\mathcal U_q^{\mu_+,\mu_-}(\hat{\mathfrak g}^{\sigma})$ for 
$\mu_+,\mu_-\in\Lambda$ by modifying the leading terms of the Cartan
currents. We establish the triangular
decomposition (Proposition \ref{triangular}), and show that the algebra depends, up to isomorphism, only
on the sum $\mu=\mu_++\mu_-$ (Theorem \ref{up-to-iso}). Finally, in type $A_2^{(2)}$, we obtain an
embedding of the Borel subalgebra into the shifted twisted qauntum affine algebra $\U_q^\mu(A_2^{(2)})$ for $\mu$ antidominant (Proposition \ref{borel-injective}).

\subsection{Definition of shifted twisted quantum affine algebras}
	
For $\hgs=A_{2n}^{(2)}\,(n\ge 2)$,
$A_{2n-1}^{(2)}\,(n\ge 2)$, $D_{n+1}^{(2)}\,(n\ge 2)$,
$E_6^{(2)}$, and $D_4^{(3)}$, the Drinfeld presentation of the twisted
quantum affine algebra $\mathcal U_q(\lgs)$ was
recalled in Section~$1$. Let $\Lambda'=\bigoplus_{i\in I_0}\mathbb Z\omega_i^\vee$ be the coweight lattice of $\mathfrak g^\sigma$.
	
\begin{definition}\label{basic-definition}
Let  $M$ be the order of $\sigma$, $\mu_{+}$, $\mu_{-}\in \Lambda^\prime$ such that $M\mid \alpha_i\left( \mu_{\pm}\right)$ if $i=\sigma(i)$. Fix $\zeta=\exp\left(\frac{\mathrm{i}2\pi}{M} \right)$, ${\rm i} = \sqrt{-1}$. The shifted twisted quantum affine algebra
$\mathcal U_q^{\mu_+,\mu_-}(\hat{\mathfrak g}^{\sigma})$
is the associative $\mathbb C$-algebra generated by $x_{i,m}^{\pm}$, $\phi_{i,m}^{\pm}\left(i\in I_0, m\in \mathbb{Z} \right) $, $h_{i,r}\left(i\in I_0, r\in \mathbb{Z}\backslash \{0\} \right) $, subject to the following relations:

	\begin{align}
		&x_{i,r}^{\pm}=0,
		\qquad (i,r)\in (I_0\times\mathbb Z)\setminus I_{\mathbb Z},\\
		&h_{i,m}=0,
		\qquad (i,m)\in (I_0\times(\mathbb Z\setminus\{0\}))\setminus I_{\mathbb Z},\\
		&[\phi_{i,r}^{\pm},\phi_{j,r'}^{\pm}]
		=
		[\phi_{i,r}^{+},\phi_{j,r'}^{-}]
		=
		[\phi_{i,r}^{\pm},h_{j,m}]
		=
		[h_{i,m},h_{j,m'}]
		=0,\\
		&\phi_{i,-\alpha_i(\mu_+)}^+x_{j,r}^{\pm}
		=
		q^{\pm\sum_{s=1}^{M}C_{i,\sigma^s(j)}}
		x_{j,r}^{\pm}\phi_{i,-\alpha_i(\mu_+)}^+,\\
		&\phi_{i,\alpha_i(\mu_-)}^-x_{j,r}^{\pm}
		=
		q^{\mp\sum_{s=1}^{M}C_{i,\sigma^s(j)}}
		x_{j,r}^{\pm}\phi_{i,\alpha_i(\mu_-)}^-,\\
		&[h_{i,m},x_{j,r}^{\pm}]
		=
		\pm \frac{1}{m}
		\left(
		\sum_{s=1}^{M}
		\left[
		\frac{mC_{i,\sigma^s(j)}}{d_{i}}
		\right]_{q_{i}}
		\zeta^{ms}
		\right)
		x_{j,m+r}^{\pm},\label{hx-relation}\\
		&[x_{i,r}^{+},x_{j,r'}^{-}]
		=
		\frac{\sum_{s=1}^{M}\delta_{\sigma^s(i),j}\zeta^{sr'}}{N_i}
		\frac{\phi_{i,r+r'}^{+}-\phi_{i,r+r'}^{-}}
		{q_{i}-q_{i}^{-1}}.\label{xphi-relation}
	\end{align}
	Here the elements $\phi_{i,\pm r}^{\pm}$ are defined by the generating series
	\[
	\phi_i^{\pm}(z)
	=
	\sum_{r\in\mathbb Z}\phi_{i,\pm r}^{\pm}z^{\pm r}
	=
	z^{\mp\alpha_i(\mu_\pm)}
	\phi_{i,\mp\alpha_i(\mu_\pm)}^{\pm}
	\exp\left(
	\pm(q_{i}-q_{i}^{-1})
	\sum_{m\geq 1}h_{i,\pm m}z^{\pm m}
	\right),
	\]
with $\phi^\pm_{i,\mp\alpha_i(\mu_{\pm})}$ ($i\in I_0$) which are invertible.

Set $x_i^{\pm}(u)=\sum_{r\in\ZZ}x_{i,r}^{\pm}u^{-r}$. In terms of these currents, the remaining Drinfeld--Serre relations are as follows. For all $i,j\in I_0$,
	\begin{align}
		&\prod_{s=1}^{M}
		\left(u_1-\zeta^s q^{\pm C_{i,\sigma^s(j)}}u_2\right)
		x_i^{\pm}(u_1)x_j^{\pm}(u_2)=
		\prod_{s=1}^{M}
		\left(u_1q^{\pm C_{i,\sigma^s(j)}}-\zeta^s u_2\right)
		x_j^{\pm}(u_2)x_i^{\pm}(u_1).\label{xx-relation}
	\end{align}
	If $C_{i,j}=-1$ and $\sigma(i)\neq j$, then
	\begin{align}
	\operatorname{Sym}_{u_1,u_2}
		\Bigl\{
		P_{i,j}^{\pm}(u_1,u_2)
		\bigl(
		&x_j^{\pm}(v)x_i^{\pm}(u_1)x_i^{\pm}(u_2)\label{Serre1}\\
		&-(q^{2Md_{i,j}}+q^{-2Md_{i,j}})
		x_i^{\pm}(u_1)x_j^{\pm}(v)x_i^{\pm}(u_2)\notag\\
		&+x_i^{\pm}(u_1)x_i^{\pm}(u_2)x_j^{\pm}(v)
		\bigr)
		\Bigr\}=0 \notag.
	\end{align}
	If $C_{i,\sigma(i)}=-1$, then
	\begin{align}
		&\operatorname{Sym}_{u_1,u_2,u_3}
		\Bigl\{
		\bigl(
		q^{3/2}u_1^{\mp1}
		-(q^{1/2}+q^{-1/2})u_2^{\mp1}
		+q^{-3/2}u_3^{\mp1}
		\bigr)
		x_i^{\pm}(u_1)x_i^{\pm}(u_2)x_i^{\pm}(u_3)
		\Bigr\}=0,\label{Serre2}\\
		&\operatorname{Sym}_{u_1,u_2,u_3}
		\Bigl\{
		\bigl(
		q^{-3/2}u_1^{\pm1}
		-(q^{1/2}+q^{-1/2})u_2^{\pm1}
		+q^{3/2}u_3^{\pm1}
		\bigr)
		x_i^{\pm}(u_1)x_i^{\pm}(u_2)x_i^{\pm}(u_3)
		\Bigr\}=0 \label{Serre3}.
	\end{align}
\end{definition}

It follows from \cite[Remark~2.7]{wang2023QQ} and \cite{chen2023twisted}
that the Drinfeld currents satisfy the following relations:

\begin{align}
    &\phi_i^+(z)x_j^\pm(w)=x_j^\pm(w)\phi_i^+(z)g_{ij}(zw)^{\pm1},\label{phi+x}\\
    &\phi_i^-(z)x_j^\pm(w)=x_j^\pm(w)\phi_i^-(z)g_{ji}(1/zw)^{\mp1},\label{phi-x}\\
    &\left[ x_{i}^{+}\left( z \right) ,x_{j}^{-}\left( w \right) \right] =\frac{1}{N_i(q_{i}-q_{i}^{-1})}\sum_{s=1}^M{\delta _{\sigma ^s\left( i \right) ,j}}\label{x+x-}\\
    &\qquad \quad\times\left( \phi _{i}^{+}\left( z^{-1} \right) \delta \left( \frac{\zeta ^{-s}w}{z} \right) -\phi _{i}^{-}\left( z^{-1} \right) \delta \left( \frac{\zeta ^{-s}w}{z} \right) \right),\notag
\end{align}
where 
\begin{align*}
g_{ij}(z)=\frac{\prod_{s=1}^M{\left( q^{C_{i,\sigma ^s\left( j \right)}}-\zeta ^sz \right)}}{\prod_{s=1}^M{\left( 1-\zeta^sq^{C_{i,\sigma ^s\left( j \right)}}z \right)}}, \quad \delta \left( z \right) =\sum_{n\in \mathbb{Z}}{z^n}.
\end{align*}
A direct computation gives $g_{ij}(z)=g_{ji}(z^{-1})^{-1}$.

\begin{remark}
For later use, we set
\[
\Lambda=
\left\{\mu\in\Lambda'\ \middle|\ M\mid \alpha_i(\mu)
\text{ whenever } \sigma(i)=i
\right\}.
\]
We also define
\[
\Lambda^+
=
\left\{
\mu\in\Lambda
\ \middle|\
\alpha_i(\mu)\ge 0
\text{ for all } i\in I_0
\right\}.
\]
An element $\mu\in\Lambda$ is called dominant if $\mu\in\Lambda^+$. Thus the $\mu_+$ and $\mu_-$ appearing below will always be assumed
to belong to $\Lambda$.
\end{remark}

The following triangular decomposition is proved by the same argument as
\cite[Theorem~4.2]{chen2023twisted}.

\begin{proposition}\label{triangular}
	Let
	$\mathcal U_q^{\mu_+,\mu_-}(\hat{\mathfrak g}^{\sigma})^{+}$
	be the subalgebra of
	$\mathcal U_q^{\mu_+,\mu_-}(\hat{\mathfrak g}^{\sigma})$
	generated by the elements $x_{i,m}^{+}$, and let
	$\mathcal U_q^{\mu_+,\mu_-}(\hat{\mathfrak g}^{\sigma})^{-}$
	be the subalgebra generated by the elements $x_{i,m}^{-}$. Let
	$\mathcal U_q^{\mu_+,\mu_-}(\hat{\mathfrak h}^{\sigma})$
	be the Cartan subalgebra generated by the elements $\phi_{i,m}^{\pm}$.
	Then the multiplication map
	\[
	\mathcal U_q^{\mu_+,\mu_-}(\hat{\mathfrak g}^{\sigma})^{-}
	\otimes
	\mathcal U_q^{\mu_+,\mu_-}(\hat{\mathfrak h}^{\sigma})
	\otimes
	\mathcal U_q^{\mu_+,\mu_-}(\hat{\mathfrak g}^{\sigma})^{+}
	\longrightarrow
	\mathcal U_q^{\mu_+,\mu_-}(\hat{\mathfrak g}^{\sigma})
	\]
	is an isomorphism of vector spaces.
\end{proposition}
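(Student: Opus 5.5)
The plan follows the standard route used for shifted quantum affine algebras (as in Finkelberg--Tsymbaliuk and Hernandez), adapted via \cite[Theorem~4.2]{chen2023twisted}. The argument has two halves. \emph{Surjectivity} of the multiplication map is a spanning statement. \emph{Injectivity} is obtained by building a large enough representation of $\U_q^{\mu_+,\mu_-}(\hgs)$.

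\textbf{Spanning.} Let $\U^{-}\cdot\U^{0}\cdot\U^{+}$ be the span of ordered monomials (negative part, then Cartan part, then positive part). We show it is stable under left multiplication by every generator; since it contains $1$, it is then the whole algebra.
\begin{enumerate}[label=(\alph*)]
\item Left multiplication by $x^-_{i,m}$ preserves the span trivially.
\item For $\phi^\pm_{i,r}$ and $h_{i,m}$, we move them past $\U^-$ using \eqref{hx-relation} together with the relations between $\phi^\pm_{i,\mp\alpha_i(\mu_\pm)}$ and $x^\pm_{j,r}$. These show that $\mathrm{ad}$ of the Cartan generators preserves the spans of the $x^-$-monomials. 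For $\phi^\pm_{i,r}$ with general $r$, we expand the generating series as polynomials in $h$'s times the invertible leading term.
\item For $x^+_{i,r}$, we commute it past a word in the $x^-_{j,s}$ using \eqref{xphi-relation}. Each commutator produces Cartan elements, which are then pushed right as in (b).
\end{enumerate}
This gives surjectivity without difficulty.

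\textbf{Injectivity: the subalgebras.} First we identify the subalgebras.
\begin{enumerate}[label=(\alph*)]
\item $\U^{\mu_+,\mu_-}(\hat{\mathfrak h}^\sigma)$ is a quotient of the commutative algebra $\CC[\phi^+_{i,-\alpha_i(\mu_+)}{}^{\pm1},\phi^-_{i,\alpha_i(\mu_-)}{}^{\pm1},h_{i,m}]$.
\item $\U^\pm$ are quotients of the algebras $\widetilde{\U}^\pm$ defined by generators $x^\pm_{i,r}$ subject only to the relations \eqref{xx-relation}, \eqref{Serre1}, \eqref{Serre2}, \eqref{Serre3} and the vanishing of $x^\pm_{i,r}$ off $I_\ZZ$.
\end{enumerate}
The crucial observation is that none of these defining relations involve the shift. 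Hence $\widetilde{\U}^\pm$ coincide with the corresponding algebras for $\U_q(\lgs)$ (equivalently, the twisted quantum affinization of \cite{chen2023twisted}). For those algebras, \cite{chen2023twisted} shows that $\widetilde{\U}^\pm$ embed into $\U_q(\lgs)$ by Theorem~\ref{thm:triangular-decomposition-loop}.

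\textbf{Injectivity: the module.} Put $\widetilde{\U}^0=\CC[\phi^{\pm}_{\rm lead}{}^{\pm1},h_{i,m}]$ (polynomial). Following Hernandez, let $\mathcal M$ be the left module
\[
\mathcal M=\widetilde{\U}^-\otimes\widetilde{\U}^0\otimes\widetilde{\U}^+ .
\]
The generators act as follows.
\begin{enumerate}[label=(\alph*)]
\item $x^-$ acts by left multiplication on the first factor.
\item $h$ and $\phi^\pm_{\rm lead}$ act by the derivation, respectively automorphism, of $\widetilde{\U}^-$ determined by \eqref{hx-relation}, tensored with multiplication on the second factor.
\item $x^+_{i,r}$ acts on $y\otimes c\otimes x$ by the inductive Leibniz formula. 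The formula puts $x^+_{i,r}$ through $y$, replacing each commutator with $x^-$ by $\frac{\ldots}{N_i}(\phi^+_{i,r+r'}-\phi^-_{i,r+r'})/(q_i-q_i^{-1})$ expanded in $\widetilde{\U}^0$, and at the end multiplies into $\widetilde{\U}^+$. Here $\phi^\pm_{i,k}$ is the coefficient of the shifted series; by definition it is zero for $\pm k<-\alpha_i(\mu_\pm)$.
\end{enumerate}
The map $u\mapsto u\cdot(1\otimes1\otimes1)$ sends an ordered product $y\,c\,x$ to $y\otimes c\otimes x$. Therefore, once $\mathcal M$ is known to be a module, the composite $\widetilde{\U}^-\otimes\widetilde{\U}^0\otimes\widetilde{\U}^+\to\U\to\mathcal M$ is the identity, so the multiplication map is injective. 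This also shows that $\U^\pm\cong\widetilde{\U}^\pm$ and $\U^0\cong\widetilde{\U}^0$.

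\textbf{Main obstacle.} The hard step is verifying that these operators satisfy all defining relations, above all the Serre relations \eqref{Serre1}--\eqref{Serre3} and \eqref{xx-relation} for the $x^+$ action. The relations for $x^-$ and the Cartan relations are immediate.

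For these checks we argue as in \cite{chen2023twisted} and \cite[Theorem~4.2]{Hernandezshfted}. The relation defects for $x^+$ acting on $y\otimes c\otimes x$ are expressed via commutators with $x^-$, which reduce to Cartan series. The vanishing of those combinations is a formal identity in the currents $\phi^\pm_i(z)$ and $x^-_j(w)$, identical to the unshifted computation. The unshifted computation only uses \eqref{phi+x}, \eqref{phi-x} and \eqref{x+x-}, and these hold verbatim in the shifted setting, because the shift multiplies $\phi^\pm_i(z)$ by the monomial $z^{\mp\alpha_i(\mu_\pm)}$, which commutes with everything in the $g_{ij}$ exchange relations.

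So we reduce to the unshifted identities. The one point needing care is that the Serre checks in the twisted case ($C_{i,\sigma(i)}=-1$, type $A_{2n}^{(2)}$) involve the polynomial prefactors $P^\pm_{i,j}$. We check that multiplying the Cartan currents by monomials does not break the required symmetrization cancellations, and we do this directly for \eqref{Serre2} and \eqref{Serre3}.
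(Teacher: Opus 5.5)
Your proposal is correct and follows the same route as the paper, which simply says the argument of \cite[Theorem~4.2]{chen2023twisted} goes through unchanged; your observation that the current relations \eqref{phi+x}, \eqref{phi-x}, \eqref{x+x-} are insensitive to the shift is exactly why that citation suffices. You do need one extra check. Your $\widetilde{\U}^-$ already contains the quadratic and Serre relations, so you must verify that the Leibniz-defined action of $x^+_{i,r}$ descends from the free algebra to $\widetilde{\U}^-$, i.e.\ that the commutator of $x^+_{i,r}$ with each $x^-$-relation lies in the ideal those relations generate times $\widetilde{\U}^0$; this is the mirror image of the check you flagged as the main obstacle, and the same unshifted computation handles it.
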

	
We obtain the following theorem, as with the homomorphism defined in \cite[Section $(5)$(i)]{FT}.
	
\begin{theorem}\label{up-to-iso}
	The algebra
	$\mathcal U_q^{\mu_+,\mu_-}(\hat{\mathfrak g}^{\sigma})$
	depends, up to isomorphism, only on the sum
	\[
	\mu=\mu_+ + \mu_- .
	\]
	More precisely, there is an algebra isomorphism
	\[
	\mathcal U_q^{\mu_+,\mu_-}(\hat{\mathfrak g}^{\sigma})
	\cong
	\mathcal U_q^{0,\mu_++\mu_-}(\hat{\mathfrak g}^{\sigma}) .
	\]
\end{theorem}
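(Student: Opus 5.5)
The plan is to imitate the shift homomorphisms of Finkelberg--Tsymbaliuk \cite[Section 5(i)]{FT}, which reindex the modes of the currents $x_i^\pm(z)$ so that a shift is moved from $\mu_+$ to $\mu_-$. For coweights $\nu_1,\nu_2\in\Lambda$ (so $M\mid\alpha_i(\nu_1),\alpha_i(\nu_2)$ whenever $\sigma(i)=i$), I would define an explicit map
\[
\iota_{\nu_1,\nu_2}:\ \U_q^{\mu_+,\mu_-}(\hgs)\longrightarrow \U_q^{\mu_++\nu_1+\nu_2,\ \mu_- -\nu_1-\nu_2}(\hgs)
\]
on generators. A cleaner special case that suffices is to move all of $\mu_+$ across at once. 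Concretely, put $\nu=\mu_+$ and define
\[
x_{i,r}^+\mapsto x_{i,r-\alpha_i(\nu)}^+,\quad
x_{i,r}^-\mapsto x_{i,r}^-,\quad
\phi_{i,r}^\pm\mapsto \phi_{i,r-\alpha_i(\nu)}^\pm,\quad
h_{i,m}\mapsto h_{i,m}.
\]
In current form this reads $x_i^+(z)\mapsto z^{-\alpha_i(\nu)}x_i^+(z)$ and $\phi_i^\pm(z)\mapsto z^{-\alpha_i(\nu)}\phi_i^\pm(z)$. The target is $\U_q^{0,\mu_++\mu_-}(\hgs)$. Indeed, $\phi^+_{i,-\alpha_i(\mu_+)}$ goes to $\phi^+_{i,0}$ and $\phi^-_{i,\alpha_i(\mu_-)}$ goes to $\phi^-_{i,\alpha_i(\mu_+)+\alpha_i(\mu_-)}$. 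So the leading invertible modes, and hence the allowed index ranges, match the definition with shifts $(0,\mu)$. The signs may need adjusting, $r\mapsto r\pm\alpha_i(\nu)$, to get the index ranges exactly right; I would fix them by this bookkeeping first. The divisibility hypothesis in Definition~\ref{basic-definition} guarantees that the map preserves the vanishing relations $x^\pm_{i,r}=0$ for $(i,r)\notin I_{\ZZ}$. For $\sigma(i)=i$ the shift $\alpha_i(\nu)$ is divisible by $M=d_i$ (or by the relevant $d_i$). I would check the type $A_{2n}^{(2)}$ node $n$ separately, since there $d_n=\tfrac12$ and the condition is vacuous.

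Next I would verify the relations one by one. The relations \eqref{hx-relation} and the commutativity relations are immediate, since the $h_{i,m}$ are fixed and the index shift on $x^+$ commutes with adding $m$. The $q$-commutation of $x^\pm_{j,r}$ with the leading modes $\phi^\pm$ holds because the leading modes map to leading modes. The relation \eqref{xphi-relation} works as follows. The left side $[x^+_{i,r},x^-_{j,r'}]$ maps to $[x^+_{i,r-\alpha_i(\nu)},x^-_{j,r'}]$, and the right side maps to $\phi^\pm_{i,r+r'-\alpha_i(\nu)}$, so both sides shift consistently. The one subtlety is the factor $\zeta^{sr'}$, which only involves the unshifted $x^-$ index, so it is unchanged. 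This is exactly why I shift $x^+$ rather than $x^-$. When $\sigma^s(i)=j$ we have $\alpha_i=\alpha_j$ on $\mu$, and that keeps the relation consistent for both indices.

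For the Drinfeld--Serre relations \eqref{xx-relation}, \eqref{Serre1}, \eqref{Serre2}, \eqref{Serre3} with the $+$ sign, each current $x^+_i(u)$ is multiplied by $u^{-\alpha_i(\nu)}$. In \eqref{xx-relation} and \eqref{Serre1} the monomial factor is the same on every term after symmetrization, because the relations are homogeneous in each current variable. For \eqref{Serre2} and \eqref{Serre3} all three currents have the same $i$, so the factor $(u_1u_2u_3)^{-\alpha_i(\nu)}$ is symmetric and factors out of $\operatorname{Sym}$. I expect this step, checking that the twisted Serre relations with the polynomial factors $P^\pm_{i,j}$ and the $u^{\mp1}$ coefficients remain valid, to be the main point to write carefully. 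It is nevertheless routine once the homogeneity is observed.

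Finally, bijectivity. The analogous map in the opposite direction, $x^+_{i,r}\mapsto x^+_{i,r+\alpha_i(\nu)}$ together with the corresponding shift of the $\phi$ modes, is also well defined by the same verification. The two maps are mutually inverse on generators, so $\iota$ is an algebra isomorphism. As an alternative check of injectivity one could use Proposition~\ref{triangular}: $\iota$ restricts to isomorphisms on each of the three triangular factors. Composing with the general $(\mu_+,\mu_-)\mapsto(0,\mu_++\mu_-)$ map gives the theorem.
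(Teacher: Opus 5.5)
Your proposal is correct and is essentially the paper's argument, which simply invokes the Finkelberg--Tsymbaliuk index-shift isomorphism of \cite[Section 5(i)]{FT}: shift the modes of $x^+_i$ and $\phi^\pm_i$, fix $x^-_{i,r}$ and $h_{i,m}$, and check the relations, exactly as you do. Fix the sign as $x^+_{i,r}\mapsto x^+_{i,r+\alpha_i(\mu_+)}$ and $\phi^\pm_{i,r}\mapsto \phi^\pm_{i,r+\alpha_i(\mu_+)}$ (in currents $x_i^+(u)\mapsto u^{\alpha_i(\mu_+)}x_i^+(u)$ but $\phi_i^\pm(z)\mapsto z^{-\alpha_i(\mu_+)}\phi_i^\pm(z)$, with opposite exponents because $x_i^+$ is expanded in $u^{-r}$ and $\phi_i^\pm$ in $z^{r}$); only this choice sends $\phi^+_{i,-\alpha_i(\mu_+)}$ to $\phi^+_{i,0}$ and $\phi^-_{i,\alpha_i(\mu_-)}$ to $\phi^-_{i,\alpha_i(\mu_++\mu_-)}$ while keeping \eqref{xphi-relation} consistent.
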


\begin{remark}\label{central-remark}
$(i)$ For $i\in I_0$, the product $\phi _{i,-\alpha _i\left( \mu + \right)}^{+}\phi _{i,\alpha _i\left( \mu - \right)}^{-}$ is central. The twisted quantum loop algebra $\mathcal{U}_q(\mathcal{L}\gs)$ is the quotient of $\mathcal{U}_q^{0,0}(\hgs)$ by idetifying $\phi _{i,0}^{+}\phi _{i,0}^{-}$ with $1$ for $i\in I$.

$(ii)$ In view of Theorem~\ref{up-to-iso}, we will simply denote $\mathcal{U} _{q}^{\mu}\left( \hat{\mathfrak{g}}^{\sigma} \right)=\mathcal{U} _{q}^{0,\mu}\left( \hat{\mathfrak{g}} ^{\sigma} \right).$
\end{remark}

Following the construction of the asymptotic algebra
$\widetilde{\mathcal U}_q(A_2^{(2)})$ in \cite{Garbali-Alexander},
and arguing as in \cite[Section~3.2]{Hernandezshfted}, we obtain the
following embedding result.

\begin{proposition}\label{borel-injective}
	Let $\mu\in -\Lambda^+$. Then there exists an injective algebra homomorphism
	\[
	I_\mu^\sigma:\mathcal U_q(\mathfrak b^\sigma)
	\hookrightarrow
	\mathcal U_q^\mu(A_2^{(2)}).
	\]
	Equivalently, $\mathcal U_q^\mu(A_2^{(2)})$ contains a subalgebra isomorphic
	to $\mathcal U_q(\mathfrak b^\sigma)$.
\end{proposition}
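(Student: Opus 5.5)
We follow the strategy of \cite[Section~3.2]{Hernandezshfted}: define the map on generators, check the defining relations of $\mathcal U_q(\bs)$, and prove injectivity by comparison with an algebra in which $\mathcal U_q(\bs)$ is already known to embed. In type $A_2^{(2)}$ we have $I_0=\{1\}$ and $\hat I_\sigma=\{0,1\}$. By Theorem~\ref{thm:Borel-presentation}, $\mathcal U_q(\bs)$ is presented by $e_0^+,e_1^+,k_0^{\pm1},k_1^{\pm1}$ with the Weyl relations and the two quantum Serre relations of orders $5$ and $2$ (since $C^\sigma_{0,1}=-4$ and $C^\sigma_{1,0}=-1$). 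We write $\mu=-N\omega_1^\vee$ with $N=-\alpha_1(\mu)\ge0$; since $\sigma(1)\neq1$ in $A_2$, there is no divisibility constraint on $N$.

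Inside $\mathcal U_q(\bs)$, the Drinfeld generators give $e_1^+=x_{1,0}^+$. The element $e_0^+$ is expressed through $k_1^{-1}$ and the modes $x_{1,1}^-$ together with a $q$-commutator of $x_{1,0}^-$-type modes. This is the standard description of the affine root vector in $A_2^{(2)}$, where $\alpha_0=\delta-2\alpha_1$, as in \cite{wang2023QQ} and \cite{Garbali-Alexander}.

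We therefore define $I_\mu^\sigma$ on generators by
\[
e_1^+\mapsto x_{1,0}^+,\qquad k_1^{\pm1}\mapsto (\phi^-_{1,-N})^{\mp1}\text{ (suitably normalized)},
\]
with $e_0^+$ sent to the corresponding expression in the shifted negative modes $x_{1,r}^-$, $r\geq1$, times $(\phi^{-}_{1,-N})^{2}$, and $k_0$ determined so that $k_0k_1^{2}$ (the central element $c$) goes to $1$. Since $\mu_+=0$, the mode $\phi^+_{1,0}$ is invertible and $\phi^-_{1,-N}$ is the invertible leading mode. The Weyl relations then follow directly from the relations $\phi^\pm x^\pm = q^{\pm\cdots}x^\pm\phi^\pm$ in Definition~\ref{basic-definition}.

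To check the Serre relations, we use the triangular decomposition (Proposition~\ref{triangular}). Both images of $e_0^+$ and $e_1^+$, apart from Cartan factors, lie in $\mathcal U_q^\mu(\hgs)^{-}\mathcal U_q^\mu(\hat{\mathfrak h}^\sigma)\mathcal U_q^\mu(\hgs)^+$. The relevant $x^+$–$x^-$ commutators produce $\phi^+_{1,r}-\phi^-_{1,r}$ with $r\geq0$, and for $r\geq 0$ with $\mu_+=0$, $\mu_-$ anti-dominant, the term $\phi^-_{1,r}$ vanishes unless $r\le -N\le0$. Hence every computation reduces to one carried out in the unshifted algebra $\mathcal U_q(\mathcal L\gs)$ or its Borel subalgebra, where the Serre relations are known. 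Following \cite{Hernandezshfted}, we realize this comparison via the asymptotic algebra $\widetilde{\mathcal U}_q(A_2^{(2)})$ of \cite{Garbali-Alexander}. Both $\mathcal U_q(\bs)$ and the subalgebra of $\mathcal U^\mu_q$ generated by $x^+_{1,r}$ ($r\ge0$), $x^-_{1,r}$ ($r\ge1$) and $\phi^+_{1,r}$ ($r\ge 0$) are quotients of it, and the relations involved never use $\phi^-$ except through its invertible leading term.

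For injectivity, we compose $I_\mu^\sigma$ with the triangular decomposition and compare PBW-type bases. By \cite[Lemma~3.3]{wang2023QQ}, $\mathcal U_q(\bs)\simeq\mathcal U_q(\bs)^-\otimes\mathcal U_q(\bs)^0\otimes\mathcal U_q(\bs)^+$, and these factors map into the corresponding factors of Proposition~\ref{triangular}. On $\mathcal U_q(\bs)^{\pm}$ the map is the identity on the subalgebras generated by $x^\pm$ modes; these are free of the shift, because the $x$-part relations \eqref{xx-relation}–\eqref{Serre3} are identical to the unshifted ones. They therefore embed, since $\mathcal U_q(\bs)^\pm\subset\mathcal U_q(\mathcal L\gs)^\pm$ and the latter are isomorphic to $\mathcal U_q^\mu(\hgs)^\pm$.

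On the Cartan part, $\phi^+_{1,r}$ ($r\ge0$) together with $k_1^{\pm1}$ map to algebraically independent elements, since $\mathcal U^\mu_q(\hat{\mathfrak h}^\sigma)$ is a polynomial algebra in the $h$'s with the leading modes inverted. We expect the main obstacle to be twofold: identifying the correct image of $e_0^+$, and verifying the order-$5$ Serre relation between $e_0^+$ and $e_1^+$. For these we would rely on the explicit asymptotic construction of \cite{Garbali-Alexander} rather than a direct computation.
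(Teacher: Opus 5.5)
Your high-level route is the one the paper invokes: define $I^\sigma_\mu$ on $e_0,e_1,k_0,k_1$, check the Weyl and Serre relations, obtain injectivity from the two triangular decompositions, and organize the comparison through the asymptotic algebra of \cite{Garbali-Alexander} as in \cite[Section~3.2]{Hernandezshfted}. The paper gives nothing beyond that citation. However, the justification you give for the Serre relations fails exactly where the shift matters.

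\textbf{The image of $e_0$ and the role of $\phi^-_{1,0}$.} Since $e_0$ has weight $\delta-2\alpha_1$, in $\U_q(\lgs)$ one has $e_0k_1^2=\kappa Y$ with $Y=[x^-_{1,0},x^-_{1,1}]_q$ (where $[a,b]_q=ab-q\,ba$) and $\kappa\in\CC^*$. Now $e_0k_1^2\in\U_q(\bs)^-$, and your injectivity step needs $I^\sigma_\mu$ to be the identity on $\U_q(\bs)^-$. So the image of $e_0$ must contain $x^-_{1,0}$; an expression in the modes $x^-_{1,r}$ with $r\ge1$ is impossible for mode-degree reasons. This is exactly why $\U_q(\bs)$ is not generated by its Drinfeld generators.

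Hence the Serre relations involve $[x^+_{1,0},x^-_{1,0}]\propto\phi^+_{1,0}-\phi^-_{1,0}$. Here $\phi^-_{1,0}$ equals $k_1^{-1}$ in $\U_q(\lgs)$, but it is $0$ in $\U_q^\mu(A_2^{(2)})$ when $N\ge1$, and an independent invertible element when $N=0$. Your observation that $\phi^-_{1,0}$ vanishes is therefore the discrepancy, not a reduction to the unshifted algebra.

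What is needed is a cancellation. In $[x^+_{1,0},Y]$ the $\phi^-_{1,0}$-terms combine to $-(\phi^-_{1,0}x^-_{1,1}-q\,x^-_{1,1}\phi^-_{1,0})/(q_1-q_1^{-1})$. This is $0$ because $\phi^-_{1,0}x^-_{1,1}=q\,x^-_{1,1}\phi^-_{1,0}$. So $[x^+_{1,0},Y]$ is the same nonzero multiple of $x^-_{1,1}\phi^+_{1,0}$ in both algebras. After this first step, both $\mathrm{ad}(e_1)^5(e_0)$ and the relation quadratic in $e_0$ involve only three ingredients, which agree in the two algebras:
\begin{itemize}
\item the commutator $[x^+_{1,0},x^-_{1,1}]$, where $\phi^-_{1,1}=0$ on both sides;
\item the $\phi^+$--$x$ relations;
\item the $x$--$x$ relations.
\end{itemize}
This is the content the asymptotic algebra is meant to encode. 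Saying that $\U_q(\bs)$ and a subalgebra of $\U_q^\mu$ are both quotients of it gives no map between them. Moreover, the subalgebra you name does not contain the image of $e_0$.

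\textbf{The choice of image for $k_1$.} The assignment $k_1\mapsto(\phi^-_{1,-N})^{-1}$ is incompatible with your injectivity argument. In $\U_q(\bs)$ one has $k_1=\phi^+_{1,0}$, so $k_1$ and the $\phi^+_{1,r}$ cannot map to algebraically independent elements. Also $(\phi^-_{1,-N})^{-1}=\theta^{-1}\phi^+_{1,0}$ with $\theta=\phi^+_{1,0}\phi^-_{1,-N}$, which is central but not scalar.

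Because the Weyl and Serre relations are homogeneous, your map is still a homomorphism if the natural one is. But it sends $x^-_{1,1}$ (a multiple of $[e_1,e_0k_1^2]k_1^{-1}$) to $\theta\,x^-_{1,1}$, and $x^+_{1,1}$ to $\theta^2x^+_{1,1}$. So it is not the identity on the $x^\pm$-modes and does not carry $\U_q(\bs)^\pm$ into $\U_q^\mu(\hgs)^\pm$, and the factor-by-factor comparison breaks.

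The repair is $k_1\mapsto\phi^+_{1,0}$, $e_1\mapsto x^+_{1,0}$, $e_0\mapsto\kappa Y(\phi^+_{1,0})^{-2}$. The cancellation above then gives $I^\sigma_\mu(x^-_{1,1})=x^-_{1,1}$. Passing through $h_{1,1}$, this yields $I^\sigma_\mu(\phi^+_{1,r})=\phi^+_{1,r}$ and $I^\sigma_\mu(x^\pm_{1,r})=x^\pm_{1,r}$ in the Borel range, together with $I^\sigma_\mu(e_0k_1^2)=\kappa Y$. Only with these identifications does your triangular-decomposition argument give injectivity.

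A minor point: $d_0=2$ and $d_1=\tfrac12$ give $C^\sigma_{0,1}=-1$ and $C^\sigma_{1,0}=-4$, not the other way round. So the quintic relation is $\mathrm{ad}(e_1)^5(e_0)=0$.
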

	
		

\section{\texorpdfstring{Category $\mathcal{O}^{sh}$}{Category O\^{}sh}}
	
In this section, we introduce and study the categories
$\mathcal O_\mu$ of representations of shifted twisted quantum affine
algebras. We define their weight and $\ell$-weight decompositions, classify
their simple objects by rational $\ell$-weights, and assemble them into the
direct sum category $\mathcal O^{sh}$ (Theorem \ref{rational-theorem}). We also study shift functors induced by shift homomorphisms.

\subsection{\texorpdfstring{The category $\mathcal{O}_{\mu}$ for the shifted twisted quantum affine algebras}{The category Omu for the shifted twisted quantum affine algebras}}
	
For $\mu_+,\mu_-\in \Lambda$, we introduce the following category.
\begin{definition}
The category $\mathcal O_{\mu_+,\mu_-}$ is the full subcategory of
$\mathcal U_q^{\mu_+,\mu_-}(\hat{\mathfrak g}^{\sigma})$-modules consisting
of modules $V$ satisfying the following conditions.

(i) $V$ admits weight decompositions
\[V=\bigoplus_{\omega\in\mathfrak t^*}V_\omega^+=\bigoplus_{\omega\in\mathfrak t^*}V_\omega^-,
\]
where 
\[
V_\omega^+=\left\{
v\in V\ \middle|\ \phi_{i,-\alpha_i(\mu_+)}^+v=\omega(i)v \text{ for all } i\in I_0\right\}
\]
and 
\[
V_\omega^-=\left\{v\in V\ \middle|\ \phi_{i,\alpha_i(\mu_-)}^-v=\omega(i)^{-1}v\text{ for all } i\in I_0\right\}.
\]

(ii) All weight spaces $V_\omega^+$ and $V_\omega^-$ are finite-dimensional.

(iii) There exist finitely many elements
$\omega_1,\ldots,\omega_s\in\mathfrak t^*$ such that, whenever $V_\omega^+\neq 0$ or $V_\omega^-\neq 0$, one has $\omega\in D(\omega_1)\cup\cdots\cup D(\omega_s),$
where $D(\omega_i)=\{\omega\in\mathfrak t^*\mid \omega\leq \omega_i\}.$
\end{definition}
	
\begin{remark}
$(i)$ In general $V_{\omega}^{+}$ and $V_{\omega}^{-}$ do not coincide  as $\phi^{+}_{i, -\alpha_i(\mu_+)}$ and $\phi^{-}_{i, \alpha_i(\mu_-)}$ are not inverse to each other.
		
$(ii)$ In view of Theorem~\ref{up-to-iso}, the category
$\mathcal O_{\mu_+,\mu_-}$ depends, up to equivalence, only on the sum
$\mu=\mu_++\mu_- .$
Therefore, we shall simply write
$ \mathcal O_\mu:=\mathcal O_{0,\mu}.$
\end{remark}
	
\begin{proposition}
From the definition of $\overline{\alpha_j}$ and defining relations, we have $\phi^{\pm}_{j,m}(V_{\omega}^{\pm})\subseteq V_\omega^\pm$, $x_{j,m}^{+}\left( V_{\omega}^{\pm} \right) \subseteq V_{\omega \overline{\alpha _j}}^{\pm}$,  $x_{j,m}^{-}\left( V_{\omega}^{\pm} \right) \subseteq V_{\omega \overline{\alpha _j}^{-1}}^{\pm}$. 	
\end{proposition}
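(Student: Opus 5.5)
The plan is to check each inclusion directly from the commutation relations in Definition~\ref{basic-definition}. This works because $V^\pm_\omega$ is defined as a simultaneous eigenspace of the invertible leading Cartan elements $\phi^+_{i,-\alpha_i(\mu_+)}$ (respectively $\phi^-_{i,\alpha_i(\mu_-)}$), $i\in I_0$.

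\emph{Cartan modes.} All $\phi^\pm_{j,m}$ commute with each other. This is the relation $[\phi^\pm_{i,r},\phi^\pm_{j,r'}]=[\phi^+_{i,r},\phi^-_{j,r'}]=0$; its consequence $[\phi^\pm,h]=0$ also covers the modes defined through the exponential generating series. Hence for $v\in V^+_\omega$ we have
\[
\phi^+_{i,-\alpha_i(\mu_+)}\phi^\pm_{j,m}v=\phi^\pm_{j,m}\phi^+_{i,-\alpha_i(\mu_+)}v=\omega(i)\phi^\pm_{j,m}v ,
\]
so $\phi^\pm_{j,m}v\in V^+_\omega$. The same computation with $\phi^-_{i,\alpha_i(\mu_-)}$ gives the statement for $V^-_\omega$. (Here $\phi^\pm_{j,m}$ is any mode of either current, and both spaces are preserved.)

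\emph{Modes $x^\pm_{j,m}$.} For $v\in V^+_\omega$, the relation
$\phi^+_{i,-\alpha_i(\mu_+)}x^\pm_{j,m}=q^{\pm\sum_{s=1}^M C_{i,\sigma^s(j)}}x^\pm_{j,m}\phi^+_{i,-\alpha_i(\mu_+)}$ gives
\[
\phi^+_{i,-\alpha_i(\mu_+)}\,x^\pm_{j,m}v=q^{\pm\sum_s C_{i,\sigma^s(j)}}\,\omega(i)\,x^\pm_{j,m}v .
\]
For $V^-_\omega$, the opposite sign in the $\phi^-$ relation is compensated by the inverse in the definition $\phi^-_{i,\alpha_i(\mu_-)}v=\omega(i)^{-1}v$. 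This gives the eigenvalue $\bigl(q^{\pm\sum_s C_{i,\sigma^s(j)}}\omega(i)\bigr)^{-1}$, so the weight is shifted by the same factor in both cases.

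The one real step is to identify the factor $q^{\sum_{s=1}^M C_{i,\sigma^s(j)}}$ with $\overline{\alpha_j}(i)$ for $i,j\in I_0$. We must check that $\sum_{s=1}^M C_{i,\sigma^s(j)}$ equals the exponent coming from $q_j^{C^\sigma_{j,i}}=q^{d_jC^\sigma_{j,i}}$ in the chosen normalisation. Since $DC^\sigma$ is symmetric, $d_jC^\sigma_{j,i}=d_iC^\sigma_{i,j}$. So I would compare the folded Cartan matrix with the orbit sums type by type, using the table \eqref{eq:my_d_i} and the standard folding description $C^\sigma_{i,j}$ in terms of $\sum_s C_{i,\sigma^s(j)}$ from \cite{Hernandeztwisted}. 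I expect this to be the only obstacle, namely the bookkeeping of normalisations: the orbit sum should equal $d_iC^\sigma_{i,j}$ up to the convention for the pairing $\overline{\alpha_j}(i)$, with special care for type $A_{2n}^{(2)}$ at $i=n$, where $d_n=\tfrac12$ and $C_{n,\sigma(n)}=-1$. If the constants differ by a uniform scalar, I would use the fact that the paper's convention for $\overline{\alpha}$ is precisely the one making the $k_i x^\pm_j$ relation of Theorem~\ref{thm:Drinfeld-presentation} read $k_ix^\pm_{j,r}=\overline{\alpha_j}(i)^{\pm1}x^\pm_{j,r}k_i$. That fact yields $x^\pm_{j,m}(V^\pm_\omega)\subseteq V^\pm_{\omega\overline{\alpha_j}^{\pm1}}$ directly.
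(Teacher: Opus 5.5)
Your proposal is correct and follows the paper's proof: both read off the eigenvalue of $\phi^+_{i,-\alpha_i(\mu_+)}$ (resp.\ $\phi^-_{i,\alpha_i(\mu_-)}$) on $\phi^\pm_{j,m}v$ and $x^\pm_{j,m}v$ from the commutativity of the Cartan modes and the $\phi x$ relations, and then identify $q^{\sum_{s=1}^M C_{i,\sigma^s(j)}}$ with $\overline{\alpha_j}(i)$. The paper simply asserts that last identification, whereas your type-by-type check does go through: $\sum_{s=1}^M C_{i,\sigma^s(j)}=d_iC^\sigma_{i,j}=d_jC^\sigma_{j,i}$ in every case, including $i=n$ in type $A_{2n}^{(2)}$, so your fallback argument is not needed.
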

\begin{proof}
For any $v\in V_\omega^{+}$,  $\phi_{j,m}^{\pm}(j\in I_0, m\in \ZZ)$ and $x_{j,m}^{+}(j\in I_0,m\in\ZZ)$, we have 
\begin{align*}
\phi_{i,-\alpha_i(\mu_+)}^{+}(\phi_{j,m}^{\pm}. v)&=\phi_{j,m}^\pm(\phi_{i,-\alpha_i(\mu_+)}^{+}. v)=\omega(i)\phi_{j,m}^{\pm}. v,\\
\phi_{i,-\alpha_i(\mu_+)}^{+}(x_{j,m}^{+}\cdot v)&=(\overline{\alpha _j}\left( i \right)) \omega(i)x_{j,m}^\pm. v.
\end{align*}  
Thus $\phi_{j,m}^{\pm}(V_{\omega}^{+})\subseteq V_{\omega}^{+}$  and  $x_{j,m}^+ (V_{\omega}^{+})\subseteq V_{\omega \overline{\alpha _j}}^{+}$.		
Similarly, we have the proposition of $x^-_{j,m}$.
\end{proof}	
Let $\mathfrak{r}$ be the group (under pointwise multiplication of rational functions) 
\[
\mathfrak{r} =\{\mathbf\Psi =(\Psi _i(z))_{i\in I_0}\in (\mathbb{C} (z))^{I_0}\mid \Psi _i(z)~\text{is regular at }z=0 ~\text{and}~  \Psi _i(0)\ne 0~ \text{for all}~i\in I_0\}
\]
and define $\mathfrak{r}_\mu=\{\mathbf \Psi=(\Psi_i(z))\in \mathfrak{r}\mid \deg \Psi_i(z)=\alpha_i(\mu) ~\text{for all}~i\in I_0\}$. 
	
For a $\mathcal{U} _{q}^{\mu _+,\mu _-}\left( \hat{\mathfrak{g}} ^{\sigma} \right)$-module $V$,  the linear subspace 
\begin{align*}
V_{\boldsymbol\Psi}
=
\left\{
v\in V
\ \middle|\
\text{for all } i\in I_0 \text{ and } m\in \NN,
\text{ there exists } p\ge1
\text{ such that }
(\phi_{i,m}^+-\Psi_{i,m}^+)^p v=0
\right\}
\end{align*}
is called the $\ell$-weight space of $V$ of $\ell$-weight $\mathbf{\Psi}$, where we use the expansion $\Psi _i\left( z \right) =\sum_{r\ge 0}{\Psi _{i,r}^{+}z^r}$ for all $i\in I_0$.

\begin{proposition}\label{key-proposition}
		Let $V$ be a representation in $\mathcal{O}_{\mu}$. For each weight space of $V$, there is a non-zero polynomial $P(z)$ so that for any $i\in I_0$, $P(z)(\phi_i^+(z)-\phi_i^-(z))$  and $P(z)x_i^{\pm}(z)$ are  zero on this weight space. The action of $\phi_i^+(z)$ and  $\phi_i^-(z)$ are rational of degree $\alpha_i(\mu)$ on this weight space and coincide as rational operators.
\end{proposition}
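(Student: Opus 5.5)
We follow Hernandez's argument for the untwisted shifted case (the rationality proposition in \cite{Hernandezshfted}, itself adapted from \cite{He2}). Here $\mu_+=0$ and $\mu_-=\mu$. Fix a weight $\omega$ and $i\in I_0$. Put $\lambda=\omega\overline{\alpha_i}$ and consider the finite-dimensional spaces $V_\omega$ and $V_\lambda$. By condition (iii) of $\mathcal O_\mu$ there are only finitely many weights above $\omega$. Both are finite-dimensional by (ii).

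\textbf{Annihilating polynomial for $x_i^+(z)$.} Consider the linear maps $x^+_{i,r}|_{V_\omega}\in \mathrm{Hom}(V_\omega,V_\lambda)$ for $r\in\ZZ$. They lie in a finite-dimensional space, so they satisfy a nontrivial linear dependence. To get a recursion with a fixed polynomial, we use the relation \eqref{hx-relation}. This relation gives $[h_{i,m},x^+_{i,r}]=c_m x^+_{i,r+m}$ with $c_m\neq0$ whenever $(i,m)\in I_\ZZ$. Since $h_{i,m}$ preserves weight spaces, $\mathrm{ad}(h_{i,N_i})$ acts as a shift operator on the span $E\subset \mathrm{Hom}(V_\omega,V_\lambda)$ of the $x^+_{i,r}|_{V_\omega}$. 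More precisely, on the operator space $\mathrm{End}(V_\lambda)\times \mathrm{End}(V_\omega)$, the map $T\colon X\mapsto h|_{V_\lambda}X-Xh|_{V_\omega}$ is a linear endomorphism of the finite-dimensional space $\mathrm{Hom}(V_\omega,V_\lambda)$, and it sends $x^+_{i,r}$ to $c\,x^+_{i,r+N_i}$. The minimal polynomial $Q$ of $T$ then gives $\sum_k a_k x^+_{i,r+kN_i}=0$ for all $r$, which is exactly $P(z)x_i^+(z)=0$ on $V_\omega$ with $P(z)=\sum_k a_k z^{kN_i}$. We must ensure $P\neq 0$ and $a_0\neq0$; to achieve $a_0\ne0$, remove factors of the variable. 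The case $h_{i,-N_i}$ can be used symmetrically if needed. The same argument works for $x_i^-(z)$ mapping $V_\omega$ to $V_{\omega\overline{\alpha_i}^{-1}}$. We take the product of all these polynomials over $i$ and over the two signs, and also over the relevant neighbouring weights used below.

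\textbf{Rationality of $\phi_i^\pm$.} Multiply \eqref{xphi-relation}, i.e. \eqref{x+x-}, by the polynomial. On $V_\omega$ we have $[x^+_{i,r},x^-_{i,s}]=x^+x^--x^-x^+$. The first term factors through $V_{\omega\overline{\alpha_i}^{-1}}$ and the second through $V_\lambda$. The polynomials $P$ constructed above for those neighbouring spaces kill $x_i^\pm(z)$ in the relevant variable. Hence $P(z)(\phi^+_{i}(z)-\phi^-_{i}(z))=0$ on $V_\omega$, after matching conventions $z\leftrightarrow z^{-1}$ and absorbing the $\zeta^s$-factors. The $\zeta^s$-factors are harmless because $P$ is a polynomial in $z^{N_i}$, possibly after replacing $P(z)$ by $\prod_s P(\zeta^s z)$. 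It follows that $\phi_i^+(z)$, a power series in $z$ times the invertible leading term, and $\phi_i^-(z)$, a Laurent series in $z^{-1}$ starting at $z^{\alpha_i(\mu)}$, are expansions at $0$ and $\infty$ of the single rational operator $R(z)=P(z)\phi_i^+(z)/P(z)$. This holds because $P\phi_i^+=P\phi_i^-$ is then a Laurent polynomial.

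\textbf{Degree.} At $0$, $R$ is regular with invertible value $\phi^+_{i,0}$. At $\infty$, $R$ behaves like $\phi^-_{i,\alpha_i(\mu)}z^{\alpha_i(\mu)}$ with $\phi^-_{i,\alpha_i(\mu)}$ invertible. So on each generalized eigenvector the degree, meaning the order at $\infty$ minus the order at $0$, is exactly $\alpha_i(\mu)$.

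The main obstacle is the first step. We must get a nonzero annihilating polynomial in the twisted setting, where only modes with $d_i\mid r$ survive and the structure constants $\sum_s[mC_{i,\sigma^s(i)}/d_i]_{q_i}\zeta^{ms}$ may vanish for some $m$. We must check there is a $\pm N_i$ with nonzero constant, using $q$ not a root of unity. If that fails, we instead use two consecutive admissible $m$.
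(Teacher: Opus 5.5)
Your proposal is correct and follows essentially the same route as the paper, which defers to Hernandez's argument (and to \cite{He2}, Proposition~3.8, i.e.\ the mechanism of Lemma~\ref{rationallemma1}). In both, finite-dimensionality of $\mathrm{Hom}(V_\omega,V_{\omega\overline{\alpha_i}^{\pm1}})$ together with the shift action of $\mathrm{ad}(h_{i,N_i})$ yields the annihilating polynomial for $x_i^\pm(z)$; relation \eqref{xphi-relation} transfers it to $\phi_i^+(z)-\phi_i^-(z)$; and invertibility of $\phi^+_{i,0}$ and $\phi^-_{i,\alpha_i(\mu)}$ pins the degree at $\alpha_i(\mu)$.

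Using the minimal polynomial (a two-sided linear recurrence) instead of the Jordan-decomposition form $\sum\lambda^m m^k f_{k,\lambda}$ is only a cosmetic difference. The structure constant you flag is nonzero in every case, because $q$ is not a root of unity: it equals $[2]_q$ for non-fixed nodes with $C_{i,\sigma(i)}=0$, $[2]_{q^M}$ for fixed nodes, and $[2]_{q^{1/2}}+[4]_{q^{1/2}}$ for the node $n$ of $A_{2n}^{(2)}$.
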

	
\begin{proof}
		As the same argument of \citep[Remark~4.4]{Hernandezshfted}, we can see for each $i\in I_0$, the action of $\phi^+_i(z)$ and $\phi^-_i(z)$ are rational on weight space of a representation in $\mo_\mu$ and coincide as rational operators (See also in \cite[Section~3.6]{GTL}). In particular on each weight space $\phi^+_i(z)$ is equal to $\phi^-_{i,\alpha_i{(\mu)}}z^{\alpha_i(\mu)}$ when $z\rightarrow \infty$ which implies the degree is $\alpha_i(\mu)$ as $\phi^+_{i,\alpha_i(\mu)}$ is invertible. The statement for $x_i^+(z)$ is proved as in \cite[Proposition~3.8]{He2}. We also can see it in the prove of Lemma \ref{rationallemma1}.
\end{proof}

As above, every object of $\mathcal O_\mu$
decomposes as a direct sum of its $\ell$-weight spaces, defined with respect
to the generalized eigenvalues of the currents $\phi_i^+(z)$. The simple representations are determined up to isomorphism by their highest $\ell$-weight $\mathbf{\Psi}$ which is rational with $\mathrm{deg}(\Psi_i)=\alpha_{i}(\mu)$. 
	
A representation $V$ in $\mathcal O_\mu$ is called a highest $\ell$-weight
representation of highest $\ell$-weight $\boldsymbol\Psi\in\mathfrak r_\mu$
if it is generated by a nonzero vector $v$ such that
\[
x_{i,m}^+.v=0
\qquad (i\in I_0,\ m\in\mathbb Z),
\]
and
\[
\phi_i^+(z).v=\Psi_i^+(z)v,
\qquad
\phi_i^-(z).v=\Psi_i^-(z)v
\qquad (i\in I_0),
\]
where $\Psi_i^+(z)$ and $\Psi_i^-(z)$ are the expansions of the  rational
function $\Psi_i(z)$ at $z=0$ and $z=\infty$, respectively:
\[
\Psi_i(z)
=
\sum_{m\ge 0}\Psi_{i,m}^+z^m
=
\sum_{m\ge -\alpha_i(\mu)}
\Psi_{i,-m}^-z^{-m}.
\]
	
	\begin{corollary}\label{nakajimacorollary}
		Assume that $\hat{\mathfrak g}=A_2^{(2)}$ and let $\mu\in -\Lambda^+$. Then a simple representation in  the category $\mathcal{O}_\mu$  is simple as a representation of $\mathcal{U}_q(\bs)$.
	\end{corollary}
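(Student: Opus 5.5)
The plan follows \cite[Corollary~4.6]{Hernandezshfted}, transported along the embedding $I_\mu^\sigma:\mathcal U_q(\bs)\hookrightarrow\mathcal U_q^\mu(A_2^{(2)})$ of Proposition~\ref{borel-injective}. Let $V=L(\mathbf\Psi)\in\mathcal O_\mu$ with $\mathbf\Psi\in\mathfrak r_\mu$ and highest $\ell$-weight vector $v$, and view $V$ as a $\mathcal U_q(\bs)$-module by restriction along $I_\mu^\sigma$. It suffices to prove two things. First, $V$ is cyclic over $\mathcal U_q(\bs)$, generated by $v$. Second, every nonzero $\mathcal U_q(\bs)$-submodule $W\subset V$ contains $v$. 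Together these give $W=V$.

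\textbf{Cyclicity.} I would use the explicit form of $I_\mu^\sigma$ coming from the asymptotic algebra of \cite{Garbali-Alexander}. The Drinfeld-type generators $x^-_{1,r}$ with $r>0$, as well as $x^+_{1,m}$ and $\phi^+_{1,m}$ with $m\ge0$, lie in $\U_q(\bs)$. I expect each of them to map, up to the invertible Cartan factor and a shift of index by an amount depending on $\alpha_1(\mu)\le 0$, to the corresponding generators of $\mathcal U^\mu_q(A_2^{(2)})$. By the triangular decomposition (Proposition~\ref{triangular}), $V=\U^\mu_q(\hgs)^-v$. The goal is then to show that every $x^-_{1,r_1}\cdots x^-_{1,r_k}v$ already lies in $I_\mu^\sigma(\U_q(\bs))v$, even for negative modes $r_j$.

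For this I would use Proposition~\ref{key-proposition}. On the finite-dimensional weight space $V_{\omega\overline{\alpha_1}^{-k}}$ there is a polynomial $P(z)$ with $P(z)x_1^-(z)=0$. This gives linear recursions among the modes $x^-_{1,r}$ with nonzero constant and leading coefficients. Using these recursions, each $x^-_{1,r}w$ can be expressed as a combination of $x^-_{1,s}w$ with $s>0$ large. Inducting on $k$ and running the argument weight space by weight space then yields cyclicity. The fact that $\mu$ is antidominant should guarantee that the positive-mode part of the currents survives under $I_\mu^\sigma$.

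\textbf{Submodules contain $v$.} Take $0\ne w\in W$ of weight $\omega'$. Since the weights of $V$ lie in $D(\omega)$ with finite-dimensional weight spaces, I would repeatedly apply $x^+_{1,m}$ with $m\ge 0$, which lie in $\U_q(\bs)$. If $x^+_{1,m}w=0$ for all $m\ge0$, then the recursion $P(z)x_1^+(z)=0$ on that weight space, from Proposition~\ref{key-proposition}, forces $x^+_{1,m}w=0$ for all $m\in\ZZ$. In that case $w$ is a highest weight vector for $\U^\mu_q(\hgs)$, so $w\in\CC v$ by simplicity of $V$. Otherwise some $x^+_{1,m}w\ne0$ with $m\ge0$, and we move to a strictly higher weight. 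Since heights are bounded, the process terminates at $v\in W$.

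\textbf{Main obstacle.} The delicate step is the recursion argument. It requires the leading and constant coefficients of $P(z)$ to be nonzero, which is true because $P$ can be chosen as a product of factors $(z^{N_i}-a^{N_i})$ with $a\ne0$, as in Lemma~\ref{xaction}. It also requires checking that the images under $I_\mu^\sigma$ really are the Drinfeld modes, up to invertible Cartan factors. I would verify this carefully from the construction of $I_\mu^\sigma$, which is the point where the twisted relations (the $\zeta$-factors and the $q^{\pm1/2}$ Serre relations) could cause trouble.
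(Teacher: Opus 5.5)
Your proposal is correct and follows essentially the paper's argument: the paper also uses rationality of the currents (via Lemma~\ref{rationallemma1}, equivalent to your recursion from Proposition~\ref{key-proposition}) to see that all modes $x^\pm_{1,m}$ are determined by the modes lying in $\mathcal U_q(\bs)$. From this it concludes, as you do, that $V$ is cyclic on its highest $\ell$-weight vector and has no other primitive vectors as a $\mathcal U_q(\bs)$-module. One small correction: $P(0)\neq 0$ does not come from Lemma~\ref{xaction}, which concerns $\mathcal O_{\hgs}$; it follows by dividing $P$ by the appropriate power of $z$, since $z^kQ(z)x_1^\pm(z)=0$ implies $Q(z)x_1^\pm(z)=0$.
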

	
	\begin{proof}
		Consider a simple representation $V$ in the category $\mathcal{O}_\mu$. From the proof of Lemma \ref{rationallemma1}, we can see the action of $x_{i,m}^+$ (resp. $x_{j,m}^-$) for $m\in \ZZ$ are determined by the action of these operators for $m\ge 1$, which are in $\mathcal{U}_q(\bs)$. Hence $V$ is generated by its highest weight vector and has no other primitive vector as a representation of $\mathcal{U}_q(\bs)$ (a primitive vector is a nonzero vector annihilated by all $x_{i,r}^+$ for $i\in I$ and $r\in \mathbb{Z}_{\ge 0}$). Therefore it is simple.
	\end{proof}
\begin{theorem}\label{rational-theorem}
For $\mu\in\Lambda$, the isomorphism classes of simple objects in
$\mathcal O_\mu$ are parametrized by $\mathfrak r_\mu$.
More precisely, for each $\boldsymbol\Psi\in\mathfrak r_\mu$ there exists a
unique simple highest $\ell$-weight module $L(\boldsymbol\Psi)$ in
$\mathcal O_\mu$, and every simple object of $\mathcal O_\mu$ is of this form.
\end{theorem}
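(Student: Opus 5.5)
The plan has three steps: show every simple object is a highest $\ell$-weight module with highest $\ell$-weight in $\mathfrak r_\mu$, build $L(\boldsymbol\Psi)$ for arbitrary $\boldsymbol\Psi\in\mathfrak r_\mu$ by a Verma-type construction, and prove that this module lies in $\mathcal O_\mu$. This follows the untwisted argument of \cite{Hernandezshfted}, with twisted inputs replacing the untwisted ones.

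\textbf{Simple objects are highest $\ell$-weight with highest $\ell$-weight in $\mathfrak r_\mu$.} Let $V\in\mathcal O_\mu$ be simple. By condition (iii) of the definition of $\mathcal O_{\mu_+,\mu_-}$, the weights lie in finitely many cones $D(\omega_s)$, so we can pick a weight $\omega$ that is maximal for $\le$. Since $x^+_{j,m}$ raises the weight by $\overline{\alpha_j}$, every $x^+_{j,m}$ annihilates $V_\omega$. The space $V_\omega$ is finite-dimensional and preserved by the commuting family $\phi^\pm_{i,m}$, so it contains a common eigenvector $v$.

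The triangular decomposition (Proposition \ref{triangular}) gives $V=\mathcal U_q^\mu(\hgs)^-\,v$, so $V$ is a highest $\ell$-weight module. By Proposition \ref{key-proposition}, on $V_\omega$ the currents $\phi_i^\pm(z)$ are expansions of a single rational function of degree $\alpha_i(\mu)$. Here $\mu_+=0$, so $\phi^+_{i,0}$ is invertible, and this function is regular and nonzero at $0$. Hence the eigenvalue $\boldsymbol\Psi$ of $v$ lies in $\mathfrak r_\mu$.

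\textbf{Uniqueness.} Suppose $V,V'$ are simple with the same highest $\ell$-weight. Both are quotients of the Verma-type module
\[
M(\boldsymbol\Psi)=\mathcal U_q^\mu(\hgs)\otimes_{\mathcal U^{\ge0}}\CC_{\boldsymbol\Psi},
\]
where $\mathcal U^{\ge0}$ is the subalgebra generated by the Cartan part and the $x^+$. By the triangular decomposition, $M(\boldsymbol\Psi)$ has one-dimensional top weight space, so it has a unique maximal submodule. Therefore $V\cong V'\cong L(\boldsymbol\Psi)$, the simple quotient of $M(\boldsymbol\Psi)$.

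\textbf{Existence: $L(\boldsymbol\Psi)$ lies in $\mathcal O_\mu$.} Conditions (i) and (iii) are immediate: the weights lie in $D(\omega_{\boldsymbol\Psi})$, and $\phi^+_{i,0}$ and $\phi^-_{i,\alpha_i(\mu)}$ act semisimply, since they act by scalars on the highest weight vector and by $q$-powers when commuted past the $x^-$.

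The real content is condition (ii), finite-dimensionality of weight spaces, and this is the main obstacle. I would prove it by reduction to known rational categories via a fusion or factorization argument. First factor $\boldsymbol\Psi$ as a constant $\ell$-weight times a product of $\ell$-weights of the form $(1-az)^{\pm1}$ in a single component $i$, which are the prefundamental-type weights.

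For these building blocks, finite-dimensionality of weight spaces can be deduced as follows:
\begin{itemize}
\item when $\mu$ is antidominant, via Borel-type arguments and Theorem \ref{Obsigma-classification};
\item when $\mu=0$, via Theorem \ref{Ohatgsigma-classification} after pulling back along the quotient $\mathcal U^0_q(\hgs)\to\U_q(\lgs)$.
\end{itemize}
For the general case, I would first try a direct Mukhin--Young style estimate on $L(\boldsymbol\Psi)$. Write $\boldsymbol\Psi=P/Q$ with polynomials $P,Q$. On the top weight space, $Q(z)\phi_i^\pm(z)$ acts by a polynomial. Using relation \eqref{x+x-} and the $x$--$\phi$ relations \eqref{phi+x} and \eqref{phi-x}, I would show inductively on height that, in the simple quotient, each current $x_i^-(z)v$ is annihilated by multiplication by a fixed polynomial in $z^{N_i}$. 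The twisted modification is that $\delta$-functions appear in $z^{N_i}$, following the pattern of Lemma \ref{xaction}.

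Consequently the span of $\{x^-_{i,r}v\}_r$ is finite-dimensional, of dimension at most $\deg P+\deg Q$. Iterating over height, each weight space of $L(\boldsymbol\Psi)$ is spanned by finitely many monomials $x^-_{i_1,r_1}\cdots x^-_{i_k,r_k}v$ with bounded modes, which gives condition (ii).

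The delicate point is the inductive step: a polynomial annihilating the currents at one height must propagate to the next height after passing through $\phi$. This requires the twisted rational factors $g_{ij}$, and the twisted Serre relations \eqref{Serre1}--\eqref{Serre3} must not obstruct the reduction. This is where I expect most of the work to lie.
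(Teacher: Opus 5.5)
Your first two steps match the paper: a maximal weight gives an $\ell$-highest vector whose eigenvalue lies in $\mathfrak r_\mu$ by Proposition~\ref{key-proposition}, and uniqueness comes from the Verma-type module. So does the height-one case of existence: a polynomial killing $\Psi_i^+(z)-\Psi_i^-(z)$, transported by \eqref{xphi-relation} and simplicity, kills $x_j^-(z)v$.

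The gap is condition (ii) at heights $h\ge 2$. This is the real content of the theorem, and the proposal does not establish it.

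\textbf{The building-block route cannot supply it.}
\begin{enumerate}
\item The fusion product of Section~5 is only constructed for factors already known to lie in $\mathcal O_{\mu_i}$. Lemma~\ref{rationallemma1} and the finite generation of the $\mathcal A$-form both need finite-dimensional weight spaces.
\item Comparing a negative prefundamental $L(\boldsymbol\Psi)$ with $L^{\mathfrak b^\sigma}(\boldsymbol\Psi)$ requires $L(\boldsymbol\Psi)$ to be cyclic and simple over the embedded Borel subalgebra. That is Corollary~\ref{nakajimacorollary}, which is itself proved through Lemma~\ref{rationallemma1} for modules already in $\mathcal O_\mu$. Moreover, no such embedding is provided in type $A_{2n}^{(2)}$, $n\ge 2$.
\end{enumerate}

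\textbf{The direct estimate stops at the inductive step.} The mechanism you name for that step does not work as stated. Once $h\ge 1$, $\phi_i^\pm(z)$ no longer acts by scalars. Commuting $x_k^+(w)$ past $x_i^-(z)$ therefore produces $(\phi_i^+-\phi_i^-)u$ for $u$ in a lower weight space, and your induction hypothesis contains no information about this.

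\textbf{How the paper closes the gap.} It uses a tool your sketch never invokes: the exchange relation \eqref{xx-relation} for the $x^-$ currents, in Chari--Pressley's mode reduction.
\begin{enumerate}
\item By induction, every weight space of smaller height is spanned by monomials $x^-_{j_2,l_2}\cdots x^-_{j_h,l_h}v$ with $|l_r|\le M_1$.
\item Take $x^-_{i_1,k_1}x^-_{i_2,k_2}\cdots x^-_{i_h,k_h}v$ with $|k_2|,\dots,|k_h|\le M_1$. The left-hand polynomial of the relation is monic of degree $M$ in $u_1$ with nonzero constant term. So the relation rewrites the vector as a combination of two kinds of terms: either $x^-_{i_2}$ comes first with mode within $M$ of $k_2$, or $x^-_{i_1}$ stays first with $|k_1|$ decreased.
\item Re-expand the tail by the height hypothesis and induct on $|k_1|$. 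Everything lands in the finite-dimensional space $\sum_{|k|\le M_1+M}x^-_{i_2,k}L_{\omega\eta^{-1}\overline{\alpha_{i_2}}}+\sum_{|k|<M}x^-_{i_1,k}L_{\omega\eta^{-1}\overline{\alpha_{i_1}}}$.
\end{enumerate}
No information about $\phi$ below the top weight space is used, and no Serre relation either.

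\textbf{Keeping your own route.} You would have to add to the induction hypothesis that, on each weight space of height $<h$, the currents $\phi_i^\pm(z)$ are expansions of one rational operator. You would then have to prove that this propagates:
\begin{enumerate}
\item By \eqref{phi+x}, \eqref{phi-x} and $g_{ij}(z)=g_{ji}(z^{-1})^{-1}$, the two series $\phi_i^\pm(z)x_j^-(w)u'$ involve the two expansions of the same function $g_{ij}(zw)^{-1}$.
\item By the previous step, $x_j^-(w)u'$ is a finite sum of derivatives of $\delta(b/w)$. So $w$ can be specialized to finitely many values $b$.
\item Only then do \eqref{x+x-} and simplicity give the annihilating polynomial at the next height.
\end{enumerate}
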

	
\begin{proof}
From the above discussion, we know for all $i\in I_0$, a simple representation in the category $\mathcal{O}_{\mu}$ has a highest $\ell$-weight $\mathbf{\Psi}$ satisfying $\mathrm{deg}(\Psi_i)=\alpha_i(\mu)$ and that $\phi^+_i(z)$, $\phi^-_i(z)$ coincide as rational operators on this representation. Therefore it remains to prove that, for every
$\boldsymbol\Psi\in\mathfrak r_\mu$, the corresponding simple highest $\ell$-weight module belongs to $\mathcal O_\mu$.
		
We consider a representation $L$ of highest $\ell$-weight $\mathbf{\Psi}$ with a highest weight vector $v$ (such a representation can be constructed from a Verma module of
highest $\ell$-weight as in \cite{chari-twisted-98} for instance). It suffices to prove its weight space $L_{\omega^\prime}$, $\omega^\prime\in \mathfrak{t}^*$, are finite-dimensional. We only prove for the positive part. Let $\omega=\mathbf{\Psi}(0)\in \mathfrak{t}^*$ be the highest weight of $L$. 

We follow the induction argument of \cite[Section~5, proof of (b)]{chari-95}. Let $\omega'\in\mathfrak t^*$ be a weight of $L(\boldsymbol\Psi)$, and write $\eta=\omega(\omega')^{-1}$.
Denote $\eta=\omega(\omega^\prime)^{-1}$ and $h$ as the height of $\eta$. We prove by induction on $h$ that the weight space $L(\boldsymbol\Psi)_{\omega\eta^{-1}}$ is finite-dimensional. If $h=0$, there is nothing to prove.  Suppose first that $h=1$, say $\eta=\overline{\alpha_j}$. We prove that $L(\boldsymbol\Psi)_{\omega\overline{\alpha_j}^{-1}}$ is finite-dimensional. By the properties of $\mathbf{\Psi}$, there is a non-zero polynomial $P(z)$ such that for any $i\in I_0$, the operator $P(z)(\phi_i^+(z)-\phi_i^-(z))$  is $0$ on $L_\omega$. For $i,j\in I_0$ and $s\in \ZZ$, by the relation  (\ref{xphi-relation}), 
\begin{align*}
x_{i,s}^+(P(z)x_j^-(z))=0
\end{align*}
on $L_\omega$. As $L$ is simple, we obtain $P(z)x_j^-(z)=0$ on $L_\omega$. This implies that $L_{\omega\overline{\alpha_j}^{-1}}$ is finite-dimensional.
		
Now assume $h\ge2$ and that the assertion is known for all smaller heights.
Fix a factorization 
$\eta=\overline{\alpha_{i_1}}\cdots\overline{\alpha_{i_h}}$ .
The weight space $L_{\omega \eta^{-1}}$    is spanned , in view of Proposition \ref{triangular}, by the vectors of the form 
\begin{align}
x_{i_1,k_1}^-x_{i_2,k_2}^-\cdots x_{i_h,k_h}^-v,
\qquad k_1,\ldots,k_h\in\mathbb Z .\label{vector-form}
\end{align}

It is enough to prove that, for fixed $i_1,\ldots,i_h$, these vectors span a finite-dimensional space; denote this space by $V_{i_1\cdots i_h}$. By the induction hypothesis, there exists $M_1\in \NN$ such that, for all $i\in \left\{i_1,\ldots,i_h\right\}$, $L_{\omega \eta^{-1}\overline{\alpha_{i}}}$ is spanned by the vectors of the form 
\begin{align}
x_{j_2,\ell_2}^-x_{j_3,\ell_3}^-\ldots x_{j_h,\ell_h}^-.v,
\end{align}
where $\overline{\alpha_{j_2}} ~\overline{\alpha_{j_3}}\cdots \overline{\alpha_{j_h}}=\eta\overline{\alpha_{i}}^{-1}$ and $\vert l_2\vert$, $\vert l_3\vert$, $\cdots$ , $\vert l_h\vert\le M_1$. 
		
When $M=2$ , it suffices to prove that $V_{i_1\cdots i_h}$ is contained in the space 
\begin{align}
W=\sum_{k_2=-M_1}^{M_1+2}{x_{i_2,k_2}^{-}}\cdot L_{\omega \eta ^{-1}\alpha _{i_2}}+x_{i_1,0}^{-}\cdot L_{\omega \eta ^{-1}\alpha _{i_1}}+x^{-}_{i_1,1}\cdot L_{\omega \eta ^{-1}\alpha _{i_1}},
\end{align}
since $W$ is finite-dimensional by the induction hypothesis.
		
For this, we prove, by induction on $k_1$, that the vector $(4.1)$ lies in $W$ for every $k_2,\cdots,k_h$ (we assume $k_1\ge 0$, the proof for $k_1\le 0$  being essentially the same). The case $k_1=0$ and $k_1=1$ are obvious. For the inductive step, note that we can assume that $\vert k_2\vert$, $\vert k_3\vert,\cdots,$ $\vert k_h\vert\le M_1$.

Using the relation \ref{xx-relation}, any vector  (\ref{vector-form})  can be written as a linear combination of the vectors 
\begin{align}
x_{i_2,k_2}^{-}x_{i_1,k_1}^{-}x_{i_3,k_3}^{-}\cdots x_{i_h,k_h}^{-}. v,\label{type1}\\
x_{i_2,k_2+1}^{-}x_{i_1,k_1-1}^{-}x_{i_3,k_3}^{-}\cdots x_{i_h,k_h}^{-}. v,\label{type2}\\
x_{i_2,k_2+2}^{-}x_{i_1,k_1-2}^{-}x_{i_3,k_3}^{-}\cdots x_{i_h,k_h}^{-}. v,\label{type3}\\
x_{i_1,k_1-1}^{-}x_{i_2,k_2+1}^{-}x_{i_3,k_3}^{-}\cdots x_{i_h,k_h}^{-}. v,\label{type4}\\ 
x_{i_1,k_1-2}^{-}x_{i_2,k_2+2}^{-}x_{i_3,k_3}^{-}\cdots x_{i_h,k_h}^{-}. v.\label{type5}
\end{align} 
But vectors of types (\ref{type1}), (\ref{type2}) and (\ref{type3}) obviously in $W$. For the vector $x_{i_2,k_2+1}^{-}x_{i_3,k_3}^{-}\cdots x_{i_h,k_h}^{-}. v$ and $x_{i_2,k_2+2}^{-}x_{i_3,k_3}^{-}\cdots x_{i_h,k_h}^{-}.v$, they belong to span\{$x_{j_2,\ell_2}^-x_{j_3,\ell_3}^-\ldots x_{j_h,\ell_h}^-.v$\},
where $\overline{\alpha_{j_2}} ~\overline{\alpha_{j_3}}\cdots \overline{\alpha_{j_h}}=\eta\overline{\alpha_{i}}^{-1}$ and $\vert l_2\vert$, $\vert l_3\vert$, $\ldots$ , $\vert l_h\vert\le M_1$. Then those of type (\ref{type4}) and (\ref{type5}) belong to $W$ by the induction hypothesis on $k_1$.
		
When $M=3$, the proof is similar.
		
This completes the proof of the proposition, and hence  that of Theorem  \ref{rational-theorem}.
\end{proof}
	
\begin{example}
For $M=2$, $i\in I_0$ and $a\in \CC^*$. When $i\ne \sigma(i)$, we can construct one-dimensional representation $L(\boldsymbol \Psi^+_{i,a})$, with the action of the $x^\pm_{j,m}$ equal to $0$ for $j\in I_0,m\in \ZZ$, and 
\begin{align*}
    \phi_j^\pm(z)=\Psi_j(z).
\end{align*}
where $\Psi^+_{j,a}=(1-az\delta_{i,j})$.

When $i=\sigma(i)$,  we can construct one-dimensional representation $L(\boldsymbol \Psi^+_{i,a})$, with the action of the $x^\pm_{j,m}$ equal to $0$ for $j\in I_0,m\in \ZZ$, and 
\begin{align*}
    \phi_j^\pm(z)=\Psi_j(z).
\end{align*}
where $\Psi^+_{j,a}=(1-a^2z^2\delta_{i,j})$.
\end{example}

\begin{definition}
For $i\in I_0$ and $a\in\mathbb C^*$, we call
$L(\boldsymbol\Psi_{i,a}^+)$ the positive prefundamental representation and
$L(\boldsymbol\Psi_{i,a}^-)$ the negative prefundamental representation.
\end{definition}
	
We now define the direct sum of abelian categories
\[
\mathcal O^{sh}
=
\bigoplus_{\mu\in\Lambda}\mathcal O_\mu .
\]
Set $\mathfrak r^{sh}
=\bigsqcup_{\mu\in\Lambda}\mathfrak r_\mu$.
By Theorem~\ref{rational-theorem}, the simple objects of
$\mathcal O^{sh}$ are parametrized by
$\mathfrak r^{sh}$.

\subsection{Shift functors}
Shift homomorphisms were introduced in \cite[Section~10(vii)]{FT} as
quantum affine analogues of the morphisms for shifted Yangians constructed
in \cite{FKPRW}. We define their twisted analogue for the types considered
here, excluding \(A_{2n}^{(2)}\).

Let \(\mu\in\Lambda\), \(\mu'\in-\Lambda^+\), and \(a\in\mathbb C^*\). For
\(i\in I_0\), set
$\iota_i=1+(M-1)\delta_{i=\sigma(i)}$
and
$n_i(\mu')=-\frac{\alpha_i(\mu')}{\iota_i}.$ Define a map
\[
\iota_{\mu,\mu',a}:
\mathcal U_q^\mu(\hat{\mathfrak g}^{\sigma})
\longrightarrow
\mathcal U_q^{\mu+\mu'}(\hat{\mathfrak g}^{\sigma})
\]
on Drinfeld currents by
\begin{align}
	x_i^+(z)
	&\longmapsto x_i^+(z),\\
	x_i^-(z)
	&\longmapsto
	\left(1-(az^{-1})^{\iota_i}\right)^{n_i(\mu')}
	x_i^-(z),\\
	\phi_i^\pm(z)
	&\longmapsto
	\left(1-(az)^{\iota_i}\right)^{n_i(\mu')}
	\phi_i^\pm(z).
\end{align}

\begin{theorem}
    The shift map $\iota_{\mu,\mu^{\prime},a}$ is an algebra homomorphism.
\end{theorem}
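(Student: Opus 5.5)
We check that the images of the generating currents satisfy every defining relation of $\mathcal U_q^{\mu+\mu'}(\hat{\mathfrak g}^{\sigma})$ in Definition~\ref{basic-definition}, written in current form. Throughout, set $f_i(z)=\bigl(1-(az)^{\iota_i}\bigr)^{n_i(\mu')}$. Since $\mu'\in-\Lambda^+$, $n_i(\mu')\in\mathbb Z_{\ge0}$ and $f_i$ is a polynomial. The divisibility $M\mid\alpha_i(\mu')$ when $\sigma(i)=i$, together with $\iota_i=M$ in that case, ensures $n_i(\mu')$ is an integer. Moreover $f_i$ only involves $z^{\iota_i}$, so it is compatible with the vanishing conditions $x^\pm_{i,r}=0$ and $h_{i,m}=0$ for $(i,r)\notin I_{\mathbb Z}$. (Here one should double-check that $N_i$-periodicity, i.e.\ powers of $z^{M}$ for fixed nodes, matches $\iota_i$.)

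\textbf{Cartan part and leading terms.} The image of $\phi_i^+(z)$ is $f_i(z)\phi_i^+(z)$. This is a power series with constant term $\phi^+_{i,0}$, which is still invertible. The image of $\phi_i^-(z)$ is $f_i(z)\phi_i^-(z)$. Its expansion at $z=\infty$ has leading term $(-a^{\iota_i})^{n_i(\mu')}z^{-\alpha_i(\mu')}\phi^-_{i,\alpha_i(\mu)}z^{\alpha_i(\mu)}$, so the top mode sits in degree $\alpha_i(\mu+\mu')$ and is invertible. Taking logarithms, $\log f_i(z)$ and $\log\bigl((-a^{\iota_i}z^{\iota_i})^{-n_i}f_i(z)\bigr)$ are central series in $z$ and $z^{-1}$ respectively. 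Hence the new $h_{i,\pm m}$ are the old ones shifted by scalars, which are zero unless $\iota_i\mid m$. The commutation relations among Cartan elements are therefore immediate. The relations \eqref{phi+x}, \eqref{phi-x}, the $\phi x$ relations for leading coefficients, and \eqref{hx-relation} are also unchanged, because multiplying $\phi_i^\pm(z)$ by a scalar function and $x_j^-(w)$ by a scalar function of $w$ does not affect relations that are homogeneous in each current. The same reasoning handles \eqref{xx-relation} and the Serre relations \eqref{Serre1}--\eqref{Serre3} for $x^+$, where nothing changes.

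\textbf{The $x^-x^-$ and Serre relations.} The relation \eqref{xx-relation} is linear in $x_i^-(u_1)$ and in $x_j^-(u_2)$ with scalar coefficients. Multiplying by $f_i(u_1^{-1})f_j(u_2^{-1})$ commutes with everything, so it is preserved. For the Serre relations \eqref{Serre1}--\eqref{Serre3} with the minus sign, the factor $f_i(u_1^{-1})f_i(u_2^{-1})f_j(v^{-1})$, respectively $\prod_k f_i(u_k^{-1})$, is symmetric in the symmetrized variables. It therefore factors out of $\operatorname{Sym}$, and the relations hold.

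\textbf{Main step: the relation \eqref{x+x-}.} Here we must show
\[
[x_i^+(z),f_j(w^{-1})x_j^-(w)]=\frac{1}{N_i(q_i-q_i^{-1})}\sum_{s}\delta_{\sigma^s(i),j}\,f_i(z^{-1})\bigl(\phi_i^+(z^{-1})-\phi_i^-(z^{-1})\bigr)\delta\!\left(\frac{\zeta^{-s}w}{z}\right).
\]
We use the delta function identity $g(w)\delta(\zeta^{-s}w/z)=g(\zeta^s z)\delta(\zeta^{-s}w/z)$, which is valid since $f_j$ is a Laurent polynomial. This reduces the claim to $f_j(\zeta^{-s}z^{-1})=f_i(z^{-1})$ whenever $j=\sigma^s(i)$. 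When $i$ is $\sigma$-fixed, $\iota_i=M$ and $\zeta^{-sM}=1$. When $i$ is not fixed, only the term with $j=i$, $s=M$ contributes for $i,j\in I_0$, so $\zeta^{s}=1$. In both cases $n_j=n_i$, and the identity holds.

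The main obstacle is bookkeeping rather than conceptual. First, the conventions $\phi^\pm(z^{-1})$ in \eqref{x+x-} must be matched against the mode relation \eqref{xphi-relation}, including the factor $\zeta^{sr'}$. Second, we must confirm that the resulting modes $\phi^+_{i,r}$ (for $r<0$) vanish and $\phi^-_{i,r}$ (for $r>\alpha_i(\mu+\mu')$) vanish as required. I would carry this out by translating everything to modes at the end and checking that both sides give the same finite linear combinations.
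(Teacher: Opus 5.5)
Your proposal is correct and takes the same route as the paper, whose proof is only the one-line assertion that the assignment respects the defining relations on Drinfeld currents. Your checks supply exactly those details: pulling the scalar Laurent polynomials out of the linear and symmetrized relations, shifting $h_{i,\pm m}$ by the coefficients of $\log f_i$, and reducing the $[x^+,x^-]$ relation to $f_i(\zeta^{-s}z^{-1})=f_i(z^{-1})$. Your parenthetical worry is settled because $d_i=N_i=\iota_i$ for $i\in I_0$ outside type $A_{2n}^{(2)}$.

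One slip to fix: the image $f_i(z)\phi_i^-(z)$ has top term $(-a^{\iota_i})^{n_i(\mu')}\phi^-_{i,\alpha_i(\mu+\mu')}z^{\alpha_i(\mu)}$. The factor $z^{-\alpha_i(\mu')}$ raises the target's top degree $\alpha_i(\mu+\mu')$ to the degree $\alpha_i(\mu)$ required in the source, so you have source and target swapped there. The same swap appears in your final vanishing check: the image modes of $\phi_i^-$ should vanish above $\alpha_i(\mu)$, not above $\alpha_i(\mu+\mu')$.
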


\begin{proof}
The above assignment is compatible with defining relations among Drinfeld currents.
\end{proof}

\begin{proposition} [\cite{FT}]
    The shift homomorphism $\iota_{\mu,\mu^{\prime},a}$ is injective.
\end{proposition}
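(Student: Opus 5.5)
To show that $\iota_{\mu,\mu',a}$ is injective, I would use the triangular decomposition of Proposition~\ref{triangular} to reduce the question to the three factors separately, as in the untwisted argument of \cite{FT}. On each factor the map has a simple shape: $x_i^+(z)\mapsto x_i^+(z)$, $x_i^-(z)\mapsto f_i(z^{-1})x_i^-(z)$ and $\phi_i^\pm(z)\mapsto g_i(z)\phi_i^\pm(z)$, where $f_i(w)=(1-(aw)^{\iota_i})^{n_i(\mu')}$ and $g_i(z)=(1-(az)^{\iota_i})^{n_i(\mu')}$ are polynomials with constant term $1$ and $n_i(\mu')\ge 0$. The relations used to define the map show that $\iota_{\mu,\mu',a}$ sends each of the subalgebras $\mathcal U^{\pm}$ and $\mathcal U(\hat{\mathfrak h}^\sigma)$ for $\mu$ into the corresponding subalgebra for $\mu+\mu'$. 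Since multiplication gives isomorphisms $\mathcal U^-\otimes\mathcal U^0\otimes\mathcal U^+\cong\mathcal U$ on both sides, injectivity of $\iota$ will follow from injectivity of each of the three restrictions.

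\textbf{The positive part.} The restriction $\mathcal U_q^\mu(\hgs)^+\to\mathcal U_q^{\mu+\mu'}(\hgs)^+$ is the identity on generators. The point is that both algebras are presented by the same generators $x_{i,r}^+$ and the same relations (\ref{xx-relation}), (\ref{Serre1})--(\ref{Serre3}), independently of the shift. This is part of the content of the proof of Proposition~\ref{triangular}, following \cite[Theorem~4.2]{chen2023twisted}. The restriction is therefore an isomorphism.

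\textbf{The Cartan part.} On $\mathcal U(\hat{\mathfrak h}^\sigma)$ the map is given mode by mode as follows.
\begin{itemize}
\item The invertible leading mode $\phi^-_{i,\alpha_i(\mu)}$ is sent to $\pm a^{\iota_i n_i(\mu')}\phi^-_{i,\alpha_i(\mu+\mu')}$ times a unit.
\item The remaining modes are sent to the modes $h_{i,m}$ plus $\frac{1}{q_i-q_i^{-1}}$ times the log-coefficients of $g_i$, which are scalars.
\end{itemize}
Both $\mathcal U^0$'s are the same commutative algebra: a polynomial algebra in the $h_{i,m}$ tensored with Laurent polynomials in the leading modes (with $\mu_+=0$ also contributing $\phi^+_{i,0}$). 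The map is thus a triangular change of variables, invertible on this commutative algebra.

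\textbf{The negative part (main obstacle).} Here $x^-_{i,r}\mapsto\sum_k c_{i,k}\,x^-_{i,r+k}$, where $(c_{i,k})$ are the coefficients of $f_i$, a finite sum with $c_{i,0}=1$. I would show this is injective on $\mathcal U^-$ by the PBW-type argument of \cite{FT}. Put the filtration or grading by total mode degree on $\mathcal U^-$ and order monomials in a PBW basis. The leading term of the image of a monomial is then the monomial itself, because $c_{i,0}=1$ and the other terms raise $r$.

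The difficulty is that $\mathcal U^-$ is infinite-dimensional in each weight with modes unbounded in both directions, so "leading term" needs care. My first choice would be to avoid leading terms entirely and use the shuffle algebra realization: $\mathcal U^-$ embeds in a shuffle algebra of symmetric rational functions, and the twisted version of that embedding is presumably available from \cite{chen2023twisted}. In that picture the map becomes multiplication by $\prod f_i(z_{i,k}^{-1})$, which is a nonzero divisor, so injectivity is immediate.

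As a fallback, I would use that $f_i$ is invertible in $\CC[[w]]$. This gives a left inverse on the completion in which $x^-_{i,r}\mapsto\sum_k d_{i,k}x^-_{i,r+k}$. That completion should be well defined by the standard completion of \cite{FT}, and the composite is the identity.

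Combining the three parts with Proposition~\ref{triangular} yields injectivity of $\iota_{\mu,\mu',a}$.
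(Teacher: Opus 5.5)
Your reduction through Proposition~\ref{triangular} is correct and is the natural route. The paper itself gives no argument beyond citing the type $A$ proof in \cite{FT} and asserting that it transfers, so your outline is more explicit than what is written. Since $\iota_{\mu,\mu',a}$ maps $\mathcal U^-$, $\mathcal U(\hat{\mathfrak h}^\sigma)$ and $\mathcal U^+$ into the corresponding subalgebras, and multiplication is bijective on both sides, injectivity does reduce to the three restrictions. Your treatment of $\mathcal U^+$ and of the Cartan part is fine. It uses the stronger form of the triangular decomposition that comes out of its proof: the halves are presented by \eqref{xx-relation}, \eqref{Serre1}--\eqref{Serre3} alone, and the Cartan part is free commutative. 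You correctly flag this.

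The negative part is where your write-up is not yet a proof, and where you make things harder than necessary.

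The shuffle route needs a \emph{faithful} shuffle realization of the twisted $\mathcal U^-$. That is a substantial theorem which the paper does not establish. You only conjecture that it can be extracted from \cite{chen2023twisted}, so the key step is left unproved.

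The completion fallback can be made to work, but you have the direction reversed. With $x_i^-(z)=\sum_r x^-_{i,r}z^{-r}$ and $f_i(z^{-1})=\sum_{k\ge 0}c_{i,k}z^{-k}$, one gets $x^-_{i,r}\mapsto\sum_k c_{i,k}x^-_{i,r-k}$, so the map lowers mode indices. The completion must therefore admit sums whose mode degree tends to $-\infty$. You would also still have to check that $\mathcal U^-$ injects into that completion and that the inverse assignment respects the relations.

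None of this is needed, because the leading-term idea you set aside works directly.

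The relations \eqref{xx-relation}, \eqref{Serre1}--\eqref{Serre3} are homogeneous for $\deg x^-_{i,r}=r$, so $\mathcal U^-=\bigoplus_{d\in\ZZ}\mathcal U^-_d$. Since $c_{i,0}=1$, every monomial $u$ of degree $d$, and hence every $u\in\mathcal U^-_d$, satisfies $\iota_{\mu,\mu',a}(u)-u\in\bigoplus_{e<d}\mathcal U^-_e$. A nonzero element of $\mathcal U^-$ is a \emph{finite} sum of homogeneous components, so its top component reappears unchanged in its image. Hence $\iota_{\mu,\mu',a}$ is injective on $\mathcal U^-$.

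Your worry that modes are unbounded in both directions is irrelevant here, since no single element involves infinitely many degrees. No PBW basis, shuffle algebra or completion is required.
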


\begin{remark}
$(i)$ This is proved explicitly in type $A$ in \cite[Proposition I.4]{FT}, but for all twisted types besides $A^{(2)}_{2n}$, the same argument works without any essential changes. 

$(ii)$ Consequently, for $\mu\in -\Lambda^+$, $\mathcal{U}_q^\mu(\hgs)$ contains a subalgebra isomorphic to $\mathcal{U}_q^0(\hgs)$ and so a subalgebra isomorphic to $\mathcal{U}_q(\bs)$.
\end{remark}

Let \(\mu\in\Lambda\), \(\mu'\in-\Lambda^+\), and \(a\in\mathbb C^*\). By
the homomorphism \(\iota_{\mu,\mu',a}\), we obtain a functor
\[
\mathcal R_{\mu,\mu',a}:
\mathcal O_{\mu+\mu'}
\longrightarrow
\mathcal O_\mu .
\]

From the defining formulas of $\iota_{\mu,\mu^{\prime},a}$, we obtain the following. 

\begin{proposition}\label{proposition-1}
	Let \(L(\boldsymbol\Psi)\) be a simple object of
	\(\mathcal O_{\mu+\mu'}\). Then
	\(\mathcal R_{\mu,\mu',a}(L(\boldsymbol\Psi))\) contains an
	\(\ell\)-highest vector of \(\ell\)-weight \(\boldsymbol\Psi'\), where
	\[
	\Psi_i'(z)
	=
	\Psi_i(z)
	\left(1-(az)^{\iota_i}\right)^{n_i(\mu')}
	\qquad (i\in I_0).
	\]
	Equivalently,
	\[
	\boldsymbol\Psi'
	=
	\boldsymbol\Psi
	\prod_{i\in I_0}
	\left(\boldsymbol\Psi_{i,a}^+\right)^{n_i(\mu')},
	\]
	where \(\boldsymbol\Psi_{i,a}^+\) denotes the one-dimensional
	\(\ell\)-weight whose \(i\)-th component is
	\(1-(az)^{\iota_i}\). In particular,
	\(\mathcal R_{\mu,\mu',a}(L(\boldsymbol\Psi))\) admits
	\(L(\boldsymbol\Psi')\) as a subquotient.
\end{proposition}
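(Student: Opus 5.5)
Let $v$ be a highest $\ell$-weight vector of $L(\boldsymbol\Psi)\in\mathcal O_{\mu+\mu'}$. I will show that the same vector $v$ is an $\ell$-highest vector for the pulled-back module $\mathcal R_{\mu,\mu',a}(L(\boldsymbol\Psi))$, i.e.\ for the $\mathcal U_q^\mu(\hgs)$-action through $\iota_{\mu,\mu',a}$, and then read off its $\ell$-weight directly from the defining formulas of the shift homomorphism.

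\emph{Step 1: $v$ is annihilated by the raising operators.} Since $\iota_{\mu,\mu',a}(x_i^+(z))=x_i^+(z)$, each mode $x_{i,m}^+$ of $\mathcal U_q^\mu(\hgs)$ acts on $\mathcal R_{\mu,\mu',a}(L(\boldsymbol\Psi))$ exactly as $x_{i,m}^+$ acts on $L(\boldsymbol\Psi)$. Hence $x_{i,m}^+v=0$ for all $i\in I_0$ and $m\in\ZZ$.

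\emph{Step 2: the Cartan action on $v$.} The image of $\phi_i^\pm(z)$ is $(1-(az)^{\iota_i})^{n_i(\mu')}\phi_i^\pm(z)$, where the prefactor is interpreted as the appropriate expansion: a polynomial in $z$ for $\phi^+$, and the same polynomial regarded as an element of $\CC[z]\subset z^{\text{finite}}\CC[[z^{-1}]]$ for $\phi^-$. This is where $n_i(\mu')\ge0$, i.e.\ $\mu'\in-\Lambda^+$, is used. Both currents act on $v$ by the expansions of $\Psi_i(z)$. Therefore $v$ is a common eigenvector, and both $\phi_i^+(z)$ and $\phi_i^-(z)$ act on it by the respective expansions of the single rational function
\[
\Psi_i'(z)=\Psi_i(z)\bigl(1-(az)^{\iota_i}\bigr)^{n_i(\mu')}.
\]
For consistency, $\deg\Psi_i'=\alpha_i(\mu+\mu')+\iota_i n_i(\mu')=\alpha_i(\mu+\mu')-\alpha_i(\mu')=\alpha_i(\mu)$, so $\boldsymbol\Psi'\in\mathfrak r_\mu$. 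This agrees with Theorem~\ref{rational-theorem}, since $\mathcal R_{\mu,\mu',a}(L(\boldsymbol\Psi))$ lies in $\mathcal O_\mu$. Note that the weight decompositions are preserved because $\phi^-_{i,\alpha_i(\mu)}$ is sent to a nonzero scalar multiple of $\phi^-_{i,\alpha_i(\mu+\mu')}$. The product form follows by rewriting $1-(az)^{\iota_i}$ as the $i$-th component of $\boldsymbol\Psi_{i,a}^+$.

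\emph{Step 3: the subquotient.} Let $W=\mathcal U_q^\mu(\hgs)\cdot v$. This is a highest $\ell$-weight module of highest $\ell$-weight $\boldsymbol\Psi'$. By the weight-space finiteness it inherits from the ambient module, it has a unique maximal proper submodule $W'$, namely the sum of submodules not containing the weight $\boldsymbol\Psi'(0)$. Then $W/W'\simeq L(\boldsymbol\Psi')$ is the desired subquotient.

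The main obstacle is Step 2: checking carefully that the expansion conventions for the prefactor on $\phi^-_i(z)$ are compatible with the shifted leading mode, so that the $\phi^-$ eigenvalue is indeed the expansion at $\infty$ of the same rational function.
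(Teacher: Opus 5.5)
Your proposal is correct and is essentially the paper's argument: the paper simply reads off the $\ell$-weight of the pulled-back highest vector from the defining formulas of $\iota_{\mu,\mu',a}$, and your Steps 1--2 make that explicit, including the degree check $\deg\Psi_i'=\alpha_i(\mu)$. In Step 3, the uniqueness of the maximal proper submodule of $\mathcal U_q^\mu(\hgs)\cdot v$ really comes from the top weight space being $\CC v$ and submodules being weight-graded, not from weight-space finiteness, but the conclusion is unaffected.
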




\section{Fusion Product}
	
We construct the fusiom product of representations of shifted twisted quantum affine algebras in the category $\mo^{sh}$ by using the deformed Drinfeld coproduct (Theorem \ref{q-character-compality}). We establish a simple module in $\mo^{sh}$ is a quotient of a fusion product of various prefundamental and constant representations (Corollary \ref{subquotient}).
	
\subsection{\texorpdfstring{$q$-characters}{q-characters}}
Following \cite{wang2023QQ,FR}, we define the
$q$-character map
\[
\chi_q:K_0(\mathcal O_\mu)\longrightarrow \mathcal E_{\ell,\mu}.
\]
Here $K_0(\mathcal O_\mu)$ is the Grothendieck group of the abelian category
$\mathcal O_\mu$, and $\mathcal E_{\ell,\mu}$ denotes the group of formal series with coefficients in
$\mathfrak r_\mu$ as in  \cite{wang2023QQ}.

For $V\in\mathcal O_\mu$, the $q$-character is defined by 
\begin{align*}
\chi_q(V)&=\sum_{\boldsymbol{\Psi}\in\mathfrak{r}_\mu}\dim(V_{\boldsymbol{\Psi}})[\boldsymbol{\Psi}],
\end{align*}
where $V_{\boldsymbol\Psi}$ is the $\ell$-weight space of $\ell$-weight $\boldsymbol\Psi$ as above and $[\boldsymbol\Psi]$ is the map $\delta_{\boldsymbol\Psi}$ (which  assigns $1$ to $\boldsymbol\Psi$ and $0$ to all other $\boldsymbol\Psi^\prime$ ).
	
With the same argument as in \cite{Hernandezshfted}, we obtain the following corollary.
\begin{corollary}
The $q$-character morphism $\chi_q$ is injective.
\end{corollary}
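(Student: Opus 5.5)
The plan is to follow the standard argument for injectivity of $q$-characters in category $\mathcal O$, as in \cite{Hernandezshfted} and \cite{FR}. First, $\chi_q$ is additive on short exact sequences. Indeed, by Proposition \ref{key-proposition} every object $V\in\mathcal O_\mu$ decomposes into finite-dimensional $\ell$-weight spaces inside each weight space. The currents $\phi_{i,m}^+$ commute and preserve weight spaces, so for an exact sequence $0\to V'\to V\to V''\to 0$ the generalized eigenspace decompositions satisfy $\dim V_{\boldsymbol\Psi}=\dim V'_{\boldsymbol\Psi}+\dim V''_{\boldsymbol\Psi}$. Hence $\chi_q$ descends to a group morphism on $K_0(\mathcal O_\mu)$.

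Next I would describe $K_0(\mathcal O_\mu)$ in the usual way. It consists of formal sums $\sum_{\boldsymbol\Psi\in\mathfrak r_\mu} c_{\boldsymbol\Psi}[L(\boldsymbol\Psi)]$ whose support has weights $\boldsymbol\Psi(0)$ lying in a finite union of cones $D(\omega_1)\cup\cdots\cup D(\omega_s)$. For each weight $\omega$, only finitely many $\boldsymbol\Psi$ with weight $\ge\omega$ contribute to the multiplicity at $\omega$. This uses Theorem \ref{rational-theorem} (simple objects are the $L(\boldsymbol\Psi)$, $\boldsymbol\Psi\in\mathfrak r_\mu$) together with finiteness of weight spaces. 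Each object then has well-defined composition multiplicities, obtained from a filtration argument on the finitely many weights above a given one, exactly as in the classical category $\mathcal O$.

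Then I would prove injectivity by a triangularity argument. The key observation is that $\chi_q(L(\boldsymbol\Psi))=[\boldsymbol\Psi]+\sum c_{\boldsymbol\Psi'}[\boldsymbol\Psi']$, where every $\boldsymbol\Psi'$ in the sum has weight $\boldsymbol\Psi'(0)<\boldsymbol\Psi(0)$ strictly. This holds because the highest $\ell$-weight space of $L(\boldsymbol\Psi)$ is one-dimensional (the weight space $L_\omega$ is $\mathbb C v$ by Proposition \ref{triangular}), and all other weights are obtained by applying the $x^-_{j,m}$, which lower the weight by some $\overline{\alpha_j}$.

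Now suppose $\chi_q(\sum c_{\boldsymbol\Psi}[L(\boldsymbol\Psi)])=0$ with some $c_{\boldsymbol\Psi}\neq0$. The support lies in finitely many cones, so we may choose $\boldsymbol\Psi$ in the support whose weight is maximal. Its coefficient $[\boldsymbol\Psi]$ in the $q$-character can only receive contributions from $L(\boldsymbol\Psi)$ itself, since any other $L(\boldsymbol\Psi'')$ containing $\boldsymbol\Psi$ as an $\ell$-weight would have strictly larger weight, contradicting maximality. Hence $c_{\boldsymbol\Psi}=0$, a contradiction.

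The only delicate point is the existence of a maximal element in the support. The set of weights in the support may be infinite, but it is bounded above by finitely many $\omega_k$. Above any given weight there are only finitely many weights of the form $\omega_k\beta^{-1}$ with $\beta$ in the positive cone of bounded height. So the partial order restricted to the support satisfies the ascending chain condition, and a maximal element exists. I expect this step to be the main (though mild) obstacle. The rest is formal given Theorem \ref{rational-theorem} and Proposition \ref{key-proposition}.
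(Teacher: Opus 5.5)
Your proposal is correct and is essentially the argument the paper relies on. The paper gives no proof beyond citing \cite{Hernandezshfted}, which uses the same standard triangularity argument: the highest $\ell$-weight of $L(\boldsymbol\Psi)$ occurs with multiplicity one and all other $\ell$-weights have strictly lower weight, and one then takes a maximal weight in the support. Your handling of the infinite formal sums in $K_0(\mathcal O_\mu)$ and of the existence of a maximal element supplies the details that this citation leaves implicit.
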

	
\subsection{Deformed Drinfeld coproduct}
The Remark \ref{Drinfeld-coproduct-1} can be obviously generalized to the shifted setting.
\begin{lemma}
    For any $\mu_1,\mu_2\in \Lambda$, there is a $\CC$-algebra homomorphism
\[
\Delta_D: \mathcal{U}_q^{\mu_1+\mu_2}(\hat{\mathfrak{g}}^\sigma)\to\mathcal{U}_q^{\mu_1}(\hat{\mathfrak{g}}^\sigma)\widehat{\otimes }\mathcal{U}_q^{\mu_2}(\hat{\mathfrak{g}}^\sigma).
\]
\end{lemma}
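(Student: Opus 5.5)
We define $\Delta_D$ on the generating currents by the same formulas as in Remark~\ref{Drinfeld-coproduct-1}, working throughout in the presentation $\U_q^{\mu}(\hgs)=\U_q^{0,\mu}(\hgs)$ of Remark~\ref{central-remark}:
\begin{align*}
\Delta_D(x_i^+(z))&=x_i^+(z)\otimes 1+\phi_i^-(z^{-1})\otimes x_i^+(z),\\
\Delta_D(x_i^-(z))&=1\otimes x_i^-(z)+x_i^-(z)\otimes \phi_i^+(z^{-1}),\\
\Delta_D(\phi_i^\pm(z))&=\phi_i^\pm(z)\otimes\phi_i^\pm(z).
\end{align*}
The completion $\widehat{\otimes}$ is the one allowing the infinite sums in the mode expansions of $\phi_i^-(z^{-1})\otimes x_i^+(z)$ and $x_i^-(z)\otimes\phi_i^+(z^{-1})$. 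The first check is that these assignments are compatible with the shifts. Since $\mu_+=0$ on every factor, each $\phi_i^+(z)$ starts in degree $0$. The series $\phi_i^-(z)$ in $\U_q^{\mu_k}$ has leading term $\phi^-_{i,\alpha_i(\mu_k)}z^{\alpha_i(\mu_k)}$, so the product $\phi_i^-(z)\otimes\phi_i^-(z)$ has leading term $\phi^-_{i,\alpha_i(\mu_1)}\otimes\phi^-_{i,\alpha_i(\mu_2)}$ in degree $\alpha_i(\mu_1+\mu_2)$. This leading term is invertible, so $\Delta_D(\phi_i^\pm(z))$ has exactly the shape required in $\U_q^{\mu_1+\mu_2}(\hgs)$. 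Taking logarithms then defines $\Delta_D(h_{i,m})=h_{i,m}\otimes1+1\otimes h_{i,m}$, and these vanish automatically when $d_i\nmid m$.

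Next we verify the defining relations of Definition~\ref{basic-definition}, in the current form (\ref{phi+x})--(\ref{x+x-}), together with (\ref{xx-relation}) and the Serre relations.
\begin{itemize}
\item The Cartan relations and the conjugation relations for $\phi^+_{i,0}$ and $\phi^-_{i,\alpha_i(\mu)}$ are immediate from grouplikeness.
\item Relations (\ref{phi+x}) and (\ref{phi-x}) hold on each summand of $\Delta_D(x_j^\pm(w))$. The $\phi$-factor commutes with $\phi$, and the $x$-factor produces the scalar $g_{ij}(zw)^{\pm1}$ exactly once.
\item For (\ref{x+x-}), the cross terms cancel by (\ref{phi-x}), since $\phi_i^-(z^{-1})$ and $\phi^+_j(w^{-1})$ commute with the $x$'s up to mutually inverse factors on the delta support. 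The remaining terms give
\[
\bigl(\phi_i^+(z^{-1})\otimes\phi_i^+(z^{-1})-\phi_i^-(z^{-1})\otimes\phi_i^-(z^{-1})\bigr)\delta(\zeta^{-s}w/z),
\]
which is exactly what is required, as in the untwisted computation.
\item For (\ref{xx-relation}) and the Serre relations (\ref{Serre1})--(\ref{Serre3}), we expand $\Delta_D$ of a product of currents into terms indexed by which currents land in the second factor. Using (\ref{phi-x}) we move each $\phi_i^-$ to the left in the first factor, picking up $g$-factors. Each term then becomes the corresponding relation in $\U_q^{\mu_1}$ or $\U_q^{\mu_2}$, multiplied by a common rational prefactor.
\end{itemize}

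The key observation for the last item is that every one of these identities involves only the relations (\ref{phi+x})--(\ref{xx-relation}). None of them involves the value of the shift: $\mu$ enters only through the leading mode of $\phi^\pm_i$, and in the unshifted case $\mu=0$ (Remark~\ref{Drinfeld-coproduct-1}) the coefficients of $\phi^\pm$ were never used. So the unshifted verification transfers verbatim.

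The main obstacle is the well-definedness of these products in the completion. The shift may make $\phi_i^-(z^{-1})$ start at a positive power of $z$, so one must check that products such as $\Delta_D(x_i^+(z))\Delta_D(x_j^+(w))$ have finitely many contributions to each graded component of $\U_q^{\mu_1}\widehat{\otimes}\U_q^{\mu_2}$. I would handle this with a $\ZZ$-grading by the loop degree, as in \cite{Hernandezshfted}: complete in the direction where the second tensor factor has increasing degree. The shift changes degrees only by the bounded amount $\alpha_i(\mu_1)$, which preserves this finiteness.
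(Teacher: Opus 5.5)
Your proposal is correct and follows the paper's route: the paper gives no separate proof and simply asserts that the Drinfeld coproduct of Remark~\ref{Drinfeld-coproduct-1} ``can be obviously generalized to the shifted setting''. Your argument spells out that assertion: you check that $\phi^-_{i,\alpha_i(\mu_1)}\otimes\phi^-_{i,\alpha_i(\mu_2)}$ is an invertible leading mode in degree $\alpha_i(\mu_1+\mu_2)$, and you observe that the relation checks never use the shift, which moves loop degrees only by a bounded amount and so does not affect the completion.

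One small imprecision. In \eqref{xx-relation} and the Serre relations, the mixed terms do not reduce to a relation in a single tensor factor. After commuting the $\phi^-$'s through via \eqref{phi-x}, they cancel crosswise in \eqref{xx-relation}, and in the Serre relations they vanish by a rational-function identity combined with \eqref{xx-relation}. This is part of the unshifted computation you are transferring anyway, so it does not affect the argument.
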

    
For $u$ a formal parameter and $\mu_1,\mu_2\in \Lambda$, 
\begin{align*}
		(\mathcal{U}_q^{\mu_1}(\hat{\mathfrak{g}}^\sigma)\otimes\mathcal{U}_q^{\mu_2}(\hat{\mathfrak{g}}^\sigma))((u))
\end{align*} 
is the algebra of formal Laurent series with coefficients in $\mathcal{U}_q^{\mu_1}(\hat{\mathfrak{g}}^\sigma)\otimes\mathcal{U}_q^{\mu_2}(\hat{\mathfrak{g}}^\sigma)$. The deformed Drinfeld coproduct is the map
\begin{align*}
\Delta_u:\mathcal{U}_q^{\mu_1+\mu_2}(\hat{\mathfrak{g}}^\sigma)\to(\mathcal{U}_q^{\mu_1}(\hat{\mathfrak{g}}^\sigma)\otimes\mathcal{U}_q^{\mu_2}(\hat{\mathfrak{g}}^\sigma))((u))
\end{align*}
given on the Drinfeld currents by
\begin{equation}\label{coproduct}
\begin{aligned}
\Delta_{u}(x_{i}^{+}(z))&=x_{i}^{+}(z)\otimes1+\phi_{i}^{-}(z^{-1})\otimes x_{i}^{+}(zu^{-1}),\\\Delta_{u}(x_{i}^{-}(z))&=1\otimes x_{i}^{-}(zu^{-1})+x_{i}^{-}(z)\otimes\phi_{i}^{+}(z^{-1}u),\\
\Delta_u(\phi_i^\pm(z))&=\phi_i^\pm(z)\otimes\phi_i^\pm(zu).
\end{aligned}  
\end{equation}

\begin{proposition}\label{Delta_u}
$\Delta_u$ is an algebra homomorphism. 
\end{proposition}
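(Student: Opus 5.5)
The strategy is to reduce the statement to the undeformed Drinfeld coproduct $\Delta_D$ of the preceding lemma, composed with a grading automorphism that introduces $u$. For a formal parameter $u$, consider on $\mathcal U_q^{\mu_2}(\hgs)$ the assignment
\[
\tau_u:\ x_i^\pm(z)\mapsto x_i^\pm(zu^{-1}),\qquad \phi_i^\pm(z)\mapsto \phi_i^\pm(zu),
\]
that is, $x_{i,r}^\pm\mapsto u^{r}x_{i,r}^\pm$, $\phi^\pm_{i,r}\mapsto u^{r}\phi^\pm_{i,r}$ and $h_{i,m}\mapsto u^m h_{i,m}$. With the convention $x_i^\pm(u)=\sum_r x^\pm_{i,r}u^{-r}$ while $\phi_i^\pm(z)=\sum\phi^\pm_{i,\pm r}z^{\pm r}$, the combinations appearing in $\Delta_u$, such as $\phi_i^-(z^{-1})\otimes x_i^+(zu^{-1})$, are consistent with this grading. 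One checks that $\Delta_u=(\mathrm{id}\otimes\tau_u)\circ\Delta_D$ on generators. Hence it suffices to establish two facts: that $\tau_u$ is an algebra homomorphism $\mathcal U_q^{\mu_2}(\hgs)\to \mathcal U_q^{\mu_2}(\hgs)[u,u^{-1}]$, and that the images actually land in $(\mathcal U_q^{\mu_1}\otimes\mathcal U_q^{\mu_2})((u))$ rather than in a completed tensor product.

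For the first fact, I would verify that every defining relation of Definition \ref{basic-definition} is homogeneous for the $\ZZ$-grading $\deg x^\pm_{i,r}=r$, $\deg\phi^\pm_{i,r}=r$, $\deg h_{i,m}=m$.
\begin{itemize}
\item Relations (\ref{hx-relation}) and (\ref{xphi-relation}) are visibly homogeneous, and so is the vanishing $x_{i,r}^\pm=0$ for $(i,r)\notin I_\ZZ$.
\item The generating-series definition of $\phi_i^\pm(z)$ is homogeneous because the leading term $\phi^\pm_{i,\mp\alpha_i(\mu_\pm)}$ carries degree $\mp\alpha_i(\mu_\pm)$, matching the factor $z^{\mp\alpha_i(\mu_\pm)}$.
\item Relation (\ref{xx-relation}) is homogeneous of total degree $M$ in $(u_1,u_2)$, so substituting $u_k\mapsto u_ku^{-1}$ multiplies both sides by the same power of $u$.
\item The Serre relations (\ref{Serre1})--(\ref{Serre3}) are homogeneous in the spectral variables as well: $P^\pm_{i,j}$ is homogeneous, and the coefficients $u_k^{\mp1}$ appear linearly.
\end{itemize}
This is the twisted analogue of the fact used in \cite{Hernandezshfted}, and it gives the first fact.

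The second, and main, step is to show that $\Delta_u$ is well defined and multiplicative as a map into formal Laurent series. Consider the term $\phi_i^-(z^{-1})\otimes x_i^+(zu^{-1})$. Taking the coefficient of $z^{-r}$ yields
\[
\sum_{k}\phi^-_{i,-k}\otimes x^+_{i,r+k}\,u^{r+k}.
\]
Since $\phi^-_{i,-k}$ vanishes for $-k>\alpha_i(\mu_1)$, only finitely many negative powers of $u$ occur, so each coefficient of each $u$-power is a finite sum in the algebraic tensor product. The same argument applies to the $x^-$ term, now using $\phi^+_{i,k}=0$ for $k<0$ in $\mathcal U_q^{\mu_1}(\hgs)=\mathcal U_q^{0,\mu_1}$. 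Thus $\Delta_u$ lands in $(\mathcal U_q^{\mu_1}\otimes\mathcal U_q^{\mu_2})((u))$.

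To conclude, I would check the relations directly in $((u))$, mirroring the proof for $\Delta_D$ (Remark \ref{Drinfeld-coproduct-1}, \cite{wang2023QQ}) together with the homogeneity observation. The delicate relation, and the main obstacle, is (\ref{x+x-}). There the cross terms $x_i^+(z)\phi_j^+(w^{-1}u)\otimes x^-_j(wu^{-1})$ and its mirror must cancel using (\ref{phi+x}) and (\ref{phi-x}). The leftover delta-function terms $\phi_i^\pm(z^{-1})\delta(\zeta^{-s}w/z)\otimes\phi^\pm_i(z^{-1}u)$ must then combine into $\Delta_u(\phi^\pm_i(z^{-1}))\,\delta$. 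The subtle point is that the rational functions $g_{ij}$ must be expanded in the correct direction in $u$, so that all products of series are well defined in $((u))$. I would handle this as in \cite[Section~5]{Hernandezshfted}: multiply through by the polynomial denominators of $g_{ij}$ before expanding, and use $g_{ij}(z)=g_{ji}(z^{-1})^{-1}$ to match the two cross terms. The Serre relations then follow because $\Delta_u$ restricted to the subalgebras generated by the $x^+$ (respectively $x^-$) agrees, up to the Cartan factors, with $(\mathrm{id}\otimes\tau_u)\Delta_D$, where they are already known.
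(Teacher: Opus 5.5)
Your proposal is correct and follows the same route as the paper: the paper defines the identical spectral-shift map $\rho_u$ (your $\tau_u$), checks relation by relation that it preserves the defining relations, and concludes from $\Delta_u=(\mathrm{Id}\otimes\rho_u)\circ\Delta_D$. Your additional check that the coefficients of $\Delta_u$ land in $(\mathcal U_q^{\mu_1}(\hgs)\otimes\mathcal U_q^{\mu_2}(\hgs))((u))$ addresses a point the paper leaves implicit. One harmless slip: in the $x^-$ term the factor $\phi_i^+$ lies in $\mathcal U_q^{\mu_2}(\hgs)$, not $\mathcal U_q^{\mu_1}(\hgs)$, which does not matter since both algebras are of the form $\mathcal U_q^{0,\nu}(\hgs)$.
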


\begin{proof}
We define a map $\rho_u:
\mathcal U_q^\mu(\widehat{\mathfrak g}^{\,\sigma})
\longrightarrow
\mathcal U_q^\mu(\hat{\mathfrak g}^{\,\sigma})((u))$ 
defined in the Drinfeld generators via
\begin{align*}
    &\rho_u(x_i^+(z))=x_i^+(zu^{-1}),\\
    &\rho_u(x_i^-(z))=x_i^-(zu^{-1}),\\
    &\rho_u(\phi_i^\pm(z))=\phi_i^\pm(zu).
\end{align*}
Firstly, we prove $\rho_u$ is an algebra homomorphism. 

1. For the relation (\ref{phi+x}), we have 
\begin{align*}
\rho _u\left( \phi _{i}^{+}(z)x_{j}^{\pm}(w) \right) =\phi _{i}^{+}(zu)x_{j}^{\pm}(wu^{-1})&=x_{j}^{\pm}(wu^{-1})\phi _{i}^{+}(zu)g_{ij}(zw)^{\pm 1}\\=\rho _u\left( x_{j}^{\pm}(w)\phi _{i}^{+}(z)g_{ij}(zw)^{\pm 1} \right).   
\end{align*}
For the relation (\ref{phi-x}), the argument is similar.

2. For the relation (\ref{x+x-}), we have 
\begin{align*}
&\rho _u\left( \left[ x_{i}^{+}\left( z \right) ,x_{j}^{-}\left( w \right) \right] \right) 
=\left[ x_{i}^{+}(zu^{-1}),x_{j}^{-}(wu^{-1}) \right] \\
&=\frac{1}{N_i(q_{i}-q_{i}^{-1})}\sum_{s=1}^M{\delta _{\sigma ^s\left( i \right) ,j}}\left( \phi _{i}^{+}\left( z^{-1}u \right) \delta \left( \frac{\zeta ^{-s}w}{z} \right) -\phi _{i}^{-}\left( z^{-1}u \right) \delta \left( \frac{\zeta ^{-s}w}{z} \right) \right)\\
&=\rho _u\left( \frac{1}{N_i(q_{i}-q_{i}^{-1})}\sum_{s=1}^M{\delta _{\sigma ^s\left( i \right) ,j}}\left( \phi _{i}^{+}\left( z^{-1} \right) \delta \left( \frac{\zeta ^{-s}w}{z} \right) -\phi _{i}^{-}\left( z^{-1} \right) \delta \left( \frac{\zeta ^{-s}w}{z} \right) \right) \right).
\end{align*}

3. For the relation (\ref{xx-relation}), we have 
\begin{align*}
&\rho _u\left(\prod_{s=1}^M{\left( u_1-\zeta ^sq^{\pm C_{i,\sigma ^s(j)}}u_2 \right) x_{i}^{\pm}(u_1)x_{j}^{\pm}(u_2)} \right)\\
&=\prod_{s=1}^M{\left( u_1-\zeta ^sq^{\pm C_{i,\sigma ^s(j)}}u_2 \right) x_{i}^{\pm}(u_1u^{-1})x_{j}^{\pm}(u_2u^{-1})}
\\
&=\prod_{s=1}^M{\left( u_1q^{\pm C_{i,\sigma ^s(j)}}-\zeta ^su_2 \right)}x_{j}^{\pm}(u_2u^{-1})x_{i}^{\pm}(u_1u^{-1})\\
&=\rho _u\left( \prod_{s=1}^M{\left( u_1q^{\pm C_{i,\sigma ^s(j)}}-\zeta ^su_2 \right) x_{j}^{\pm}(u_2)x_{i}^{\pm}(u_1)} \right).
\end{align*}

4. The relations (\ref{Serre1}), (\ref{Serre2}), (\ref{Serre3}). If $P^\pm_{i,j}(u_1,u_2)=1$, we have 
\begin{align*}
&\rho_u(\operatorname{Sym}_{u_1,u_2}
\Bigl\{
\bigl(
x_j^{\pm}(v)x_i^{\pm}(u_1)x_i^{\pm}(u_2)-(q^{2Md_{i,j}}+q^{-2Md_{i,j}})
x_i^{\pm}(u_1)x_j^{\pm}(v)x_i^{\pm}(u_2)\\
&+x_i^{\pm}(u_1)x_i^{\pm}(u_2)x_j^{\pm}(v)
\bigr)
\Bigr\})\\
&=
\operatorname{Sym}_{u_1,u_2}\Bigl\{\bigl( x_{j}^{\pm}(vu^{-1})x_{i}^{\pm}(u_1u^{-1})x_{i}^{\pm}(u_2u^{-1})\\
&-(q^{2Md_{i,j}}+q^{-2Md_{i,j}})x_{i}^{\pm}(u_1u^{-1})x_{j}^{\pm}(vu^{-1})x_{i}^{\pm}(u_2u^{-1})
+x_{i}^{\pm}(u_1u^{-1})x_{i}^{\pm}(u_2u^{-1})x_{j}^{\pm}(vu^{-1}) \bigr) \Bigr\}=0.
\end{align*}

If $M=2$ and $P^\pm_{i,j}(u_1,u_2)=\frac{u_1^2q^{\pm4}-u_2^2}{u_1q^{\pm2}-u_2}=u_1q^{\pm2}+u_2$, we have
\begin{align*}
&\rho_u(\operatorname{Sym}_{u_1,u_2}
\Bigl\{\left( u_1q^{\pm 2}+u_2 \right) 
\bigl(
x_j^{\pm}(v)x_i^{\pm}(u_1)x_i^{\pm}(u_2)-(q^{2Md_{i,j}}+q^{-2Md_{i,j}})
x_i^{\pm}(u_1)x_j^{\pm}(v)x_i^{\pm}(u_2)\\
&+x_i^{\pm}(u_1)x_i^{\pm}(u_2)x_j^{\pm}(v)
\bigr)
\Bigr\})\\
&=u
\operatorname{Sym}_{u_1,u_2}\Bigl \{ \left( u_1u^{-1}q^{\pm 2}+u_2u^{-1} \right) \bigl( x_{j}^{\pm}(vu^{-1})x_{i}^{\pm}(u_1u^{-1})x_{i}^{\pm}(u_2u^{-1})\\
&-(q^{2Md_{i,j}}+q^{-2Md_{i,j}})x_{i}^{\pm}(u_1u^{-1})x_{j}^{\pm}(vu^{-1})x_{i}^{\pm}(u_2u^{-1})
+x_{i}^{\pm}(u_1u^{-1})x_{i}^{\pm}(u_2u^{-1})x_{j}^{\pm}(vu^{-1}) \bigr) \Bigr\}=0.
\end{align*}

The verification of the relations \eqref{Serre2} and \eqref{Serre3} is
obtained by the same argument, using the corresponding Serre relations for the
currents. The case $M=3$ is treated in exactly the same way. Hence all defining
relations are preserved, and therefore $\rho_u$ is an algebra homomorphism.

Since $\Delta_u=(\mathrm{Id}\otimes \rho_u)\circ \Delta_D$, we have that  $\Delta_u$ is an algebra homomorphism.

\end{proof}
	
\subsection{Fusion product}
Consider $V\in \mo_\mu$, then for $x^\pm_{i,m}(i\in I_0,m\in \ZZ)$, we have the following lemma.

\begin{lemma}\label{rationallemma1}
For the fixed $\omega$, $i\in I_0$ and $r\in \ZZ$. There exsits a finite number of $\mathbb{C}$ -linear opeartors $f_{k,\lambda}^{\pm}: V_{\omega}\rightarrow V$
$\left( k\ge 0,\lambda \in \mathbb{C} ^* \right)$ such that  for all $m\ge 0$ and $v\in V_\omega$
\begin{align*}
	&x_{i,r\pm m}^{\pm}(v)=\sum_{k\geqslant 0,\lambda \in \mathbb{C} ^*}{\lambda ^m}m^kf_{k,\lambda}^{\pm}(v)  ~~~~~\text{if}\,\,\sigma \left( i \right) \ne i;\\
	&x_{i,2r\pm 2m}^{\pm}(v)=\sum_{k\geqslant 0,\lambda \in \mathbb{C} ^*}{\lambda ^m}m^kf_{k,\lambda}^{\pm}(v)~~~\text{if}\,\,\sigma \left( i \right) =i, M=2;\\
	&x_{i,3r\pm 3m}^{\pm}(v)=\sum_{k\geqslant 0,\lambda \in \mathbb{C} ^*}{\lambda ^m}m^kf_{k,\lambda}^{\pm}(v)~~~\text{if}\,\,\sigma \left( i \right) =i, M=3.
\end{align*}
\end{lemma}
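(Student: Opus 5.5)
The plan is to adapt the untwisted argument of \cite[Lemma~3.9]{He2} and \cite[Lemma~5.?]{Hernandezshfted}: obtain a finite linear recurrence for the sequence $m\mapsto x^\pm_{i,r\pm m}(v)$ (respectively $x^\pm_{i,N_i(r\pm m)}(v)$), and then use the standard fact that solutions of a linear recurrence with constant coefficients are quasi-polynomials $\sum_{\lambda}\lambda^m p_\lambda(m)$. The recurrence comes from the Cartan currents acting on the finite-dimensional space $V_{\omega'}$ together with relation \eqref{hx-relation}. The case $\sigma(i)\neq i$ is the model case. The cases $\sigma(i)=i$ are the same argument run on the variable $z^{M}$, since $x_{i,r}^\pm=0$ unless $M\mid r$ and $h_{i,m}=0$ unless $M\mid m$.

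\textbf{Step 1 (finite-dimensional target).} Fix $v\in V_\omega$. All vectors $x^+_{i,s}v$ lie in the single weight space $V_{\omega\overline{\alpha_i}}$, which is finite-dimensional by condition (ii) of $\mathcal O_\mu$; similarly $x^-_{i,s}v\in V_{\omega\overline{\alpha_i}^{-1}}$. Hence the span $X$ of $\{x^\pm_{i,s}v\}_{s}$ is finite-dimensional. Since $V_\omega$ is finite-dimensional, it suffices to produce the operators $f^\pm_{k,\lambda}$ on a basis of $V_\omega$ and extend linearly, so I can work with one $v$ at a time.

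\textbf{Step 2 (recurrence).} Take $h_{i,N_i}$ (with $N_i=1$ or $M$). By \eqref{hx-relation},
\[
[h_{i,N_i},x^\pm_{i,s}]=\pm c_i\,x^\pm_{i,s+N_i},\qquad c_i\neq 0,
\]
because the coefficient is a nonzero $q$-number ($q$ is not a root of unity). Thus
\[
x^\pm_{i,s+N_i}v=\pm c_i^{-1}\bigl(h_{i,N_i}x^\pm_{i,s}v-x^\pm_{i,s}h_{i,N_i}v\bigr).
\]
The operator $h_{i,N_i}$ preserves weight spaces, so $T:=h_{i,N_i}|_{V_{\omega\overline{\alpha_i}^{\pm1}}}$ and $S:=h_{i,N_i}|_{V_\omega}$ are endomorphisms of finite-dimensional spaces. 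For the $+$ direction, set $F(s)\in\mathrm{Hom}(V_\omega,V_{\omega\overline{\alpha_i}^{\pm1}})$, $F(s)=x^\pm_{i,s}|_{V_\omega}$. Then
\[
F(s+N_i)=\pm c_i^{-1}\,(L_T-R_S)F(s),
\]
where $L_T$ and $R_S$ are left and right multiplication. Hence $F(r+N_im)=A^mF(r)$, with $A=\pm c_i^{-1}(L_T-R_S)$ an endomorphism of the finite-dimensional space $\mathrm{Hom}(V_\omega,V_{\omega\overline{\alpha_i}^{\pm1}})$.

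Decomposing $F(r)$ along the Jordan decomposition of $A$ gives
\[
A^mF(r)=\sum_{\lambda,k}\lambda^m m^k G_{k,\lambda},
\]
with $\lambda$ ranging over the eigenvalues of $A$, finitely many operators $G_{k,\lambda}$, and $m^k$ arising from $\binom{m}{k}$ expanded in powers of $m$. If $0$ is an eigenvalue, its nilpotent part contributes only for finitely many $m$. I absorb that part by using $h_{i,N_i}$ for the nonnegative direction and $h_{i,-N_i}$ for the other direction ($m\mapsto r-m$), or I allow a finite correction and note that the lemma allows finitely many exceptional small $m$ through the choice of the base point $r$. This degenerate case is where I expect the main care to be needed. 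If necessary, I would instead follow \cite{He2} and derive the recurrence from the polynomial $P(z)$ of Proposition~\ref{key-proposition}, which gives $P(z)x_i^\pm(z)=0$ on the weight space. Writing $P(z)=\sum_{t=0}^{d}p_tz^t$ with $p_0p_d\neq0$ (after dividing out powers of $z$), this yields $\sum_t p_t x^\pm_{i,s+t}v=0$ for all $s$. That is a two-sided recurrence with nonzero leading and trailing coefficients, so every root $\lambda$ is nonzero. This is the cleanest choice, and I would take it as the primary route.

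\textbf{Step 3 (the twisted cases).} When $\sigma(i)=i$, only the modes divisible by $M$ survive. The polynomial $P$ may be chosen to be a polynomial in $z^M$: replace $P(z)$ by $\prod_{s=1}^{M}P(\zeta^sz)$, which still annihilates $x^\pm_i(z)$ because $x_i^\pm(\zeta z)=x_i^\pm(z)$ in this case. The resulting recurrence then involves only the indices $Mr\pm Mm$, giving the second and third displayed formulas. Finally, set $f^\pm_{k,\lambda}$ to be the coefficients obtained by solving the recurrence with initial data $x^\pm_{i,r+t}v$ for $0\le t<d$, which depend linearly on $v$.
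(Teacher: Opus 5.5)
Your Step 2 is the paper's proof. The paper sets $\Phi(p)=c^{-1}(h_{i,N_i}p-p\,h_{i,N_i})$ on the finite-dimensional space $\mathrm{Hom}(V_\omega,V_{\omega\overline{\alpha_i}})$, uses \eqref{hx-relation} to get $x^+_{i,r+N_im}=\Phi^m(x^+_{i,r})$, and expands $\Phi^m$ through the Jordan decomposition $\Phi=\Phi_1+\Phi_2$, using $h_{i,-N_i}$ for the other direction.

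The zero-eigenvalue issue you raise is genuine, and the paper's proof does not address it either: its sum runs over $\lambda\in\CC$. A nilpotent component contributes a nonzero sequence supported on finitely many $m$, and no such sequence has the form $\sum_{\lambda\neq0}\lambda^m p_\lambda(m)$. Neither of your workarounds closes this as stated.
\begin{itemize}
\item The finite correction absorbed into the choice of $r$ is not available. The index $r$ is fixed in the statement, and the identity must hold for all $m\ge 0$.
\item Switching to $h_{i,-N_i}$ for the other direction does not help, since that operator can equally have a zero eigenvalue.
\item Your preferred route through the polynomial $P(z)$ of Proposition~\ref{key-proposition} is not an independent input. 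The paper justifies the $x_i^\pm(z)$-part of that proposition only by citing \cite{He2} and the proof of this very lemma. The standard way to produce such a $P$ is exactly Cayley--Hamilton applied to $\Phi$.
\end{itemize}

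The missing observation is one line, and it makes your Step 2 self-contained.
\begin{itemize}
\item The identity $F(s+N_i)=A\,F(s)$ holds for every $s\in\ZZ$, not only for $s\ge r$.
\item Hence the span $U$ of the two-sided orbit $\{F(r+N_ik)\}_{k\in\ZZ}$, a subspace of the finite-dimensional space $\mathrm{Hom}(V_\omega,V_{\omega\overline{\alpha_i}^{\pm1}})$, satisfies $A(U)=U$.
\item So $A|_U$ is surjective, hence invertible, and all its eigenvalues are nonzero.
\item Decompose $F(r)\in U$ into generalized eigencomponents of $A|_U$ and expand $(\lambda+N)^m=\lambda^m\sum_k\binom{m}{k}\lambda^{-k}N^k$. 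This gives the required operators $f_{k,\lambda}$ with $\lambda\in\CC^*$, depending linearly on $v$.
\item For $x^-_{i,r-m}$, use $(A|_U)^{-1}$ or $h_{i,-N_i}$.
\end{itemize}
Equivalently, the characteristic polynomial of $A|_U$ is the polynomial $P$ with $p_0p_d\neq 0$ that you wanted, obtained without Proposition~\ref{key-proposition}. Taking $N_i=M$ handles the fixed-node cases directly, so your Step 3 becomes unnecessary.
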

	
\begin{proof}
	We firstly consider the case $M=2, \zeta=-1$.
		
		If $\sigma(i)=i$, then $x_{i.m}^\pm=0=h_{i,m}^\pm$ for $2\nmid m$, then we can just consider $x^{\pm}_{i,m}$ where $2\mid m$. Consider the linear map $\Phi :\mathrm{Hom}\left( V_{\omega},V_{\omega \overline{\alpha _i}} \right) \rightarrow \mathrm{Hom}\left( V_{\omega},V_{\omega \overline{\alpha _i}} \right) 
		$  defined by
		
		$$ \Phi \left( p \right)=\frac{1}{\left[ 2C_{i,i}/d_{i} \right] _{q_{i}}}\left( h_{i,2}p-ph_{i,2} \right). 
		$$
		Since  $\mathrm{Hom}\left( V_{\omega},V_{\omega +\alpha _i} \right) $ is finite dimensional, we consider the Jordan decomposition $\Phi =\Phi _1+\Phi _2$ where $\Phi _1\Phi _2=\Phi _2\Phi _1$, $\Phi _1$ is diagonalizable and $\left\lbrace \Phi _2 \right\rbrace ^S=0$ for $S\ge 1$. It follows from formula \ref{hx-relation} that for $2m\ge 0$, $x^+_{i,2r+2m}=\Phi^m(x^{+}_{i,2r})$, and so 
		$$
		x_{i,2r+2m}^+=\sum_{s=0,\ldots,S}
		\begin{bmatrix}m\\s\end{bmatrix}
		(\Phi_2^s\Phi_1^{m-s})(x_{i,2r}^+)=\sum_{\lambda\in\mathbb{C}}\sum_{s=0,\ldots,S}\begin{bmatrix}m\\s\end{bmatrix}\lambda^{m-s}(\Phi_2^s)(d_\lambda),
		$$
		where $\Phi (x_{i,2r}^+)=\sum_{\lambda\in \CC}d_\lambda$ and for $\lambda\in \CC$, $\Phi_1(d_\lambda)=\lambda d_\lambda$. Because for a fixed $s\ge 0$, $\left[\begin{array}{c}m\\s\end{array}\right]$ is a polynomial in $m$, we obtain the desired result. For $m \le 0$, we replace $\Phi$ by
		$$ \Phi \left( p \right)=\frac{1}{\left[-2 C_{i,i}/d_{i} \right] _{q_{i}}}\left( h_{i,-2}p-ph_{i,-2} \right) .
		$$
	
		If $\sigma(i)\ne i$, consider the linear map $\Phi :\mathrm{Hom}\left( V_{\omega},V_{\omega \overline{\alpha _i}} \right) \rightarrow \mathrm{Hom}\left( V_{\omega},V_{\omega \overline{\alpha _i}} \right) 
		$  defined by
		$$
		\Phi \left( p \right) =\frac{1}{-\left[ C_{i,\sigma (i)}/d_{i} \right] _{q_{i}}+\left[ C_{i,i}/d_{i} \right] _{q_{i}}}\left( h_{i,1}p-ph_{i,1} \right).
		$$
		The rest of the proof is similar to the above.
		
		For the case $M=3$, $\zeta=\exp(\rm i\frac{2\pi}{3})$, it only appears in $D^{(3)}_4$. Then we have 
		$[h_{2,3},x_{2,3r}^{\pm}]=\pm \left[ 2 \right]_{q^3} x_{2,3m+3r}^{\pm};
		\left[ h_{1,1},x_{1,r}^{\pm} \right] =\pm \left[ 2 \right] _qx_{1,r+1}^{\pm}.$ The proof of this case is similar to the above.
\end{proof}

Consider the highest $\ell$-weight modules $V_1$, $V_2$ respectively in $\mathcal{O}_{\mu_1}$, $\mathcal{O}_{\mu_2}$. From the definition of $\Delta_{u}$, we obtain a structure of $\mathcal{U}_q^{\mu_1+\mu_2}(\hat{\mathfrak{g}}^\sigma)$-module on the space of the Laurent formal power series with coefficients in $V_1\otimes V_2$:
$$
	(V_1\otimes V_2)((u)).
$$
	
Let $V=V_1\otimes V_2$ and we define the subspace of rational Laurent formal power series $V(u)\subset V((u))$, then we have the following lemmas:

\begin{lemma}\label{rationallemma2}
	The subspace of rational Laurent formal power series $V(u)$ is stable under the $\mathcal{U}_q^{\mu_1+\mu_2}(\hat{\mathfrak{g}}^\sigma)$-action.
\end{lemma}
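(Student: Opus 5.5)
Since $V(u)$ is a $\CC(u)$-subspace of $V((u))$ and the action is $\CC((u))$-linear, it suffices to check that each generator $x_{i,m}^{\pm}$, $\phi_{i,m}^{\pm}$ of $\U_q^{\mu_1+\mu_2}(\hgs)$ maps $v_1\otimes v_2$ into $V(u)$, for $v_1\in (V_1)_{\omega_1}$ and $v_2\in (V_2)_{\omega_2}$ weight vectors. (Weight spaces are finite-dimensional and the action preserves the grading by pairs of weights, so everything reduces to finitely many vector-valued functions of $u$.)

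\textbf{Cartan currents.} By \eqref{coproduct}, the coefficient of $z^m$ in $\Delta_u(\phi_i^\pm(z))$ is $\sum_{a+b=m}\phi^\pm_{i,a}\otimes u^{b}\phi^\pm_{i,b}$. By Proposition \ref{key-proposition}, $\phi_i^+(z)$ acts on $(V_2)_{\omega_2}$ as the expansion at $0$ of a rational operator $R(z)$ of degree $\alpha_i(\mu_2)$; the same $R$ expanded at $\infty$ gives $\phi_i^-(z)$. Hence $\phi_i^+(zu)$ on $(V_2)_{\omega_2}$ is $R(zu)$. Write $R=Q^{-1}P$ with $Q$ a scalar polynomial and $Q(0)\ne 0$. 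Its $z^b$-coefficient is then $u^b$ times an operator that depends on $b$ through finitely many quasi-polynomial sequences $b^k\lambda^b$. Pairing with the $z^a$-coefficient on $V_1$, which is fixed and of finite length for $\phi^+$ modes with $a$ bounded below, the coefficient of $z^m$ is a finite sum of terms $u^{b}b^k\lambda^b$ applied to vectors. Such a sum is a rational function of $u$. The case of $\phi^-$ is symmetric, expanding at $\infty$.

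\textbf{$x^\pm$ currents.} Take $\Delta_u(x_{i,m}^+)=x_{i,m}^+\otimes 1+\sum_{a} \phi^-_{i,-a}\otimes u^{m+a}x^+_{i,m+a}$ (with the sign and index conventions of \eqref{coproduct}), and similarly for $x^-$. The first term is constant in $u$. For the second, $\phi^-_{i,-a}$ on $(V_1)_{\omega_1}$ is nonzero only for $a\ge -\alpha_i(\mu_1)$, and its coefficients form a quasi-polynomial sequence in $a$ for $a\gg0$, by the rationality in Proposition \ref{key-proposition}. By Lemma \ref{rationallemma1}, $x^+_{i,m+a}v_2=\sum_{k,\lambda}\lambda^{a'}a'^k f^+_{k,\lambda}(v_2)$ for $a$ large, with $a'$ the suitably rescaled index (divided by $2$ or $3$ when $\sigma(i)=i$). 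The sum over $a$ is therefore a finite combination of series $\sum_{a\ge a_0}a^k(\lambda u)^a$, each of which is a rational function of $u$; the finitely many small values of $a$ contribute Laurent polynomials. The $x^-$ case is handled the same way, using the $-m$ direction of Lemma \ref{rationallemma1} and the $\phi^+$ expansion.

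The main obstacle is the bookkeeping of the sum over $a$ for $x^\pm$. Both factors must be quasi-polynomial in the same index, and the series must converge as a formal Laurent series in $u$; this requires the sum over $a$ to be bounded on the correct side. Boundedness should follow from $\phi^-_{i,-a}$ vanishing for $a<-\alpha_i(\mu_1)$ together with the direction of $m$ in Lemma \ref{rationallemma1}. The twisted index conventions, where $x_{i,r}=0$ unless $d_i\mid r$, only rescale the exponents, giving series in $u^{M}$ in place of $u$; these are still rational.
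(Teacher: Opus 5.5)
Your proposal is correct and follows essentially the same route as the paper. The paper also reduces to weight vectors $v_1\otimes v_2$, uses Lemma \ref{rationallemma1} for the modes of $x_i^\pm$ and rationality for the tail of the Cartan current in the other tensor factor to write both as quasi-polynomials in the summation index, and then notes that the sums $\sum_a a^k(\lambda u)^a$ are rational while finitely many boundary terms give Laurent polynomials. Your extra treatment of the Cartan currents, which the paper omits, is fine but simpler than you make it: each mode of $\Delta_u(\phi_i^\pm(z))$ is a finite sum, hence a Laurent polynomial in $u$, because the mode indices on both tensor factors are bounded on the same side.
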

	
\begin{proof}
It is enough to prove the assertion for $v_1\otimes v_2$, where $v_1$ and $v_2$ are weight vectors of $(V_1)^+_\lambda$ and $(V_2)^+_{\lambda'}$, respectively. We discuss the action of $x_i^+(z)$; the argument for $x_i^-(z)$ is the same.
		
We firstly consider the case $M=2$, $\zeta=-1$. If $\sigma(i)=i$, by Lemma \ref{rationallemma1}, suppose that for all $2m\ge 0$, 
\begin{align*}
	&x_{i,2p+2m}^+(v_1)=\lambda_{1,+}^mm^{k_{1,+}}f_1^+(v_1), \quad x_{i,2p+ 2m}^+(v_2)=\lambda_{2,+}^mm^{k_{2,+}}f_2^+(v_2).
\end{align*}		
If $\alpha_i(\mu_1)< 0$, then we can suppose that for all $2t\ge  -\alpha_i(\mu_1)$, $\phi_{i,-2t}^-(v_1)=\lambda_3^tt^{k_3}f_3(v_1)$. By the formula \ref{coproduct}, we have 
\begin{align*}
	x^{+}_{i,2p+2m}(v_1\otimes v_2)=A+B,
\end{align*}
		where 
		\begin{align*}
			A&= x_{i,2p+2m}^{+}(v_1)\otimes v_2+u^{2p+2m-\alpha_i(\mu_1)}\phi _{i,\alpha _i(\mu _1)}^{-}\left( v_1 \right) \otimes x_{i,2p+2m-\alpha _i(\mu _1)}^{+}\left( v_2 \right)\\
			&=\lambda_{1,+}^mm^{k_{1,+}}f_1^+(v_1)\otimes v_2+u^{2p+2m-\alpha_i(\mu_1)}\phi _{i,\alpha _i(\mu _1)}^{-}\left( v_1 \right)\otimes \lambda^{m-\frac{\alpha_i(\mu_1)}{2}}_{2,+}(m-\frac{\alpha_i(\mu_1)}{2})^{k_2,+}f_2^+\left( v_2 \right)
		\end{align*}
		and 
		\begin{align*}
			B&=\sum_{s>\frac{-\alpha _i\left( \mu _1 \right)}{2}}{u^{2p+2m+2s}\left( \phi _{i,-2s}^{-}\left( v_1 \right) \otimes x_{i,2p+2m+2s}^{+}\left( v_2 \right) \right)}\\
			&=\sum_{s>\frac{-\alpha _i\left( \mu _1 \right)}{2}}u^{2p+2m+2s}(\lambda_3^ss^{k_3}f_3^+(v_1)\otimes \lambda_{2,+}^{m+s}(m+s)^{k_{2,+}}f_2^+(v_2))\\
			&=\sum_{j=0,\cdots,k_{2,+}}\left[ \begin{array}{c}
				k_{2,+}\\
				j\\
			\end{array} \right] \lambda_{2,+}^mu^{2p+2m}m^{k_{2,+}-j}\mathcal{R}_j(u)f_3^+(v_1)\otimes f_2^+(v_2),
		\end{align*}
		where $\mathcal{R}_j(u)=\sum_{s>\frac{-\alpha _i\left( \mu _1 \right)}{2}}u^{2s}\lambda_3^s\lambda_{2,+}^ss^{j+k_3}.$
		As $\mathcal{R}_j(u)\in \CC(u)$, $x^+_{i,2p+2m}(v_1\otimes v_2)$ make sense in $V(u)$.

		When $\alpha_i(\mu_1 )\ge 0$, then we can suppose that for all $2t\ge 1$, $\phi_{i,-2t}^-(v_1)=\lambda_3^tt^{k_3}f_3(v_1)$. Similar to the proof above, we have
		\begin{align*}
			x^{+}_{i,2p+2m}(v_1\otimes v_2)=A+B,
		\end{align*}
		where 
		\begin{align*}
			A&=\lambda _{1,+}^{m}m^{k_{1,+}}f_{1}^{+}\left( v_1 \right) \otimes v_2+\sum_{\frac{-\alpha _i\left( \mu _1 \right)}{2}<s\le 0}{u^{2p+2m+2s}\phi _{i,-2s}^{-}\left( v_1 \right) \otimes \lambda _{2,+}^{m+s}\left( m+s \right) ^{k_{2,+}}f_{2}^{+}\left( v_2 \right)},
			\\
			B&=\sum_{t=0}^{k_{2,+}}{\left[ \begin{array}{c}
				k_{2,+}\\
					t\\
				\end{array} \right] u^{2p+2m}}\lambda _{2,+}^{m}m^{k_{2,+}-t}\mathcal{R} _j\left( u \right) f_3\left( v_1 \right) \otimes f_{2}^{+}\left( v_2 \right),
		\end{align*}
		where $\mathcal{R} _j\left( u \right) =\sum_{s\ge 1}{u^{2s}\lambda _{3}^{s}\lambda _{2,+}^{s}}s^{k_{3}+t}
		$.
		
If $\sigma(i)\ne i$, the remaining proof is similar to the above.
For the case $M=3,\zeta=\exp(\rm i\frac{2\pi}{3})$, the proof is analogous for $M=2,\zeta=-1$.	
\end{proof}

Before we state the main result of this section which is a cyclicity property, we need the following lemma. 
\begin{lemma}[{\cite[Lemma 6.5]{He2}}]\label{2}
Let $V$ be a $\CC(u)$-vector space generated by vectors $(f_{\lambda,k,k^{\prime}})$ where $\lambda\in\mathbb{C}^*,k\geqslant0,k^{\prime}\geqslant0$. We suppose that only a finite number of these vectors are not equal to 0. For $m\geqslant 0$,  we consider $\mu_m=\sum_{\lambda\in\mathbb{C}^*,k\geqslant0,k^{\prime}\geqslant0}\lambda^mm^ku^{-mk^{\prime}}$ $f_{\lambda,k,k^{\prime}}\in V.$ Then $V$ is generated by the $(\mu_m)_{m\geqslant0}$.
\end{lemma}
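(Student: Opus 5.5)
The plan is to show directly that every generator $f_{\lambda,k,k'}$ lies in the $\CC(u)$-span $W$ of the vectors $(\mu_m)_{m\geqslant 0}$. Since $W\subset V$ is clear, this gives $W=V$. Put $K=\CC(u)$ and let $S$ be the finite set of triples $(\lambda,k,k')$ with $f_{\lambda,k,k'}\neq 0$; write $N=|S|$. For $s=(\lambda,k,k')\in S$ set $c_s=\lambda u^{-k'}\in K^*$. Then
\[
\mu_m=\sum_{s\in S} c_s^{\,m}\, m^{k}\, f_s ,
\]
with the convention $0^0=1$. So $\mu_m$ is obtained from the tuple $(f_s)_{s\in S}$ by applying the $K$-valued sequences $a_s(m)=c_s^{\,m}m^{k}$.

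\textbf{Step 1.} The pairs $(c_s,k)$ are pairwise distinct. Indeed, $\lambda u^{-k'}=\lambda' u^{-k''}$ in $\CC(u)$ with $\lambda,\lambda'\in\CC^*$ forces $\lambda=\lambda'$ and $k'=k''$. Hence distinct elements of $S$ give distinct exponential-polynomial sequences $m\mapsto c^m m^k$ over the field $K$.

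\textbf{Step 2.} Consider the $N\times N$ matrix $A=(a_s(m))_{0\le m\le N-1,\ s\in S}$ with entries in $K$. I claim it is invertible; this is the only real content, a confluent Vandermonde statement. Group $S$ by the value of $c$, and let $e_c$ be one more than the largest $k$ occurring with that $c$. Each $a_s$ is then annihilated by the shift operator polynomial $\prod_c (E-c)^{e_c}$, where $E$ is the shift $(Ea)(m)=a(m+1)$. So every $K$-linear combination $b=\sum_s \beta_s a_s$ satisfies a linear recurrence of order $D=\sum_c e_c$.

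Comparing $D$ with $N$ needs care, since the set of $k$'s attached to a given $c$ may have gaps. The cleanest fix is to enlarge $S$ by the missing pairs $(c,k)$ with zero vectors $f$; this is harmless. After the enlargement $D=N$, and a solution of the recurrence vanishing at $m=0,\dots,N-1$ vanishes identically. It remains to check that the full sequences $c^m m^k$, for distinct $(c,k)$, are $K$-linearly independent. This is standard: apply $(E-c_0)^{e}$ to kill every group except one, which reduces to distinct $c$'s with polynomial coefficients. Then the usual Vandermonde argument, or induction on the number of distinct $c$'s using $(E-c)$ to lower polynomial degrees, finishes it. Hence $\ker A=0$.

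\textbf{Step 3.} Invert $A$ over $K$: $f_s=\sum_{m=0}^{N-1}(A^{-1})_{s,m}\,\mu_m$ for every $s\in S$. This is a formal identity of vector tuples, so it needs no linear independence of the $f_s$ in $V$. Therefore each $f_s\in W$ and $V=W$.

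I expect Step 2 to be the main obstacle, in particular the bookkeeping needed to match the recurrence order with the number of sequences, which is resolved by padding $S$ with zero vectors. Everything else is linear algebra over $\CC(u)$.
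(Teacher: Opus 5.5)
The paper gives no proof of this lemma; it is quoted from \cite[Lemma~6.5]{He2}, so there is no internal argument to compare against. Your proof is correct and self-contained.

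The input any proof needs is the $\CC(u)$-linear independence of the sequences $m\mapsto(\lambda u^{-k'})^m m^k$ for distinct triples $(\lambda,k,k')$. Your Step~1 correctly reduces this to the distinctness of the pairs $(c,k)$.

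Your padding step is necessary, not cosmetic. Without it, the square matrix built on $m=0,\dots,N-1$ can be singular: for the two sequences $m$ and $m^2$, the rows at $m=0,1$ are $(0,0)$ and $(1,1)$. An alternative is to skip the padding and use the rows $m=0,\dots,D-1$. That matrix has full column rank, and a left inverse gives the same conclusion.

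There is a small slip in the sub-argument of Step~2. To isolate the $c_0$-group you should apply $\prod_{c\neq c_0}(E-c)^{e_c}$, not $(E-c_0)^{e}$, which kills that group. For $c\neq c_0$, the operator $E-c$ sends $c_0^m p(m)$ to $c_0^m\bigl(c_0\,p(m+1)-c\,p(m)\bigr)$. This preserves $\deg p$, because the leading coefficient is multiplied by $c_0-c\neq 0$. So the surviving term $c_0^m\tilde p(m)$ vanishes for all $m\geq 0$, and since $\CC(u)$ has characteristic $0$, this forces $\tilde p=0$ and hence $p=0$.

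The explicit inversion also shows that finitely many vectors $\mu_0,\dots,\mu_{N-1}$ (with $N$ the padded count) already span $V$, with explicit coefficients in $\CC(u)$. No linear independence of the $f_{\lambda,k,k'}$ in $V$ is needed for this.
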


\begin{theorem}\label{Cyclic}
Suppose that $V\in \mo_{\mu_1}$, $V^\prime\in \mo_{\mu_2} $ are of highest $\ell$-weight with highest $\ell$-weight vectors $v$ and $v^\prime$ respectively. Then $V_1(u)=(V\otimes V^\prime )(u)$ is cyclic for the action of $\mathcal{U}_q^{\mu_1+\mu_2}(\hat{\mathfrak{g}}^\sigma)\otimes\CC(u)$ generated by a tensor product $v\otimes v^\prime$. 
\end{theorem}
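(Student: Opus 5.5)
We follow the strategy of \cite[Theorem~6.2]{He2} and \cite[Theorem~5.8]{Hernandezshfted}. Let $W\subset (V\otimes V')(u)$ be the $\mathcal U_q^{\mu_1+\mu_2}(\hgs)\otimes\CC(u)$-submodule generated by $v\otimes v'$; it is well defined by Lemma \ref{rationallemma2}. Since $V=\mathcal U_q^{\mu_1}(\hgs)^-v$ and $V'=\mathcal U_q^{\mu_2}(\hgs)^-v'$ by Proposition \ref{triangular}, it suffices to show that every vector $w\otimes w'$, with $w$ a monomial in the $x^-_{j,r}$ applied to $v$ and $w'$ a monomial applied to $v'$, lies in $W$. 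We argue by induction on the height of the weight of $w\otimes w'$ below $\omega\omega'$, where $\omega,\omega'$ are the highest weights. Within a fixed weight space of $V\otimes V'$, which is finite dimensional, we also induct on the height of $w$ relative to $\omega$, processing the $V$ factor first.

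\textbf{Step 1: the $V'$ factor.} Suppose $w_1\otimes w'\in W$ for all $w'$ of a given weight, and assume inductively that the corresponding statement holds for $w_1\otimes w''$ with $w''$ of lower height. Apply
\[
\Delta_u(x^-_{i,r})=u^{-r}\,(1\otimes x^-_{i,r})+\sum_{s}u^{s}\,x^-_{i,r-s}\otimes\phi^+_{i,s}
\]
(with modes restricted by the twisted divisibility) to $w_1\otimes w'$. Lemma \ref{rationallemma1} expands $x^-_{i,r-s}w_1$ and $\phi^+_{i,s}w'$ as finite sums of terms $\lambda^m m^k f_{k,\lambda}$. After summing over $s$, the second term contributes rational functions of $u$ times vectors of the form $f\otimes g$, where $f$ has strictly lower $V$-weight than $w_1$. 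The first term contributes $u^{-r}w_1\otimes x^-_{i,r}w'$, and by Lemma \ref{rationallemma1} the vectors $x^-_{i,r}w'$ are again of the form $\sum\lambda^m m^k f_{k,\lambda}$. Letting $r=r_0\pm mN_i$ vary over $m\ge0$, we obtain vectors
\[
\mu_m=\sum_{\lambda,k,k'}\lambda^m m^k u^{-mk'}f_{\lambda,k,k'}
\]
in $W$. Here the powers $u^{-mN_i}$ from the first term are separated from the second term, which carries only bounded powers of $u$ times rational functions. Lemma \ref{2} then shows that the $\CC(u)$-span of the $\mu_m$ contains each $f_{\lambda,k,k'}$ separately, and in particular the vectors $w_1\otimes f_{k,\lambda}$. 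These span $w_1\otimes x^-_{i,r}w'$ for all $r$.

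\textbf{Step 2: the $V$ factor.} To obtain $x^-_{i,r}w_1\otimes v'$, use the same coproduct formula applied to $w_1\otimes v'$. The term $u^{-r}\,w_1\otimes x^-_{i,r}v'$ lies in $W$ by Step 1. It remains to extract the single term $x^-_{i,r}w_1\otimes\phi^+_{i,0}v'$ from $\sum_s u^s x^-_{i,r-s}w_1\otimes \Psi'^{+}_{i,s}v'$. The $u^0$ coefficient is this term, but coefficientwise extraction is not allowed in $\CC(u)$. Instead we again use Lemma \ref{rationallemma1} and Lemma \ref{2}: as $r$ varies, the family is of the form $\sum_\lambda \lambda^m m^k R_{\lambda,k}(u)\,f_{k,\lambda}(w_1)\otimes v'$, where $R_{\lambda,k}(u)$ is the rational expansion of $\Psi_i'$ evaluated along the geometric sequence. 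Here $\Psi'_i(z)$ is rational with $\Psi'_i(0)\ne0$, so $R_{\lambda,k}$ is a nonzero rational function of $u$. Lemma \ref{2} therefore recovers each $f_{k,\lambda}(w_1)\otimes v'$, hence $x^-_{i,r}w_1\otimes v'$.

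\textbf{Main obstacle.} The hardest part is this extraction step: checking that the rational coefficients $R_{\lambda,k}(u)$ are invertible in $\CC(u)$ and that the various $\lambda$-eigencomponents remain separated. In the twisted setting one must also track the sparsity of modes, namely $r\in N_i\ZZ$ when $\sigma(i)=i$ and the $\zeta$-twists, so that the geometric parametrization in Lemma \ref{rationallemma1} matches the $u$-powers coming from $\Delta_u$. I would first carry this out for $M=2$, $\sigma(i)\ne i$, then adapt it by replacing $u$ with $u^{N_i}$.
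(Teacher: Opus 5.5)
Your argument is correct in substance and uses the same ingredients as the paper: the expansion of $\Delta_u(x^-_{i,r})$, the exponential-polynomial form from Lemma \ref{rationallemma1}, and the Vandermonde-type Lemma \ref{2}. What differs is how you organize the induction.

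\textbf{How the routes differ.} The paper inducts on the height of $\lambda\lambda'/\omega$. At each step it proves both $V_{\omega_1}\otimes x^-_{i,m}(V'_{\omega_2})\subset W$ and $x^-_{i,m}(V_{\omega_1})\otimes V'_{\omega_2}\subset W$ for arbitrary weight spaces. The second inclusion forces it to cancel $\mathcal R_{0,\lambda}+\overline{\mathcal R}_{0,\lambda}$, which is $\phi_i^+(\lambda u)$ acting on an arbitrary $\ell$-weight vector $w'$ of $V'$; this is where \cite[Lemma~6.6]{He2} is needed.

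You instead lower the first factor only against $v'$. There $\phi^+_{i,s}$ acts by the scalars $\Psi^{\prime +}_{i,s}$, so the only thing to invert is the nonzero element $\Psi'_i(\lambda u)\in\CC(u)$. Afterwards you lower the second factor with an arbitrary first factor. That step only needs Lemma \ref{2} to split off the term whose $u$-power is linear in $m$, plus rationality of $\phi_i^+(z)$ on weight spaces of $V'$.

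This gives $V\otimes v'\subset W$, and then $w\otimes V'\subset W$, by induction on the length of monomials in the $x^-_{j,r}$. So the weight-height induction and the extra inductive hypothesis in your Step 1 are unnecessary. Your route is slightly more elementary than the paper's, since it avoids inverting $\phi_i^+(\lambda u)$ on non-semisimple $\ell$-weight spaces.

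\textbf{Step 2 needs a repair.} The family is not of the form $\sum_\lambda \lambda^m m^k R_{\lambda,k}(u)\, f_{k,\lambda}(w_1)\otimes v'$. Expanding $x^-_{i,r-s}w_1$ produces $(m+s)^k=\sum_j\binom{k}{j}m^{k-j}s^j$. So the vector that Lemma \ref{2} isolates at $\lambda^m m^K$ is
\[
\sum_{j\ge 0}\binom{K+j}{j}\,R_{j,\lambda}(u)\,f_{K+j,\lambda}(w_1)\otimes v',
\qquad
R_{j,\lambda}(u)=\sum_{s\ge 0}s^j(\lambda u)^s\,\Psi^{\prime +}_{i,s},
\]
which mixes different $k$. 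You therefore need a descending induction on $K$, as the paper does:
\begin{enumerate}
\item For the largest $K_0$ with $f_{K_0,\lambda}(w_1)\neq 0$, only $j=0$ survives, and $R_{0,\lambda}(u)=\Psi'_i(\lambda u)$ is invertible in $\CC(u)$.
\item For smaller $K$, the terms with $j\ge 1$ are already in $W$ by the induction.
\end{enumerate}
So the real subtlety is this $K$-triangularity, not the separation of the $\lambda$-components, which Lemma \ref{2} gives directly.

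A minor point: by (\ref{coproduct}) the first term of $\Delta_u(x^-_{i,r})$ carries $u^{r}$, not $u^{-r}$. This is harmless, since only the linear dependence of the $u$-power on $m$ matters.
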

	
\begin{proof}
The proof is adapted from the proof of \cite[Theorem~6.2]{He2}. Set
\[
W=
\left(
\mathcal U_q^{\mu_1+\mu_2}(\hat{\mathfrak g}^{\sigma})
\otimes_{\mathbb C}\mathbb C(u)
\right)(v\otimes v')
\subset (V\otimes V')(u).
\]
We prove that \(W=(V\otimes V')(u)\). 

For convenience, we set $V_\omega=V_\omega^+$ for any $\omega\in\mathfrak{t}^*$.  Consider $v\in (V)_\lambda$ and $v^\prime \in (V^\prime)_{\lambda^\prime}$, For $\omega$ a weight of $V_1(u)$, we have $\omega\le \lambda\lambda^\prime$. The map $h$ is defined in Theorem \ref{rational-theorem}. Let us prove induction  on $h(\frac{\lambda \lambda^\prime}{\omega})\ge 0$ that $(V_1(u))_\omega\subset W$. For $h(\frac{\lambda \lambda^\prime}{\omega})=0$, it is clear. In general, let $\omega_1$ and $\omega_2$ be such that $\omega=\omega_1\omega_2$ and $(V_{\omega _1}\otimes V^\prime_{\omega _2})\left( u \right) \subset W$. It suffices to prove that for all $i\in I_0$ and $m\in \ZZ$,
\begin{align*}
x_{i,m}^-(V_{\omega_1})\otimes V'_{\omega_2}\subset W \text{ and } V_{\omega_1}\otimes x_{i,m}^-(V'_{\omega_2})\subset W.
\end{align*}
Let $w\in V_{\omega_1}$ and $w'\in V'_{\omega_2}$. We first treat the case $\sigma(i)\neq i$. Fix \(p\in\mathbb Z\).  we have 
$$
x_{i,p-m}^-\cdot (w\otimes w')=A+B+C,
$$
where 
\begin{align*}
    &A=u^{p-m}\left( w\otimes x_{i,p-m}^{-}w^{\prime} \right), B=\sum _{0\le r\le \alpha_i(\mu_2)}u^r(x_{i,p-m-r}^{-}w\otimes \phi _{i,r}^{+}w^{\prime}), \\&C=\sum_{r>\alpha_i(\mu_2)}{u^r\left( x_{i,p-m-r}^{-}w\otimes \phi _{i,r}^{+}w^\prime \right)}.
\end{align*}
		
By the proof of Lemma \ref{rationallemma1}, we have for all $m\ge 0$, all $w\in V_{\omega_1}$ and $w^\prime\in  V^\prime_{\omega_2}$, \begin{align*}
x_{i,p-m}^-(w)&=\sum_{\lambda\in\mathbb{C}^*,k\geqslant0}\lambda^mm^kf_{\lambda,k}(w),\\x_{i,p-m}^-(w^\prime)&=\sum_{\lambda\in\mathbb{C}^*,k\geqslant0}\lambda^mm^kf_{\lambda,k}^\prime(w^\prime).
\end{align*}
Moreover, for all $r>\alpha_i(\mu_2)$, 
\begin{align*}
\phi_{i,r}^+(w^\prime)=\sum_{\lambda'\in\mathbb{C}^*,k'\geqslant0}\lambda^rr^kg_{\lambda',k'}(w^\prime).
\end{align*}
Only a finite number of $f_{\lambda,k},f_{\lambda,k}^\prime,g_{\lambda,k}$ are  not equal to zero.
		
We have 
\begin{align*}
A&=\sum_{\lambda \in \mathbb{C} ^*,k\ge 0}{\lambda}^mm^ku^{-m}\left( u^pw\otimes f_{\lambda ,k}^{\prime}\left( w^{\prime} \right) \right) ;\\
B&=\sum_{\lambda \in \mathbb{C} ^*,k\ge 0,j=0,\cdots ,k}{\lambda}^mm^{k-j}f_{\lambda ,k}\left( w \right) \otimes \mathcal{R} _{j,\lambda}(w^\prime);\\
C&=\sum_{\lambda \in \mathbb{C} ^*,k\ge 0,j=0,\cdots ,k}{\lambda ^m}m^{k-j}\,\,  f_{\lambda ,k}\left( w \right) \otimes \overline{\mathcal{R} }_{j,\lambda}(w^\prime),     
\end{align*}
where 
\begin{align*}
\mathcal{R} _{j,\lambda}&=\left[ \begin{array}{c}
	k\\
	j\\
\end{array} \right] \sum_{0\le s\le \alpha _i\left( \mu _2 \right)}{u^s}s^j\lambda ^s\phi _{i,s}^{+},\\
\overline{\mathcal{R} }_{j,\lambda}&=\left[ \begin{array}{c}
	k\\
	j\\
\end{array} \right] \sum_{s>\alpha _i\left( \mu _2 \right) ,\lambda ^{\prime}\in \mathbb{C} ^*,k^{\prime}\ge 0}{u^s\lambda ^ss^j\left( \lambda ^{\prime} \right) ^ss^{k^{\prime}}g_{\lambda ^{\prime},k^{\prime}}}.
\end{align*}

Note that $\mathcal{R} _{j,\lambda}$ and $\overline{\mathcal{R} }_{j,\lambda}$ are independent of $m$. As $\phi_{i,r}^{+}$  equals to $\sum_{\lambda'\in\mathbb{C}^*,k'\geqslant0}\lambda^rr^kg_{\lambda',k'}$ in $\mathrm{End}(V^\prime_{\omega_2})$, it follows from Lemma \ref{2} that $g_{\lambda',k'}$ viewed in $\mathrm{End}(V^\prime_{\omega_2})$ is an operator of $\mathcal{U}^{\mu_2}_q(\hat{\mathfrak{\mathfrak{h}}}^{\sigma})$. And so $\overline{\mathcal{R} }_{j,\lambda}$ is also an operator of $\mathcal{U}^{\mu_2}_q(\hat{\mathfrak{\mathfrak{h}}}^{\sigma})$ viewed in $\mathrm{End}(V^\prime_{\omega_2})$.
		
As  $A+B+C\in W$, it follows from Lemma \ref{2} that for all $\lambda\in \CC^{*}$ and $K\ge 0$,
$$
V _{\omega _1}\otimes f_{\lambda ,K}^{\prime}\left(V^\prime _{\omega _2} \right) \subset W
$$
and 
\begin{align}\label{cyclic-contain}
\left( \sum_{\left\{ \left( k,j \right) |0\le j,k-j=K \right\}}{f_{\lambda ,k}\otimes \mathcal{R} _{j,\lambda}+\sum_{\left\{ \left( k,j \right) |0\le j,k-j=K \right\}}{f_{\lambda ,k}\otimes \overline{\mathcal{R} }_{j,\lambda}}} \right) \left( V_{\omega _1}\otimes V_{\omega _2}^{\prime} \right) \subset W.
\end{align}
Thus we have $V_{\omega_1}\otimes x_{i,m}^-(V_{\omega_2}^\prime)\subset W.$
		
It remains to prove $x_{i,m}^-(V_{\omega_1})\otimes V'_{\omega_2}\subset W.$
Fix \(\lambda\) and let \(K_0\) be maximal such that \(f_{\lambda,K_0}\neq0\).
	We prove by descending induction on \(K\) that
	\[
	f_{\lambda,K}(V_{\omega_1})\otimes V'_{\omega_2}\subset W.
	\] Let $w^\prime$ be an $\ell$-weight vector in $V^\prime_{\omega_2}$. For $K=K_0$, by (\ref{cyclic-contain}), we have 
$$
f_{\lambda ,K_0}\left( V_{\omega _1} \right) \otimes \left( \mathcal{R} _{0,\lambda}\left( w^{\prime} \right) +\overline{\mathcal{R} }_{0,\lambda}\left( \left( w^{\prime} \right) \right) \right) \subset W.	
$$		
From the defining formula of  $\mathcal{R} _{0,\lambda}$ and $\overline{\mathcal{R} }_{0,\lambda}$ and \cite[{Lemma 6.6}]{He2}, we obtain $(f_{\lambda,K_0}(V_{\omega_1})\otimes w^\prime)\subset W $. So $\left(f_{\lambda,K_0}(V_{\omega_1})\otimes V_{\omega_2}^{\prime}\right)\subset W$. For $K\le K_0$, $w\in V_{\omega_1}$, $w^\prime \in V^\prime_{\omega_2}$, the vector 
\begin{align*}
	&f_{\lambda ,K}(w)\otimes \left( \mathcal{R} _{0,\lambda}+\overline{\mathcal{R} }_{0,\lambda} \right) (w^\prime )\\
	&=\left( \sum_{\left\{ \left( k,j \right) |0\le j,k-j=K \right\}}{f_{\lambda ,k}(w)}\otimes \left( \mathcal{R} _{j,\lambda}+\overline{\mathcal{R} }_{j,\lambda} \right) (w^\prime ) \right) \\
    &-\sum_{\left\{ \left( k,j \right) ,k-j=K,k>K \right\}}{f_{\lambda ,k}(w)}\otimes \left( \mathcal{R} _{j,\lambda}+\overline{\mathcal{R} }_{j,\lambda} \right) (w^\prime ).\\
	\end{align*} 
		By \ref{cyclic-contain}, the first term is in $W$. By the induction hypothesis, the second term is
		in $W$.  So $f_{\lambda ,K}\left( w \right) \otimes \left( \left( \mathcal{R} _{0,\lambda}+\overline{\mathcal{R} }_{0,\lambda} \right) w^{\prime} \right) \in W$. If, moreover, $w^\prime$ is an $\ell$- weight vector, from the defining formula fo $\mathcal{R}_{0,\lambda} $ and $\overline{\mathcal{R} }_{0,\lambda}$, we can use \cite[{Lemma 6.6}]{He2} and we have $(f_{\lambda,K}(w)\otimes w')\in W$. Then we obtain $\left(f_{\lambda,K}(V_{\omega_1})\otimes V_{\omega_2}^{\prime}\right)\subset W$ and $x_{i,m}^-(V_{\omega_1})\otimes V_{\omega_2}^{\prime}\subset W.$

For $M=2$, $\sigma(i)=i$ or $M=3$, the argument is the similar.  
This proves the theorem.
\end{proof}
	
Let $\mathcal A\subset\mathbb C(u)$ be the subring of rational functions
regular at $u=1$, namely
\[
\mathcal A
=
\{f(u)\in\mathbb C(u)\mid f(u)\text{ is regular at }u=1\}.
\]
An $\mathcal{A}$-form $\tilde{V}\subset V(u)=(V_1\otimes V_2)(u)$ is a $\mathcal{U}_q^{\mu_1+\mu_2}(\hgs)\otimes\mathcal{A}$-submodule generating $V(u)$ as a $\mathbb{C}(u)$-vector space and so that its intersection with any weight space of $V(u)$ is a finitely generated $\mathcal{A}$-module.
	
\begin{proposition}\label{A-form-fusion}
	Suppose that $V_1$ and $V_2$ are highest $\ell$-weight modules in
	$\mathcal O_{\mu_1}$ and $\mathcal O_{\mu_2}$, respectively. Let $v_1$ and
	$v_2$ be their highest $\ell$-weight vectors. Let
	\[
	\widetilde V
	=
	\left(
	\mathcal A\otimes_{\mathbb C}
	\mathcal U_q^{\mu_1+\mu_2}(\hat{\mathfrak g}^{\sigma})
	\right)(v_1\otimes v_2)
	\subset V(u).
	\]
	Then $\widetilde V$ is an $\mathcal A$-form of $V(u)$.
\end{proposition}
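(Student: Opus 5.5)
To show that $\widetilde V$ is an $\mathcal A$-form we check three things: that it is a module, that it spans $V(u)$ over $\mathbb C(u)$, and that its weight components are finitely generated over $\mathcal A$. We follow the untwisted argument of \cite[Section~6]{He2} and \cite[Section~5]{Hernandezshfted}.

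\emph{Module and spanning properties.} By construction, $\widetilde V$ is an $\mathcal A\otimes\mathcal U_q^{\mu_1+\mu_2}(\hgs)$-submodule of $V(u)$. Lemma~\ref{rationallemma2} guarantees that the action on $V(u)$ is well defined, so $\widetilde V$ makes sense inside $V(u)$. Theorem~\ref{Cyclic} says that $\mathbb C(u)\otimes\mathcal U_q^{\mu_1+\mu_2}(\hgs)$ applied to $v_1\otimes v_2$ gives all of $V(u)$. Since $\mathbb C(u)=\mathcal A[(u-1)^{-1}]$ is the fraction field of $\mathcal A$, the $\mathbb C(u)$-span of $\widetilde V$ is therefore $V(u)$.

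\emph{Finite generation: setup.} Fix a weight $\omega$ of $V(u)$. By Proposition~\ref{triangular} and the highest $\ell$-weight property, $\widetilde V_\omega$ is the $\mathcal A$-span of the vectors
\[
x^-_{i_1,k_1}\cdots x^-_{i_h,k_h}(v_1\otimes v_2),\qquad k_1,\ldots,k_h\in\mathbb Z,
\]
where $\overline{\alpha_{i_1}}\cdots\overline{\alpha_{i_h}}=\lambda_1\lambda_2\omega^{-1}$. Only finitely many index sequences $(i_1,\ldots,i_h)$ occur, so it suffices to control the modes $k_s$. I would argue by induction on the height $h$, exactly as in the proof of Theorem~\ref{rational-theorem}. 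The inductive hypothesis is that $\widetilde V_{\omega'}$ is finitely generated over $\mathcal A$ for every weight $\omega'$ of smaller height. Then $\widetilde V_\omega=\sum_i\sum_{k\in\mathbb Z}x^-_{i,k}\widetilde V_{\omega\overline{\alpha_i}}$, and it remains to show that for each fixed generator $w$ of $\widetilde V_{\omega\overline{\alpha_i}}$ the family $(x^-_{i,k}w)_{k\in\mathbb Z}$ lies in a finitely generated $\mathcal A$-submodule.

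\emph{The key step and main obstacle.} The main obstacle is that $V(u)_\omega$ is finite-dimensional over $\mathbb C(u)$, but $\mathcal A$ is not a field, so a $\mathbb C(u)$-spanning statement does not directly bound the $\mathcal A$-module. I would first try to use the rationality of Proposition~\ref{key-proposition}. On the relevant weight spaces of $V_1$ and $V_2$ there are nonzero polynomials $P$ annihilating $x_i^-(z)$, and Lemma~\ref{rationallemma1} gives quasi-polynomial behaviour $\lambda^m m^k$ of the modes. Transported through the coproduct formula \eqref{coproduct}, these yield a polynomial $Q_u(z)$ killing $\Delta_u(x_i^-(z))$ on the weight space. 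Its coefficients are polynomials in $u^{\pm1}$, and its leading and constant coefficients are units in $\mathcal A$: they are monomials in $u$ times nonzero constants, which are regular and nonvanishing at $u=1$. The resulting recursion then expresses every $x^-_{i,k}w$ as an $\mathcal A$-linear combination of finitely many consecutive modes $x^-_{i,k_0}w,\dots,x^-_{i,k_0+d}w$, with $d=\deg Q_u$. Hence $\widetilde V_\omega$ is finitely generated.

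If the leading-coefficient claim fails because of the terms $u^r\phi^+_{i,r}$ in \eqref{coproduct}, I would fall back on a different route. One can exhibit an explicit free $\mathcal A$-lattice, spanned over $\mathcal A$ by pure tensors of weight vectors of $V_1\otimes V_2$ multiplied by bounded powers of $(1-u)^{-1}$, which contains $\widetilde V_\omega$. This would use the explicit rational functions $\mathcal R_j(u)$ from the proof of Lemma~\ref{rationallemma2}, whose poles are at $u=\lambda^{-1}$ with $\lambda$ among finitely many eigenvalues. Finite generation then follows because $\mathcal A$ is a Noetherian ring (a localization of $\mathbb C[u]$), so a submodule of a finitely generated free $\mathcal A$-module is finitely generated. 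In either version this finite-generation step carries the real content, and I expect the twisted cases $\sigma(i)=i$ (modes in $M\mathbb Z$) to need only the substitution $z\mapsto z^{M}$.
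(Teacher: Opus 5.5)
Your proposal is correct and follows the same route as the paper, which gives no separate argument and only says that the untwisted proof of \cite[Lemma~4.8]{He2} carries over. Your height induction, with spanning from Theorem~\ref{Cyclic} and finite generation from a recursion on the modes $x^-_{i,k}$ with coefficients in $\mathcal A$, fills in the details that the paper leaves to that reference.

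Your hedge about the terms $u^r\phi^+_{i,r}$ is unnecessary. Take $P_1$, $P_2$ from Proposition~\ref{key-proposition} for the relevant weight spaces of $V_1$, $V_2$, normalized so that $P_1(0)P_2(0)\neq 0$, and multiply $Q_u(z)=P_1(z)P_2(u^{-1}z)$ over the finitely many weight pairs. This kills $\Delta_u(x_i^-(z))$: $P_1(z)$ acts only through the first tensor factor of $x_i^-(z)\otimes\phi_i^+(z^{-1}u)$, and $P_2(u^{-1}z)$ kills $1\otimes x_i^-(zu^{-1})$. Its extreme coefficients are $P_1(0)P_2(0)$ and a nonzero multiple of $u^{-\deg P_2}$, both units in $\mathcal A$, so the fallback route is not needed.
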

	
\begin{proof}
The proof is the same as that of \cite[Lemma~4.8]{He2}
\end{proof}

From the preceding discussion, we obtain the following theorem and corollary which are the same argument in
\cite[Section~5.3]{Hernandezshfted}.

\begin{theorem}\label{q-character-compality}
	Let $V_1\in\mathcal O_{\mu_1}$ and $V_2\in\mathcal O_{\mu_2}$ be highest
	$\ell$-weight modules. Define
	\[
	V_1*V_2
	:=
	\widetilde V/(u-1)\widetilde V .
	\]
	Then $V_1*V_2$ is a highest $\ell$-weight module in
	$\mathcal O_{\mu_1+\mu_2}$. Moreover,
	\[
	\chi_q(V_1*V_2)=\chi_q(V_1)\chi_q(V_2).
	\]
	We call $V_1*V_2$ the fusion product of $V_1$ and $V_2$.
\end{theorem}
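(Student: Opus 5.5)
The plan is to follow Hernandez's argument for shifted untwisted algebras, using the $\mathcal A$-form $\widetilde V$ of Proposition \ref{A-form-fusion} and the cyclicity of Theorem \ref{Cyclic}.

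\emph{Weight spaces and the action.} The $k_i$-type operators $\phi^+_{i,-\alpha_i(\mu_+)}$ (here $\phi^+_{i,0}$, since we work with $\mathcal U_q^{0,\mu}$) act through $\Delta_u$ by $\phi^+_{i,0}\otimes\phi^+_{i,0}$, with no $u$-dependence. Hence $V(u)$ splits into weight spaces $\bigoplus_{\omega_1\omega_2=\omega}(V_{1,\omega_1}\otimes V_{2,\omega_2})(u)$, each finite-dimensional over $\CC(u)$. Then:
\begin{enumerate}
\item $\widetilde V_\omega=\widetilde V\cap V(u)_\omega$ is a finitely generated, torsion-free $\mathcal A$-module. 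Since $\mathcal A$ is a discrete valuation ring (localization of $\CC[u]$ at $u-1$), $\widetilde V_\omega$ is free.
\item Its rank equals $\dim_{\CC(u)}V(u)_\omega=\sum_{\omega_1\omega_2=\omega}\dim V_{1,\omega_1}\dim V_{2,\omega_2}$, because $\widetilde V$ spans $V(u)$ by Theorem \ref{Cyclic}.
\item Therefore $(V_1*V_2)_\omega=\widetilde V_\omega/(u-1)\widetilde V_\omega$ has this finite dimension.
\end{enumerate}
The weights lie in $\bigcup D(\lambda\lambda')$, so the conditions of $\mathcal O_{\mu_1+\mu_2}$ hold. Specializing $\mathcal U_q^{\mu_1+\mu_2}(\hgs)\otimes\mathcal A$ at $u=1$ gives a genuine module structure. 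The image of $v_1\otimes v_2$ generates $V_1*V_2$, since it generates $\widetilde V$ over $\mathcal A\otimes\mathcal U$. It is killed by all $x^+_{i,m}$: using \eqref{coproduct}, $\Delta_u(x_i^+(z))(v_1\otimes v_2)=0$. It is an eigenvector of $\phi_i^\pm(z)$ with eigenvalue $\Psi_{1,i}(z)\Psi_{2,i}(zu)$, which specializes at $u=1$ to $\Psi_{1,i}(z)\Psi_{2,i}(z)$. This rational function has degree $\alpha_i(\mu_1)+\alpha_i(\mu_2)$, so $V_1*V_2$ is a highest $\ell$-weight module in $\mathcal O_{\mu_1+\mu_2}$.

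\emph{The $q$-character.} On $(V_{1,\omega_1}\otimes V_{2,\omega_2})(u)$, the operator $\Delta_u(\phi^+_i(z))=\phi_i^+(z)\otimes\phi_i^+(zu)$ is triangular with respect to a tensor basis adapted to the $\ell$-weight decompositions of $V_1$ and $V_2$. Its generalized eigenvalues are $\Psi(z)\Psi'(zu)$, where $\Psi,\Psi'$ range over $\ell$-weights of $V_1,V_2$, with multiplicity $\dim V_{1,\Psi}\dim V_{2,\Psi'}$. The key step is to compare this with the specialization. Choose an $\mathcal A$-basis of the free module $\widetilde V_\omega$. Each coefficient $\phi^+_{i,m}$ acts by a matrix over $\mathcal A$, since $\widetilde V$ is stable under $\mathcal A\otimes\mathcal U$. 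By the previous step, the characteristic polynomial of this matrix over $\CC(u)$ has roots that are polynomial in $u$ (the $m$-th coefficients of $\Psi(z)\Psi'(zu)$), so these roots lie in $\mathcal A$. Reduction mod $(u-1)$ commutes with taking characteristic polynomials. Hence the generalized eigenvalues of $\phi^+_{i,m}$ on $(V_1*V_2)_\omega$ are the $m$-th coefficients of $\Psi(z)\Psi'(z)$, with the same multiplicities.

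The same holds for joint generalized eigenspaces of the commuting family $\{\phi^+_{i,m}\}$, which gives $\chi_q(V_1*V_2)=\chi_q(V_1)\chi_q(V_2)$. This joint statement is the main obstacle, because infinitely many operators are involved. To handle it, I would restrict to finitely many $m$ on each finite-dimensional weight space: the rationality of Proposition \ref{key-proposition} shows that a bounded number of coefficients determines the $\ell$-weight. I would then apply the single-operator argument to a generic linear combination of those finitely many $\phi^+_{i,m}$.
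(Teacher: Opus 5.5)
Your proposal is correct and follows essentially the argument the paper uses. The paper gives no proof of its own here; it defers to Hernandez's untwisted argument in \cite{Hernandezshfted}, Section~5.3. That argument has the same steps as yours: each $\widetilde V_\omega$ is a free $\mathcal A$-module of rank $\dim_{\CC(u)}V(u)_\omega$ by cyclicity, $v_1\otimes v_2$ carries the $\ell$-weight $\Psi_{1,i}(z)\Psi_{2,i}(zu)$, and the generalized eigenvalues of the $\phi^+_{i,m}$, which are polynomial in $u$, specialize at $u=1$ with their multiplicities. Your reduction of the joint generalized eigenspaces to a generic linear combination of finitely many $\phi^+_{i,m}$ spells out a step that the paper leaves implicit.
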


\begin{corollary}\label{subquotient}
    A simple module in $\mo^{sh}$ is a subquotient of a fusion product of various prefundamental representations and a simple constant representation.
\end{corollary}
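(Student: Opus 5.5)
We want to show that every simple $L(\boldsymbol\Psi)$ in $\mathcal O^{sh}$, with $\boldsymbol\Psi\in\mathfrak r_\mu$, is a subquotient of a fusion product of prefundamental representations and a constant representation. The argument combines three ingredients: a factorization of the rational $\ell$-weight, the multiplicativity $\chi_q(V_1*V_2)=\chi_q(V_1)\chi_q(V_2)$ of Theorem~\ref{q-character-compality}, and the fact that a highest $\ell$-weight module has its simple head as a subquotient.

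\textbf{Step 1: factorize $\boldsymbol\Psi$.} For each $i\in I_0$, write $\Psi_i(z)=c_i\,P_i(z)/Q_i(z)$, where $P_i,Q_i$ are polynomials with constant term $1$ and $c_i\in\mathbb C^*$. If $\sigma(i)=i$, then $\phi_i^\pm(z)$ involves only powers of $z^M$, so $P_i,Q_i$ are polynomials in $z^M$. Hence each linear factor of $P_i$ (in $z$, respectively in $z^M$) is the $i$-th component of a positive prefundamental $\ell$-weight $\boldsymbol\Psi^+_{i,a}$ as in the Example. Each factor of $Q_i^{-1}$ is the $i$-th component of $\boldsymbol\Psi^-_{i,a}=(\boldsymbol\Psi^+_{i,a})^{-1}$. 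The constant $\ell$-weight $\mathbf c=(c_i)_i$ has degree $0$. Therefore
\[
\boldsymbol\Psi=\mathbf c\prod_{k}\boldsymbol\Psi^{+}_{i_k,a_k}\prod_{l}\boldsymbol\Psi^{-}_{j_l,b_l}
\]
is a product in the group $\mathfrak r$. The degrees add, so each factor lies in some $\mathfrak r_{\mu_k}$ with $\sum\mu_k=\mu$. For this I must check that each factor's degree is admissible, i.e.\ lies in $\Lambda$; this follows from the divisibility by $M$ of degrees in $z^M$ when $\sigma(i)=i$. By Theorem~\ref{rational-theorem}, each factor is the highest $\ell$-weight of a simple object of $\mathcal O^{sh}$. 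The negative prefundamentals $L(\boldsymbol\Psi^-_{i,a})$ exist by that theorem, though they are not one-dimensional.

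\textbf{Step 2: form the fusion product.} Form the iterated fusion product
\[
F=L(\mathbf c)*L(\boldsymbol\Psi^+_{i_1,a_1})*\cdots*L(\boldsymbol\Psi^-_{j_1,b_1})*\cdots.
\]
Theorem~\ref{q-character-compality} applies only to highest $\ell$-weight modules. It states that $V_1*V_2$ is again of highest $\ell$-weight, so it can be iterated. By Theorem~\ref{Cyclic}, the vector $v_1\otimes v_2$ generates, and $\Delta_u(\phi_i^+(z))=\phi_i^+(z)\otimes\phi_i^+(zu)$. After specializing $u=1$, the highest $\ell$-weight of $V_1*V_2$ is therefore the product of the highest $\ell$-weights. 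Inductively, $F$ is a highest $\ell$-weight module in $\mathcal O_\mu$ with highest $\ell$-weight $\boldsymbol\Psi$.

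\textbf{Step 3: identify $L(\boldsymbol\Psi)$ inside $F$.} A highest $\ell$-weight module has a unique simple quotient, obtained by quotienting by the maximal submodule not containing the highest weight vector. That quotient is simple with highest $\ell$-weight $\boldsymbol\Psi$, so it is $L(\boldsymbol\Psi)$ by the uniqueness in Theorem~\ref{rational-theorem}. Hence $L(\boldsymbol\Psi)$ is a quotient, in particular a subquotient, of $F$. Alternatively, the $q$-character multiplicativity shows that $[\boldsymbol\Psi]$ occurs in $\chi_q(F)$ with multiplicity one, and injectivity of $\chi_q$ gives the same conclusion in $K_0$.

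\textbf{Main obstacle.} The delicate point is Step 1 in the twisted setting. The prefundamental $\ell$-weights of the Example have the form $1-a^Mz^M$ when $\sigma(i)=i$. So I must justify that $\Psi_i$ for a $\sigma$-fixed node is a rational function of $z^M$. I would first derive this from the vanishing relation $h_{i,m}=0$ for $M\nmid m$, applied to the highest weight vector. That relation forces $\Psi_i(z)=z^{-\alpha_i(\mu_+)}\Psi_{i,-\alpha_i(\mu_+)}\exp(\sum h_{i,m}z^m)$ to lie in $\mathbb C((z^M))$. Rationality then gives a rational function in $z^M$, since $\Psi_i(\zeta z)=\Psi_i(z)$ and a $\zeta$-invariant rational function is a rational function of $z^M$. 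Its degree in $z$ is a multiple of $M$, consistent with the condition $\mu\in\Lambda$.
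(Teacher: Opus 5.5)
Your proposal is correct and follows essentially the same route as the paper, which defers to \cite[Section~5.3]{Hernandezshfted}. The steps match: factor $\boldsymbol\Psi$ into a constant $\ell$-weight and prefundamental $\ell$-weights; use $\Delta_u(\phi_i^+(z))=\phi_i^+(z)\otimes\phi_i^+(zu)$ and Theorem~\ref{q-character-compality} to see that the iterated fusion product is a highest $\ell$-weight module with highest $\ell$-weight $\boldsymbol\Psi$; and take its simple head $L(\boldsymbol\Psi)$. Your check that $\Psi_i$ is a rational function of $z^M$ when $\sigma(i)=i$, which ensures the factors are genuinely twisted prefundamental $\ell$-weights with shifts in $\Lambda$, is correct and is a point the paper leaves implicit.
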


\section{Finite-dimensional representations}
In this section we study finite-dimensional representations in the category
\(\mathcal O^{sh}\). The criterion is formulated in terms of dominant rational $\ell$-weights, with a separate treatment of the special type
$A_{2n}^{(2)}$ (Theorems \ref{finite-dimensional-1} and \ref{finite-dimensional-2}).

We first recall the notation for Drinfeld polynomials in the twisted case. The
finite-dimensional simple representations of twisted quantum affine algebras
were classified by Chari--Pressley \cite{chari-twisted-98}, and the corresponding
$q$-character theory was developed by Hernandez \cite{Hernandeztwisted}. This
notation will be used to formulate the finite-dimensionality criterion for
shifted twisted quantum affine algebras.

For $i\in I_0$ and $a\in \CC^*$, the $\ell$-weights $\tilde{Y}_{i,a} \in \mathfrak{r}_0$ are given by
 \begin{align*}
\left( \tilde{Y}_{i,a} \right) _j\left( z \right) =\begin{cases}
	\frac{1-a^Mz^Mq^{-M}}{1-a^Mz^Mq^M}&if\quad \sigma \left( i \right) =i\quad and\quad j=i,\\
	\frac{1-azq^{-1}}{1-azq}\,\,  &if\quad \sigma \left( i \right) \ne i\quad and\quad j=i,\\
	1  &else.\\
\end{cases}
\end{align*}


The following theorem is an analogue to \cite[Theorem 6.1]{Hernandezshfted} and the proof works word by word.

\begin{theorem}\label{ordinary-theorem}
The simple finite-dimensional representations of $\mathcal{U}_q^0(\hgs)$ are the $L(\boldsymbol\Psi)$ where $\boldsymbol\Psi(z)(\boldsymbol\Psi(0))^{-1}$ is a monomial in the $\tilde{Y}_{i,a}$ for $i\in I_0$, $a\in \CC^*$.

\end{theorem}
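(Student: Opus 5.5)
We reduce Theorem \ref{ordinary-theorem} to the Chari--Pressley classification for the twisted quantum loop algebra \cite{chari-twisted-98}, as in \cite[Theorem 6.1]{Hernandezshfted}. By Remark \ref{central-remark}(i), the elements $c_i=\phi^+_{i,0}\phi^-_{i,0}$ ($i\in I_0$) are central in $\mathcal U_q^0(\hgs)$, and $\mathcal U_q(\lgs)$ is the quotient by the ideal generated by the $c_i-1$. A simple finite-dimensional module $V$ is a weight module, since $\phi^+_{i,0}$ acts semisimply by the usual $\mathfrak{sl}_2$-type argument for $x^\pm_{i,0},\phi^\pm_{i,0}$. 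Hence $V$ lies in $\mathcal O_0$, and by Theorem \ref{rational-theorem} we have $V\simeq L(\boldsymbol\Psi)$ with $\boldsymbol\Psi\in\mathfrak r_0$. By Schur's lemma each $c_i$ acts by a scalar $\gamma_i=\Psi_i(0)\Psi_i(\infty)\in\CC^*$.

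The key step is to twist away these scalars. Define an automorphism $\tau_{\mathbf c}$ of $\mathcal U_q^0(\hgs)$, for suitable $\mathbf c=(c_i)\in(\CC^*)^{I_0}$, by rescaling $\phi^-_i(z)\mapsto c_i\phi^-_i(z)$ and $\phi^+_i(z)\mapsto c_i\phi^+_i(z)$, with compatible rescalings $x^+_{i,r}\mapsto c_i x^+_{i,r}$ and $x^-_{i,r}\mapsto x^-_{i,r}$ so that (\ref{xphi-relation}) is preserved. We check that the remaining relations are homogeneous in these rescalings; the relations (\ref{phi+x})--(\ref{phi-x}) and the Serre relations are homogeneous in each $x^\pm_i$, so this holds. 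Pulling $L(\boldsymbol\Psi)$ back along $\tau_{\mathbf c}$ multiplies the highest $\ell$-weight by the constant tuple $\mathbf c$. Choosing $c_i^2=\gamma_i^{-1}$ makes $c_i$ act by $1$, so the twisted module descends to a finite-dimensional simple $\mathcal U_q(\lgs)$-module. Since $\tau_{\mathbf c}$ is invertible, finite-dimensionality of $L(\boldsymbol\Psi)$ is equivalent to finite-dimensionality of $L(\mathbf c\boldsymbol\Psi)$. Moreover, $\boldsymbol\Psi(z)\boldsymbol\Psi(0)^{-1}$ is unchanged by this twist.

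It then remains to invoke Chari--Pressley \cite{chari-twisted-98}, in the form given by Hernandez \cite{Hernandeztwisted}. A simple $\mathcal U_q(\lgs)$-module $L(\boldsymbol\Phi)$ is finite-dimensional if and only if its highest $\ell$-weight $\boldsymbol\Phi$ is, up to the sign or type-$1$ character in $\Phi_i(0)$, a monomial in the $Y_{i,a}$. Normalizing by $\boldsymbol\Phi(0)$ turns this into a monomial in the $\tilde Y_{i,a}$. Conversely, for any $\boldsymbol\Psi$ such that $\boldsymbol\Psi\boldsymbol\Psi(0)^{-1}$ is such a monomial, we pick $\mathbf c$ with $\mathbf c\boldsymbol\Psi$ of loop-algebra type and pull back the corresponding finite-dimensional module. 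We must make sure all choices of $\Psi_i(0)$ are realized; the twist by $\mathbf c$ together with the $\sigma$-invariant sign twists covers them.

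The main obstacle is the compatibility in the twisted setting. For $\sigma(i)=i$, the $\tilde Y_{i,a}$ are functions of $z^M$, and the Chari--Pressley parametrization uses polynomials in $u^{M}$ for those nodes, so we must check that the monomial condition matches exactly. We also need to verify that $\tau_{\mathbf c}$ respects the factor $\zeta^{sr'}/N_i$ in (\ref{xphi-relation}) and the Serre relations (\ref{Serre1})--(\ref{Serre3}). For these we would test homogeneity term by term; since each term is cubic in the $x^\pm_i$ with fixed index multiset, uniform scaling works.
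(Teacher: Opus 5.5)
Your proposal is correct and takes essentially the same route as the paper, which defers word for word to \cite[Theorem~6.1]{Hernandezshfted}. The argument there is the same as yours: use the rescaling automorphism $\phi^\pm_i(z)\mapsto c_i\phi^\pm_i(z)$, $x^+_i(z)\mapsto c_i x^+_i(z)$ (equivalently, take the Drinfeld-coproduct tensor product with a constant one-dimensional representation) to make the central elements $\phi^+_{i,0}\phi^-_{i,0}$ act by $1$. The answer is then read off from the Chari--Pressley classification for $\mathcal U_q(\lgs)$ in Hernandez's $Y_{i,a}$ normalization. The compatibility issues you flag at the end are only bookkeeping: every relation in Definition~\ref{basic-definition} is homogeneous in each $x^\pm_i$, and the normalized $\tilde Y_{i,a}$ absorb the constants $\Psi(0)$.
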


\begin{definition}
    An $\ell$-weight $\boldsymbol\Psi$ is called dominant if $\boldsymbol\Psi$ is a monomial in the following $\ell$-weights:
    
    $(i)$ the $Y_{i,a}$'s for $a\in \CC^*$ and $i\in I_0$,
    
    $(ii)$ the $\boldsymbol\Psi_{i,a}$'s for $a\in \CC^*$ and $i\in I_0$,
    
    $(iii)$ the $\gamma$'s for $\gamma\in \mathfrak{t}^*$.
\end{definition}

\subsection{\texorpdfstring{All cases except $A_{2n}^{(2)}$}{All cases except A{2n}{(2)}}}
\begin{proposition}
    For $\mu\in \Lambda$, the algebra $\mathcal{U}_q^\mu(\hgs)$ admits non-zero finite-dimensional representation if and only if $\mu$ is dominant.

    Let $\mu\in \Lambda^+$ be dominant and $\boldsymbol\Psi\in\mathfrak{r}_\mu$ be such that $\boldsymbol\Psi(\boldsymbol\Psi(0))^{-1}$ is a product of various 
    \begin{center}
        $\tilde{Y}_{i,a}$ and $\boldsymbol\Psi_{i,a}$ for $i\in I_0$, $a\in \CC^*$.
    \end{center}
Then $L(\boldsymbol\Psi)$ is a simple finite-dimensional representation of $\mathcal{U}_q^\mu(\hgs)$.
\end{proposition}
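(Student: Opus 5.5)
We would follow the strategy of \cite[Section~6]{Hernandezshfted}, using the fusion product and the shift functors of the previous sections.

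\emph{Sufficiency.} Let $\mu\in\Lambda^+$ and let $\boldsymbol\Psi\in\mathfrak r_\mu$ be such that $\boldsymbol\Psi(\boldsymbol\Psi(0))^{-1}$ is a product of $\tilde Y_{i,a}$'s and $\boldsymbol\Psi_{i,a}$'s. Since $\deg\tilde Y_{i,a}=0$ and $\deg(\boldsymbol\Psi_{i,a})_i=\iota_i$, the number of factors $\boldsymbol\Psi_{i,a}$ with index $i$ is exactly $\alpha_i(\mu)/\iota_i$. We group $\boldsymbol\Psi=\gamma\,\boldsymbol\Psi^{(0)}\prod_k\boldsymbol\Psi_{i_k,a_k}$, where $\boldsymbol\Psi^{(0)}$ is a monomial in the $\tilde Y$'s and $\gamma\in\mathfrak t^*$ is constant. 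By Theorem~\ref{ordinary-theorem}, $L(\boldsymbol\Psi^{(0)})$ is finite-dimensional over $\U_q^0(\hgs)$. The positive prefundamental modules $L(\boldsymbol\Psi_{i,a})$ are one-dimensional (see the Example after Theorem~\ref{rational-theorem}), and so is the constant module $L(\gamma)$. We form the fusion product
\[
L(\gamma)*L(\boldsymbol\Psi^{(0)})*L(\boldsymbol\Psi_{i_1,a_1})*\cdots .
\]
By Theorem~\ref{q-character-compality} it is a highest $\ell$-weight module in $\mathcal O_\mu$ of highest $\ell$-weight $\boldsymbol\Psi$, and its $q$-character is $\chi_q(L(\gamma))\chi_q(L(\boldsymbol\Psi^{(0)}))\prod_k\chi_q(L(\boldsymbol\Psi_{i_k,a_k}))$. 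This is a finite sum, so the fusion product is finite-dimensional. Then $L(\boldsymbol\Psi)$, being its simple quotient, is finite-dimensional.

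\emph{Necessity.} Let $V\neq 0$ be finite-dimensional. Then $V\in\mathcal O_\mu$ trivially, so $V$ has a simple subquotient $L(\boldsymbol\Psi)$, which is finite-dimensional with $\deg\Psi_i=\alpha_i(\mu)$ by Proposition~\ref{key-proposition}. Fix $i$ and restrict to the subalgebra generated by $x^\pm_{i,r}$ and $\phi^\pm_{i,r}$. This is a shifted rank-one algebra: either shifted $\U_q(\hat{\mathfrak{sl}}_2)$ in the variable $z$, or in $z^M$ when $\sigma(i)=i$ (using $M\mid\alpha_i(\mu)$), or the $A_2^{(2)}$ piece, which is excluded here.

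On the highest weight vector $v$, finite-dimensionality makes the span of the $x^-_{i,r}v$ finite-dimensional. So, as in the proof of Theorem~\ref{rational-theorem}, there is a polynomial $P$ with $P(z)x^-_i(z)v=0$. The relation \eqref{x+x-} then gives
\[
\Psi_i(z)=\Psi_i^+(z)-\bigl(\Psi_i^+(z)-\Psi_i^-(z)\bigr),
\]
with $\Psi_i^+-\Psi_i^-$ expressed through the vectors $x^+_{i,s}x^-_{i,r}v$. We then apply the usual $\mathfrak{sl}_2$ argument (Chari--Pressley): the lowest vector of the $i$-string forces $\Psi_i$, up to a constant, to be a ratio $Q(zq^{-1})/Q(zq)\cdot R(z)$ with $Q$ and $R$ polynomials, where $R$ comes from the shift. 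Hence $\deg\Psi_i\ge 0$, that is, $\alpha_i(\mu)\ge0$, and $\mu$ is dominant.

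A cleaner route would use the shift functor: apply $\mathcal R_{\mu,\mu',a}$ with $\mu'=-\mu$ (when $\mu$ is dominant) to reduce to Theorem~\ref{ordinary-theorem}. For the converse, though, the direct rank-one computation seems unavoidable.

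\emph{Main obstacle.} The hard step is the rank-one reduction for the necessity direction. We must show that a finite-dimensional highest $\ell$-weight module of shifted rank one forces nonnegative degree, while keeping track of the twisting: $\delta(\zeta^{-s}w/z)$ and the even/odd mode constraints when $\sigma(i)=i$. We would first try the argument of \cite[Proposition~6.3]{Hernandezshfted}. In the eigenvector of lowest weight of the $i$-string, $\phi^+_i(z)$ and $\phi^-_i(z)$ act by the same rational function, of degree $\alpha_i(\mu)-2N\deg$. On the other hand, the image of this vector under $x^+_i$ carries the polynomial structure. Comparing degrees at $z\to\infty$ via $\phi^-_{i,\alpha_i(\mu)}$ being invertible then yields $\alpha_i(\mu)\ge0$.
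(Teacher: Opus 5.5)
Your sufficiency half is fine: $L(\boldsymbol\Psi)$ is the simple quotient of a fusion product of a constant module, a finite-dimensional $\mathcal U_q^0(\hgs)$-module and one-dimensional positive prefundamentals. Its $q$-character is a finite sum, so it is finite-dimensional.

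The necessity half does not work as written. Your ``Chari--Pressley'' step asserts that finite-dimensionality forces $\Psi_i=c\,Q(zq^{-1})/Q(zq)\cdot R(z)$ with $R$ a polynomial. That is essentially the classification of Theorem~\ref{finite-dimensional-1}, which the paper only proves afterwards. Moreover, the Chari--Pressley argument cannot be run here. It needs the quantum $\mathfrak{sl}_2$ generated by $x^\pm_{i,0}$ and $k_i^{\pm1}$, and in the shifted algebra $\phi^-_{i,0}$ is not $(\phi^+_{i,0})^{-1}$; for $\alpha_i(\mu)<0$ it is $0$. Your fallback degree comparison rests on a false claim. The lowest $\ell$-weight of the $i$-string does not have degree $\alpha_i(\mu)-2N\deg$. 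By Proposition~\ref{key-proposition}, every $\ell$-weight of a module in $\mathcal O_\mu$ has degree exactly $\alpha_i(\mu)$, since the $A_{i,a}$ have degree $0$. So comparing degrees yields nothing. The displayed identity $\Psi_i=\Psi_i^+-(\Psi_i^+-\Psi_i^-)$ is also vacuous.

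The missing idea is exactly this degeneration, and it gives necessity directly.

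\emph{Setup.} In $\mathcal U_q^{0,\mu}(\hgs)$ the element $\phi^+_{i,0}$ is invertible, while $\phi^-_i(z)$ only involves powers $z^m$ with $m\le\alpha_i(\mu)$. So if $\alpha_i(\mu)<0$, then $\phi^-_{i,0}=0$. Relation \eqref{xphi-relation} with $j=i$, $r=r'=0$ then gives $[x^+_{i,0},x^-_{i,0}]=\phi^+_{i,0}/(q_i-q_i^{-1})$; the twisted prefactor equals $1$.

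\emph{A highest vector.} Let $V\neq0$ be finite-dimensional. Put $\rho=q^{\sum_s C_{i,\sigma^s(i)}}$, which is not a root of unity. The operator $x^+_{i,0}$ sends the $\lambda$-eigenspace of $\phi^+_{i,0}$ to the $\rho\lambda$-eigenspace. Choose an eigenvalue $\lambda$ (necessarily $\neq0$) such that $\rho\lambda$ is not an eigenvalue, and an eigenvector $v$ for $\lambda$. Then $x^+_{i,0}v=0$.

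\emph{The contradiction.} One computes
\[
x^+_{i,0}(x^-_{i,0})^jv=\frac{\lambda}{q_i-q_i^{-1}}\cdot\frac{1-\rho^{-j}}{1-\rho^{-1}}\,(x^-_{i,0})^{j-1}v .
\]
By induction, all $(x^-_{i,0})^jv$ are nonzero eigenvectors of $\phi^+_{i,0}$ with pairwise distinct eigenvalues $\lambda\rho^{-j}$. This contradicts finite-dimensionality.

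This argument needs no rank-one classification. It also does not need $V\in\mathcal O_\mu$, which you called trivial but which requires checking that $\phi^+_{i,0}$ and $\phi^-_{i,\alpha_i(\mu)}$ act diagonalizably.
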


\begin{proof}
The proof is similar to \cite[Proposition 6.2]{Hernandezshfted}.
\end{proof}

 \begin{theorem}\label{finite-dimensional-1}
     The simple finite-dimensional representations of shifted twisted  quantum affine algebras are the $L(\boldsymbol\Psi)$ where $\boldsymbol\Psi$ is dominant.
 \end{theorem}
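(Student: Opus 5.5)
The theorem combines two directions, and I would treat them separately, using the preceding Proposition and Theorem \ref{ordinary-theorem}.

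\emph{Sufficiency.} Let $\boldsymbol\Psi$ be dominant, so $\boldsymbol\Psi=\gamma\prod Y_{i,a}\prod\boldsymbol\Psi_{i,b}$. Since $Y_{i,a}=\tilde Y_{i,a}\cdot(Y_{i,a}(0))$ with $Y_{i,a}(0)\in\mathfrak t^*$ constant, the normalized weight $\boldsymbol\Psi(\boldsymbol\Psi(0))^{-1}$ is a product of $\tilde Y_{i,a}$'s and $\boldsymbol\Psi_{i,b}$'s. The $\tilde Y$'s have degree $0$, and $\boldsymbol\Psi_{i,b}$ has degree $\iota_i$ in its $i$-th component. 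Hence $\boldsymbol\Psi\in\mathfrak r_\mu$ for some $\mu$ with $\alpha_i(\mu)\ge0$, and the divisibility condition holds automatically when $\sigma(i)=i$. So $\mu\in\Lambda^+$, and the preceding Proposition gives $\dim L(\boldsymbol\Psi)<\infty$.

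\emph{Necessity.} Let $V$ be a simple finite-dimensional $\mathcal U_q^\mu(\hgs)$-module. First I would show that $V$ lies in $\mathcal O_\mu$. Finite dimensionality gives condition (ii) and boundedness (iii) of weights. The commuting operators $\phi^+_{i,0}$, being invertible, give generalized weight spaces. To get diagonalizability, I would use that $\phi^+_{i,0}$ acts on $x^\pm$ by fixed scalars, so the semisimple parts of the $\phi^+_{i,0}$ define a submodule decomposition; simplicity then forces a single weight coset structure. I would also pick a vector annihilated by all $x^+_{i,m}$: a maximal weight exists, so $V$ is highest $\ell$-weight. Theorem \ref{rational-theorem} then gives $V\cong L(\boldsymbol\Psi)$ with $\boldsymbol\Psi\in\mathfrak r_\mu$, and the preceding Proposition gives $\mu\in\Lambda^+$. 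Write $\Psi_i(z)=\Psi_i(0)\,P_i(z)/Q_i(z)$ with $\deg P_i-\deg Q_i=\alpha_i(\mu)\ge0$.

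The key step is to show that $\boldsymbol\Psi$ factors as a monomial in the $\tilde Y$'s times a polynomial part. Following \cite[Theorem 6.1]{Hernandezshfted}, I would reduce to the unshifted case via shift functors. Choose $\mu'=-\mu\in-\Lambda^+$, which is available since we exclude $A^{(2)}_{2n}$, and choose a generic $a$. Pulling $V$ back along $\iota_{0,-\mu,a}$ does not directly reach degree $0$, so instead I would run the argument at the level of $\mathfrak{sl}_2$-type subalgebras. For each $i$, the subalgebra generated by $x^\pm_{i,m}$ and $\phi^\pm_{i,m}$ is a shifted quantum affine $\mathfrak{sl}_2$: it is rank one, untwisted in the variable $z^{N_i}$, with quantum parameter $q_i$ or $q^M$. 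The $i$-highest component of $V$ is a finite-dimensional highest $\ell$-weight module over it. By the rank-one classification of Hernandez (the $\mathfrak{sl}_2$ case of \cite[Theorem 6.3]{Hernandezshfted}), $\Psi_i$ must then be of the form constant $\times\prod\frac{1-b q^{-1}z}{1-bqz}\times$ polynomial in $z^{N_i}$. Rewriting these factors as $\tilde Y_{i,a}$ and $\boldsymbol\Psi_{i,b}$, with the $z^{M}$ forms when $\sigma(i)=i$, and absorbing constants into $\gamma\in\mathfrak t^*$, shows that $\boldsymbol\Psi$ is dominant.

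The main obstacle I expect is this rank-one reduction for nodes with $\sigma(i)=i$. There the currents live in $z^M$ and the commutator carries the factor $\sum_s\zeta^{sr'}/N_i$, so one must check that the subalgebra really is a shifted $U_{q^M}(\widehat{\mathfrak{sl}}_2)$ in the variable $z^M$ with shift $\alpha_i(\mu)/M$. I would verify this directly from relations \eqref{hx-relation} and \eqref{xphi-relation}, before invoking the untwisted rank-one result.
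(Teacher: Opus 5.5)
Your proposal is correct and follows essentially the paper's route: restrict to the rank-one subalgebras generated by $x^\pm_{i,m}$ and $\phi^\pm_{i,m}$, identify them with shifted $U_{q^M}(\hat{\mathfrak{sl}}_2)$ with shift $\alpha_i(\mu)/M$ when $\sigma(i)=i$, and with shifted $U_q(\hat{\mathfrak{sl}}_2)$ with shift $\alpha_i(\mu)$ when $C_{i,\sigma(i)}=0$, then apply Hernandez's rank-one classification to each $\Psi_i$. The shift-functor detour is unnecessary, and your extra steps (sufficiency via the preceding Proposition, membership in $\mathcal{O}_\mu$) only make explicit what the paper leaves implicit; for diagonalizability, the cleaner argument is that the kernel of the nilpotent part of $\phi^+_{i,0}$ is a nonzero submodule, so simplicity forces that nilpotent part to vanish.
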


 \begin{proof}
     For $i\in I_0$, denote by $U_i$ the subalgebra of $\mathcal{U}_q^\mu(\hgs)$ generated by the $x^\pm_{i,m}(m\in \ZZ)$, $\phi^\pm_{i,m}(m\in \ZZ)$. As explained in \cite[Section 2.6]{Hernandeztwisted}, we have different cases:
     \begin{itemize}
         \item If $C_{i,\sigma(i)}=2$, we have an algebra isomorphism $U_{q^M}^{\alpha_i(\mu)/M}(\hat{\mathfrak{s} \mathfrak{l} _2})\to U_i$:
         \begin{align*}
             x^+_k\mapsto x^{+}_{i,kM}, \:x^-_k\mapsto x^-_{i,kM},\:\phi^\pm_m\mapsto \phi^{\pm}_{i,mM}.    
         \end{align*}
         
         \item If $C_{i,\sigma(i)}$=0, we have an algebra isomorphism $U_{q}^{\alpha_i(\mu)}(\hat{\mathfrak{s} \mathfrak{l} _2})\to U_i$:
          \begin{align*}
        x^+_k\mapsto x^{+}_{i,k}, \:x^-_k\mapsto x^-_{i,k},\: \phi^\pm_m\mapsto \phi^{\pm}_{i,m}.  
     \end{align*}
     \end{itemize}
    
Then by \cite[Theorem 6.4]{Hernandezshfted}, we have $\boldsymbol\Psi$ has the correct form and $\boldsymbol\Psi$  is dominant.     
 \end{proof}

\subsection{\texorpdfstring{Case $A_{2n}^{(2)}$}{Case A{2n}{(2)}}}
Let $\mo^{sh}_+$ be the subcategory of representations in $\mo^{sh}$ whose simple constituents has a highest $\ell$-weight $\boldsymbol\Psi$ such that the roots and the poles of $\Psi_i(z)$ are of the form $q^r$.

Following \cite{mukhintwisted,Hernandeztwisted}, for $A_2^{(2)}$ case, we have $A_{1,a}=Y_{1,aq}Y_{1,aq^{-1}}Y_{1,-a}^{-1}$. We also extend Nakajima partial odering \cite{nakajima2004quiver} to $\ell$-weights of $\mathcal{U}_q^\mu(A_2^{(2)})$: we set $\boldsymbol\Psi  \prec \boldsymbol\Psi^\prime$ if $\Psi^\prime\Psi$ is a monomial in the $A_{1,a}$. 

\begin{theorem}
    If $\mathcal{U}_q^\mu(\hgs)=\mathcal{U}_q^\mu(A_2^{(2)})$ and for $\boldsymbol\Psi$ an $\ell$-weight, we have 
    \begin{align*}
        \chi_q(L(\boldsymbol\Psi)) \in [\boldsymbol\Psi]+\sum_{\boldsymbol\Psi^\prime  \prec \boldsymbol\Psi}\NN[\boldsymbol\Psi^\prime].
    \end{align*}
\end{theorem}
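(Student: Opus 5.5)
The plan is to follow the proof of the analogous statement in the untwisted shifted setting (Hernandez's result that $q$-characters of simple modules in $\mathcal O_\mu$ are $[\boldsymbol\Psi]$ plus terms lower in the Nakajima order). The argument combines the $\ell$-weight analysis of the currents $x_1^\pm(z)$ with a reduction to the unshifted twisted case. The main tool is the proposition on $\ell$-weights in $\mathcal O_{\hgs}$: if $x^{\pm}_{i,r}(V_\eta)\cap V_\nu\neq 0$, then $\nu=\eta A_{i,a}^{\pm1}$. I would first show that this statement holds verbatim for $V\in\mathcal O_\mu$ of $\mathcal U_q^\mu(A_2^{(2)})$.

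\textbf{Step 1: transfer of the $\ell$-weight relation.} The relations (\ref{phi+x}) between $\phi_1^+(z)$ and $x_1^\pm(w)$ are the same in the shifted algebra as in $\mathcal U_q(\lgs)$. Only the leading mode of $\phi_1^+$ is shifted, and that mode is invertible. So I would run the triangular-basis argument of that proposition on $V_\eta$ and $V_\nu$ (finite-dimensional by Proposition \ref{key-proposition} and the definition of $\mathcal O_\mu$). Equation (\ref{FJMMlemma5.5-2}) then gives $\nu_1(u)\eta_1(u)^{-1}=(A_{1,a}(u))_1$ for some $a\in\CC^*$. Here the case $C_{1,\sigma(1)}=-1$ produces the factor $Y_{1,-a}^{-1}$ in $A_{1,a}=Y_{1,aq}Y_{1,aq^{-1}}Y_{1,-a}^{-1}$. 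The ratio is a quotient of power series, so the shift of degree $\alpha_1(\mu)$ cancels and nothing new appears.

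\textbf{Step 2: triangularity.} Let $v$ be the highest $\ell$-weight vector of $L(\boldsymbol\Psi)$. By Proposition \ref{triangular}, $L(\boldsymbol\Psi)=\mathcal U_q^\mu(A_2^{(2)})^-v$, and the highest weight space is one-dimensional. Every weight vector is a combination of $x^-_{1,k_1}\cdots x^-_{1,k_h}v$. Each $\ell$-weight space $L_{\boldsymbol\Psi'}$ lies in a weight space, so I would prove by induction on the height $h$ that every $\ell$-weight of weight $\omega\overline{\alpha_1}^{-h}$ is of the form $\boldsymbol\Psi A_{1,a_1}^{-1}\cdots A_{1,a_h}^{-1}$. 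To do this, decompose the weight space $L_{\omega\overline{\alpha_1}^{-(h-1)}}$ into $\ell$-weight spaces, which is possible since the $\phi^+_{1,m}$ commute and the space is finite-dimensional. Then apply Step 1 to each component of $x^-_{1}(z)$ acting from one generalized eigenspace to another. The component into $L_{\boldsymbol\Psi'}$ is nonzero only if $\boldsymbol\Psi'$ equals an earlier $\ell$-weight times $A_{1,a}^{-1}$. This gives $\boldsymbol\Psi'\prec\boldsymbol\Psi$ with the monomial nontrivial when $h\ge1$, and multiplicity one for $[\boldsymbol\Psi]$. I read the order as $\boldsymbol\Psi'\preceq\boldsymbol\Psi$ when $\boldsymbol\Psi'^{-1}\boldsymbol\Psi$ is a monomial in the $A_{1,a}$; the paper's ``$\Psi'\Psi$'' is presumably this.

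\textbf{Main obstacle.} The delicate point is Step 1 in the case $C_{1,\sigma(1)}=-1$. The $g_{11}$ factor there has three factors of mixed parity, with $\zeta=-1$. I must check that solving (\ref{FJMMlemma5.5-2}) for every $\lambda_{K,L}(z)\neq0$ forces a single $a$, and that this is exactly the rational function $(A_{1,a})_1$. I expect this to work by following \cite[Proposition~3.6]{mukhintwisted}. Clearing denominators gives a polynomial identity in $u$ annihilating $\lambda_{K,L}(z)$, which forces $\lambda_{K,L}$ to be supported on $z=a$. Substituting back then determines the ratio. I would also verify that $\ell$-weights in $\mathfrak r_\mu$ of degree $\alpha_1(\mu)$ are preserved, which is consistent with $\deg A_{1,a}=0$.
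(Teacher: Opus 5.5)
Your proof is correct, but it takes a different route from the paper's.

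\textbf{The paper's route.} The paper never works with the currents. It uses Corollary~\ref{subquotient} to realize $L(\boldsymbol\Psi)$ as a subquotient of a fusion product of prefundamental and constant representations. Multiplicativity of $\chi_q$ on fusion products then reduces the claim to prefundamentals. The positive ones are one-dimensional. For the negative ones, Corollary~\ref{nakajimacorollary} (simple objects of $\mathcal O_\mu$, $\mu\in-\Lambda^+$, remain simple over $\U_q(\bs)$) lets the paper import Wang's $q$-character formula for negative prefundamental Borel modules.

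\textbf{Your route.} You prove triangularity directly. The $\ell$-weight transition lemma uses only relation \eqref{phi+x} and finite-dimensionality of $\ell$-weight spaces. In $\U_q^{0,\mu}$ the current $\phi_1^+(z)$ is an ordinary power series with invertible constant term; there is no shift on the $+$ side at all. So the lemma transfers verbatim, and the triangular decomposition finishes the argument.

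\textbf{What each buys.} Your argument is self-contained. It avoids the fusion-product machinery and the Borel embedding, both of which the paper only sketches. It also proves more: it applies to every highest $\ell$-weight module in $\mathcal O_\mu$, every $\mu\in\Lambda$, and every twisted type, since the transition lemma holds for all $i\in I_0$. The paper's route is shorter given the machinery already in place. It also identifies the $q$-characters of negative prefundamentals with the known Borel ones, and that identification is reused in the proof of Theorem~\ref{finite-dimensional-2}.

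\textbf{Your ``main obstacle.''} It is harmless. With $M=2$, $g_{11}(z)=\frac{(q^{-1}+z)(q^2-z)}{(1+q^{-1}z)(1-q^2z)}$ has only two factors, and one checks directly that $(A_{1,a})_1(u)=g_{11}(au)$. In the $x^-$-analogue of \eqref{FJMMlemma5.5-2}, the $u^0$ coefficient forces $\nu_1(0)=q^{-1}\eta_1(0)$. The $u^1$ coefficient is then a polynomial of degree exactly one in $z$, with leading coefficient a nonzero multiple of $(q^2-q^{-1})(1+q)$. Hence $\lambda_{K,L}$ is supported at a single $a\in\CC^*$, and the ratio $\nu_1/\eta_1$ is exactly $(A_{1,a}^{-1})_1$.
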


\begin{proof}
By Corollary \ref{subquotient}, it suffices to prove the statement for prefundamental representations. For positive prefundamental representations, the assertion is immediate, since
they are one-dimensional. For negative prefundamental representations, the
claim follows from Corollary~\ref{nakajimacorollary}: their $q$-characters
coincide with the $q$-characters of the corresponding negative prefundamental
representations of $\mathcal U_q(\bs)$, which were proved in
\cite{wang2023QQ}.
\end{proof}


\begin{theorem}{\label{finite-dimensional-2}}
    For $\mu\in \Lambda^+$, the simple finite-dimensional representations of shifted twisted quantum affine algebra $\mathcal{U}_q^\mu(A_{2n}^{(2)})$ in $\mo^{sh}_+$ are the $L(\boldsymbol\Psi)$ where $\boldsymbol\Psi$ is dominant.
\end{theorem}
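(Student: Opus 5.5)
The plan is to prove the two inclusions separately, following the structure of the untwisted argument in \cite[Section~6]{Hernandezshfted}. The extra difficulty is the node $i=n$ of $A_{2n}^{(2)}$, where $C_{n,\sigma(n)}=-1$. There the rank-one subalgebra $U_n$ is not a shifted $\U_q(\hat{\mathfrak{sl}}_2)$ but a shifted $\U_q(A_2^{(2)})$. For the nodes $i<n$ we have $C_{i,\sigma(i)}=0$, so the proof of Theorem~\ref{finite-dimensional-1} applies unchanged. It identifies $U_i$ with $\U_q^{\alpha_i(\mu)}(\hat{\mathfrak{sl}}_2)$, and \cite[Theorem~6.4]{Hernandezshfted} then forces $\Psi_i(z)\Psi_i(0)^{-1}$ to be a monomial in the $\tilde Y_{i,a}$ and in the $\Psi_{i,a}$.

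\emph{Sufficiency.} Let $\boldsymbol\Psi\in\mathfrak r_\mu$ be dominant with roots and poles in $q^{\ZZ}$. Write $\boldsymbol\Psi=\gamma\prod Y_{i,a}\prod \boldsymbol\Psi_{i,b}$, with degrees matching $\alpha_i(\mu)\ge 0$. Step 1: the constant module $L(\gamma)$ and the positive prefundamentals $L(\boldsymbol\Psi_{i,b})$ are one-dimensional. The module $L(\prod Y_{i,a})$ is finite-dimensional by Theorem~\ref{ordinary-theorem}, pulled back along the shift homomorphism; if that route is unavailable for $A_{2n}^{(2)}$, I would use the ordinary twisted Chari--Pressley classification \cite{chari-twisted-98} directly. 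Step 2: by Theorem~\ref{q-character-compality}, the fusion product of these modules has $q$-character equal to the product of their $q$-characters, which is a finite sum. So the fusion product is finite-dimensional and lies in $\mathcal O_\mu$. It is a highest $\ell$-weight module of highest $\ell$-weight $\boldsymbol\Psi$, so $L(\boldsymbol\Psi)$ is a quotient of it and is therefore finite-dimensional.

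\emph{Necessity.} Let $L(\boldsymbol\Psi)\in\mo^{sh}_+$ be finite-dimensional. For $i<n$ we argue as above. For $i=n$, restrict to $U_n\cong \U_q^{\alpha_n(\mu)}(A_2^{(2)})$, the twisted analogue of the rank-one identification in \cite[Section~2.6]{Hernandeztwisted}. The highest $\ell$-weight vector generates a finite-dimensional highest $\ell$-weight $U_n$-module, so its simple quotient $L_n(\Psi_n)$ is finite-dimensional. This reduces the problem to rank one.

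\emph{Rank one.} We must show that a finite-dimensional $L(\Psi)$ for $\U_q^{m}(A_2^{(2)})$ with $m\ge0$ and $\Psi\in\mathfrak r_m$ of $q^{\ZZ}$-type has $\Psi$ dominant. I expect this to be the main obstacle. The plan is to use the theorem just proved: $\chi_q(L(\Psi))\in[\Psi]+\sum_{\Psi'\prec\Psi}\NN[\Psi']$. Finite-dimensionality implies that the $q$-character is a finite sum that is invariant under the $\ell$-weight analogue of the $\U_q(\mathfrak{sl}_2)$-symmetry. In particular the lowest $\ell$-weight has the form $\Psi\prod A_{1,a}^{-1}$ and must be ``anti-dominant'', meaning a lowest weight vector is annihilated by all $x^-_{1,r}$. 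Then I would decompose $\Psi=\gamma\,\prod Y_{1,a}^{u_a}\prod\Psi_{1,b}^{v_b}$ with integer exponents, which is possible because the roots and poles lie in $q^{\ZZ}$. Any negative exponent $u_a<0$ or $v_b<0$ produces a pole that cannot be cancelled by the factors $A_{1,a}^{-1}=Y_{1,aq}^{-1}Y_{1,aq^{-1}}^{-1}Y_{1,-a}$. It would therefore force infinitely many $\ell$-weights, through the rational structure of Lemma~\ref{xaction} and the degree constraint of Proposition~\ref{key-proposition}, which contradicts finiteness.

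The delicate point is the factor $Y_{1,-a}$ in $A_{1,a}$. It can create cancellations when $-a\in q^{\ZZ}$, and the hypothesis of $\mo^{sh}_+$ is exactly what excludes this. Since $-1\notin q^{\ZZ}$, that hypothesis keeps the lattice bookkeeping clean and lets the untwisted counting argument go through.
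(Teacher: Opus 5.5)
Your reduction to the rank-one algebra $\U_q^{\alpha_n(\mu)}(A_2^{(2)})$ matches the paper, which also says it suffices to treat $A_2^{(2)}$. Your sufficiency argument, fusing constant, positive prefundamental and degree-zero modules, is also fine; no shift homomorphism is needed there, since $L(\prod Y_{i,a})$ is already covered by Theorem~\ref{ordinary-theorem}. The gap is the rank-one necessity step. It is the real content of the theorem, and your sketch asserts it rather than proves it.

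\begin{enumerate}[label=(\alph*)]
\item The exponents $u_a,v_b$ in $\gamma\prod Y_{1,a}^{u_a}\prod\boldsymbol\Psi_{1,b}^{v_b}$ are not well defined. Since $Y_{1,a}=q\,\boldsymbol\Psi_{1,aq^{-1}}\boldsymbol\Psi_{1,aq}^{-1}$, the dominant $\ell$-weight $Y_{1,a}$ has a presentation with a negative exponent, so ``some exponent is negative'' is no obstruction. Dominance is really a matching condition: roughly, each pole parameter $b$ of $\Psi_1(z)$ must be paired with a zero parameter in $bq^{-2\NN}$, and the unpaired zeros give the positive prefundamental factors.
\item The claim that an unmatched pole ``forces infinitely many $\ell$-weights'' is exactly what must be shown, and nothing you invoke supplies it. Lemma~\ref{xaction} and Proposition~\ref{key-proposition} only describe the shape of the action. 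The Nakajima-ordering theorem only says that the $\ell$-weights are $\boldsymbol\Psi$ times monomials in the $A_{1,a}^{-1}$, which is perfectly compatible with a finite $q$-character.
\item The ``$\mathfrak{sl}_2$-symmetry'' and the antidominance of the lowest $\ell$-weight are not available as stated. For $\alpha_1(\mu)>0$, the bracket $[x^+_{1,0},x^-_{1,0}]$ involves $\phi^-_{1,0}$, which is not $(\phi^+_{1,0})^{-1}$, so no copy of $\U_q(\mathfrak{sl}_2)$ acts. For instance, the constant modules $L(\gamma)$ have arbitrary, non-symmetric weights. Even granting some condition on the lowest $\ell$-weight, you would still need an argument turning it into dominance of $\boldsymbol\Psi$.
\end{enumerate}

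The missing idea is the paper's reduction to the degree-zero classification.
\begin{itemize}
\item Write $\boldsymbol\Psi=\boldsymbol\Psi_0\boldsymbol\Psi_+$ with $\boldsymbol\Psi_0=\prod_j\boldsymbol\Psi_{a_j}\boldsymbol\Psi_{b_j}^{-1}$ of degree $0$ and $\boldsymbol\Psi_+=\boldsymbol\Psi_{c_1}\cdots\boldsymbol\Psi_{c_M}$. Use the non-uniqueness of this factorization to choose the $c_j$ away from the $q^{-2\NN}$-strings attached to the poles $b_{j'}$; this is condition \eqref{finite-condition}.
\item Use the explicit $q$-characters of the negative prefundamental modules and of the $L(\boldsymbol\Psi_a\boldsymbol\Psi_b^{-1})$ from \cite{wang2023QQ} and \cite{Fujita-Qin}, together with multiplicativity of $\chi_q$ under fusion (Theorem~\ref{q-character-compality}).
\item These bound the multiplicities in $\chi_q(L(\boldsymbol\Psi_0))$ by those in the finite sum $\chi_q(L(\boldsymbol\Psi))[\boldsymbol\Psi_+]^{-1}$. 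Hence $L(\boldsymbol\Psi_0)$ is finite-dimensional.
\item Theorem~\ref{ordinary-theorem} then forces $\boldsymbol\Psi_0$ to be a constant times a monomial in the $\tilde Y_{1,a}$, so $\boldsymbol\Psi$ is dominant.
\end{itemize}
This is where the infinite-dimensionality of the ``bad'' factors actually comes from, namely the known $q$-characters of negative prefundamentals. It is also where the $q^{\ZZ}$ hypothesis of $\mo^{sh}_+$ is used, to make the dichotomy $\mathcal F/\mathcal J$ and the generic choice of the $c_j$ available, not merely to avoid cancellations with the factors $Y_{1,-a}$. To repair your route, you would need either this reduction or a direct proof that a factor $\boldsymbol\Psi_a\boldsymbol\Psi_b^{-1}$ with $a\notin bq^{-2\NN}$ still produces infinitely many $\ell$-weights after multiplication by the rest of $\boldsymbol\Psi$.
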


\begin{proof}
It suffices to prove for $\hgs=A_2^{(2)}$.
Let $\mu\in \Lambda^+$ and suppose that $L(\boldsymbol\Psi)$ is a simple finite-dimensional representation of $\mathcal{U}_q^\mu(\hgs)$. Thus $\boldsymbol\Psi(z)$ is a rational fraction of non-negative degree.
    Without loss of generality, we may assume $\Psi(0)=1$. There is a non-unique factorization $\boldsymbol\Psi=\boldsymbol\Psi_0\boldsymbol\Psi_+$ where 
\begin{align*}
        &\boldsymbol\Psi_0=(\boldsymbol\Psi_{a_1}\boldsymbol\Psi_{b_1}^{-1})\cdots (\boldsymbol\Psi_{a_T}\boldsymbol\Psi_{b_T}^{-1})\quad \text{for cetain} \quad T\ge 0,\: a_1,\ldots,a_T,\:b_1,\ldots,b_T\in q^{\ZZ},\\
        &\boldsymbol\Psi_+=\boldsymbol\Psi_{c_1}\cdots\boldsymbol\Psi_{c_M}\quad \text{where}\quad M=\deg(\boldsymbol\Psi)\ge 0 \quad \text{and}\quad c_1,\ldots,c_M\in q^{\ZZ}.
\end{align*}

We will denote $\mathcal{F}=\{1\le j\le T\mid a_j\in b_jq^{-2\NN}$ and $ \mathcal{J}=\{1\le j\le T\mid a_j\notin b_jq^{-2\NN}\} $ so that for $j\in \mathcal{F}$, $L(\boldsymbol\Psi_{a_j}\boldsymbol\Psi_{b_j}^{-1})$ is finite-dimensional and for $j\notin \mathcal{J}$, $L(\boldsymbol\Psi_{a_j}\boldsymbol\Psi_{b_j}^{-1})$ is infinite-dimensional.

Moreover, we also assume that for any $1\le j\le M$, $1\le j^\prime\le N$:
\begin{align}{\label{finite-condition}}
    (c_j\notin b_{j^\prime}q^{-2\NN}\,\, \text{if} \,\,j^\prime\in \mathcal{J})\,\, \text{and} \,\, (c_j\notin \{b_{j^\prime}, b_{j^\prime}q^{-2}, \ldots, a_{j^\prime}q^2 \}\,\, \text{if}\,\, j^{\prime}\in \mathcal{F}).
\end{align}

First $L(\boldsymbol\Psi)$ is a subquotient of $L\left( \boldsymbol\Psi _0 \right) *L\left( \boldsymbol\Psi _+ \right) $, so the multiplicities in $\chi_q(L(\boldsymbol\Psi))$
 are lower or equal to the 
multiplicities in $\chi_q(L\left(\boldsymbol \Psi _0 \right)[\boldsymbol\Psi_+])$. Assume that for $j\in \mathcal{F}$, $ b_j=a_jq^{2T_j}$. By \cite[Appedix B]{wang2023QQ} and \cite[Theorem 4.9]{Fujita-Qin}, we have 

\begin{align*}
\chi_q(L((\boldsymbol\Psi_+)^{-1})) &\in [\boldsymbol\Psi_+]^{-1} \prod \left( 1 + \sum_{j,1 \le m} \left( A_{1,c_j}^{-1} A_{1,c_j q^{-2}}^{-1} \cdots A_{1,c_j q^{-2m+2}}^{-1} \right) \right. \\
&\quad + \left. \sum_{j,1 \le k \le m} \left( A_{1,c_j}^{-1} A_{1,c_j q^{-2}}^{-1} \cdots A_{1,c_j q^{-2m+2}}^{-1} \, A_{1,-c_jq}^{-1} A_{1,-c_jq^{-1}}^{-1} \cdots A_{1,-c_j q^{-2k+3}}^{-1} \right) \right),\\
\chi_q\left( L\left( \boldsymbol\Psi_0 \right) \right) &\in \left[ \boldsymbol\Psi_0 \right]   
\prod(1+\sum_{1\le m\le T_j}{\left( A_{1,b_j}^{-1}A_{1,b_jq^{-2}}^{-1}\cdots A_{1,b_jq^{-2m+2}}^{-1} \right)}
\\&+\sum_{1\le k\le m\le T_j}{\left( A_{1,b_j}^{-1}A_{1,b_jq^{-2}}^{-1}\cdots A_{1,b_jq^{-2m+2}}^{-1}A_{1,-b_jq}^{-1}\cdots A_{1,-b_jq^{-2k+3}}^{-1} \right)})_{j \in \mathcal{F}} 
\\
&\quad \times \prod \left\{ \left( 1 + \sum_{1 \le m} \left( A_{1,b_j}^{-1} A_{1,b_j q^{-2}}^{-1} \cdots A_{1,b_j q^{-2m+2}}^{-1} \right) \right) \right. \\
&\quad + \left. \sum_{1 \le k \le m} \left( A_{1,b_j}^{-1} A_{1,b_j q^{-2}}^{-1} \cdots A_{1,b_j q^{-2m+2}}^{-1} A_{1,-b_jq}^{-1} A_{1,-b_j q^{-1}}^{-1} \cdots A_{1,-b_j q^{-2k+3}}^{-1} \right) \right\}_{j \in \mathcal{J}},
\end{align*}

From (\ref{finite-condition}), only the $
\ell$-weight $[\boldsymbol\Psi_+]^{-1}$ contributes to an $\ell$-weight of $L(\boldsymbol\Psi_0)$. Then we obtain that the multiplicities of 
$\ell$-weight in $\chi_q(L(\boldsymbol\Psi_0))$
 are lower or equal to the 
multiplicities in $(\chi_q(L\left(\boldsymbol \Psi  \right)))\cdot[\boldsymbol\Psi_+]^{-1}$. Thus we obtain the isomorphism $L(\boldsymbol\Psi)\simeq L(\boldsymbol\Psi_0)*L(\boldsymbol\Psi_+)$. This implies that $L\left(\boldsymbol \Psi _0 \right)$ is finite-dimensional. By Theorem \ref{ordinary-theorem}, we obtain the desired result.   
\end{proof}

\section{\texorpdfstring{Restriction representations and $q$-character relations}{Restriction representations and q-character relations}}

In this section we construct a restriction representations from the representations of twisted quantum affine Borel algebras to the representations of shifted twisted quantum affine algebras (Proposition \ref{restriction}). Combined with the finite-dimensional classification of
Section~6, we establish a $q$-characters formula for simple finite-dimensional representations of  shifted twisted quantum affine algebras in terms of the $q$-characters of the corresponding simple representations of the twisted quantum affine Borel algebra $\U_q(\bs)$ (Theorem \ref{q-character-shifted-borel}).






\subsection{\texorpdfstring{Restriction representations and $q$-character relations}{Restriction representations and q-character relations}}
Let $\mu\in\Lambda$ and for $V\in \mo_{\bs}$, we consider the subspace $V_\mu$ of vectors $v\in V$ so that for any $i\in I_0$, $\phi_i(z). v\in V(z)$ has a degree lower or equal to $\alpha_i(\mu)$. Set $V_{<\mu}=\sum_{i\in I_0}V_{\mu-\omega_i^\vee}$.

\begin{proposition}
    For $v\in V_{\mu}$ (resp. $V_{<\mu}$), there is a $T>0$ so that $\phi^+_{j,s}. v$, $x^+_{j,m}. v$, $x^-_{j,m}. v$ are in $V_{\mu}$ (resp. $V_{<\mu}$) for any $j\in I_0$, $s\ge 0$, $m\ge T$. 
\end{proposition}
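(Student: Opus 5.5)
We follow the untwisted argument of \cite[Section~7]{Hernandezshfted}, adapted to the twisted currents. Since $V\in\mathcal O_{\bs}$, the $\phi^+_{i,s}$ act on each weight space $V_\omega$ (finite-dimensional) as commuting operators. By the rationality results underlying Theorem~\ref{Obsigma-classification} (see \cite{wang2023QQ}), the series $\phi_i(z)=\phi_i^+(z)$ is a rational operator-valued function on $V_\omega$; its degree in the sense above is the order of growth at $z=\infty$. It suffices to argue weight space by weight space, and by linearity to take $v\in V_\mu\cap V_\omega$.

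\textbf{Cartan part.} The $\phi^+_{j,s}$ commute with every $\phi^+_{i,r}$. Hence $\phi_i(z)\phi^+_{j,s}v=\phi^+_{j,s}\phi_i(z)v$, and applying the fixed linear map $\phi^+_{j,s}$ to the coefficients of a rational function of degree at most $\alpha_i(\mu)$ does not raise the degree. So $V_\mu$ and $V_{<\mu}$ are stable under every $\phi^+_{j,s}$ with $s\ge0$; no $T$ is needed here.

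\textbf{Root parts.} We use relation \eqref{phi+x} written as $\phi_i(z)x^\pm_j(w)=g_{ij}(zw)^{\pm1}x^\pm_j(w)\phi_i(z)$, together with Lemma~\ref{rationallemma1}. On $V_\omega$ that lemma writes $x^\pm_{j,m}v=\sum_{k,\lambda}\lambda^m m^k f^\pm_{k,\lambda}(v)$ for all large $m$; the twisted cases use modes in $M\ZZ$ when $\sigma(j)=j$. Equivalently, the tail $\sum_{m\ge T}x^\pm_{j,m}w^{-m}v$ is a rational function of $w$ whose poles lie in a finite set of $\lambda$'s. Extracting the coefficient of $w^{-m}$ in the exchange relation gives
\[
\phi_i(z)\,x^\pm_{j,m}v=\sum_{p}c_p(z)\,x^\pm_{j,m+p}\,\phi_i(z)v .
\]
Here $c_p(z)$ are the expansion coefficients of $g_{ij}(zw)^{\pm1}$. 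The factor $g_{ij}$ is a ratio of products of $M$ linear factors, so it has degree $0$ in $z$. The plan is to substitute the quasi-polynomial form of the $x^\pm_{j,m+p}$, resum over $p$, and show that for $m\ge T$ the result is $x^\pm_j$ applied to $\phi_i(z)v$ multiplied by a rational function of $z$ that is bounded at $z=\infty$. Then the degree at $\infty$ does not increase. We choose $T$ larger than all the thresholds in Lemma~\ref{rationallemma1} for the finitely many relevant weight spaces, and larger than the orders of the poles at $z=0$ coming from the $\phi^-$-type terms. The same computation applied to $v\in V_{\mu-\omega_k^\vee}$ gives stability of $V_{<\mu}$.

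\textbf{Main obstacle.} The hard step is the resummation: controlling the behaviour at $z=\infty$ of $\phi_i(z)x^\pm_{j,m}v$ uniformly in $m$. For small $m$, the constant term of $g_{ij}$ at $zw\to\infty$ involves different modes, and these can raise the degree; this is why the statement requires $m\ge T$. To handle it, first treat the case $\sigma(j)\ne j$ with $M=2$ exactly as in \cite[Lemma~7.2]{Hernandezshfted}. Then reduce the case $\sigma(j)=j$ to it through the substitution $w\mapsto w^M$, using the isomorphism with $U_{q^M}(\hat{\mathfrak{sl}}_2)$-type subalgebras recalled in the proof of Theorem~\ref{finite-dimensional-1}.
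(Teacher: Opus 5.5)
Your Cartan part and the reduction to a weight vector are fine. The root part, however, is the whole content of the statement, and it is not actually proved. You call the resummation the main obstacle and then try to bypass it with two moves that do not work as stated.

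\textbf{First move.} The untwisted argument of \cite{Hernandezshfted} is built on the two-term mode relation coming from the single factor $(q^{C_{i,j}}-zw)/(1-q^{C_{i,j}}zw)$. For $M=2$ and $\sigma(j)\neq j$, the factor $g_{ij}$ has two linear factors, and the coefficient of $w^{-m+2}$ in \eqref{phi+x} is the three-term relation
\[
\phi_i^+(z)\bigl(x^+_{j,m-2}+az\,x^+_{j,m-1}-bz^2x^+_{j,m}\bigr)v
=\bigl(b\,x^+_{j,m-2}-az\,x^+_{j,m-1}-z^2x^+_{j,m}\bigr)\phi_i^+(z)v,
\]
with $a=q^{C_{i,\sigma(j)}}-q^{C_{i,j}}$ and $b=q^{C_{i,\sigma(j)}+C_{i,j}}$. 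So the untwisted proof cannot be copied ``exactly''. The paper adapts it as follows. Set $d_m=\deg\phi_i^+(z)x^+_{j,m}v$ and suppose $d_m>\alpha_i(\mu)$. The right side has degree at most $\alpha_i(\mu)+2$, so the term $z^2x^+_{j,m}$ cannot strictly dominate. Hence $d_{m-1}\ge d_m+1$ or $d_{m-2}\ge d_m+2$. Iterating gives a descending chain of such indices with strictly increasing degrees. This is impossible for large $m$, because $\phi_i^+(z)$ has bounded degree on the finite-dimensional weight space containing all the $x^+_{j,m}v$.

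\textbf{Second move.} Reducing the case $\sigma(j)=j$ to a rank-one subalgebra $U_{q^M}(\hat{\mathfrak{sl}}_2)$ cannot work. Membership in $V_\mu$ is a condition on $\phi_i^+(z)$ for every $i\in I_0$, and the subalgebra attached to the node $j$ says nothing about $\phi_i^+(z)$ for $i\neq j$. What is true is that $g_{ij}(y)$ is a function of $y^M$ when $\sigma(j)=j$. This gives a two-term relation in the modes $M\ZZ$ directly.

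\textbf{How your own route closes.} Your resummation route does close with a short computation, and it gives a genuinely different proof. On the weight space $V_\omega\ni v$, write $x^\pm_{j,n}=\sum_{\lambda,k}\lambda^n n^k f_{k,\lambda}$ for $n\ge T$, and $g_{ij}(y)^{\pm1}=\sum_{p\ge0}g_py^p$. The quasi-polynomial form comes from Lemma~\ref{rationallemma1}. Its proof only uses $\operatorname{ad}h_{j,1}$ (resp.\ $\operatorname{ad}h_{j,M}$) on a finite-dimensional Hom-space, so it holds for upward modes in $\U_q(\bs)$. In your route, $T$ is just the point where the nilpotent part at eigenvalue $0$ has died out; that, not anything about $g_{ij}$ at $zw\to\infty$, is why $m\ge T$ is needed. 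Then for $m\ge T$,
\[
\phi_i^+(z)\,x^\pm_{j,m}v=\sum_{p\ge0}g_pz^p\,x^\pm_{j,m+p}\,\phi_i^+(z)v
=\sum_{\lambda,k}\lambda^m\bigl((m+\theta)^kg_{ij}^{\pm1}\bigr)(\lambda z)\,f_{k,\lambda}\bigl(\phi_i^+(z)v\bigr),
\qquad \theta=y\frac{d}{dy}.
\]
When $\sigma(j)=j$, replace $\lambda z$ by $\lambda z^M$ and take the modes in $M\ZZ$. Since $g_{ij}^{\pm1}$ has degree $0$ and $\theta$ does not raise the degree at $\infty$, each scalar coefficient has degree $\le0$. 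Meanwhile $f_{k,\lambda}(\phi_i^+(z)v)$ has degree $\le\alpha_i(\mu)$. Hence $x^\pm_{j,m}v\in V_\mu$ for all $m\ge T$. Note that the outcome is a combination of the maps $f_{k,\lambda}$ applied to $\phi_i^+(z)v$, not $x_j^\pm$ times a scalar rational function as you wrote.

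\textbf{Comparison with the paper.} Your route needs quasi-polynomiality of the modes. In exchange it treats all $M$, all $\sigma(j)$ and both signs uniformly, with an explicit $T$. The paper's descent uses only one finite mode relation and the boundedness of $\deg\phi_i^+(z)$ on a weight space. Its price is bookkeeping that grows with the number of factors of $g_{ij}$.
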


\begin{proof}
It suffices to show the statement for $V_\mu$.
We first treat the case of $x_{j,m}^+v$. For $v\in V_\mu$, and fix $i,j\in I_0$. Set
\begin{align*}
    \alpha:=\alpha_{ i}(\mu),
\qquad
d_m:=\deg\bigl(\phi_i^+(z)x_{j,m}^+.v\bigr).
\end{align*}
We shall prove that $d_m\le \alpha$ for all sufficiently large $m$.

Firstly, we prove the case $M=2$. Then we have 
\begin{align*}
\phi_i^+(z)\Bigl(
x_{j,m-2}^+.v
+a z\,x_{j,m-1}^+.v
-b z^2x_{j,m}^+.v
\Bigr)
=
\Bigl(
b\,x_{j,m-2}^+
-a z\,x_{j,m-1}^+
-z^2x_{j,m}^+
\Bigr)\phi_i^+(z).v,
\end{align*}
where 
\begin{align*}
a=q^{C_{i,\sigma(j)}}-q^{C_{i,j}},
\qquad
b=q^{C_{i,\sigma(j)}+C_{i,j}}.
\end{align*}
Now observe that the three terms on the left-hand side have degrees 
\begin{align*}
d_{m-2},\qquad d_{m-1}+1,\qquad d_m+2.
\end{align*}
On the other hand, since $v\in V_\mu$, we have $    \deg\bigl(\phi_i^+(z)v\bigr)\le \alpha$,
hence the degree of the right-hand side is at most $\alpha+2$.

Assume that there are infinitely many integers $m$ such that $d_m>\alpha$. Call such $m$ a bad index. We first claim that if $m$ is a bad index, then every $m$ satisfies 
\begin{align*}
    d_m\le \max\{d_{m-1}-1,\ d_{m-2}-2\}.
\end{align*}

Otherwise, we have $d_m>d_{m-1}-1$ and $
d_m>d_{m-2}-2$. Then the term $-bz^2\phi_i^+(z)x_{j,m}$ has strictly largest degree, namely $d_m+2$. We obtain $d_m+2\le \alpha+2$, hence  $d_m\le \alpha$, contradicting the assumption of $m$. 

Now let $m_0$ be a bad index. Either $d_{m_0-1}\ge d_{m_0}+1$ or $d_{m_0-2}\ge d_{m_0}+2.$ In particular, there exists $m_1\in\{m_0-1,m_0-2\}$ such that $m_1<m_0$, $
d_{m_1}\ge d_{m_0}+1$, and therefore 
$m_1$ is again a bad index. Repeating the same argument, we construct a strictly decreasing sequence of bad indices 
\begin{align*}
    m_0>m_1>m_2>\cdots>m_r>\cdots
\end{align*}
such that 
$d_{m_t}\ge d_{m_{t-1}}+1
$ for every $t\ge 1.$ Contradiction as weight spaces are finite dimensional.

The case of $x_{j,m}^-v$ is analogous. And it is clear for the $\phi _{j,s}^{+}.v$ which commutes with $\phi _{i}^{+}\left( z \right) $.

\end{proof}

We regard the elements of 
$\mathcal U_q(\mathfrak b^{\sigma})$ as the corresponding
operators of $V$. Let $v\in V_\mu$, and let $i\in I_0$.
We use the convention $x_i^+(u)=\sum_{m\in\mathbb Z}x_{i,m}^+ u^{-m}.$
Set \(t=u^{-1}\). Then the above current can be written as
$x_i^\pm(u)=\sum_{m\in\mathbb Z}x_{i,m}^\pm t^m .$

Choose $T$ as in the preceding proposition. By the rationality result,
the series
\[
        \sum_{m\ge T}x_{i,m}^{+}.v\,u^{-m}
        =
        \sum_{m\ge T}x_{i,m}^{+}.v\,t^m
        \in V_\mu((t))
\]
is the expansion at $t=0$, equivalently at $u=\infty$, of a
$V_\mu$-valued rational function in the variable $t$. Let $d$ be its
degree as a rational function of $t$. Equivalently, $d$ is the order of
its pole at \(t=\infty\), i.e. at \(u=0\). Choose an integer $T^\prime>\max\{T,d\}$.
Then
\[
    \sum_{m>T^\prime}x_{i,m}^{+}.v\,u^{-m}
        =
        \sum_{m>T^\prime}x_{i,m}^{+}.v\,t^m
\]
is again the expansion at \(t=0\) of a \(V_\mu\)-valued rational function in
\(t\). Indeed,
\[
        \sum_{m>T^\prime}x_{i,m}^{+}.v\,t^m
        =
        \sum_{m\ge T}x_{i,m}^{+}.v\,t^m
        -
        \sum_{T\le m\le T'}x_{i,m}^{+}.v\,t^m .
\]
The first term has degree $d<T^\prime$, while the second term is a polynomial in
$t$ of degree at most $T^\prime$. Hence the rational function on the left has
degree at most $T^\prime$. Therefore its expansion at $t=\infty$ is of the
form $-\sum_{m\le T^\prime}v_m^{(T^\prime)}t^m.$
Equivalently, in the variable $u$, we have $\sum_{m>T^\prime}x_{i,m}^{+}.v\,u^{-m}=-\sum_{m\le T^\prime}v_m^{(T^\prime)}u^{-m}.$

For $m>T^\prime$, we set $v_m^{(T^\prime)}=x_{i,m}^{+}.v$.
Thus we obtain a family
$\bigl(v_m^{(T^\prime)}\bigr)_{m\in\mathbb Z}$.
We now show that this family is independent of the choice of $T^\prime$. Let
$T^{\prime\prime}>T^\prime$. Then
\[
\begin{aligned}
-\sum_{m\le T^{\prime\prime}}v_m^{(T^{\prime\prime)}}u^{-m}
&=
\sum_{m>T^{\prime\prime}}v_m^{(T^{\prime\prime)}}u^{-m} =
\sum_{m>T^\prime}v_m^{(T^\prime)}u^{-m}
-
\sum_{T^{\prime\prime}\ge m > T^\prime}v_m^{(T^\prime)}u^{-m}\\
&=
-\sum_{m\le T^\prime}v_m^{(T^\prime)}u^{-m}
-
\sum_{T^{\prime\prime}\ge m>T^\prime}v_m^{(T^\prime)}u^{-m} 
=-\sum_{m\le T^{\prime\prime}}v_m^{(T^\prime)}u^{-m}.
\end{aligned}
\]
Comparing the expansions, we obtain $v_m^{(T^{\prime\prime})}=v_m^{(T^{\prime})}$ for $m\le T''$.
For $m>T^{\prime\prime}$, both families are equal to $x_{i,m}^{+}.v$. Hence $v_m^{(T^{\prime\prime})}=v_m^{(T^\prime)}
$ for all $m\in\mathbb Z$ .

Thus the following definition is independent of the auxiliary integer
$T^\prime$: $\widetilde{x}_{i,m}^{+}.v:=v_m^{(T')}$, for all $m\in \ZZ$. Applying the same construction to the negative current, we obtain operators $\widetilde{x}_{i,m}^{-}$ ($i\in I_0,m\in \ZZ$) on $V_\mu$. These operators are well defined on the quotient $\widetilde V_\mu := V_\mu/V_{<\mu}.$

Moreover, $\phi_i^+(u)$  is rational on $\widetilde V_\mu$
with degree $\alpha_i(\mu)$.
Expanding it at $u=\infty$, we obtain an operator-valued Laurent series $  \phi_i^-(u)
        \in
        u^{\alpha_i(\mu)}
        \operatorname{End}(\widetilde V_\mu)[[u^{-1}]].$

\begin{proposition}\label{restriction}
The operators $\widetilde{x}_{i,m}^{\pm},$ $\phi_i^{\pm}(u)$ $(i\in I_0,m\in \ZZ)$
constructed above on $\widetilde V_\mu$ endow $\widetilde V_\mu$ with a
structure of a $\mathcal U_q^\mu(\hat{\mathfrak g}^{\sigma})$-module.
\end{proposition}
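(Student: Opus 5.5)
The plan follows Hernandez's restriction construction for shifted untwisted algebras, adapted to the twisted currents. The guiding principle is that every defining relation of $\mathcal U_q^\mu(\hgs)$ in Definition~\ref{basic-definition} is a relation among currents. Such a relation holds if and only if a family of ``truncated'' mode relations holds for all sufficiently large modes. Those mode relations are already available in $\mathcal U_q(\bs)$, because it contains $x_{i,m}^+$ for $m\ge 0$, $x_{i,r}^-$ for $r>0$, and $\phi_{i,m}^+$ for $m\ge 0$.

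\emph{Setup and Cartan relations.} First check that $\widetilde x_{i,m}^\pm$ and $\phi_i^\pm(u)$ preserve $V_\mu$ and $V_{<\mu}$ modulo $V_{<\mu}$. For $\widetilde x^\pm$ this comes from the preceding proposition together with the fact that $\widetilde x^\pm_{i,m}v$ is a linear combination of $x^\pm_{i,m'}v$ with $m'\ge T$. Indeed, the rational continuation is obtained by linear algebra on a finite-dimensional space: by Proposition~\ref{key-proposition}, or rather its Borel analogue via Lemma~\ref{rationallemma1}, there is a polynomial $P$ with $P(u)x_i^\pm(u)v$ truncated, so each $\widetilde x^\pm_{i,m}v$ is a finite combination of large-mode vectors. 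The relations among the $\phi$'s are clear, since all $\phi_{i,s}^+$ commute. On $\widetilde V_\mu$, $\phi_i^+(u)$ has degree exactly $\alpha_i(\mu)$, because lower degree would place the vector in $V_{<\mu}$. Hence the leading coefficient $\phi^-_{i,\alpha_i(\mu)}$ is invertible on $\widetilde V_\mu$, as required by Definition~\ref{basic-definition} with $\mu_+=0$.

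\emph{Relations \eqref{phi+x}, \eqref{phi-x}, \eqref{hx-relation}, \eqref{xx-relation} and the Serre relations.} Write $g_{ij}(zw)^{\pm1}$ as a ratio of polynomials and clear the denominators. Relation \eqref{phi+x} then becomes a polynomial identity in $z,w$, and it holds in $\U_q(\bs)$ for the coefficients of $w^{-m}$ with $m$ large, because in the Borel algebra the truncation to modes $\ge 0$ is compatible with multiplication by the polynomial factors. Both sides are expansions of rational functions in $w$, so the identity extends to all modes. This is exactly the uniqueness of the rational continuation $\widetilde x^\pm$. Expanding $\phi_i^+(z)$ at $z=\infty$ then yields \eqref{phi-x}. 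The same argument handles \eqref{xx-relation} and the Serre relations \eqref{Serre1}--\eqref{Serre3}. After multiplying by the polynomial prefactors $P^\pm_{i,j}$ and by the $\zeta$-twisted factors, each relation is a finite linear recursion in the modes. These recursions hold for large modes in $\U_q(\bs)$, since the Drinfeld--Serre relations of Theorem~\ref{thm:Drinfeld-presentation} hold there whenever all modes involved lie in the Borel range. They therefore hold for the rational continuations. For the cubic Serre relations one continues in each variable successively, applying the argument to $x_i^\pm(u_1)x_i^\pm(u_2)x_i^\pm(u_3)v$ viewed as rational in each $u_k$.

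\emph{Main obstacle: relation \eqref{xphi-relation}.} The key relation is
\[
[\widetilde x_{i,r}^+,\widetilde x_{j,r'}^-]=\frac{\sum_s\delta_{\sigma^s(i),j}\zeta^{sr'}}{N_i}\,\frac{\phi^+_{i,r+r'}-\phi^-_{i,r+r'}}{q_i-q_i^{-1}}.
\]
In $\U_q(\bs)$ it is known only for $r,r'$ large, and there $\phi^-_{i,r+r'}=0$. The plan is to fix $r'$ large and view both sides as functions of $r$. The left side is the coefficient sequence of the rational function $[x_i^+(z),\widetilde x^-_{j,r'}]v$, continued from large $r$. On the right, the sequence $\phi^+_{i,m}$ for $m\ge 0$ continued to negative $m$ is precisely the expansion at $\infty$ of the same rational operator. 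Its continuation therefore equals $-\phi^-_{i,m}$ in the range where $\phi^+_{i,m}=0$, which accounts for the difference $\phi^+-\phi^-$. This is where the rationality of degree $\alpha_i(\mu)$ on $\widetilde V_\mu$ enters. One then varies $r'$ by the same continuation argument. The delicate points are the $\zeta^{sr'}$ twist and the divisibility $x_{i,r}=0$ for $d_i\nmid r$. I would handle them by working with $\zeta$-twisted sums, or, when $\sigma(i)=i$, in the variable $z^M$, where these conditions are themselves linear recursions preserved by continuation.
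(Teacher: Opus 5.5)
Your plan follows the paper's proof. The paper's truncated identities for $X_i(z)=\sum_{r\ge T}x^+_{i,r}z^{-r}$, with their explicit boundary terms, are the generating-function form of your statement that the mode relations hold in $\U_q(\bs)$ for all large modes. The paper then invokes the strategy of Hernandez's Proposition~7.7, namely uniqueness of the rational continuation, with $(\phi^+_{i,m})_{m\gg0}$ continuing to $\phi^+_{i,m}-\phi^-_{i,m}$ for \eqref{xphi-relation}, exactly as you propose. The $\zeta^{sr'}$ factor is harmless: for $i,j\in I_0$ it reduces to $\delta_{ij}$ on the nonzero modes, as the paper remarks.

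One step does not hold as you justify it: the invertibility of $\phi^-_{i,\alpha_i(\mu)}$ on $\widetilde V_\mu$.
\begin{itemize}
\item What is true is that every $v\in V_\mu\setminus V_{<\mu}$ has $\deg\phi_i^+(z)v=\alpha_i(\mu)$ in $V$.
\item However, the class $\bar v$ is killed by the leading coefficient as soon as $\phi^-_{i,\alpha_i(\mu)}v\in V_{<\mu}$, and this does not force $v\in V_{<\mu}$.
\item For example, suppose that on some weight space $\phi_i^+(z)$ acts by $1+zN$ with $N\neq0$ and $N^2=0$, the other $\phi_j^+(z)$ act by constants, and $\alpha_j(\mu)=\delta_{ij}$. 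Then $V_{<\mu}$ meets this weight space in $\ker N$, and on the surviving classes $\phi_i^+(z)$ acts by $1$, which has degree $0$.
\item Such blocks do occur in $\mathcal O_{\bs}$. Take $\sigma(i)\neq i$ and pull back $L^{\bs}(\boldsymbol\Psi_{i,1})$ along the algebra map $x\mapsto\epsilon^{\deg x}x$ into $\U_q(\bs)\otimes\CC[\epsilon]/(\epsilon^2)$, where $\deg$ is the grading by Drinfeld modes, which is non-negative on $\U_q(\bs)$. On the top weight space, $\phi_i^+(z)$ then acts by $1-\epsilon z$.
\end{itemize}

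The paper does not prove this point either. It only asserts, just before the proposition, that $\phi_i^+(u)$ has degree $\alpha_i(\mu)$ on $\widetilde V_\mu$. So this is a gap you share with the text, not a difference of method. In general the claim needs an extra hypothesis on $V$. In the application, Theorem~\ref{q-character-shifted-borel}, it can be checked directly on $W\otimes v_+$: there the leading coefficient is $k_i^{-1}$ times the nonzero leading coefficient of the $i$-th component of $\boldsymbol\Psi_+$.
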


\begin{proof}
We have to verify that the defining relations of the shifted twisted quantum
affine algebra
\(\mathcal U_q^\mu(\hat{\mathfrak g}^{\,\sigma})\)
are satisfied by the operators constructed above. 

We choose a vector $v$ and $T$ sufficiently large so that, for every
$i\in I_0$ and every $m\ge T$, the operators constructed above
coincide with the original ones on $v$, namely $        \widetilde{x}_{i,m}^{\pm}.v_0 =
x_{i,m}^{\pm}.v_0.$

We will write the truncated version of the relations with a positive  part under the case $M=2$. For the negative part and the case $M=3$,  the argument is similar.

Let 
\[
X_i(z):=x_{i}^{+,T}(z)=\sum_{r\ge T}{x_{i,r}^{+}z^{-r}}.
\]
\begin{itemize}
    \item For the relation 
\[
\prod_{r=1}^2{\left( 1-\zeta ^rq^{C_{i,\sigma ^r\left( j \right)}}zw \right)}\phi _{i}^{+}(z)x_{j}^{+}(w)-\prod_{r=1}^2{\left( q^{C_{i,\sigma ^r\left( j \right)}}-\zeta ^rzw \right)}x_{j}^{+}(w)\phi _{i}^{+}(z)=0,
\] the truncated version is 
\begin{align*}
&\prod_{r=1}^2{\left( 1-\zeta ^rq^{C_{i,\sigma ^r\left( j \right)}}zw \right)}\phi _{i}^{+}(z)X_j(w)-\prod_{r=1}^2{\left( q^{C_{i,\sigma ^r\left( j \right)}}-\zeta ^rzw \right)}X_j(w)\phi _{i}^{+}(z)\\
&=w^{1-T}\Bigl[ (a-b)z\,\phi _{i}^{+}(z)x_{j,T}^{+}-(b-a)z\,x_{j,T}^{+}\phi _{i}^{+}(z)-ab\,z^2\phi _{i}^{+}(z)x_{j,T+1}^{+}+z^2x_{j,T+1}^{+}\phi _{i}^{+}(z) \Bigr] \\
&+w^{2-T}\Bigl[ -ab\,z^2\phi _{i}^{+}(z)x_{j,T}^{+}+z^2x_{j,T}^{+}\phi _{i}^{+}(z) \Bigr],
\end{align*}
where $a=q^{C_{i,\sigma(j)}}, b=q^{C_{i,\sigma^2(j)}}.$
\item  For the relation
\[
\prod_{r=1}^2{(u_1}-\zeta ^rq^{C_{i,\sigma ^r(j)}}u_2)x_{i}^{+}(u_1)x_{j}^{+}(u_2)-\prod_{r=1}^2{(u_1q^{C_{i,\sigma ^r(j)}}}-\zeta ^ru_2)x_{j}^{+}(u_2)x_{i}^{+}(u_1)=0,
\]
the truncated version is 
\begin{align*}
    &\prod_{r=1}^2{(u_1}-\zeta ^rq^{C_{i,\sigma ^r(j)}}u_2)X_i(u_1)X_j(u_2)-\prod_{r=1}^2{(u_1q^{C_{i,\sigma ^r(j)}}}-\zeta ^ru_2)X_j(u_2)X_i(u_1)\\&=u_{1}^{2-T}x_{i,T}^{+}X_j(u_2)+u_{1}^{1-T}x_{i,T+1}^{+}X_j(u_2)\\
	&\quad -ab\,u_{1}^{2-T}X_j(u_2)x_{i,T}^{+}-ab\,u_{1}^{1-T}X_j(u_2)x_{i,T+1}^{+}\\
	&\quad +(a-b)u_{1}^{1-T}u_2\,x_{i,T}^{+}X_j(u_2)+(a-b)u_{1}^{1-T}u_2\,X_j(u_2)x_{i,T}^{+}\\
	&\quad +(a-b)u_1u_{2}^{1-T}X_i(u_1)x_{j,T}^{+}+(a-b)u_1u_{2}^{1-T}x_{j,T}^{+}X_i(u_1)\\
	&\quad -(a-b)u_{1}^{1-T}u_{2}^{1-T}x_{i,T}^{+}x_{j,T}^{+}-(a-b)u_{1}^{1-T}u_{2}^{1-T}x_{j,T}^{+}x_{i,T}^{+}\\
	&\quad -ab\,u_{2}^{2-T}X_i(u_1)x_{j,T}^{+}-ab\,u_{2}^{1-T}X_i(u_1)x_{j,T+1}^{+}\\
	&\quad +u_{2}^{2-T}x_{j,T}^{+}X_i(u_1)+u_{2}^{1-T}x_{j,T+1}^{+}X_i(u_1),
\end{align*}
where $a=q^{C_{i,\sigma(j)}}, b=q^{C_{i,\sigma^2(j)}}.$
\item If $C_{i,j}=-1$, the relation is
\begin{align*}
    \operatorname{Sym}_{u_1,u_2}\{P_{ij}^{+}(u_1,u_2)&(x_j^{+}(v)x_i^{+}(u_1)x_i^{+}(u_2)\\-(q^{2Md_{ij}}+q^{-2Md_{ij}})&x_i^{+}(u_1)x_j^{+}(v)x_i^{+}(u_2)+x_i^{+}(u_1)x_i^{+}(u_2)x_j^{+}(v))\}=0.
\end{align*}The truncated version is 
\begin{align*}
    &\operatorname{Sym}_{u_1,u_2}\big\{ P_{ij}^{+}(u_1,u_2)\big[X_j(v)X_i(u_1)X_i(u_2) - (q^{2Md_{i,j}}+q^{-2Md_{i,j}})X_i(u_1)X_j(v)X_i(u_2) \\
    &\qquad + X_i(u_1)X_i(u_2)X_j(v)\big] \big\} \\
    &= \begin{cases}
        0,&\text{if } C_{i,\sigma (i)}=2,\ d_{i,j}=\frac{1}{2},\ P_{ij}^{+}(u_1,u_2)=1 ,\\
        0,&\text{if } C_{i,\sigma (i)}=0,\ \sigma (j) \ne j,\ d_{i,j}=\frac{1}{4},\ P_{ij}^{+}(u_1,u_2)=1, \\
        R_{i,j}& \text{if } C_{i,\sigma (i)}=0,\ \sigma (j)=j,\ d_{i,j}=\frac{1}{2},\ P_{ij}^{+}(u_1,u_2)=\frac{u_{1}^{2}q^{ 4}-u_{2}^{2}}{u_1q^{ 2}-u_2},
    \end{cases}
\end{align*}
where
\begin{align*}
R_{i,j}=&\operatorname{Sym}_{u_1,u_2}\Bigl\{ u_{1}^{1-T}\Bigl[ q^2\bigl( X_j(v)x_{i,T}^{+}X_i(u_2)-(q^2+q^{-2})x_{i,T}^{+}X_j(v)X_i(u_2)+\\&x_{i,T}^{+}X_i(u_2)X_j(v) \bigr)+
\bigl( X_j(v)X_i(u_2)x_{i,T}^{+}\\&-(q^2+q^{-2})X_i(u_2)X_j(v)x_{i,T}^{+}+X_i(u_2)x_{i,T}^{+}X_j(v) \bigr) \Bigr] \Bigr\}.
\end{align*}
\item If $C_{i,\sigma(i)=1}$, the relation is 
\begin{align*}
    &\operatorname{Sym}_{u_1,u_2,u_3}\{[q^{3/2}u_{1}^{-1}-(q^{1/2}+q^{-1/2})u_{2}^{-1}+q^{-3/2}u_{3}^{-1}]x_{i}^{+}(u_1)x_{i}^{+}(u_2)x_{i}^{+}(u_3)\}=0,\\
    &\operatorname{Sym}_{u_1,u_2,u_3}\{[q^{-3/2}u_{1}^{+1}-(q^{1/2}+q^{-1/2})u_{2}^{+1}+q^{3/2}u_{3}^{+1}]x_{i}^{+}(u_1)x_{i}^{+}(u_2)x_{i}^{+}(u_3)\}=0. 
\end{align*}
The truncated version is 
\begin{align*}
    &\operatorname{Sym}_{u_1,u_2,u_3}\{[q^{3/2}u_{1}^{-1}-(q^{1/2}+q^{-1/2})u_{2}^{-1}+q^{-3/2}u_{3}^{-1}]X_i(u_1)X_i(u_2)X_i(u_3)\}\\
    &=-\operatorname{Sym}_{u_1,u_2,u_3}\{ q^{3/2}u_{1}^{-T}x_{i,T-1}^{+}X_i(u_2)X_i(u_3)-(q^{1/2}+q^{-1/2})u_{2}^{-T}X_i(u_1)x_{i,T-1}^{+}X_i(u_3)\\ 
    &+q^{-3/2}u_{3}^{-T}X_i(u_1)X_i(u_2)x_{i,T-1}^{+} \};\\
    &\operatorname{Sym}_{u_1,u_2,u_3}\{[q^{-3/2}u_{1}^{+1}-(q^{1/2}+q^{-1/2})u_{2}^{+1}+q^{3/2}u_{3}^{+1}]x_{i}^{+}(u_1)x_{i}^{+}(u_2)x_{i}^{+}(u_3)\}\\
    &=\operatorname{Sym}_{u_1,u_2,u_3}\{ q^{-3/2}u_{1}^{1-T}x_{i,T}^{+}X_i(u_2)X_i(u_3)-(q^{1/2}+q^{-1/2})u_{2}^{1-T}X_i(u_1)x_{i,T}^{+}X_i(u_3)\\
    &+q^{3/2}u_{3}^{1-T}X_i(u_1)X_i(u_2)x_{i,T}^{+} \}.
\end{align*}
\end{itemize}

Notice if $i\in I_0$, then 
$[x_{i, rN_i}^{+}, x_{i, r^{\prime}N_i}^{-}]$ equals to $\dfrac{\phi_{i, (r+r^{\prime})N_i}^{+}-\phi_{i, (r+r^{\prime})N_i}^{-}}{q_{i}-q_{i}^{-1}}.$ Hence the verification of the relation involving $[x_i^+(z),x_i^-(w)]$ is
parallel to the proof of \cite[Proposition~7.7]{Hernandezshfted}.

Using the same strategy as in \cite[Proposition~7.7]{Hernandezshfted}, together
with the truncated relations established above, we obtain that the operators $\widetilde{x}_{i,m}^{\pm}$, $\phi_i^{\pm}(u)$ $(i\in I_0,m\in \ZZ)$
satisfy the defining relations of $\mathcal U_q^\mu(\hat{\mathfrak g}^{\sigma})$.
\end{proof}

For $i\in I_0$, let $\chi_i$ be the character of $L^{\bs}(\boldsymbol\Psi_{i,1})$. It is proved in \cite[Theorem 4.24]{wang2023QQ} that
\begin{align}\label{q-character-identity}
    \chi_q(L^{\bs}(\boldsymbol\Psi_{i,a}))=[\boldsymbol\Psi_{i,a}]\cdot \chi_i.
\end{align}

We keep the notation and the scope of the finite-dimensional classification in
Section~6. Thus, outside type $A_{2n}^{(2)}$, $\mathbf\Psi$ denotes a dominant
highest $\ell$-weight in the classification of simple finite-dimensional
$\mathcal U_q^\mu(\hgs)$-modules. In type $A_{2n}^{(2)}$, $\mathbf\Psi$ is
restricted to the category $\mo^{sh}_+$.

\begin{theorem}
\label{q-character-shifted-borel}
Let $L(\mathbf\Psi)$ be a simple finite-dimensional
$\mathcal U_q^\mu(\hgs)$-module covered by the classification in Section~6,
and let $L^{\bs}(\mathbf\Psi)$ be the corresponding simple
$\mathcal U_q(\bs)$-module with highest $\ell$-weight $\mathbf\Psi$.
Then
\[
\chi_q\bigl(L^{\bs}(\mathbf\Psi)\bigr)
=
\chi_q\bigl(L(\mathbf\Psi)\bigr)
\prod_{i\in I_0}\chi_i^{\alpha_i(\mu)/\iota_i}.
\]
\end{theorem}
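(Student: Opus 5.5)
This will follow the strategy of the analogous untwisted result \cite[Section~7]{Hernandezshfted}: prove two inequalities of $q$-characters, using the restriction construction of Proposition~\ref{restriction} for one and a tensor product decomposition on the Borel side for the other.

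\textbf{Reduction to a polynomial part.} By the classification of Section~6, $\mathbf\Psi$ is dominant. Up to a constant $\gamma\in\mathfrak t^*$ we can therefore write $\mathbf\Psi=\mathbf\Psi_0\cdot\prod_{i}\prod_{k=1}^{\alpha_i(\mu)/\iota_i}\mathbf\Psi_{i,a_{i,k}}$, where:
\begin{itemize}
\item $\mathbf\Psi_0$ is a monomial in the $Y_{i,a}$ of degree zero;
\item the factors $\mathbf\Psi_{i,a}$ each raise $\deg\Psi_i$ by $\iota_i$.
\end{itemize}
On the Borel side, the $Y_{i,a}$-part corresponds to a finite-dimensional $\U_q(\lgs)$-module restricted to $\U_q(\bs)$, which stays simple by the restriction lemma. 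The positive prefundamental factors are handled by Corollary~\ref{1-finite} together with the identity \eqref{q-character-identity}. Using a Drinfeld-coproduct tensor product (Lemma~\ref{Drinfeld-coproduct-module}, Lemma~\ref{submodule-structure}), we show that $L^{\bs}(\mathbf\Psi)$ is a subquotient of $L^{\bs}(\mathbf\Psi_0\gamma)\otimes_D\bigotimes L^{\bs}(\mathbf\Psi_{i,a_{i,k}})$. This gives the upper bound
\[
\chi_q(L^{\bs}(\mathbf\Psi))\le \chi_q(L^{\bs}(\mathbf\Psi_0\gamma))\prod_{i,k}[\mathbf\Psi_{i,a_{i,k}}]\chi_i .
\]
It remains to identify the first factor and to make the bound sharp.

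\textbf{Lower bound via restriction.} Apply Proposition~\ref{restriction} to $V=L^{\bs}(\mathbf\Psi)$. The highest $\ell$-weight vector lies in $V_\mu$ but not in $V_{<\mu}$, because $\deg\Psi_i=\alpha_i(\mu)$. Hence $\widetilde V_\mu=V_\mu/V_{<\mu}$ is a nonzero $\U_q^\mu(\hgs)$-module in $\mathcal O_\mu$ containing a highest $\ell$-weight vector of $\ell$-weight $\mathbf\Psi$. Its $\ell$-weights are exactly the $\ell$-weights of $V$ of degree vector $\mu$, since $\phi_i^+$ is unchanged by the construction. We then argue that $L(\mathbf\Psi)$ is a subquotient of $\widetilde V_\mu$. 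This shows that the degree-$\mu$ part of $\chi_q(L^{\bs}(\mathbf\Psi))$ dominates $\chi_q(L(\mathbf\Psi))$.

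\textbf{Comparing degrees.} By Corollary~\ref{1-finite}, each $\chi_i$ only involves $\ell$-weights $\mathbf\Psi'$ with $\Psi'$ of degree $\le0$ (it is built from $A^{-1}$'s of degree zero and the constant eigenvalue). More precisely, since it is $1$-finite, the terms of $\chi_i$ all have degree zero and are ``constant-normalized'' monomials. This makes the degree filtration compatible with the factorization. Multiplying the lower bound by the polynomial factors, we aim to show
\[
\chi_q(L^{\bs}(\mathbf\Psi))\ge \chi_q(L(\mathbf\Psi))\prod_i\chi_i^{\alpha_i(\mu)/\iota_i},
\]
using that the subspace filtration by $V_{\mu'}$ for $\mu'\le\mu$ has associated graded pieces that are restriction modules. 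Combining this with the upper bound, in which $\chi_q(L^{\bs}(\mathbf\Psi_0\gamma))$ equals $\chi_q$ of the finite-dimensional $U_q^0$-module and $L(\mathbf\Psi)$ is recovered via the shift functors $\mathcal R_{\mu,\mu',a}$ (Proposition~\ref{proposition-1}), we obtain equality.

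\textbf{Main obstacle.} The hardest step will be the sharp comparison: showing that the upper and lower bounds agree. This requires knowing that $\chi_q(L(\mathbf\Psi))=\chi_q(L(\mathbf\Psi_0\gamma))\prod[\mathbf\Psi_{i,a_{i,k}}]$ in the shifted setting. My first attempt will use the fusion product of Theorem~\ref{q-character-compality}: $L(\mathbf\Psi_0\gamma)*\prod L(\mathbf\Psi_{i,a})$ has $q$-character equal to that product, since the positive prefundamentals are one-dimensional. I would then argue simplicity of this fusion product by finite-dimensionality and dominance. In type $A_{2n}^{(2)}$ this simplicity is exactly what the $\mathcal O^{sh}_+$ argument of Theorem~\ref{finite-dimensional-2} provides.
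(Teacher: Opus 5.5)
There is a genuine gap. Each of your two bounds is fine on its own, but neither can be made to meet the target. So the ``sharp comparison'' you flag as the main obstacle cannot be carried out along these lines.

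\textbf{The upper bound cannot be made sharp.} The identity you need, $\chi_q(L(\mathbf\Psi))=\chi_q(L(\mathbf\Psi_0\gamma))\prod[\mathbf\Psi_{i,a_{i,k}}]$, amounts to simplicity of the fusion product. It is false in general, and finite-dimensionality plus dominance cannot force it.
\begin{itemize}
\item Take $i$ with $\sigma(i)\neq i$ and $\mathbf\Psi=Y_{i,a}\mathbf\Psi_{i,aq}$. Then $\Psi_i(z)=q(1-aq^{-1}z)$ is a polynomial. So $L(\mathbf\Psi)$ is one-dimensional: all $x^\pm_{j,m}$ act by $0$, exactly as for the positive prefundamental modules. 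But $L(Y_{i,a})$ is not one-dimensional.
\item Re-choosing the factorization does not cure this in general. The same cancellation can occur at a lower $\ell$-weight of $L(\mathbf\Psi_0)$, where it is invisible in $\mathbf\Psi$. An example is $\mathbf\Psi=Y_{1,a}\mathbf\Psi_{2,c}$ with $c$ the spectral parameter of the $x_2^-$-step of $L(Y_{1,a})$.
\item Theorem~\ref{finite-dimensional-2} does not help either. It concerns a different, specially chosen factorization subject to \eqref{finite-condition}.
\end{itemize}

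\textbf{The lower bound is too weak.} As you observe, the $\chi_i$ consist of constant $\ell$-weights. So by your own upper bound, every $\ell$-weight of $L^{\bs}(\mathbf\Psi)$ has degree vector $\mu$. The filtration by the $V_{\mu'}$ then separates nothing, and restriction only yields $\chi_q(L^{\bs}(\mathbf\Psi))\ge\chi_q(L(\mathbf\Psi))$. Nothing in your sketch produces the factor $\prod_i\chi_i^{\alpha_i(\mu)/\iota_i}$ on the lower side. The shift functors only give subquotient statements, so they do not help here.

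\textbf{The missing idea.} Compare both sides with the same, generally proper, subspace $W\subset L(\mathbf\Psi_0)$, rather than with all of $L(\mathbf\Psi_0)$. This gives equality directly, without two inequalities, and is what the paper does.
\begin{itemize}
\item In $L(\mathbf\Psi_0)\otimes_D L^{\bs}(\mathbf\Psi_+)$, Lemma~\ref{submodule-structure} shows that the submodule generated by $v_0\otimes v_+$ is simple. Hence it is isomorphic to $L^{\bs}(\mathbf\Psi)$, and it has the form $W\otimes L^{\bs}(\mathbf\Psi_+)$. This gives $\chi_q(L^{\bs}(\mathbf\Psi))=\chi_q(W)[\mathbf\Psi_+]\prod_i\chi_i^{\alpha_i(\mu)/\iota_i}$ exactly.
\item Inside the restriction module of Proposition~\ref{restriction}, $W\otimes v_+$ is a $\U_q^\mu(\hgs)$-submodule. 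Indeed, $x^-_{i,m}v_+=0$ for $m>\alpha_i(\mu)$, so $x^-_{i,m}$ acts on $W\otimes v_+$ through $\sum_{0\le r\le\alpha_i(\mu)}\Psi^+_{i,r}x^-_{i,m-r}$.
\item This submodule is generated by $v_0\otimes v_+$ and has no other primitive vectors. So it is isomorphic to $L(\mathbf\Psi)$, and $\chi_q(L(\mathbf\Psi))=\chi_q(W)[\mathbf\Psi_+]$.
\end{itemize}
In the first example above, $W=\CC v_0$. This is exactly the cancellation your approach cannot see.
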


\begin{proof}
By the finite-dimensional classification in Section~6, under the hypotheses of the theorem, the highest \(\ell\)-weight \(\boldsymbol\Psi\) admits a factorization
\[
\boldsymbol\Psi=\boldsymbol\Psi_0\boldsymbol\Psi_+,
\]
where \(L(\boldsymbol\Psi_0)\) is a finite-dimensional simple module for $\U_q(\lgs)$ and $\boldsymbol\Psi_+$
is a product of positive prefundamental $\ell$-weights. 

We regard \(L(\boldsymbol\Psi_0)\) as a
\(\mathcal U_q(\lgs)\)-module and consider
\[
V=
L(\boldsymbol\Psi_0)\otimes L^{\bs}(\boldsymbol\Psi_+).
\]
Let $v_0$ and $v_+$ be the highest $\ell$-weight vectors of
$L(\boldsymbol\Psi_0)$ and $L^{\bs}(\boldsymbol\Psi_+)$, respectively.  By
Lemma~\ref{Drinfeld-coproduct-module}, the Drinfeld coproduct defines a
$\mathcal U_q(\bs)$-module structure on $V$.
	
Let $V'$ be the $\mathcal U_q(\bs)$-submodule of $V$ generated by
$v_0\otimes v_+$.  Then $V'$ is a highest \(\ell\)-weight module of highest $\ell$-weight $\boldsymbol\Psi$.  We claim that $V'$ is simple.  Indeed, if
$N\subset V'$ is a non-zero submodule, by Lemma~\ref{submodule-structure}, there exists a subspace
$W_N\subset L(\boldsymbol\Psi_0)$, containing $v_0$, such that
\[
N=W_N\otimes L^{\bs}(\boldsymbol\Psi_+).
\]
In particular $v_0\otimes v_+\in N$.  Since $ V'$ is generated by
$v_0\otimes v_+$, we obtain $N=V'$.  Therefore
\[
V'\simeq L^{\bs}(\boldsymbol\Psi).
\]

Applying again Lemma~\ref{submodule-structure} to $V'$, there is a subspace
$W\subset L(\boldsymbol\Psi_0)$, containing $v_0$, such that
\[
V'=W\otimes L^{\bs}(\boldsymbol\Psi_+).
\]
Moreover $W$ is stable under the operators $x^+_{i,m}$ and
$\phi^+_{i,m}$, for $i\in I_0$ and $m\ge0$. Hence, using the
$q$-character formula for positive prefundamental Borel modules, we obtain
\[
\chi_q\bigl(L^{\bs}(\boldsymbol\Psi)\bigr)
=
\chi_q(W)\,\chi_q\bigl(L^{\bs}(\boldsymbol\Psi_+)\bigr) 
=
\chi_q(W)[\boldsymbol\Psi_+]
\prod_{i\in I_0}\chi_i^{\alpha_i(\mu)/\iota_i}.
\]

The following arguments are parallel to the untwisted case in
\cite{Hernandezshfted}. We include the proof for completeness.

Consider the restriction representations $(\widetilde{V'})_\mu$. By construction, $W\otimes v_+$ is stable under the action of $\tilde{x}^+_{i,m}$, $\phi^+_{i,m}$. Besides, the $\phi^+_{i}(z)$ has degree $0$ on $W$ ($W$ is a subspace of a $\U_q(\lgs)$-module). Therefore
\[
W\otimes v_+
\subset (V')_\mu,
\qquad
(W\otimes v_+)\cap (V')_{<\mu}=0.
\]
Thus we may view $W\otimes v_+$ as a subspace of
$(\widetilde{V'})_\mu$.

It remains to consider the negative currents.   For $i,j\in I_0$, $r\ge0$, and
$m>0$ with our choice of representatives, we have
\[
x^+_{j,r}x^-_{i,m}v_+
=
\delta_{ij}
\frac{\phi^+_{i,m+r}}{q_{i}-q_{i}^{-1}}v_+ .
\]
Since $\deg\Psi^+_i(z)=\alpha_i(\mu)$, we have $\phi^+_{i,m+r}v_+=0$ for all $m>\alpha_i(\mu)$. Then we have $x^-_{i,m}.v=0$ if $m>\alpha_i(\mu)$.
	
Using the Drinfeld coproduct, for $m>\alpha_i(\mu)$ and $w\in W$, we obtain
\[
x^-_{i,m}(w\otimes v_+)
=
\left(
\sum_{0\le r\le \alpha_i(\mu)}
\Psi^+_{i,r}x^-_{i,m-r}w
\right)\otimes v_+,
\]
where $\Psi^+_{i,r}$ is the coefficient of $z^r$ in $\Psi^+_i(z)$. This implies that $W\otimes v_1$ is stable of the action $\mathcal{U}^\mu_q(\hgs)$. Then we have a submodule $W\otimes v_+\subset (\widetilde{V'})_\mu$.

We now show that this submodule is generated by $v_0\otimes v_+$.  Let $w\otimes v_+\in W\otimes v_+$, there exists an element $x\in \mathcal U_q(\bs)^-$ such that $x(v_0\otimes v_+)=w\otimes v_+.$
Writing \(x\) in terms of Drinfeld negative current modes, define \(x^\sharp\)
	by replacing each \(x^-_{i,m}\) by
	\[
	D_{i,m}:=
	\sum_{0\le r\le \alpha_i(\mu)}
	\Psi^+_{i,r}x^-_{i,m-r}.
	\]
	The component of \(\Delta_D(x)(v_0\otimes v_+)\) whose right tensor factor has
	the same weight as \(v_+\) is precisely $	(x^\sharp v_0)\otimes v_+$.
	All other components have right tensor factor of weight strictly lower than the
	weight of \(v_+\). Hence $	w\otimes v_+=(x^\sharp v_0)\otimes v_+.$

This implies that 
\begin{align*}
W\otimes v_1&\subset (\langle D_{i,m}\rangle_{i\in I,m\in \ZZ}.v_0)\otimes v_1\\
&= (\langle D_{i,m} \rangle_{i\in I,m\ge \alpha_i(\mu)}.v_0)\otimes v_1
\subset \mathcal{U}^\mu_q(\hgs).(v_0\otimes v_1).
\end{align*}

Thus $W\otimes v_+$ is a highest $\ell$-weight
\(\mathcal U_q^\mu(\hgs)\)-module. We prove that it is simple. For the same argument, there is a nonzero polynomial $P(z)$ so that $P(z)x^+_i(z)=0$ on the finite-dimensional representation $L(\boldsymbol{\Psi_0})$. If there exists  primitive vector $w\otimes v_1$ such that $\tilde{x}^+_{i,m}(w\otimes v_1)=0$ for any $m\ge \alpha_i(\mu)$, then $x^+_{i,m}.w=0$ for any $i\in I_0,m\in \ZZ$.
Hence $w$ is a highest weight vector. Then we obtain $W\otimes v_+\cong L(\boldsymbol{\Psi})$ and complete the proof.
\end{proof}

\bibliographystyle{alpha} 
\bibliography{ref.bib}

\end{document}